\definecolor{myred}{rgb}{0.75,0,0}
\definecolor{mygreen}{rgb}{0,0.5,0}
\definecolor{myblue}{rgb}{0,0,0.65}
\numberwithin{equation}{subsection}
  \def\bg{{\mathfrak b}}  
    \def\CM{{\mathbb{C}}}
    \def\DM{{\mathbb{D}}}
    \def\FM{{\mathbb{F}}}
  \def\gg{{\mathfrak g}}  
  \def\hg{{\mathfrak h}}  \def\HM{{\mathbb{H}}}
\def\IG{{\mathfrak I}}    \def\IM{{\mathbb{I}}}
\def\JG{{\mathfrak J}}    \def\JM{{\mathbb{J}}}
  \def\kg{{\mathfrak k}}  
  \def\lg{{\mathfrak l}}  
  \def\mg{{\mathfrak m}}  
  \def\ng{{\mathfrak n}}  
  \def\pg{{\mathfrak p}}  
  \def\qg{{\mathfrak q}}  \def\QM{{\mathbb{Q}}}
\def\RG{{\mathfrak R}}    \def\RM{{\mathbb{R}}}
\def\SG{{\mathfrak S}}  \def\sg{{\mathfrak s}}  
\def\TG{{\mathfrak T}}  \def\tg{{\mathfrak t}}  \def\TM{{\mathbb{T}}}
    \def\XM{{\mathbb{X}}}
    \def\ZM{{\mathbb{Z}}}
\def\BB{{\mathbf B}}    
    \def\CC{{\mathcal{C}}}
    \def\EC{{\mathcal{E}}}
    \def\FC{{\mathcal{F}}}
\def\GB{{\mathbf G}}    \def\GC{{\mathcal{G}}}
\def\HB{{\mathbf H}}    
\def\IB{{\mathbf I}}    \def\IC{{\mathcal{I}}}
\def\JB{{\mathbf J}}    
\def\LB{{\mathbf L}}    
    \def\NC{{\mathcal{N}}}
    \def\OC{{\mathcal{O}}}
\def\PB{{\mathbf P}}
    \def\SC{{\mathcal{S}}}
\def\TB{{\mathbf T}}    \def\TC{{\mathcal{T}}}
\def\UB{{\mathbf U}}
\def\DS{{\EuScript D}}
\newcommand{\nc}{\newcommand} \newcommand{\renc}{\renewcommand}
\def\reg{{\mathrm{reg}}}
\def\rs{{\mathrm{rs}}}
\DeclareMathOperator{\Ker}{Ker}
\DeclareMathOperator{\Coker}{Coker}
\DeclareMathOperator{\Coh}{Coh}
\def\to{\rightarrow}
\def\longto{\longrightarrow}
\nc{\triright}{\stackrel{[1]}{\to}}
\nc{\longtriright}{\stackrel{[1]}{\longto}}
\nc{\Br}{\mathcal{B}}
\nc{\HotRR}{{}_R\mathcal{K}_R}
\nc{\HotR}{\mathcal{K}_R}
\nc{\excise}[1]{}
\nc{\defect}{\text{df}}
\nc{\h}[1]{\underline{H}_{#1}}
\nc{\Ga}{\mathbb{G}_a} 
\nc{\Gm}{\mathbb{G}_{\mathrm{m}}} 
\nc{\Perv}{{\mathbf{P}}}
\nc{\IH}{{\mathrm{IH}}}
\nc{\ic}{\mathbf{IC}}
\nc{\gl}{{\mathfrak{gl}}}
\renc{\sl}{{\mathfrak{sl}}}
\renc{\sp}{{\mathfrak{sp}}}
\nc{\HBM}{H^{BM}}
 \DeclareMathOperator{\Hom}{Hom}
\DeclareMathOperator{\Rep}{Rep}
\newtheorem{thm}{Theorem}[subsection]
\newtheorem{lem}[thm]{Lemma}
\newtheorem{prop}[thm]{Proposition}
\newtheorem{cor}[thm]{Corollary}
\theoremstyle{definition}
\theoremstyle{remark}
\newtheorem{remark}[thm]{Remark}
\DeclareMathOperator{\Ext}{Ext}
\DeclareMathOperator{\Spec}{Spec}
\DeclareMathOperator{\Parity}{Parity}
\DeclareMathOperator{\Tilt}{Tilt}
\DeclareMathOperator{\PParity}{PParity}
\def\pt{{\mathrm{pt}}}
\def\Gr{{\EuScript Gr}}
\def\tNC{\widetilde{\NC}}
\def\Flag{\mathscr{B}}
\newcommand{\ggr}{\gg_{\mathrm{reg}}}
\newcommand{\tSC}{\widetilde{\SC}}
\newcommand{\tUp}{\widetilde{\Upsilon}}
\def\GD{\check{G}}
\def\BD{\check{B}}
\def\UD{\check{U}}
\def\PD{\check{P}}
\def\TD{\check{T}}
\def\LD{\check{L}}
\def\Vect{\mathrm{Vect}}
\newcommand{\tgg}{\widetilde{\gg}}
\def\lotimes{\@ifnextchar_{\@lotimessub}{\@lotimesnosub}}
\def\@lotimessub_#1{\mathchoice{\mathbin{\mathop{\otimes}^L}_{#1}}%
  {\otimes^L_{#1}}{\otimes^L_{#1}}{\otimes^L_{#1}}}
\def\@lotimesnosub{\mathbin{\mathop{\otimes}^L}}
\newcommand{\simto}{\xrightarrow{\sim}}
\newcommand{\Db}{D^{\mathrm{b}}}
\newcommand{\tIB}{\widetilde{\IB}}
\newcommand{\tIG}{\widetilde{\IG}}
\newcommand{\tJB}{\widetilde{\JB}}
\newcommand{\tJG}{\widetilde{\JG}}
\newcommand{\tIM}{\widetilde{\IM}}
\newcommand{\tJM}{\widetilde{\JM}}
\newcommand{\Lie}{\mathscr{L} \hspace{-1pt} ie}
\newcommand{\LLie}{\mathbb{L} \mathrm{ie}}
\newcommand{\GDO}{\GD(\mathscr{O})}
\newcommand{\LDO}{\LD(\mathscr{O})}
\newcommand{\FF}{\mathsf{F}}
\newcommand{\HF}{\mathsf{H}}
\newcommand{\Satake}{\mathsf{S}}
\newcommand{\Dmix}{D^{\mathrm{mix}}}
\newcommand{\Kb}{K^{\mathrm{b}}}
\newcommand{\Rder}{\mathsf{R}}
\newcommand{\Kil}{\mathrm{Kil}}
\begin{document}

\begin{abstract}
We prove analogues of fundamental results of Kostant on the universal centralizer of a connected reductive algebraic group for algebraically closed fields of positive characteristic (with mild assumptions), and for integral coefficients. As an application, we use these results to obtain a ``mixed modular'' analogue of the derived Satake equivalence of Bezrukavnikov--Finkelberg.
\end{abstract}

\title[Kostant section, universal centralizer, derived Satake]{Kostant section, universal centralizer, \\
and a modular derived Satake equivalence}
  
\author{Simon Riche}
\address{Universit{\'e} Clermont Auvergne, Universit{\'e} Blaise Pascal, Laboratoire de Math{\'e}matiques, BP 10448, F-63000 Clermont-Ferrand, France -- CNRS, UMR 6620, LM, F-63178 Aubi{\`e}re, France}
\email{simon.riche@math.univ-bpclermont.fr}

\thanks{The author was supported by ANR Grants No.~ANR-2010-BLAN-110-02 and ANR-13-BS01-0001-01.}

\maketitle

\section{Introduction}

\subsection{Introduction}

The main goal of this note is to provide a detailed exposition of a generalization to arbitrary characteristic (and to integral coefficients) of some fundamental results of Kostant on the universal centralizer group scheme $\IB$ (and its Lie algebra $\IG$) associated with a connected reductive group $\GB$. Some of these results are sometimes considered ``well known'' but, as far as we know, are not treated in detail in the literature in this generality. Therefore, we believe they deserve a complete treatment, under assumptions as mild as possible.

In the case of fields of characteristic $0$, these results play an important technical role in various aspects of the geometric Langlands program, see e.g.~\cite{ginzburg-satake, bf, dodd, ngo}. More recently, the ``modular'' case treated in this paper has been used in a joint work with Carl Mautner~\cite{mr} (again, as a technical tool) to generalize some constructions of~\cite{dodd} to positive characteristic, with an application to the completion of the proof of the Mirkovi{\'c}--Vilonen conjecture on stalks of standard spherical perverse sheaves on affine Grassmannians~\cite{mv}.
In this paper we present another application, to a ``mixed modular'' version of the Bezrukavnikov--Finkelberg \emph{derived Satake equivalence}~\cite{bf}, an equivalence of monoidal triangulated categories relating the $\GDO$-equivariant derived category on the affine Grassmannian $\Gr$ of a reductive group $\GD$ and the derived category of equivariant coherent sheaves on the Lie algebra $\gg$ of the Langlands dual group $\GB$.

\subsection{Kostant's transversal slice}

The proofs of the results on the universal centralizer use in a crucial way a nice transversal slice to the regular nilpotent orbit in the Lie algebra $\gg$ of $\GB$. After the introduction of this slice by Kostant (in the case of complex coefficients) in~\cite{kostant}, the study of transverse slices to more general nilpotent orbits has become a subject of intense study, see e.g.~\cite{slodowy, spaltenstein, premet}. The case of the Kostant section has some special features, however. First of all, its definition is very explicit and quite elementary, and therefore well suited to a generalization to integral coefficients. Secondly, since it is concerned with the \emph{regular} nilpotent orbit, it captures some features of the whole locus of regular elements in $\gg$ (which plays a crucial role in the study of the universal centralizer) but also of the adjoint quotient $\gg/G$. These considerations do not make sense (at least in this form) for more general nilpotent orbits.

\subsection{Description of the main results}

After some preliminaries in Section~\ref{sec:preliminaries},
the first main part of the paper (Section~\ref{sec:fields}) is concerned with the case of algebraically closed fields.
In this case we have tried to prove the various results in the maximal reasonable generality, and to introduce our assumptions only when they become necessary.

First we explain the construction of the \emph{Kostant section} $\SC$, which is both a transversal slice to the regular nilpotent orbit in $\gg$ and a section of the adjoint quotient $\gg \to \gg/\GB$
(see Theorem~\ref{thm:kostant}). In the case of positive characteristic, this construction uses the detailed analysis of the adjoint action of a regular nilpotent element due to Springer~\cite{springer}. Then we consider the universal centralizer $\IB$, i.e.~the group scheme over $\gg$ whose fiber over a point $x \in \gg$ is the (scheme-theoretic) centralizer $\GB_x$ of $x$ in $\GB$. Using $\SC$, we prove that the restriction $\IB_\reg$ of $\IB$ to the regular locus $\ggr$ is a smooth commutative group scheme (see in particular Corollary~\ref{cor:Ireg-smooth}). We also describe the Lie algebra of $\IB_\reg$ in terms of the cotangent bundle to $\gg/\GB$ (see Theorem~\ref{thm:Lie-alg-cotangent}), generalizing results in characteristic $0$ due (to the best of our knowledge) to Bezrukavnikov--Finkelberg~\cite{bf}. Finally we consider variants of these results when the Lie algebra $\gg$ is replaced by the Grothendieck resolution $\tgg$ (see~\S\ref{ss:groth-resolution}).

The second main part of the paper (Section~\ref{sec:R}) is concerned with the case when $\GB$ is a split connected, simply-connected, semi-simple algebraic group over the finite localization $\RG$ of $\ZM$ obtained by inverting all the prime numbers which are not very good, or a general linear group over $\ZM$.
In this setting, using again the results of Springer we define and study a Kostant section $\SC$ (see in particular Theorem~\ref{thm:kostant-R}), prove that the restriction of the universal centralizer to $\SC$ is smooth (see Proposition~\ref{prop:IS-smooth-R}), and describe the Lie algebra of this restriction in terms of the cotangent bundle to $\SC$ (see Theorem~\ref{thm:Lie-alg-cotangent-R}). Finally we study analogues of these objects for the Grothendieck resolution (see~\S\ref{ss:grothendieck-R}).
The proofs in this section mainly proceed by reduction to the case of algebraically closed fields.
(The necessary general results related to this technique are treated in~\S\ref{ss:reduction-fields}.)

\subsection{Application to a derived Satake equivalence}
\label{ss:intro-derived-Satake}

In Section~\ref{sec:derived-Satake} we present an application of some of these results to a ``mixed modular version'' of the derived Satake equivalence of~\cite{bf}. Here we let $\GD$ be a simply-connected quasi-simple complex algebraic group, and consider the affine Grassmannian $\Gr:=\GD(\mathscr{K})/\GDO$, where $\mathscr{K}:=\CM ( \hspace{-1pt} (z) \hspace{-1pt} )$ is the field of Laurent series in an indeterminate $z$, and $\mathscr{O}:=\CM [ \hspace{-1pt} [z] \hspace{-1pt} ]$ is the ring of power series in $z$. We also let $\FM$ be an algebraically closed field whose characteristic $\ell$ is very good for $\GD$, and denote by $\GB$ the simple $\FM$-algebraic group (of adjoint type) which is Langlands dual to $\GD$.

Following earlier work with Pramod Achar~\cite{modrap2}, we define the ``mixed modular'' $\GDO$-equivariant derived category of $\Gr$ as
\[
\Dmix_{\GDO}(\Gr, \FM) := \Kb \Parity_{\GDO}(\Gr, \FM),
\]
where $\Parity_{\GDO}(\Gr, \FM)$ is the additive category of $\GDO$-equivariant parity sheaves on $\Gr$, in the sense of Juteau--Mautner--Williamson~\cite{jmw}. To justify this definition we remark that, in the case $\ell=0$, one can show using results of Rider~\cite{rider} that this category is a mixed version (in the natural sense) of the usual equivariant derived category $\Db_{\GDO}(\Gr, \FM)$. In the case $\ell>0$, a similar definition has been used in the setting of Bruhat-constructible sheaves on (finite dimensional) flag varieties in~\cite{modrap2}; in this case also we were able to show that the mixed modular derived category is a mixed version of the ordinary derived category of constructible sheaves. We expect that a generalization of the results of~\cite{modrap2} will lead to a proof of the similar property for $\Dmix_{\GDO}(\Gr, \FM)$.

The category $\Parity_{\GDO}(\Gr, \FM)$ has a natural convolution product, which induces a similar convolution product on $\Dmix_{\GDO}(\Gr, \FM)$.
In Theorem~\ref{thm:derived-Satake} we obtain an equivalence of monoidal triangulated categories
\begin{equation}
\label{eqn:intro-derived-Satake}
\Dmix_{\GDO}(\Gr, \FM) \simto \Db_{\mathrm{fr}} \Coh^{\GB \times \Gm}(\gg),
\end{equation}
where on the right-hand side $\GB \times \Gm$ acts on $\gg$ via $(g,t) \cdot x = t^{-2} (g \cdot x)$, and $\Db_{\mathrm{fr}} \Coh^{\GB \times \Gm}(\gg)$ is the triangulated subcategory of the bounded derived category $\Db \Coh^{\GB \times \Gm}(\gg)$ of $\GB \times \Gm$-equivariant coherent sheaves on $\gg$ generated by the ``free'' objects of the form $V \otimes \OC_{\gg}$, for $V$ a finite-dimensional $\GB \times \Gm$-module. This equivalence sends an object $\FC$ of $\Dmix_{\GDO}(\Gr, \FM)$ which is both parity and perverse to $\Satake(\FC) \otimes \OC_{\gg}$, where $\Satake$ is the Satake equivalence from~\cite{mv}. It is an analogue, in our ``mixed modular'' context, of one of the equivalences constructed (in the case $\ell=0$) in~\cite{bf}.

The general strategy of our proof of~\eqref{eqn:intro-derived-Satake} is similar to the one used in~\cite{bf}. However we replace some of the arguments based on explicit computations in semisimple rank $1$ by more general considerations based on results concerning the (equivariant) cohomology of $\Gr$ and of spherical perverse sheaves due, in the case of characteristic $0$, to Ginzburg~\cite{ginzburg} and, in the general case, to Yun--Zhu~\cite{yz}. 

This application only uses the results obtained in~\S\S\ref{ss:principal-nilpotent}--\ref{ss:centralizer-F}; in particular, the case of integral coefficients is not needed. The other results (and in particular the integral case) are used in~\cite{mr}.

\subsection{Some notation and conventions}

All rings in this paper are tacitly assumed to be unital and commutative. If $A$ is a $B$-algebra, we denote by $\Omega_{A/B}$ the $A$-module of relative differential forms, see~\cite[\S II.8]{hartshorne}. If $M$ is an $A$-module, we denote by $\mathrm{S}_A(M)$ (or sometimes simply $\mathrm{S}(M)$) the symmetric algebra of $M$ over $A$.

If $X$ is a scheme, we denote by $\OC_X$ its structure sheaf, and by $\OC(X)$ the global sections of $\OC_X$. 
If $Y$ is a scheme and $X$ is a $Y$-scheme, we denote by $\Omega_{X/Y}$ the $\OC_X$-module of relative differential forms. If $X$ is smooth over $Y$, we denote by $\TM^*(X/Y)$ (or simply $\TM^*(X)$) the cotangent bundle to $X$ over $Y$, in other words the relative spectrum of the symmetric algebra of the locally free $\OC_X$-module $\TC_{X/Y}:=\mathscr{H} \hspace{-1pt} om_{\OC_X}(\Omega_{X/Y}, \OC_X)$.

If $k$ is a ring and $V$ is a free $k$-module of finite rank, by abuse we still denote by $V$ the affine $k$-scheme $\Spec \bigl( \mathrm{S}_k(\Hom_k(V,k)) \bigr)$.


If $\RG$ is a finite localization of $\ZM$, we define a \emph{geometric point} of $\RG$ to be an algebraically closed field whose characteristic is not invertible in $\RG$. If $\FM$ is a such a geometric point, then there exists a unique algebra morphism $\RG \to \FM$, so that tensor products of the form $\FM \otimes_\RG (-)$ make sense.

\subsection{Acknowledgements}

This work is part of a joint project with Carl Mautner, see~\cite{mr-exotic, mr}, and was motivated by discussions with him.
We also thank Zhiwei Yun for useful conversations on regular elements and for confirming a sign mistake in~\cite{yz}, and Sergey Lysenko for his help with some references. Finally, we thank an anonymous referee for his criticism, which lead us to work on the application presented in Section~\ref{sec:derived-Satake}.

\section{Preliminaries on Lie algebras and regular elements}
\label{sec:preliminaries}

In this section we recall a number of definitions and basic results on Lie algebras of group schemes and regular elements in the Lie algebra of a reductive group over an algebraically closed field.

\subsection{Lie algebra of group schemes}
\label{ss:Lie}

Let $X$ be a Noetherian scheme, and let $G$ be a Noetherian group scheme over $X$. We denote by $\varepsilon \colon X \to G$ the identity, and consider the $\OC_X$-coherent sheaf
\[
\omega_{G/X} := \varepsilon^*(\Omega_{G/X}).
\]
Then, by definition (see e.g.~\cite[Expos{\'e} II, \S 4.11]{sga}; see also~\cite[Chap.~12]{waterhouse} for the case $G$ and $X$ are affine), the Lie algebra of $G$ is the quasi-coherent $\OC_X$-module
\[
\Lie(G/X) := \mathscr{H} \hspace{-1pt} om_{\OC_X}(\omega_{G/X}, \OC_X),
\]
with its natural bracket. We also denote by $\LLie(G/X)$ the scheme over $X$ which is the relative spectrum of the symmetric algebra of the $\OC_X$-module $\omega_{G/X}$. This scheme is naturally a Lie algebra over $X$. When $X$ is clear, we will sometimes abbreviate $\Lie(G/X)$, resp.~$\LLie(G/X)$, to $\Lie(G)$, resp.~$\LLie(G)$.

We will use the following easy consequences of the definition.

\begin{lem}
\label{lem:Lie-alg}
\begin{enumerate}
\item
If $G$ is smooth over $X$, then the $\OC_X$-module $\Lie(G/X)$ is locally free of finite rank, and $\LLie(G/X)$ is a vector bundle over $X$.
\label{it:Lie-smooth}
\item
Let $f \colon Y \to X$ be a morphism. Assume that either $G$ is smooth over $X$, or $f$ is flat. Then there exists a canonical isomorphism
\[
\Lie(G \times_X Y / Y) \cong f^* \Lie(G/X)
\]
of $\OC_Y$-Lie algebras.
\label{it:base-change-Lie}
\end{enumerate}
\end{lem}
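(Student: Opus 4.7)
For part~(1), the plan is simply to unwind definitions. Smoothness of $G$ over $X$ implies (by the standard differential characterization of smoothness) that $\Omega_{G/X}$ is a locally free $\OC_G$-module of finite rank. Pulling back along the identity section $\varepsilon \colon X \to G$ preserves this property, so $\omega_{G/X} = \varepsilon^* \Omega_{G/X}$ is locally free of finite rank on $X$. Its $\OC_X$-dual $\Lie(G/X)$ is then also locally free of finite rank, and consequently $\LLie(G/X) = \Spec_X \mathrm{S}_{\OC_X}(\omega_{G/X})$ is a vector bundle.

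For part~(2), I would build the isomorphism in two steps. First, using the base-change compatibility of relative differentials, which holds with no assumption on $f$ and $G$: if $p \colon G \times_X Y \to G$ denotes the projection in the cartesian square, then there is a canonical isomorphism $\Omega_{G \times_X Y / Y} \cong p^* \Omega_{G/X}$. Applying $\varepsilon_Y^*$, where $\varepsilon_Y \colon Y \to G \times_X Y$ is the identity section of the base-changed group scheme, and using the evident identity $p \circ \varepsilon_Y = \varepsilon \circ f$, one deduces a canonical isomorphism
\[
\omega_{G \times_X Y / Y} \;\cong\; f^* \omega_{G/X}.
\]

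Second, one must identify $\HOM_{\OC_Y}(f^* \omega_{G/X}, \OC_Y)$ with $f^* \HOM_{\OC_X}(\omega_{G/X}, \OC_X)$ through the canonical comparison morphism. This map is an isomorphism as soon as $\omega_{G/X}$ is finitely presented and either it is locally free or $f$ is flat. In the first case of the hypothesis (with $G/X$ smooth), $\omega_{G/X}$ is locally free by part~(1), so the comparison map is an isomorphism. In the second case ($f$ flat), $\omega_{G/X}$ is a coherent $\OC_X$-module (in particular finitely presented, as $X$ is Noetherian), and flatness of $f$ gives that $f^*$ commutes with $\HOM$ out of a finitely presented sheaf. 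Combining both steps yields the desired isomorphism, and the fact that it respects the Lie brackets follows from the naturality of the whole construction with respect to the group-scheme structure on $G \times_X Y$ pulled back from $G$.

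The only step that requires any care is the $\HOM$/pullback compatibility in the flat, non-smooth case; this is where I expect the main (mild) obstacle to lie, and it is resolved by invoking finite presentation of $\omega_{G/X}$ coming from the Noetherian hypothesis on $X$.
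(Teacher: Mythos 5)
Your proposal is correct and follows essentially the same route as the paper: part~(1) is the same unwinding of smoothness through $\Omega_{G/X}$, and part~(2) uses the same two ingredients (base-change of relative differentials via $p \circ \varepsilon_Y = \varepsilon \circ f$, plus the $\mathscr{H}\hspace{-1pt}om$/pullback comparison, with the Matsumura/flatness point handling the second hypothesis) merely applied in the opposite order. Your explicit remark that the resulting isomorphism is one of Lie algebras by naturality is a small clarification the paper leaves implicit.
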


\begin{proof}
$(1)$
Since $G \to X$ is smooth, the $\OC_G$-module $\Omega_{G/X}$ is locally free of finite rank (see~\cite[Tag 02G1]{stacks-project}), which implies that $\omega_{G/X}$ and $\Lie(G/X)$ are locally free of finite rank over $\OC_X$. The fact that $\LLie(G/X)$ is a vector bundle follows.

$(2)$
We have
\[
f^* \Lie(G/X) = f^* \mathscr{H} \hspace{-1pt} om_{\OC_X}(\varepsilon^* \Omega_{G/X}, \OC_X) \cong \mathscr{H} \hspace{-1pt} om_{\OC_Y}(f^* \varepsilon^* \Omega_{G/X}, \OC_Y)
\]
under our assumptions (see~\cite[Theorem~7.11]{matsumura} for the second case). Now using~\cite[Proposition~II.8.10]{hartshorne} one can easily check that $f^* \varepsilon^* \Omega_{G/X}$ is the restriction to $Y$ of $\Omega_{G \times_X Y / Y}$, which finishes the proof.
\end{proof}


\subsection{Notation and assumptions}
\label{ss:notation}

In Section~\ref{sec:fields} we will work in the following setting.

Let $\FM$ be an algebraically closed field of characteristic $\ell \geq 0$.
Let $\GB$ be a connected reductive group over $\FM$, of rank $r$. Let $\BB \subset \GB$ be a Borel subgroup and $\TB \subset \BB$ be a maximal torus. We denote by $\tg \subset \bg \subset \gg$ the Lie algebras of $\TB \subset \BB \subset \GB$. We also denote by $\UB$ the unipotent radical of $\BB$, by $\ng$ its Lie algebra, and by $\DS \GB \subset \GB$ the derived subgroup. We denote by $\BB^+$ the Borel subgroup of $\GB$ which is opposite to $\BB$ with respect to $\TB$, by $\UB^+$ its unipotent radical, and by $\bg^+$ and $\ng^+$ their respective Lie algebras.

We let $\Phi$ be the root system of $(\GB,\TB)$, and $W$ be its Weyl group. For $\alpha \in \Phi$, we denote by $\gg_\alpha \subset \gg$ the corresponding root subspace.

We denote by $\Phi^+ \subset \Phi$ the system of positive roots consisting of the $\TB$-weights in $\ng^+$, and by $\Delta$ the corresponding basis of $\Phi$. We also denote by ${\check \Phi} \subset X_*(\TB)$ the coroots of $\Phi$, and by ${\check \Delta} \subset {\check \Phi}^+ \subset {\check \Phi}$ the coroots corresponding to $\Delta$ and $\Phi^+$ respectively. We denote by $\ZM \Phi \subset X^*(\TB)$, resp.~$\ZM {\check \Phi} \subset X_*(\TB)$, the lattice generated by the roots, resp.~coroots. For $\alpha \in \Phi$, resp.~${\check \alpha} \in {\check \Phi}$, we denote by $d\alpha$, resp.~$d{\check \alpha}$, the differential of $\alpha$, resp.~${\check \alpha}$, considered as an element in $\tg^*$, resp.~$\tg$.

The results of Section~\ref{sec:fields} will be proved under one of the following conditions:
\begin{itemize}
\item[(C1)]
for all $\alpha \in \Phi$, $d\alpha \neq 0$;
\item[(C2)]
$\ell$ is good for $\GB$, and $X_*(\TB)/\ZM {\check \Phi}$ has no $\ell$-torsion;
\item[(C3)]
$\ell$ is good for $\GB$, and neither $X_*(\TB)/\ZM {\check \Phi}$ nor $X^*(\TB)/\ZM \Phi$ has $\ell$-torsion;
\item[(C4)]
$\ell$ is good for $\GB$, $X_*(\TB)/\ZM {\check \Phi}$ has no $\ell$-torsion, and there exists a $\GB$-equi\-variant isomorphism $\gg \simto \gg^*$.
\end{itemize}

We claim that
\[
{\rm (C4)} \Rightarrow {\rm (C3)} \Rightarrow {\rm (C2)} \qquad \text{and} \qquad {\rm (C3)} \Rightarrow{\rm (C1)}.
\]
Indeed, the condition that $X^*(\TB)/\ZM \Phi$ has no $\ell$-torsion means that the vectors $d\alpha$ ($\alpha \in \Delta$) are linearly independent in $\tg^* \cong \FM \otimes_{\ZM} X^*(\TB)$. This implies in particular that they are non zero. Since any root is $W$-conjugate to a simple root, this justifies the implication ${\rm (C3)} \Rightarrow{\rm (C1)}$.

The implication ${\rm (C3)} \Rightarrow {\rm (C2)}$ is obvious. Now, if $\kappa \colon \gg \simto \gg^*$ is a $\GB$-equivariant isomorphism, restricting to $\TB$-fixed points we obtain an isomorphism $\tg \simto \tg^*$. It is not difficult to deduce from the $\GB$-equivariance of $\kappa$ that this isomorphism must send each differential of a simple root to a non-zero multiple of the differential of the corresponding coroot. Hence the former are linearly independent iff the latter are linearly independent, which justifies the implication ${\rm (C4)} \Rightarrow {\rm (C3)}$.

Let us note also that the condition that $X_*(\TB)/\ZM {\check \Phi}$ has no $\ell$-torsion is automatic if $\DS \GB$ is simply connected. Indeed,
%
%
let $\TB'$ be the maximal torus of $\DS \GB$ contained in $\TB$. Then, as explained in~\cite[\S II.1.18]{jantzen-gps}, we have
\[
X_*(\TB') = X_*(\TB) \cap \bigl( \sum_{{\check \alpha} \in {\check \Phi}} \QM \cdot {\check \alpha} \bigr).
\]
In particular, $X_*(\TB')/\ZM {\check \Phi}$ is the torsion submodule of $X_*(\TB)/\ZM {\check \Phi}$. 
If $\DS \GB$ is simply connected then $X_*(\TB')/\ZM {\check \Phi}=0$, hence $X_*(\TB)/\ZM {\check \Phi}$ is a free $\ZM$-module.
This remark shows that condition (C4) holds (hence also the other conditions) provided $\GB$ satisfies Jantzen's ``standard hypotheses'', see e.g.~\cite[\S 6.4]{jantzen-survey}.


\begin{remark}
\begin{enumerate}
\item
Condition (C1) is discussed in~\cite[\S 13.3]{jantzen}. The only cases when this condition might not be satisfied occur when $\ell=2$ and $\GB$ has a component of type $C_n$ ($n \geq 1$). In particular, it is satisfied if $\ell$ is good for $\GB$ except possibly when $\ell=2$ and $\GB$ has a component of type $A_1$.
\item
Primes satisfying (C3) are called ``pretty good'' in~\cite{herpel}. See also~\cite[Theorem~5.2]{herpel} for the relation between this condition and other variants of the ``standard hypotheses.'' In the case $\GB$ is semisimple, this condition is equivalent to $\ell$ being very good, and (C4) is equivalent to (C3), see the proof of Lemma~\ref{lem:bilinear-form} below.
\end{enumerate}
\end{remark}

\subsection{Regular elements and the adjoint quotient}
\label{ss:adjoint-quotient}

We continue with the notation of~\S\ref{ss:notation}. For any closed subgroup $\HB \subset \GB$ and any $x \in \gg$, we denote by $\HB_x$ the scheme-theoretic centralizer of $x$ in $\HB$, i.e.~the closed subgroup of $\HB$ defined by the fiber product $x \times_{\gg} \HB$, where the morphism $\HB \to \gg$ sends $h$ to $h \cdot x$. If $\HB$ is reduced, we also denote by $C_\HB(x)$ the reduced part of $\HB_x$, i.e.~the reduced subgroup in $\HB$ whose closed points are $\{g \in \HB \mid g \cdot x = x\}$. If $\hg$ is the Lie algebra of $\HB$, we set $\hg_x:=\{y \in \hg \mid [x,y]=0\}$.

Recall that for $x \in \GB$ we have $\dim(\GB_x) \geq r$. (This follows, by standard arguments, from the fact that any element in $\gg$ is contained in the Lie algebra of a Borel subgroup, see~\cite[Proposition~14.25]{borel}, and that $\BB$ acts trivially on $\bg/\ng$.)
An element $x \in \gg$ is called \emph{regular} if $\dim(\GB_x)=r$. We denote by $\gg_{\reg} \subset \gg$ the subset consisting of regular elements; it is open (see e.g.~\cite[Proposition~1.4]{humphreys}) and non empty (see Lemma~\ref{lem:e-regular} below). We will also consider the subset $\gg_\rs \subset \gg_\reg$ consisting of regular \emph{semi-simple} elements. This subset is open, and non-empty if (and only if) (C1) holds, see~\cite[\S 13.3]{jantzen}. For any subset $\kg \subset \gg$, we set $\kg_\reg:=\kg \cap \gg_\reg$, $\kg_\rs:=\kg \cap \gg_\rs$. Then by~\cite[\S 13.3]{jantzen}, if (C1) holds we have
\begin{equation}
\label{eqn:trs}
\tg_\rs = \{x \in \tg \mid \forall \alpha \in \Phi, \, d\alpha(x) \neq 0\}.
\end{equation}

We will denote by
\[
\chi \colon \gg \to \gg/\GB
\]
the quotient morphism, and by $\chi_{\reg} \colon \ggr \to \gg/\GB$ its restriction to regular elements. (Here, $\gg/\GB:=\Spec(\OC(\gg)^\GB)$.) In Section~\ref{sec:fields} we will need a few well-known results regarding this morphism.

\begin{lem}
\label{lem:chi-conjugate}
If $x,y \in \gg_\reg$ and $\chi(x)=\chi(y)$, then $x$ and $y$ are $\GB$-conjugate.
\end{lem}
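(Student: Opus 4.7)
The approach is to reduce to two classical results via the Jordan decomposition: (a) two semisimple elements of $\gg$ with the same image under $\chi$ are $\GB$-conjugate, as a consequence of the Chevalley restriction isomorphism $\OC(\gg)^\GB \simto \OC(\tg)^W$; and (b) all regular nilpotent elements of a connected reductive Lie algebra form a single orbit under the identity component of the corresponding group (Springer's theorem~\cite{springer}, already cited in the introduction for positive characteristic).

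First I would write the Jordan decompositions $x = x_s + x_n$ and $y = y_s + y_n$, and check that $\chi(x) = \chi(x_s)$. For this, set $\LB^\circ := C_\GB(x_s)^\circ$, which under the hypotheses of \S\ref{ss:notation} is a reductive (Levi) subgroup of $\GB$ whose Lie algebra is $\gg_{x_s}$; then $x_n$ is a nilpotent element of $\Lie(\LB^\circ)$, and one may choose a cocharacter $\lambda \colon \Gm \to \LB^\circ$ which contracts $x_n$ to~$0$. Since $\lambda$ commutes with $x_s$, we obtain $\lim_{t \to 0} \lambda(t) \cdot x = x_s$, so $\chi(x) = \chi(\lambda(t) \cdot x)$ for all $t$ passes to $\chi(x_s)$ in the limit. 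The same argument gives $\chi(y) = \chi(y_s)$. Conjugating, we may assume $x_s, y_s \in \tg$; then~(a), combined with Chevalley restriction, forces $x_s$ and $y_s$ to be $W$-conjugate, hence $\GB$-conjugate. After replacing $y$ by a suitable conjugate, we arrange $s := x_s = y_s$.

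Now $x_n, y_n \in \gg_s = \Lie(\LB^\circ)$ are nilpotent. The equality $\GB_x = (\LB^\circ)_{x_n}$ together with the regularity of $x$ (i.e.\ $\dim \GB_x = r = \rank(\LB^\circ)$) shows that $x_n$ is a regular nilpotent element of $\gg_s$, and likewise for $y_n$. Step~(b) applied to $\LB^\circ$ provides $g \in \LB^\circ$ with $g \cdot x_n = y_n$; since $g$ also fixes $s$, one obtains $g \cdot x = y$.

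The main obstacle will be verifying the structural prerequisites in the generality of \S\ref{ss:notation}: existence and uniqueness of the Jordan decomposition for elements of $\gg$, the identification $C_\GB(x_s)^\circ$ as a reductive Levi subgroup with Lie algebra exactly $\gg_{x_s}$, the Chevalley restriction theorem, and the uniqueness of the regular nilpotent orbit. All of these hold under the good-prime hypotheses encoded by conditions~(C2)--(C4); the proof will accordingly invoke one or several of these conditions and appeal to \cite{springer} and standard references such as \cite{jantzen, jantzen-survey}.
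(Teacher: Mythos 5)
Your argument is correct and essentially reproduces the alternative proof the paper sketches alongside its citation of~\cite[Proposition~7.13]{jantzen}: via Jordan decomposition, reduce to conjugacy of semisimple parts with the same image under $\chi$, and to uniqueness of the regular nilpotent orbit in the reductive group $C_\GB(x_s)^\circ$ (equivalently, irreducibility of its nilpotent cone). One small caveat: you obtain conjugacy of the semisimple parts via the Chevalley restriction isomorphism, which requires condition~(C1), whereas the paper's sketch uses the unconditional GIT fact that $\chi$ separates closed (hence semisimple) $\GB$-orbits; since Lemma~\ref{lem:chi-conjugate} is stated without hypotheses, the latter is the cleaner ingredient for that step.
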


\begin{proof}
This follows from~\cite[Proposition~7.13]{jantzen}. Alternatively, one can prove this result directly using the facts that $\chi$ separates semi-simple $\GB$-orbits (since they are closed, see~\cite[Proposition~11.8]{borel}), that $C_\GB(x)^\circ$ is reductive if $x \in \gg$ is semi-simple (see~\cite[Proposition~13.19]{borel}) and that the nilpotent cone of a connected reductive group is irreducible (see~\cite[Lemma~6.2]{jantzen}).
\end{proof}

\begin{prop}
\label{prop:Chevalley-thm}
Assume that {\rm (C1)} holds. Then
the inclusion $\tg \subset \gg$ induces an algebra isomorphism $\OC(\gg)^\GB \simto \OC(\tg)^W$; in other words an isomorphism of schemes $\tg/W \simto \gg/\GB$.
\end{prop}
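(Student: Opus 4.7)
My plan is to verify that the restriction map $\OC(\gg)^\GB \to \OC(\tg)$ has image in $\OC(\tg)^W$, establish injectivity through a density argument, and then confront surjectivity, which I expect to be the main difficulty.

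\emph{Well-definedness and injectivity.} The restriction of any $\GB$-invariant function on $\gg$ to $\tg$ is automatically $W$-invariant, because $W \cong N_\GB(\TB)/\TB$ acts on $\tg$ via (restriction of) the adjoint action. For injectivity, I would exploit the density of $\GB \cdot \tg$ in $\gg$. Under (C1), the formula~\eqref{eqn:trs} shows that $\tg_\rs = \{x \in \tg \mid d\alpha(x) \neq 0 \text{ for all } \alpha \in \Phi\}$ is a non-empty open subset of $\tg$ (non-empty since the $d\alpha$ are non-zero linear forms on $\tg$ and $\FM$ is infinite). Every regular semisimple element of $\gg$ has its centralizer a maximal torus, which is $\GB$-conjugate to $\TB$; so $\gg_\rs = \GB \cdot \tg_\rs$ is open and non-empty, hence dense in the irreducible scheme $\gg$. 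Any $f \in \OC(\gg)^\GB$ vanishing on $\tg$ vanishes on $\GB \cdot \tg \supseteq \gg_\rs$, hence everywhere.

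\emph{Surjectivity.} Under (C1), each reflection $s_\alpha$ acts non-trivially on $\tg$ via $y \mapsto y - d\alpha(y) \cdot d\check{\alpha}$, so $W$ acts on $\tg$ as a true (pseudo-)reflection group, and by Chevalley--Shephard--Todd $\OC(\tg)^W$ is a polynomial $\FM$-algebra on $r$ homogeneous generators. To hit these generators by restrictions of $\GB$-invariants, I would consider coefficients of characteristic polynomials $\det(T \cdot \id_{V_\lambda} - x|_{V_\lambda})$ for $x \in \gg$ acting on a sufficient family of rational $\GB$-modules $V_\lambda$. Each such coefficient is a $\GB$-invariant polynomial on $\gg$, and its restriction to $\tg$ is (up to sign) an elementary symmetric function in the $\TB$-weights of $V_\lambda$. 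Choosing the weights $\lambda$ so that they span a subgroup of $X^*(\TB)$ containing $\ZM\Phi$ with finite index should produce enough restricted functions to generate $\OC(\tg)^W$ as an $\FM$-algebra, thereby forcing surjectivity.

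The main obstacle is precisely this last step: producing enough characteristic-free $\GB$-invariants without appealing to the Killing form, the adjoint trace pairing, or averaging, all of which can degenerate since $\ell$ is only assumed to satisfy~(C1) (weaker than ``good''). In practice, I expect the cleanest treatment in this generality is to reduce to, or directly cite, the positive-characteristic version of the Chevalley restriction theorem already in the literature (Jantzen~\cite[\S 7]{jantzen}), whose proof proceeds along essentially the above lines.
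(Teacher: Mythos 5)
Your injectivity argument via density of $\GB \cdot \tg_\rs$ in $\gg$ is correct and matches the spirit of the Springer--Steinberg argument. Your conclusion (punt to the literature) is exactly what the paper does: the paper's entire proof is a citation to~\cite[\S 7.12]{jantzen}, which in turn observes that the proof of~\cite[\S II.3.17']{springer-steinberg} goes through under (C1).

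However, your surjectivity sketch contains a genuine gap, and your characterization of the cited proof as ``along essentially the above lines'' is not accurate. You assert that under (C1) each $s_\alpha$ acts on $\tg$ by a nontrivial pseudo-reflection $y \mapsto y - d\alpha(y)\,d\check\alpha$. But (C1) only guarantees $d\alpha \neq 0$; it says nothing about $d\check\alpha \in \tg$. For $\GB = \mathrm{PGL}_2$ and $\ell = 2$, the coroot $\check\alpha$ is divisible by $2$ in $X_*(\TB)$, so $d\check\alpha = 0$ and $W$ acts \emph{trivially} on $\tg$ even though (C1) holds. More seriously, even when $W$ does act by pseudo-reflections, Chevalley--Shephard--Todd does not yield polynomial invariants over a field whose characteristic divides $|W|$ without additional hypotheses, so the claim that $\OC(\tg)^W$ is a polynomial ring on $r$ generators is unjustified under (C1) alone. (The polynomiality of $\OC(\tg)^W$ is what the paper's appeals to Demazure~\cite{demazure} are for, and those need the stronger conditions built into (C2)--(C3), not just (C1).) The restriction isomorphism $\OC(\gg)^\GB \simto \OC(\tg)^W$ under (C1) does \emph{not} proceed by first establishing that the target is a polynomial ring; the Springer--Steinberg argument establishes the bijection directly, without Chevalley--Shephard--Todd, and works even when neither side is known a priori to be polynomial. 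Your characteristic-polynomial idea is thematically related to one ingredient of that argument, but the CST scaffolding you built around it does not hold under the stated hypothesis.
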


\begin{proof}
In~\cite[\S 7.12]{jantzen} it is explained that the proof given in~\cite[\S II.3.17']{springer-steinberg} applies if (C1) is satisfied.
\end{proof}

When (C1) is satisfied, we will usually use Proposition~\ref{prop:Chevalley-thm} to consider $\chi$ as a morphism from $\gg$ to $\tg/W$.

\begin{lem}
\label{lem:centralizer-ss-reg}
Assume that {\rm (C1)} holds, and that $X_*(\TB)/\ZM {\check \Phi}$ has no $\ell$-torsion. Then
\begin{enumerate}
\item
the action of $W$ on $\tg_\rs$ is free (in the sense that the stabilizer of any $x \in \tg_\rs$ is trivial);
\label{it:W-trs-free}
\item
for any $x \in \tg_\rs$ we have $\GB_x=\TB$.
\label{it:stabilizer-trs}
\end{enumerate}
\end{lem}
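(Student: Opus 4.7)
The plan is to prove (1) first using the reflection structure of $W$ on $\tg$, and then deduce (2). As a common preliminary, for $x \in \tg_\rs$ the root-space decomposition $\gg = \tg \oplus \bigoplus_{\alpha \in \Phi} \gg_\alpha$ together with the formula $\ad(x)|_{\gg_\alpha} = d\alpha(x) \cdot \mathrm{id}$ (and $d\alpha(x) \neq 0$ by (C1) and \eqref{eqn:trs}) gives $\gg_x = \tg$. The same computation applied to a generic element of $\tg$ shows under (C1) alone that $\gg^\tg = \tg$, whence the identity component $C_\GB(\tg)^\circ$ equals $\TB$.

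For (1), I first rule out reflections in $W_x$. For $\alpha \in \Phi$ and $y \in \tg$, the formula $s_\alpha(y) = y - d\alpha(y) \cdot d\check\alpha$ shows that $s_\alpha(x) = x$ would require either $d\alpha(x) = 0$ (impossible since $x \in \tg_\rs$) or $d\check\alpha = 0$. The latter would mean $\check\alpha \in \ell X_*(\TB)$, say $\check\alpha = \ell\mu$; then the image of $\mu$ in $X_*(\TB)/\ZM\check\Phi$ is $\ell$-torsion, hence vanishes by hypothesis, so $\mu \in \ZM\check\Phi$ and $\check\alpha \in \ell \cdot \ZM\check\Phi$, contradicting the primitivity of $\check\alpha$ in the coroot lattice of a reduced root system. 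To pass from reflections to the full stabilizer, I would then invoke Steinberg's theorem (the stabilizer of a point under a finite reflection group is generated by the reflections fixing it) applied to the action of $W$ on $\tg$; this yields $W_x = \{1\}$.

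For (2), the preliminary step gives $\Lie \GB_x = \gg_x = \tg = \Lie \TB$. Combined with $\TB \subseteq \GB_x$ (the adjoint $\TB$-action on $\tg$ is trivial), the inequalities $r = \dim \TB \leq \dim \GB_x \leq \dim \Lie \GB_x = r$ force $\dim \GB_x = r$, so $\GB_x$ is smooth of dimension $r$ with identity component $\TB$. Let $g \in \GB_x(\FM)$; then $\Ad(g)$ preserves $\gg_x = \tg$, so $g\TB g^{-1}$ is connected and centralizes $\tg$. Hence $g\TB g^{-1} \subseteq C_\GB(\tg)^\circ = \TB$, and by equality of dimensions $g\TB g^{-1} = \TB$, so $g \in N_\GB(\TB)$. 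The induced element $w \in W$ satisfies $w \cdot x = x$, and (1) gives $w = 1$, hence $g \in \TB$. Combined with the smoothness established above, this forces $\GB_x = \TB$ as closed subschemes of $\GB$.

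The principal obstacle is justifying Steinberg's fixed-point theorem for the action of $W$ on $\tg$ in positive characteristic. This result is classical in characteristic zero; its transfer to our setting should proceed via the fact that the $W$-representation on $\tg$ is obtained by base change from the $\ZM$-module $X_*(\TB)$ with its natural $W$-action, with the torsion-freeness hypothesis on $X_*(\TB)/\ZM\check\Phi$ at $\ell$ ensuring that each reflection still acts with a codimension-one fixed subspace — so that the reflection structure over $\FM$ is a faithful specialization of the characteristic-zero structure over $\QM \otimes X_*(\TB)$, to which the classical theorem applies.
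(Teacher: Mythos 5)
Your overall strategy matches the paper's, and several pieces are sound. The computation $\gg_x = \tg$ for $x \in \tg_\rs$, the argument that no reflection $s_\alpha$ fixes $x$ (your primitivity argument for $\check\alpha$ in $\ZM\check\Phi$ is correct, since every coroot is $W$-conjugate to a simple coroot, which is part of a $\ZM$-basis of $\ZM\check\Phi$), and the whole of part (2) are all fine. In fact your part (2) gives a self-contained argument for the inclusion $\GB_x \subset N_\GB(\TB)$ via the observation that $\Ad(g)$ preserves $\gg_x = \tg$ and that $C_\GB(\tg)^\circ = \TB$, whereas the paper simply quotes~\cite[Lemma~13.3]{jantzen}; that is a reasonable and slightly more elementary alternative.

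The genuine gap is the step in part (1) from ``no reflection fixes $x$'' to ``the full stabilizer $W_x$ is trivial.'' You want Steinberg's fixed-point theorem for the $W$-action on $\tg$, correctly identify that it requires justification in characteristic $\ell$, but offer only a heuristic (``the reflection structure over $\FM$ is a faithful specialization of the characteristic-zero structure over $\QM\otimes X_*(\TB)$''). As stated this is not a proof: an element of $X_*(\TB)\otimes\FM$ fixed by $w \in W$ need not lift to a $w$-fixed vector in $X_*(\TB)\otimes\QM$, so there is nothing to which the classical theorem directly applies, and the fact that each reflection still acts with codimension-one fixed locus does not by itself propagate the Steinberg property mod $\ell$. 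The correct reduction, which the paper carries out following~\cite[Lemma on p.~32]{humphreys}, is the following: lift $x$ to $\check\mu \in X_*(\TB)$, observe that $w\check\mu - \check\mu \in \ZM\check\Phi \cap \ell X_*(\TB)$, use the no-$\ell$-torsion hypothesis to conclude $w\check\mu - \check\mu \in \ell\ZM\check\Phi$, and then note that a translate of $w$ in the \emph{affine} Weyl group $W \ltimes \ell\ZM\check\Phi$ fixes the actual point $\check\mu$ in $X_*(\TB)\otimes\RM$. Bourbaki's theorem for affine reflection groups then expresses this element as a product of affine reflections fixing $\check\mu$, and projecting to $W$ gives the required generation by finite reflections. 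The passage to the affine Weyl group is the essential idea missing from your sketch; without it the reduction to characteristic zero does not go through.
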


\begin{proof}
$(1)$ Our assumptions imply that the stabilizer in $\tg$ of the reflection $s_\alpha$ associated with $\alpha \in \Phi$ is the hyperplane $\Ker(d\alpha)$. Using~\eqref{eqn:trs} we deduce that if $x \in \tg_\rs$, then $x$ is not fixed by any $s_\alpha$. Hence to conclude it is enough to prove that if $w \in W$ and $x \in \tg$ satisfy $w \cdot x = x$, then $w$ is a product of reflections stabilizing $x$. 

This property follows from a well-known argument (see e.g.~\cite[Lemma on p.~32]{humphreys}). In fact, let us first assume that $\ell >0$. Then we have natural isomorphisms
\[
\tg \cong X_*(\TB) \otimes_\ZM \FM \cong (X_*(\TB) / \ell \cdot X_*(\TB)) \otimes_{\FM_\ell} \FM.
\]
Hence it is enough to prove a similar property for the $W$-action on $X_*(\TB) / \ell \cdot X_*(\TB)$. Now let $w \in W$ and ${\check \mu} \in X_*(\TB)$ be such that $w \cdot {\check \mu} = {\check \mu} \mod \ell \cdot X_*(\TB)$. Then $w \cdot {\check \mu} - {\check \mu} \in \ZM {\check \Phi} \cap \ell \cdot X_*(\TB)$. Since $X_*(\TB)/\ZM {\check \Phi}$ has no $\ell$-torsion, this implies that $w \cdot {\check \mu} - {\check \mu} \in \ell \cdot \ZM {\check \Phi}$. Let us write $w \cdot {\check \mu} - {\check \mu} = \ell {\check \lambda}$. Then the image of ${\check \mu}$ in $X_*(\TB) \otimes_\ZM \RM$ is fixed by the element $(\ell {\check \lambda}) \cdot w$ of the group $W \ltimes \ell \ZM {\check \Phi}$, which is the affine Weyl group of~\cite[\S II.6.1]{jantzen-gps} (for the reductive group which is Langlands dual to $\GB$). By a well-known result on groups generated by reflections in real affine spaces (see~\cite[V, \S 3, Proposition~1]{bourbaki}), $(\ell {\check \lambda}) \cdot w$ is a product of reflections in $W \ltimes \ell \ZM {\check \Phi}$ stabilizing ${\check \mu}$. Projecting on $W$, we deduce that indeed $w$ is a product of reflections in $W$ which stabilize ${\check \mu} \mod \ell \cdot X_*(\TB)$, which finishes the proof in the case $\ell>0$.

The case $\ell=0$ is similar and simpler; details are left to the reader.

$(2)$
First we show that $\GB_x$ is smooth.
In fact, since $x$ is regular we have $\dim(\GB_x)
=r$. On the other hand, it is easily checked using the triangular decomposition of $\gg$ that $\gg_x=\tg$. We deduce that the inclusions $\Lie(\TB) \subset \Lie(\GB_x) \subset \gg_x$ must be equalities. We have proved that $\dim(\GB_x)=\dim(\Lie(\GB_x))$, which implies that $\GB_x$ is smooth (see~\cite[Corollary on p.~94]{waterhouse}).

It follows from~\cite[Lemma~13.3]{jantzen} that $\GB_x=C_\GB(x) \subset N_\GB(\TB)$. Since no element of $W=N_\GB(\TB)/\TB$ stabilizes $x$ by (1), we deduce that $\GB_x=\TB$.
\end{proof}

We will finally use the following easy observation.

\begin{lem}
\label{lem:kappa-centralizer}
If $\kappa \colon \gg \simto \gg^*$ is a $\GB$-equivariant isomorphism, then for any $x \in \gg$ the image $\kappa(\gg_x)$ of $\gg_x$ is the subspace $(\gg/[x,\gg])^* \subset \gg^*$.
\end{lem}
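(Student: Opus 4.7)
The plan is to translate the defining condition $[x,y]=0$ of the centralizer into a vanishing condition on $\kappa(y)$, using the $\GB$-equivariance of $\kappa$ to move between the adjoint representation and its dual.

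First, I would differentiate the $\GB$-equivariance to conclude that $\kappa$ intertwines the $\gg$-actions: for the adjoint action on $\gg$ and the coadjoint action on $\gg^*$ (both obtained as differentials of the corresponding $\GB$-actions at the identity), one has
\[
\kappa([z,y])(w) = -\kappa(y)([z,w])
\]
for all $y,z,w \in \gg$. This is a general fact about $\GB$-equivariant linear maps between rational $\GB$-modules, and requires nothing beyond $\GB$ being smooth.

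Next, I would specialize to $z=x$. The identity above then reads $\kappa([x,y])(w) = -\kappa(y)([x,w])$ for every $w \in \gg$. Since $\kappa$ is an isomorphism, the following chain of equivalences is immediate:
\[
y \in \gg_x \iff [x,y]=0 \iff \kappa([x,y])=0 \iff \forall w \in \gg,\ \kappa(y)([x,w])=0,
\]
and the last condition is precisely the statement that $\kappa(y)$ annihilates the subspace $[x,\gg] \subset \gg$, i.e.\ lies in $(\gg/[x,\gg])^* \subset \gg^*$. This gives both inclusions at once, so $\kappa(\gg_x) = (\gg/[x,\gg])^*$.

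There is essentially no obstacle here; the only point worth double-checking is the derivation of $\gg$-equivariance from $\GB$-equivariance in positive characteristic, but since $\GB$ is smooth and acts rationally on the finite-dimensional vector spaces $\gg$ and $\gg^*$, this is a standard consequence of differentiating at the identity. (Should one prefer to avoid any reference to differentials, a dimension count provides a shortcut for the reverse inclusion: the inclusion $\kappa(\gg_x) \subseteq (\gg/[x,\gg])^*$ combined with $\dim \gg_x = \dim \gg - \dim [x,\gg] = \dim (\gg/[x,\gg])^*$ forces equality.)
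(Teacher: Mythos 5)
Your proof is correct and follows essentially the same approach as the paper's: the forward inclusion $\kappa(\gg_x)\subset(\gg/[x,\gg])^*$ via $\GB$-equivariance, with the reverse direction obtainable either by a dimension count (exactly as the paper does, using $\dim\gg_x=\dim\gg-\dim[x,\gg]$ from rank-nullity applied to $\ad_x$) or, as you observe, more directly from the injectivity of $\kappa$ in the chain of equivalences. The only small addition you make is spelling out the differentiated form of equivariance $\kappa([z,y])(w)=-\kappa(y)([z,w])$, which is indeed the standard fact being invoked.
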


\begin{proof}
Using the $\GB$-equivariance of $\kappa$ we observe that $\kappa(\gg_x) \subset (\gg/[x,\gg])^*$. Then we conclude by a dimension argument.
\end{proof}

\section{The case of fields}
\label{sec:fields}

In this section we use the notation of \S\S\ref{ss:notation}--\ref{ss:adjoint-quotient}.

\subsection{The principal nilpotent element and the Kostant section}
\label{ss:principal-nilpotent}

For any $\alpha \in \Delta$ we choose a non-zero root vector $e_\alpha \in \gg_\alpha$, and set
\[
e:=\sum_{\alpha \in \Delta} e_\alpha.
\]

\begin{lem}
\label{lem:e-regular}
The element $e \in \gg$ is regular.
\end{lem}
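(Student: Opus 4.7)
The overall strategy is to combine dimension inequalities to force $\dim \GB_e = r$. From the discussion in \S\ref{ss:adjoint-quotient} we already have $\dim \GB_e \geq r$, and for any affine group scheme $H$ of finite type over $\FM$ we have $\dim H \leq \dim_\FM \Lie(H)$. Since $\Lie(\GB_e) = \gg_e$ by definition of the scheme-theoretic centralizer, it suffices to prove the reverse inequality $\dim \gg_e \leq r$, as this will force equality throughout.

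To bound $\dim \gg_e$, I plan to exploit the height grading $\gg = \bigoplus_{k \in \ZM} \gg^{(k)}$, where $\gg^{(0)} = \tg$ and $\gg^{(k)} = \bigoplus_{\mathrm{ht}(\beta) = k} \gg_\beta$ for $k \neq 0$. Since $e \in \gg^{(1)}$, the map $\ad(e)$ is graded of degree $+1$, and $\gg_e$ inherits a grading. The computation is most explicit in degree $-1$: on $\gg^{(-1)} = \bigoplus_{\alpha \in \Delta} \gg_{-\alpha}$, choosing root vectors $f_\alpha \in \gg_{-\alpha}$, the map $\ad(e)$ sends $f_\alpha$ to a nonzero multiple of the coroot $d{\check \alpha} \in \tg$ (since $[e_\beta, f_\alpha] = 0$ whenever $\alpha \neq \beta$ are both simple). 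Extending this analysis, I would verify that $\ad(e) \colon \gg^{(k)} \to \gg^{(k+1)}$ is injective for every $k < 0$, so that $\gg_e$ is concentrated in nonnegative degrees; combined with the fact that the nonnegative-degree part of the kernel has total dimension exactly $r$, this will give $\dim \gg_e = r$.

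The main obstacle will be the positive-characteristic case, where the classical argument via a principal $\mathfrak{sl}_2$-triple and complete reducibility is no longer available. For this I would appeal to Springer's detailed analysis of $\ad(e)$ in~\cite{springer}, which the introduction has already signalled as the key technical input. Under the paper's standing hypotheses (and in particular (C1), which guarantees the nonvanishing of the differentials $d\alpha$ that control the structure constants appearing above), Springer's computations ensure that the boundary maps in the height grading have the expected ranks, and the dimension count closes to give $\dim \gg_e \leq r$ as required.
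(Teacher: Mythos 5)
Your approach cannot work in the generality in which the lemma is stated, because it tries to prove a strictly stronger (and sometimes false) statement. The chain of inequalities $r \leq \dim \GB_e \leq \dim_\FM \Lie(\GB_e) = \dim_\FM \gg_e$ is correct, but closing it by showing $\dim \gg_e \leq r$ amounts to proving that the scheme-theoretic centralizer $\GB_e$ is \emph{smooth}, and this is false in precisely the cases the lemma is meant to cover. Lemma~\ref{lem:e-regular} carries \emph{no} hypotheses --- it is stated before (C1)--(C4) are ever invoked --- and there are examples where $\dim \gg_e > r$ even though $e$ is regular. Concretely, take $\GB = \mathrm{SL}_2$ over a field of characteristic $2$ and $e = e_\alpha$: here $\gg_e = \bg$ is $2$-dimensional (because $\ad(e)$ kills the whole torus, since $d\alpha = 0$ in characteristic $2$), while $r = 1$; yet $e$ is regular because the $\GB$-orbit is still the full $2$-dimensional regular nilpotent orbit and $\GB_e \cong \mu_2 \times \Ga$ has dimension $1$. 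Your appeal to (C1) is exactly what rules this example out, but (C1) is not available at this point in the paper.

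The paper sidesteps the smoothness question entirely and argues at the level of orbit/centralizer dimensions of the group. It first establishes that regular nilpotent elements exist: the nilpotent cone has dimension $\dim(\gg) - r$ (Jantzen) and is a finite union of $\GB$-orbits, so some orbit has a centralizer of dimension exactly $r$. It then cites Springer's Lemma~5.8, which identifies the regular nilpotent class as the one containing any $\sum_{\alpha \in \Delta} e_\alpha$ with all $e_\alpha \neq 0$. Nothing in that argument ever needs $\dim \gg_e = r$, which is why it works unconditionally. If you want to salvage the Lie-algebra-kernel route, you must restrict to a setting where (C1) (or stronger) is assumed --- but then you are proving a different, less general statement than the one at hand.
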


\begin{proof}
First we observe that regular nilpotent elements exist: this follows from the fact that the nilpotent cone in $\gg$ has dimension $\dim(\gg)-r$ (see~\cite[Theorem~6.4]{jantzen}) and consists of finitely many $\GB$-orbits (see~\cite[\S 2.8]{jantzen} and references therein). Then the claim follows from~\cite[Lemma~5.8]{springer}. (In \emph{loc}.~\emph{cit}.~it is assumed that $\GB$ is semisimple, but this assumption does not play any role in the proof of this lemma.)
\end{proof}

\begin{lem}
\label{lem:springer}
Assume that {\rm (C2)} holds. Then the morphism
\[
\ng \to \bg, \quad x \mapsto [e,x]
\]
is injective.
\end{lem}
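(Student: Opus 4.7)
My plan is to reduce the injectivity of $\ad(e) \colon \ng \to \bg$ to the statement that $\gg_e \cap \ng = 0$, and to deduce the latter from Springer's detailed analysis of the principal nilpotent in \cite{springer}. Since the map $\ad(e) \colon \ng \to \bg$ is simply the restriction to $\ng$ of $\ad(e) \colon \gg \to \gg$ (whose kernel is by definition $\gg_e$), its kernel is $\gg_e \cap \ng$, so the whole content of the lemma is that $\gg_e$ has no component in $\ng$.

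To obtain this, I would work with the $\ZM$-grading on $\gg$ associated with the cocharacter $\sum_{\alpha \in \Phi^+} {\check \alpha} \in \ZM {\check \Phi} \subset X_*(\TB)$ (which is well-defined since the sum lies in the coroot lattice, irrespective of any torsion assumption). In this grading $\tg$ sits in degree $0$, $\ng^+ = \bigoplus_{k > 0} \gg(k)$, $\ng = \bigoplus_{k < 0} \gg(k)$, and $e$ is homogeneous of degree $2$, because $\langle \alpha, \sum_{\beta \in \Phi^+} {\check \beta} \rangle = 2$ for any simple root $\alpha$. Springer's analysis shows (under condition (C2)) that $\gg_e$ is a graded subspace of $\gg$ sitting in strictly positive degrees: he in fact produces an explicit homogeneous basis of $\gg_e$ with degrees $2m_1, \dots, 2m_r$, where the $m_i$ are the exponents of the Weyl group. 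Therefore $\gg_e \subset \ng^+$, and since $\ng^+ \cap \ng = 0$ by the triangular decomposition $\gg = \ng \oplus \tg \oplus \ng^+$, this yields $\gg_e \cap \ng = 0$.

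The main obstacle is matching Springer's hypotheses in \cite{springer} (stated for semisimple groups, sometimes under assumptions involving isogeny type) with the weaker condition (C2) used here. The reduction to the derived subgroup $\DS \GB$ is harmless, since both $e$ and $\gg_e$ are determined by $[\gg, \gg]$ modulo the center of $\gg$ (which lies in $\tg$ and plays no role in the intersection with $\ng$). The torsion-freeness of $X_*(\TB)/\ZM {\check \Phi}$ appearing in (C2) is exactly what is needed to ensure that Springer's construction of the grading together with his $\sl_2$-style computations on graded pieces go through in good characteristic without obstruction; this is where checking the precise applicability of Springer's results will require the most care.
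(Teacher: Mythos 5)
Your strategy — reduce the lemma to $\gg_e \cap \ng = 0$, exploit the grading induced by ${\check \lambda}_\circ = \sum_{{\check \alpha} \in {\check \Phi}^+} {\check \alpha}$, and invoke Springer's arithmetic analysis of $\ad(e)$ on graded pieces — is essentially the paper's, and you correctly sense that the torsion condition in (C2) is the decisive input. However, the intermediate claim your proof rests on, namely that under (C2) the centralizer $\gg_e$ sits in \emph{strictly positive} degrees, with an $r$-dimensional homogeneous basis of degrees $2m_1,\dots,2m_r$, is genuinely stronger than the lemma, and it is \emph{false} under (C2) alone.

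Take $\GB = \mathrm{SL}_2$ and $\ell = 2$. Then $\ell$ is good (type $A$) and $X_*(\TB) = \ZM{\check \Phi}$ is torsion-free, so (C2) holds; yet $\gg_e = \FM\cdot I \oplus \FM\cdot e \subset \mathfrak{sl}_2$ has dimension $2 > r = 1$, and the identity matrix $I$ lies in $\tg$, i.e.~in degree $0$. The same phenomenon occurs for $\mathrm{SL}_n$ whenever $\ell \mid n$ — precisely the cases where (C2) holds but (C3) fails. The ``exponents'' description of $\gg_e$ that you quote implicitly requires the surjectivity of the degree-$0$ map $\tg \to \gg^1$, which is governed by the linear-independence condition appearing in (C3) (cf.~Lemma~\ref{lem:direct-sum}), not by (C2). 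Note also that your reduction to $\DS\GB$ does not rescue the argument, since $\mathrm{SL}_2$ is already semisimple and simply connected.

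What you actually need, and what the paper proves, is injectivity of $\ad(e)$ only on the \emph{strictly negative} graded pieces — literally $\gg_e \cap \ng = 0$, with no claim about $\gg_e \cap \tg$. The paper does this degree by degree over $\ZM$, using the maps $t_i \colon \gg_\ZM^i \to \gg_\ZM^{i+1}$: for $i < -1$, Springer's Proposition~2.2 gives injectivity of $t_i$ and Theorem~2.6 shows that $\Coker(t_i)$ has no $p$-torsion for $p$ good, so $\FM \otimes_\ZM t_i$ stays injective; for $i = -1$, one identifies $t_{-1}$ with the inclusion $\ZM{\check \Phi} \hookrightarrow X_*(\TB)$, and $\FM \otimes_\ZM t_{-1}$ is injective precisely because $X_*(\TB)/\ZM{\check \Phi}$ has no $\ell$-torsion — this is where, and only where, the second part of (C2) enters. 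Your argument becomes correct once you replace ``$\gg_e \subset \ng^+$'' by the weaker ``$\gg_e$ meets each strictly negative graded piece trivially'' and verify this degree by degree in the manner above.
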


\begin{proof}
This result is a consequence of the considerations in~\cite[Section~2]{springer}. In fact, let $\GB_\ZM \supset \BB_\ZM \supset \TB_\ZM$ be a split connected reductive group over $\ZM$, a Borel subgroup and a (split) maximal torus such that the base change of $\GB_\ZM$, resp.~$\BB_\ZM$, resp.~$\TB_\ZM$, to $\FM$ is $\GB$, resp.~$\BB$, resp.~$\TB$. We denote by $\tg_\ZM \subset \bg_\ZM \subset \gg_\ZM$ the Lie algebras of $\TB_\ZM \subset \BB_\ZM \subset \GB_\ZM$. Then we have $\gg = \FM \otimes_\ZM \gg_\ZM$. Moreover, since the morphism
\[
\prod_{\alpha \in \Delta} \alpha \colon \TB \to (\FM^\times)^\Delta
\]
is surjective, we can assume that each $e_\alpha$ is the image in $\gg$ of a vector $x_\alpha \in \gg_\ZM$ which forms a $\ZM$-basis of the $\alpha$-weight space of $\gg_\ZM$ (with respect to the action of $\TB_\ZM$).

As in~\cite{springer} we consider the $\ZM$-grading $\gg_\ZM=\bigoplus_{i \in \ZM} \gg_\ZM^i$ induced by the height\footnote{Recall that the height of a positive root $\alpha$ is the number of simple roots occurring in the decomposition of $\alpha$ as a sum of simple roots (counted with multiplicities). The height of a nega\-tive root $\alpha$ is the opposite of the height of $-\alpha$.} of roots, and denote by $t_i \colon \gg_\ZM^i \to \gg_\ZM^{i+1}$ the morphism $y \mapsto [\sum_{\alpha \in \Delta} x_\alpha,y]$. Then the morphism of the lemma is the morphism obtained from
\[
\bigoplus_{i < 0} t_i \colon \bigoplus_{i<0} \gg_\ZM^i \to \bigoplus_{i \leq 0} \gg_\ZM^i
\]
by applying the functor $\FM \otimes_\ZM (-)$.

It is clear that $t_{-1}$ identifies with the inclusion $\ZM {\check \Phi} \hookrightarrow X_*(\TB)$. Hence the induced morphism $\FM \otimes_{\ZM} \gg^{-1}_\ZM \to \FM \otimes_{\ZM} \gg^0_\ZM$ is injective if (C2) holds (more precisely, iff the second condition in (C2) holds).

Now if $i<-1$, then by~\cite[Proposition~2.2]{springer} the morphism $t_i$ is injective, and by~\cite[Theorem~2.6]{springer} its cokernel has no $p$-torsion if $p$ is good. (In~\cite[Section 2]{springer} it is assumed that the root system of $\GB_\ZM$ is simple. However the result we need follows from Springer's results applied to each simple factor in a simply-connected cover of the derived subgroup of $\GB_\ZM$.) Hence $\FM \otimes_\ZM t_i$ is injective, which finishes the proof.
\end{proof}

From now on we will assume that (C2) holds, so that we can apply Lemma~\ref{lem:springer}.
Let us consider the cocharacter ${\check \lambda}_\circ:= \sum_{{\check \alpha} \in {\check \Phi}^+} {\check \alpha} \colon \Gm \to \TB$. This cocharacter defines (via the adjoint action) a $\Gm$-action on $\gg$. 
The vector $e$ is a weight vector for this action, of weight $2$.
Let us choose a $\Gm$-stable complement $\sg$ to $[e,\ng]$ in $\bg$, and set
\[
\SC:=e + \sg, \qquad \Upsilon:=e + \bg.
\]
Then $\Upsilon$ is endowed with a $\Gm$-action defined by $t \cdot x:=t^{-2} {\check \lambda}_\circ(t) \cdot x$. This action contracts $\Upsilon$ to $e$ (as $t \to \infty$) and stabilizes $\SC$.

Let us note that 
\begin{equation}
\label{eqn:Up-reg}
\Upsilon \subset \ggr.
\end{equation}
Indeed,
by Lemma~\ref{lem:e-regular}, $e \in \ggr$. Hence an open neighborhood of $e$ in $\Upsilon$ is included in $\ggr$. Using the contracting $\Gm$-action on $\Upsilon$ defined above, we deduce that the whole of $\Upsilon$ is included in $\ggr$.

\begin{lem}
\label{lem:direct-sum}
Assume that {\rm (C3)} holds. Then we have
\[
\gg = \sg \oplus [e, \gg].
\]
\end{lem}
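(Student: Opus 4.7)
The plan is to establish $\gg = \sg \oplus [e, \gg]$ by proving two facts: (a) $\sg \cap [e, \gg] = 0$, and (b) $\dim \sg + \dim [e, \gg] = \dim \gg$. Neither the construction of $\sg$ nor the relevant analysis of $e$ requires anything beyond (C2); the stronger hypothesis (C3) is only needed in step (b), via the equality $\dim \gg_e = r$.

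For (a), I would use the triangular decomposition $\gg = \ng \oplus \tg \oplus \ng^+$ together with $\gg = \bg \oplus \ng^+$. Given $s = [e,x] \in \sg$, write $x = x^- + x^0 + x^+$ accordingly. The same analysis as in the proof of Lemma~\ref{lem:springer} gives $[e, x^-] \in \bg$, while $[e, x^0], [e, x^+] \in \ng^+$ (since $e \in \ng^+$). Reading off the $\ng^+$-component of $s$ forces $[e, x^0 + x^+] = 0$, whence $s = [e,x^-] \in \sg \cap [e, \ng] = 0$ by the defining property of $\sg$.

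For (b), Lemma~\ref{lem:springer} immediately gives $\dim \sg = \dim \bg - \dim \ng = r$, so the remaining point is $\dim \gg_e = r$. Decomposing $\gg = \bigoplus_i \gg^i$ by root height and splitting $[e,\cdot]$ into maps $t_i \colon \gg^i \to \gg^{i+1}$, the considerations recalled in the proof of Lemma~\ref{lem:springer} show that, under (C2), the $t_i$ with $i < 0$ are injective. Under (C3), the $t_i$ with $i \geq 0$ are moreover surjective: the map $t_0$ is essentially $h \mapsto (d\alpha(h))_{\alpha \in \Delta}$, whose surjectivity is exactly the ``no $\ell$-torsion in $X^*(\TB)/\ZM\Phi$'' part of (C3); for $i > 0$, the surjectivity can be extracted from Springer's analysis in good characteristic~\cite[Theorem~2.6]{springer}, read by transposition with respect to a suitable pairing between $\gg^i$ and $\gg^{-1-i}$. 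A telescoping sum then yields $\dim \gg_e = \sum_{i \geq 0}(\dim \gg^i - \dim \gg^{i+1}) = \dim \gg^0 = r$.

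The main obstacle is justifying the surjectivity of $t_i$ for $i > 0$, which requires the full strength of (C3) and a careful invocation of Springer's results; the intersection argument (a) is by contrast essentially formal once Lemma~\ref{lem:springer} is in hand.
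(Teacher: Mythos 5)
Your proof is correct, and it takes a modestly different route from the paper's, though both ultimately rest on the same Springer inputs. Where the paper cites the a priori bound $\dim(\gg_x)\geq r$ (valid for all $x\in\gg$, proved exactly as for the group centralizers) to get $\dim[e,\gg]\leq\dim\gg-r$, and then only has to prove the span $\gg=\sg+[e,\gg]$ (directness falls out of the dimension count for free), you instead prove trivial intersection $\sg\cap[e,\gg]=0$ and the exact dimension identity $\dim\gg_e=r$ separately. Your step (a) is a pleasant, purely formal argument using the height grading and $\gg=\bg\oplus\ng^+$ that the paper never spells out because it never needs it; your step (b) replaces the a priori centralizer bound by the telescoping computation $\dim\gg_e=\sum_{i\geq 0}(\dim\gg^i-\dim\gg^{i+1})=\dim\gg^0=r$, which requires both injectivity of $t_i$ for $i<0$ (Lemma~\ref{lem:springer}, already available under (C2)) and surjectivity of $t_i$ for $i\geq 0$. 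The two proofs meet at the surjectivity of $t_i$ for $i\geq 0$: for $i=0$ this is the no-$\ell$-torsion condition on $X^*(\TB)/\ZM\Phi$ in (C3), and for $i\geq 1$ both ultimately invoke Springer's Theorem~2.6 on the torsion of $\Coker(t_i)$.

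One small caution on (b): your parenthetical suggestion to obtain surjectivity of $t_i$ ($i>0$) ``by transposition with respect to a suitable pairing between $\gg^i$ and $\gg^{-1-i}$'' is underspecified. A $\Gm$-invariant perfect pairing on $\gg$ identifying $(\gg^i)^*$ with $\gg^{-i}$ and making $t_i$ adjoint (up to sign) to $t_{-1-i}$ is most naturally available under (C4), not (C3), so this route would need to be argued more carefully (e.g.\ directly on the level of Chevalley $\ZM$-lattices). The paper avoids this by arguing directly that $\Coker(t_i)$ is a finite $\ZM$-module with no $\ell$-torsion, hence vanishes after $\FM\otimes_\ZM(-)$; you may as well adopt that formulation, which is what Springer's Theorem~2.6 delivers most directly.
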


\begin{proof}
By the same argument as for centralizers in $\GB$ (see~\S\ref{ss:adjoint-quotient}), one can check that $\dim(\gg_x) \geq r$ for all $x \in \gg$. In particular, it follows that $\dim([e,\gg]) \leq \dim(\GB)-r$. Hence to prove the lemma it suffices to prove that $\gg=\sg + [e,\gg]$. And for this it suffices to prove that $\ng^+ \subset [e,\gg]$.

This fact will again by deduced from~\cite{springer}. We use the same notation as in the proof of Lemma~\ref{lem:springer}. As in this proof, one can assume that each $e_\alpha$ is obtained from a $\ZM$-basis of the corresponding root space in $\gg_\ZM$, and consider the morphisms $t_i$. Then $\FM \otimes t_0 \colon \FM \otimes_\ZM \gg^0_\ZM \to \FM \otimes_\ZM \gg^1_\ZM$ identifies with the morphism
\[
\tg \to \FM^\Delta, \quad t \mapsto (d\alpha(t))_{\alpha \in \Delta}.
\]
Hence it is surjective if and only if the linear forms $d\alpha$ ($\alpha \in \Delta$) are linearly independent, i.e.~if and only if the third condition in {\rm (C3)} holds.

And for $i \geq 1$, combining~\cite[Proposition~2.2]{springer} and~\cite[Theorem~2.6]{springer} we obtain that $\Coker(t_i)$ is finite and has no $p$-torsion if $p$ is good. Hence $\FM \otimes_\ZM t_i$ is surjective, which finishes the proof.
\end{proof}

\begin{remark}
\label{rk:direct-sum-S}
More generally, if (C3) holds, for any $x \in \Upsilon$ we have $\gg = \sg \oplus [x,\gg]$. (In particular, we have $\dim(\gg_x)=r$ in this case.) Indeed, as in the proof of Lemma~\ref{lem:direct-sum}, it suffices to prove that $\gg = \sg + [x,\gg]$, i.e.~that the morphism $\sg \oplus \gg \to \gg$ defined by $(s,y) \mapsto s + [x,y]$ is surjective. This property is an open condition on $x$. Since it is satisfies by $e$, it is satisfied in a neighborhood of $e$ in $\Upsilon$. We conclude using the contracting $\Gm$-action.
\end{remark}

\subsection{Kostant's theorem}
\label{ss:Kostant-section-F}

\begin{prop}
\label{prop:ganginzburg}
Assume that {\rm (C2)} holds. Then
the morphism
\[
\UB \times \SC \to \Upsilon
\]
induced by the adjoint action is an isomorphism of varieties.
\end{prop}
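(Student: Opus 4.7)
The strategy is to show the morphism $\phi\colon \UB \times \SC \to \Upsilon$ is étale (by reducing to a differential computation at $(1,e)$ and propagating via a contracting $\Gm$-action), and then deduce that it is an isomorphism via a graded Nakayama argument. First, by our choice of $\sg$ one has $\bg = \sg \oplus [e,\ng]$, and Lemma~\ref{lem:springer} ensures that $[e,-]\colon \ng \to \bg$ is injective; hence $\dim[e,\ng] = |\Phi^+|$, so $\dim \sg = r$ and
\[
\dim(\UB \times \SC) \;=\; |\Phi^+| + r \;=\; \dim \bg \;=\; \dim \Upsilon.
\]

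Next I would compute the differential of $\phi$ at $(1,e)$. Under the identifications $T_{(1,e)}(\UB \times \SC) \cong \ng \oplus \sg$ and $T_e \Upsilon \cong \bg$, the differential sends $(n,s) \mapsto s - [e,n]$, and this is a linear isomorphism by the decomposition $\bg = \sg \oplus [e,\ng]$ combined with the injectivity of $[e,-]$ from Lemma~\ref{lem:springer}. To propagate étaleness to every point I would extend the $\Gm$-action of~\S\ref{ss:principal-nilpotent} to $\UB$ by conjugation, $t \cdot u := {\check \lambda}_\circ(t)\,u\,{\check \lambda}_\circ(t)^{-1}$; a routine check shows that $\phi$ is $\Gm$-equivariant. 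The étale locus of $\phi$ is open and $\Gm$-stable and contains $(1,e)$; since the weights of conjugation by ${\check \lambda}_\circ$ on each root subgroup $U_\alpha \subset \UB$ (equal to $2\,\mathrm{ht}(\alpha)$ with $\alpha$ a root of $\UB$) are strictly negative, the new action contracts $\UB$ to $1$ as $t \to \infty$, and together with the contraction of $\SC$ onto $e$ every point of $\UB \times \SC$ flows into the étale locus. Hence $\phi$ is étale.

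Finally, the $\Gm$-actions endow $\OC(\Upsilon)$ and $\OC(\UB \times \SC)$ with $\NM$-gradings whose degree-zero parts are $\FM$ (all nonzero weights on the coordinate rings are strictly positive because both actions on the underlying schemes are contracting), and $\phi^*$ is a graded homomorphism. The scheme-theoretic fiber $\phi^{-1}(e)$ is $\Gm$-stable and étale over $\FM$; since the unique $\Gm$-fixed point of $\UB \times \SC$ is $(1,e)$ (on $\UB$: the centralizer in $\GB$ of the regular cocharacter ${\check \lambda}_\circ$ is $\TB$, so $\UB \cap \TB = \{1\}$; on $\SC \subset \Upsilon$: the $\Gm$-action has no zero weights on $\bg$), this fiber equals $\Spec \FM$. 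By graded Nakayama, $\phi^*$ is surjective. On the other hand $\phi$ is flat (being étale) and its image is a $\Gm$-stable open subset of $\Upsilon$ containing the attractor $e$, hence equals all of $\Upsilon$; therefore $\phi^*$ is also injective. Thus $\phi^*$, and hence $\phi$, is an isomorphism.

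The main obstacle is this last step: the infinitesimal and étale data alone do not force a global isomorphism, and the essential extra input is the pair of contracting $\Gm$-actions. They play a double role, simultaneously upgrading the pointwise differential computation at $(1,e)$ to étaleness on all of $\UB \times \SC$ and, via graded Nakayama, reducing the global isomorphism question to the observation that the two unique $\Gm$-fixed points correspond under $\phi$.
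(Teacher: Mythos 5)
Your proof is correct, and it rests on the same two key ingredients as the paper's argument: the computation that $d_{(1,e)}\psi \colon \ng \oplus \sg \to \bg$, $(n,s) \mapsto [n,e]+s$, is an isomorphism (using Lemma~\ref{lem:springer} and the defining property of $\sg$), and the contracting $\Gm$-actions on $\UB\times\SC$ and $\Upsilon$. Where you part ways with the paper is in the endgame. The paper uses \'etaleness only at the single point $(1,e)$: this already gives dominance, hence injectivity of $\psi^*$, and the proof is then closed by comparing the $\Gm$-characters of $\OC(\Upsilon)$ and $\OC(\UB\times\SC)$ --- for a contracting action each coordinate ring is $\Gm$-isomorphic to the functions on the tangent space at its fixed point, and the differential isomorphism identifies the two characters. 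You instead propagate \'etaleness to all of $\UB\times\SC$ via $\Gm$-stability of the \'etale locus, show the scheme-theoretic fiber $\phi^{-1}(e)$ is $\Spec\FM$ (\'etaleness together with uniqueness of the $\Gm$-fixed point upstairs), conclude surjectivity of $\phi^*$ by graded Nakayama, and get injectivity of $\phi^*$ separately from surjectivity of the underlying morphism. Both routes are valid; the paper's character comparison is a bit leaner, needing nothing beyond the pointwise differential, whereas yours establishes along the way the extra global facts that $\psi$ is \'etale everywhere and surjective on points before the final step.
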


\begin{proof}
The proof is copied from that of~\cite[Lemma~2.1]{ganginzburg}. 

Let us denote by $\psi$ the morphism of the lemma.
First, we remark that the differential $d_{(1,e)}\psi$ of $\psi$ at $(1,e)$ can be identified with the morphism
\[
\left\{
\begin{array}{ccc}
\ng \oplus \sg & \to & \bg \\
(n,s) & \mapsto & [n,e]+s
\end{array}
\right. .
\]
By Lemma~\ref{lem:springer} and the definition of $\sg$, this morphism is an isomorphism. It follows that $\psi$ is {\'e}tale in a neighborhood of $(1,e)$, and in particular dominant. We deduce that the morphism $\psi^* \colon \OC(\Upsilon) \to \OC(\UB \times \SC)$ is injective.

To prove that $\psi^*$ is an isomorphism, we consider the $\Gm$-actions on $\SC$ and $\Upsilon$ defined in \S\ref{ss:principal-nilpotent}. In fact we also define a $\Gm$-action on $\UB \times \SC$ by setting $t \cdot (u,s) = ({\check \lambda}_\circ(t) u {\check \lambda}_\circ(t)^{-1}, t \cdot s)$.
Then the actions on $\UB \times \SC$ and $\Upsilon$ are contracting as $t \to \infty$ (to $(1,e)$ and to $e$ respectively), and $\psi$ is $\Gm$-equivariant. Hence to conclude we only have to prove that the characters of the action of $\Gm$ on $\OC(\Upsilon)$ and $\OC(\UB \times \SC)$ coincide. However, if $X$ is any of these spaces, $x$ is its $\Gm$-fixed point, $\mg_x \subset \OC(X)$ the ideal of $x$, and $T_x$ the tangent space of $X$ at $x$, then, as a $\Gm$-module, $\OC(X)$ is isomorphic to the associated graded of the $\mg_x$-adic filtration, which is itself isomorphic (again as a $\Gm$-module) to $\OC(T_x)$. Since $d_{(1,e)}\psi$ is an isomorphism, we deduce the equality of characters, which finishes the proof.
\end{proof}


Now we can prove the main result of this subsection. Our proof is essentially identical to that in~\cite[Proposition~6.3]{veldkamp} (which is due to Springer), and is partially based on the same idea as for Proposition~\ref{prop:ganginzburg}. The fact that the proof in~\cite{veldkamp} can be generalized to very good primes using the results of Demazure is mentioned in~\cite[\S 3.14]{slodowy}, see also~\cite[\S 7.14]{jantzen}. 
In fact, slightly weaker assumptions are sufficient.

\begin{thm}[Kostant's theorem]
\label{thm:kostant}
Assume that {\rm (C1)} and {\rm (C2)} hold.
Then
the natural morphisms
\[
\SC \to \Upsilon/\UB \to \gg / \GB
\]
are isomorphisms.
\end{thm}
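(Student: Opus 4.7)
The first isomorphism is immediate from Proposition~\ref{prop:ganginzburg}: the isomorphism $\UB \times \SC \simto \Upsilon$ says exactly that each $\UB$-orbit in $\Upsilon$ meets $\SC$ in a unique point, so $\SC \simto \Upsilon/\UB$. In view of this, the second isomorphism amounts to showing that $\pi := \chi|_\SC \colon \SC \to \gg/\GB$ is an isomorphism.

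Following the same strategy used to establish Proposition~\ref{prop:ganginzburg}, I would exploit the $\Gm$-equivariance of $\pi$. The twisted $\Gm$-action on $\gg$ from~\S\ref{ss:principal-nilpotent} commutes with the $\GB$-action (since $\check{\lambda}_\circ(\Gm) \subset \GB$ acts trivially on the quotient $\gg/\GB$, and the scalar factor is central), so it descends to a $\Gm$-action on $\gg/\GB$ contracting to the unique fixed point $\chi(e) = 0$. Combined with the contracting action on $\SC$, this realizes $\pi^* \colon \OC(\gg/\GB) \to \OC(\SC)$ as a graded homomorphism of non-negatively graded $\FM$-algebras whose degree-$0$ pieces are both $\FM$.

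I would next verify that $\pi$ is dominant (so that $\pi^*$ is injective). Under (C1), Proposition~\ref{prop:Chevalley-thm} identifies $\gg/\GB$ with $\tg/W$. For any $t \in \tg$, the cocharacter $\check{\lambda}_\circ$ contracts $\ng^+$ to $0$ as $s \to 0$, hence $\check{\lambda}_\circ(s) \cdot (e + t) \to t$ in $\gg$, and the $\GB$-invariance of $\chi$ yields $\chi(e + t) = \chi(t)$ by continuity. Since $e + \tg \subset e + \bg = \Upsilon$ and $\pi(\SC) = \chi(\Upsilon)$ by the factorization of $\chi|_\Upsilon$ through $\pi$ (via Proposition~\ref{prop:ganginzburg}), this gives $\pi(\SC) \supseteq \chi(e + \tg) = \chi(\tg) = \tg/W$, establishing dominance.

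To upgrade the graded injection $\pi^*$ to an isomorphism, I would compare Hilbert series. Under (C2), Demazure's theorem implies that $\OC(\tg)^W$ is a polynomial $\FM$-algebra on $r$ homogeneous generators of $\Gm$-weights $2d_1, \ldots, 2d_r$, where the $d_i$ are the fundamental degrees of $W$. On the other hand $\OC(\SC) = \mathrm{S}(\sg^*)$ is visibly polynomial, and a direct computation of $\Gm$-weights on $\sg$ using the height-graded decomposition of $\gg_\ZM$ employed in the proof of Lemma~\ref{lem:springer}, combined with the structural results of~\cite[\S 2]{springer} on the maps $t_i$, shows that the $\Gm$-weights on $\sg^*$ are again exactly $\{2d_1, \ldots, 2d_r\}$. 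Matching Hilbert series then forces the graded injection $\pi^*$ to be an isomorphism. The main technical obstacle lies precisely in this last step, requiring both Demazure's polynomial-invariants theorem in positive characteristic satisfying~(C2) and the explicit weight calculation on $\sg$.
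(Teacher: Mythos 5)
Your overall architecture coincides with the paper's: reduce via Proposition~\ref{prop:ganginzburg} to proving $\SC \to \gg/\GB$ is an isomorphism, then exploit the contracting $\Gm$-actions to show that an injective (because dominant) graded map of non-negatively graded algebras with matching graded characters must be an isomorphism. Within that frame, you diverge in two places.

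Your dominance argument is different from, and a bit slicker than, the paper's. The paper conjugates a regular semisimple element to $\tg$, observes $y+e \in \UB^+ \cdot y$, and concludes that $\GB\cdot\SC$ contains $\gg_\rs$. You instead use the limit $\check\lambda_\circ(s)\cdot(e+t) = s^2 e + t \to t$ as $s \to 0$, together with $\GB$-invariance and properness of limits of constant regular functions, to get $\chi(e+t)=\chi(t)$ directly, hence $\pi(\SC)=\chi(\Upsilon)\supseteq\chi(\tg)=\tg/W$. This only uses (C1) (for Proposition~\ref{prop:Chevalley-thm}), is shorter, and even gives surjectivity on closed points rather than mere dominance. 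It is a valid replacement.

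Where a genuine gap remains is the graded-character comparison, which you yourself flag as the ``main technical obstacle.'' You assert that the $\Gm$-weights on $\sg^*$ are $\{2d_1,\dots,2d_r\}$, citing the height-graded decomposition and \cite[\S 2]{springer}. But Springer's \S 2 only gives the injectivity and torsion-freeness of the maps $t_i$, which tells you $\dim\sg^{i}=\dim\gg_\ZM^i-\dim\gg_\ZM^{i-1}$ for $i\le 0$, i.e.\ that the multiplicities of the weights are differences of numbers of roots of consecutive heights. To identify \emph{that} list with $\{2d_1,\dots,2d_r\}$ you need the Kostant--Macdonald duality between root heights and the exponents of $W$, which is a separate classical theorem and is \emph{not} contained in the structural results you cite. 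The paper deliberately avoids invoking this duality: using the freeness of the integral module $\bg_\ZM/[e,\ng_\ZM]$, it transfers the $\Gm$-character of $\OC(\SC)$ to the corresponding object over $\CM$, applies Kostant's theorem over $\CM$ \cite[Theorem~7]{kostant} to identify it with $\OC(\tg_\CM/W)$, and then compares $\OC(\tg/W)$ with $\OC(\tg_\CM/W)$ via Demazure's freeness and base-change results \cite[p.~295--296]{demazure}. Your route is mathematically sound but requires an explicit additional reference (to Kostant's 1959 combinatorial theorem or to Macdonald), while the paper's route packages the same information through the characteristic-$0$ Kostant theorem. If you fill that one citation, your proof is complete.
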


\begin{proof}
Proposition~\ref{prop:ganginzburg} implies that the first morphism is an isomorphism. So to complete the proof it is enough to prove that the composition $\SC \to \gg/\GB$ is an isomorphism. First, let us prove that this morphism is dominant. 

For this it suffices to prove that $\GB \cdot \SC$ contains the open set of regular semisimple elements. So, let $x \in \gg$ be a regular semisimple element. Then $x$ is $\GB$-conjugate to a regular element $y \in \tg$ by~\cite[Proposition~11.8]{borel}. Now $\UB^+ \cdot y = y+\ng^+ \ni y + e$ (see e.g.~\cite[\S 13.3]{jantzen}), hence $x$ is $\GB$-conjugate to $y+e$. Finally, since $y+e \in e + \bg=\Upsilon$, it follows from Proposition~\ref{prop:ganginzburg} that $x \in \GB \cdot \SC$.

Now we consider again the $\Gm$-action on $\SC$ defined in~\S\ref{ss:principal-nilpotent}. Then the morphism $\SC \to \gg/\GB$ is $\Gm$-equivariant, where the $\Gm$-action on $\gg/\GB$ is induced by the action $t \cdot x=t^{-2} x$ on $\gg$. As in the proof of Proposition~\ref{prop:ganginzburg}, to conclude it is enough to prove that the $\Gm$-modules $\OC(\SC)$ and $\OC(\gg/\GB)$ have the same character. 

To prove this we remark that $\SC$ is isomorphic, as a $\Gm$-variety, to $\sg$, hence to $\bg/[e,\ng]$. (Here the $\Gm$-action on $\bg$ is given by $t \cdot x=t^{-2} {\check \lambda}_\circ(t) \cdot x$.) Now recall the notation introduced in the proof of Lemma~\ref{lem:springer}. As in this lemma we can assume that each $e_\alpha$ is obtained from an element $x_\alpha \in \gg_\ZM$. Then one can consider the base change $\GB_\CM$ of $\GB_\ZM$ to $\CM$, and the corresponding objects $\BB_\CM$, $\TB_\CM$, $\gg_\CM$, $\bg_\CM$, $\tg_\CM$. If $e_\CM$ denotes the image of $\sum_{\alpha \in \Delta} x_\alpha$ in $\gg_\CM$, then it follows from the proof of Lemma~\ref{lem:springer} that the $\Gm$-character of $\bg/[e,\ng]$ coincides with the $(\Gm)_\CM$-character of $\bg_\CM/[e_\CM, \ng_\CM]$ (where the $(\Gm)_\CM$-action on $\bg_\CM$ is defined by the same formula as the $(\Gm)_\FM$-action on $\bg_\FM$ considered above). By Lemma~\ref{lem:direct-sum} applied to the field $\CM$, we have a $\Gm$-equivariant isomorphism $\bg_\CM/[e_\CM, \ng_\CM] \simto \gg_\CM/[e_\CM,\gg_\CM]$. Using Kostant's theorem over $\CM$ (see~\cite[Theorem 7]{kostant}), we deduce that the $\Gm$-character of $\OC(\SC)$ coincides with the $(\Gm)_\CM$-character of $\OC(\tg_\CM/W)$, where the $(\Gm)_\CM$-action is induced by the action on $\tg_\CM$ given by $t \cdot x=t^{-2} x$.

From these considerations we obtain that to conclude the proof it suffices to prove that the $\Gm$-character of $\OC(\gg/\GB)$ coincides with the $(\Gm)_\CM$-character of $\OC(\tg_\CM/W)$. By Proposition~\ref{prop:Chevalley-thm}, it suffices to show that the $\Gm$-character of $\OC(\tg/W)$ coincides with the $(\Gm)_\CM$-character of $\OC(\tg_\CM/W)$. However, if $N$ is the product of the prime numbers $p$ which are bad primes for $\GB$ or such that $X_*(\TB)/\ZM{\check \Phi}$ has $p$-torsion, then we have
\begin{align*}
\OC(\tg_\CM/W) & = \CM \otimes_{\ZM[1/N]} \mathrm{S}_{\ZM[1/N]}(X^*(\TB) \otimes_\ZM \ZM[1/N])^W \\
\OC(\tg/W) & = \FM \otimes_{\ZM[1/N]} \mathrm{S}_{\ZM[1/N]}(X^*(\TB) \otimes_\ZM \ZM[1/N])^W
\end{align*}
by~\cite[Corollaire on p.~296]{demazure}. (Note that the $\ZM$-module $\Coker(i)$, where $i$ is the map considered in~\cite[\S 5]{demazure} for the root system $\Phi \subset X^*(\TB)$, is isomorphic to $\Ext^1_{\ZM}(X_*(\TB)/\ZM{\check \Phi} , \ZM)$ with our notation; in particular, this $\ZM$-module has the same torsion as $X_*(\TB)/\ZM{\check \Phi}$.)
By~\cite[Th{\'e}or{\`e}me~2 on p.~295]{demazure}, the $\ZM[1/N]$-module $\mathrm{S}_{\ZM[1/N]}(X^*(\TB) \otimes_\ZM \ZM[1/N])^W$ is graded free. We deduce the equality of characters, which finishes the proof.
\end{proof}

\begin{remark}
If (C1) and (C2) hold,
it follows from Theorem~\ref{thm:kostant} that $\gg/\GB = \tg/W$ is smooth. In fact this follows more directly from the ingredients of the proof: see~\cite[Corollaire and Th{\'e}or{\`e}me 3 on p.~296]{demazure}.
\end{remark}

\subsection{The universal centralizer}
\label{ss:centralizer-F}

The main object of study in this paper is the ``universal centralizer'' over $\gg$, i.e.~the closed subgroup of the group scheme $\GB \times \gg$ over $\gg$ defined as the fiber product
\[
\IB := \gg \times_{\gg \times \gg} (\GB \times \gg).
\]
Here the morphism $\gg \to \gg \times \gg$ is the diagonal embedding, and the morphism $\GB \times \gg \to \gg \times \gg$ is defined by $(g,x) \mapsto (g \cdot x, x)$. By definition, for $x \in \gg$, the (scheme-theoretic) fiber of $\IB$ over $x$ identifies with the group scheme $\GB_x$.

We will denote by $\IB_{\reg}$ the restriction of $\IB$ to $\gg_{\reg}$, by $\IB_{\rs}$ its restriction to $\gg_{\rs}$, and by $\IB_{\SC}$ its restriction to $\SC$.

\begin{lem}
\label{lem:a-smooth}
Assume that {\rm (C3)} holds.
Then
the morphism
\[
a \colon \GB \times \SC \to \ggr
\]
induced by the adjoint action is smooth and surjective.
\end{lem}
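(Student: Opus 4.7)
The plan is to prove surjectivity and smoothness separately, both reductions to facts already established above.

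\textbf{Surjectivity.} Given $x \in \ggr$, I want to exhibit $g \in \GB$ and $s \in \SC$ with $x = g \cdot s$. Recall from~\eqref{eqn:Up-reg} that $\SC \subset \Upsilon \subset \ggr$, and from Theorem~\ref{thm:kostant} (which applies because (C3) implies both (C1) and (C2)) that the composite $\SC \to \gg/\GB$ is an isomorphism. So there exists a unique $s \in \SC$ with $\chi(s) = \chi(x)$. Since both $s$ and $x$ lie in $\ggr$ and have the same image under $\chi$, Lemma~\ref{lem:chi-conjugate} gives $g \in \GB$ with $x = g \cdot s$, which is exactly $a(g,s) = x$.

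\textbf{Smoothness.} Both source and target are smooth (the source is a product of the smooth group $\GB$ and the affine space $\SC$, and $\ggr$ is open in $\gg$). So it suffices to show that the differential of $a$ is surjective at every point. The map $a$ is equivariant for the $\GB$-action by left multiplication on $\GB \times \SC$ (fixing the $\SC$-factor) and by the adjoint action on $\ggr$; these actions are by automorphisms of the source and the target respectively, so I may check surjectivity of $d_{(g,s)} a$ only at points of the form $(1,s)$ with $s \in \SC$. Under the standard trivializations $T_1 \GB = \gg$ and $T_s \SC = \sg$, the differential is
\[
d_{(1,s)} a \colon \gg \oplus \sg \to \gg, \qquad (X, v) \mapsto [X, s] + v,
\]
so its image equals $\sg + [\gg, s]$. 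By Remark~\ref{rk:direct-sum-S}, which applies to any $s \in \Upsilon$ (in particular to any $s \in \SC$), we have $\gg = \sg \oplus [s, \gg]$, so the differential is indeed surjective. Hence $a$ is smooth.

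There is no real obstacle here: both parts of the statement reduce directly to results already proved in~\S\ref{ss:principal-nilpotent}--\S\ref{ss:Kostant-section-F}. The only thing to be careful about is that Remark~\ref{rk:direct-sum-S} (used for the smoothness argument) requires assumption (C3), which is consistent with the hypothesis of the lemma, whereas Kostant's theorem (used for surjectivity) only requires (C1) and (C2).
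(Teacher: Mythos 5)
Your proof is correct and follows essentially the same route as the paper. For smoothness, the paper computes the differential only at $(1,e)$, then extends first by $\GB$-equivariance (to $\GB\times V$ for a neighborhood $V$ of $e$ in $\SC$) and then by the contracting $\Gm$-action; you instead compute the differential at every $(1,s)$ and invoke Remark~\ref{rk:direct-sum-S}. This is the same argument in disguise, since that Remark is itself proved by exactly the $\Gm$-contraction step the paper performs inline, so you have simply made use of the packaged version. One small point worth making explicit: your surjectivity argument checks that every closed point of $\ggr$ lies in the image, but you should say why this suffices for surjectivity of the morphism of schemes. The paper deduces this from smoothness (image open, closed points dense); if you want to keep surjectivity logically before smoothness as you wrote it, you should instead appeal to Chevalley constructibility of the image together with density of closed points in a Jacobson scheme. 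Either fix is one sentence.
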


\begin{proof}
Let us first prove smoothness.
The differential of $a$ at $(1,e)$ can be identified with the morphism $\gg \times \sg \to \gg$ sending $(x,s)$ to $[x,e]+s$. By Lemma~\ref{lem:direct-sum} this differential is surjective under our assumptions, hence $a$ is smooth in a neighborhood of $(1,e)$. In fact, since this morphism is $\GB$-equivariant, there exists a neighborhood $V$ of $e$ in $\SC$ such that $a$ is smooth on $\GB \times V$. Then using the $\Gm$-action on $\SC$ as in the proof of~\eqref{eqn:Up-reg}, we conclude that $a$ is smooth.

To prove surjectivity, using smoothness it is enough to prove that closed points of $\ggr$ belong to the image of $a$. In other words, we have to prove that any element of $\gg_\reg$ is $\GB$-conjugate to an element of $\SC$.  Let $x \in \gg_\reg$. Then it follows from Theorem~\ref{thm:kostant} that there exists $y \in \SC$ such that $\chi(x)=\chi(y)$. We conclude using Lemma~\ref{lem:chi-conjugate} and~\eqref{eqn:Up-reg}.
\end{proof}

\begin{remark}
\label{rk:dim-centralizer}
Assume that (C3) holds. Then for all $x \in \gg_\reg$ we have $\dim(\gg_x)=r$. In fact, this property was observed for $x \in \SC$ in Remark~\ref{rk:direct-sum-S}. But since any element of $\gg_\reg$ is $\GB$-conjugate to an element in $\SC$ by Lemma~\ref{lem:a-smooth}, it holds on the whole of $\gg_\reg$. In the terminology of~\cite[\S 5.7]{springer} this means that, in this case, all regular elements in $\gg$ are \emph{smoothly regular}.
\end{remark}

Let us note the following consequence. (See also~\cite[Corollary on p.~746]{bg} for a different proof, assuming that Jantzen's ``standard hypotheses'' hold and that $\ell$ is odd.)

\begin{prop}
\label{prop:chi-reg-smooth}
Assume that {\rm (C3)} holds. Then
the morphism $\chi_{\reg} \colon \gg_\reg \to \gg/\GB$ is smooth and surjective.
\end{prop}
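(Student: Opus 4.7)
The plan is to deduce both smoothness and surjectivity of $\chi_\reg$ from the corresponding properties of the action map $a \colon \GB \times \SC \to \gg_\reg$ of Lemma~\ref{lem:a-smooth}, using Kostant's theorem (Theorem~\ref{thm:kostant}) to understand the composition $\chi_\reg \circ a$.

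First I would consider the commutative diagram
\[
\xymatrix{
\GB \times \SC \ar[r]^-{a} \ar[d]_{\pr_\SC} & \gg_\reg \ar[d]^{\chi_\reg} \\
\SC \ar[r]^-{\sim} & \gg/\GB
}
\]
where the bottom arrow is the restriction of $\chi$ to $\SC$, which is an isomorphism by Theorem~\ref{thm:kostant}. The diagram commutes because $\chi$ is $\GB$-invariant. The left vertical map $\pr_\SC$ is smooth (indeed, it is obtained by base change from the structure map $\GB \to \Spec(\FM)$, which is smooth since $\GB$ is a smooth algebraic group), and the bottom arrow is an isomorphism; therefore the composition $\chi_\reg \circ a$ is smooth and surjective.

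Next, I would invoke the standard descent principle that smoothness is fppf-local on the source: if $f \colon X \to Y$ is a faithfully flat morphism locally of finite presentation and $g \colon Y \to Z$ is a morphism of schemes such that $g \circ f$ is smooth, then $g$ is smooth. Since $a$ is smooth and surjective by Lemma~\ref{lem:a-smooth}, it is in particular faithfully flat and locally of finite presentation, so this descent result applied to $f=a$ and $g=\chi_\reg$ yields smoothness of $\chi_\reg$. Surjectivity of $\chi_\reg$ is immediate from the surjectivity of $\chi_\reg \circ a$ established above.

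There is no real obstacle here; the argument is essentially formal once one has Theorem~\ref{thm:kostant} and Lemma~\ref{lem:a-smooth} in hand. The only point that requires a moment's thought is citing the correct descent statement for smoothness, but this is completely standard (see e.g.\ the Stacks project).
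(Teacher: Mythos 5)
Your proof is correct and follows essentially the same route as the paper's: both identify $\chi_\reg \circ a$ with the projection $\GB \times \SC \to \SC$ via Theorem~\ref{thm:kostant}, conclude that this composition is smooth and surjective, and then descend smoothness of $\chi_\reg$ along the smooth surjective morphism $a$ (the paper cites the Stacks Project, Tag 02K5, for this descent step).
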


\begin{proof}
Consider the composition
\[
\GB \times \SC \xrightarrow{a} \ggr \xrightarrow{\chi_\reg} \gg/\GB.
\]
Using Theorem~\ref{thm:kostant} this morphism identifies with the projection $\GB \times \SC \to \SC$ on the second factor. In particular it is smooth and surjective. By~\cite[Tag 02K5]{stacks-project}, we deduce that $\chi_\reg$ is smooth and surjective. 
\end{proof}

\begin{remark}
\label{rk:tangent-space}
Assume that (C3) holds.
The fact that $\chi_\reg$ is smooth implies that, for all $x \in \gg_\reg$, the morphism $d_x(\chi_\reg) \colon \TM_x(\gg_\reg) \to \TM_{\chi(x)}(\gg/\GB)$ is surjective; in other words we have a surjection $\gg \twoheadrightarrow \TM_{\chi(x)}(\gg/\GB)$. (Here $ \TM_x(\gg_\reg)$, resp.~$\TM_{\chi(x)}(\gg/\GB)$, denotes the tangent space to $\gg_\reg$ at $x$, resp.~to $\gg/\GB$ at $\chi(x)$.) Since the composition of $\chi_\reg$ with the morphism $\GB \to \gg_\reg$ defined by $g \mapsto g \cdot x$ is constant, the differential $d_x(\chi_\reg)$ must vanish on $[x,\gg]$. Since $\dim(\gg/[x,\gg])=r$ (see Remark~\ref{rk:dim-centralizer}), we finally obtain that $d_x(\chi_\reg)$ induces an isomorphism $\gg/[x,\gg] \simto \TM_{\chi(x)}(\gg/\GB)$.
\end{remark}

\begin{prop}
\label{prop:IS-smooth}
Assume that {\rm (C3)} holds. Then
the group scheme $\IB_\SC$ is smooth over $\SC$.
\end{prop}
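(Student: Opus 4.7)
The plan is to realize $\IB_\SC$ as a base change of the smooth morphism $a \colon \GB \times \SC \to \ggr$ from Lemma~\ref{lem:a-smooth}. Since smoothness is stable under arbitrary base change, this will immediately give the smoothness of $\IB_\SC$ over $\SC$.

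More precisely, I would form the fiber product
\[
Z := (\GB \times \SC) \times_{\ggr} \SC,
\]
using $a$ on one side and the inclusion $\iota \colon \SC \hookrightarrow \ggr$ on the other. As a subfunctor of $\GB \times \SC \times \SC$, the scheme $Z$ parametrizes triples $(g, s, s')$ satisfying $g \cdot s = s'$. The key step is to show that the natural closed immersion $\IB_\SC \hookrightarrow Z$, sending $(g, s)$ to $(g, s, s)$, is in fact an isomorphism. For this, given any $R$-point $(g, s, s')$ of $Z$, I apply the $\GB$-invariant quotient map $\chi \colon \ggr \to \gg/\GB$: the equation $g \cdot s = s'$ yields $\chi(s) = \chi(s')$ in $(\gg/\GB)(R)$, and since Theorem~\ref{thm:kostant} asserts that $\chi|_\SC \colon \SC \simto \gg/\GB$ is an isomorphism, this forces $s = s'$ in $\SC(R)$. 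Hence every $R$-point of $Z$ lies in the image of $\IB_\SC$, so $\IB_\SC \simto Z$ as $\SC$-schemes.

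Finally, by Lemma~\ref{lem:a-smooth} (which applies because (C3) holds) the morphism $a$ is smooth, and smoothness is preserved under base change; therefore the projection $Z \to \SC$, and hence $\IB_\SC \to \SC$, is smooth. I do not anticipate any serious obstacle here: the only point requiring care is verifying the identification $\IB_\SC \simto Z$ at the scheme-theoretic level rather than merely on closed points, which is immediate once Theorem~\ref{thm:kostant} is applied functorially.
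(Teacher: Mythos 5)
Your proof is correct and takes essentially the same approach as the paper: both identify $\IB_\SC$ with the base change $(\GB\times\SC)\times_{\ggr}\SC$ of the smooth morphism $a$ from Lemma~\ref{lem:a-smooth}, using Theorem~\ref{thm:kostant} (the fact that $\chi|_\SC$ is an isomorphism, and in particular a monomorphism) to collapse the two $\SC$-coordinates. The paper phrases this by rearranging fiber products (first factoring through $\gg\times_{\gg/\GB}\SC$), whereas you carry out the identification directly on the functor of points, but the underlying argument is the same.
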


\begin{proof}
By definition we have
$\IB_\SC = \SC \times_{\gg \times \SC} (\GB \times \SC)$. Since each of the morphisms which define this fiber product factors through $\gg \times_{\gg/\GB} \SC$, we also have
$\IB_\SC \cong \SC \times_{\gg \times_{\gg/\GB} \SC} (\GB \times \SC)$.
Now using Theorem~\ref{thm:kostant} we obtain that the first projection induces an isomorphism $\gg \times_{\gg/\GB} \SC \simto \gg$, which provides an isomorphism
\begin{equation}
\label{eqn:isom-IS}
\IB_\SC \simto \SC \times_{\gg} (\GB \times \SC).
\end{equation}
In this fiber product the morphism $\GB \times \SC \to \gg$ is the composition of the morphism $a$ of Lemma~\ref{lem:a-smooth} with the open embedding $\ggr \hookrightarrow \gg$, hence it is smooth; this implies our claim.
%
%
\end{proof}

\begin{cor}
\label{cor:Ireg-smooth}
Assume that {\rm (C3)} holds. Then
the group scheme $\IB_\reg$ is smooth over $\gg_{\reg}$.
\end{cor}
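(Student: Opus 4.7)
The plan is to parallel the strategy of Proposition~\ref{prop:chi-reg-smooth}, this time descending the smoothness of $\IB_\SC \to \SC$ (Proposition~\ref{prop:IS-smooth}) along the smooth surjective morphism $a \colon \GB \times \SC \to \ggr$ of Lemma~\ref{lem:a-smooth}, in order to recover smoothness of $\IB_\reg$ over $\ggr$.

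The first step I would carry out is to set up a Cartesian square with $\IB_\reg \to \ggr$ along the right, $a \colon \GB \times \SC \to \ggr$ along the bottom, and top-left corner identified with $\GB \times \IB_\SC$. Concretely, I would construct an isomorphism
\[
\GB \times \IB_\SC \;\simto\; (\GB \times \SC) \times_{\ggr} \IB_\reg
\]
over $\GB \times \SC$ by the assignment $(g, (h,s)) \mapsto \bigl((g,s), (ghg^{-1}, g \cdot s)\bigr)$, with inverse given by conjugating back. This uses the $\GB$-equivariance of $\IB \to \gg$ with respect to the natural action $g \cdot (h, x) = (ghg^{-1}, g \cdot x)$, which guarantees $\GB_{g \cdot s} = g \GB_s g^{-1}$ so that the formula indeed lands in the fiber product.

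Once this identification is in place, the rest is immediate from the descent tool already exploited in the paper. The left vertical morphism $\id_\GB \times (\IB_\SC \to \SC)$ is smooth by Proposition~\ref{prop:IS-smooth}; composing with the smooth morphism $a$ shows that $\GB \times \IB_\SC \to \ggr$ is smooth. At the same time, the top horizontal morphism $\GB \times \IB_\SC \to \IB_\reg$ is the base change of the smooth surjective $a$ along $\IB_\reg \to \ggr$, hence is itself smooth and surjective. Applying \cite[Tag 02K5]{stacks-project} exactly as in Proposition~\ref{prop:chi-reg-smooth} then yields smoothness of $\IB_\reg \to \ggr$.

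The only real obstacle is verifying that the proposed identification of fiber products is correct, which is a routine functor-of-points check resting entirely on the $\GB$-equivariance of $\IB$; everything downstream is formal.
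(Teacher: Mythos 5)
Your proof is correct and takes essentially the same route as the paper: both form the Cartesian square with $\GB \times \IB_\SC$ in the upper-left corner (using the $\GB$-equivariance of $\IB$), note that the composite $\GB \times \IB_\SC \to \gg_\reg$ is smooth by Lemma~\ref{lem:a-smooth} and Proposition~\ref{prop:IS-smooth}, observe that $\GB \times \IB_\SC \to \IB_\reg$ is smooth surjective as the base change of $a$, and conclude via \cite[Tag 02K5]{stacks-project}. The only difference is that you spell out the isomorphism $\GB \times \IB_\SC \simto (\GB \times \SC) \times_{\ggr} \IB_\reg$ explicitly, where the paper simply cites equivariance.
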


\begin{proof}
The following diagram (where each map is the natural one) is Cartesian by the $\GB$-equivariance of $\IB$:
\[
\xymatrix@C=1.2cm@R=0.5cm{
G \times \IB_\SC \ar[r] \ar[d] & \IB_\reg \ar[d] \\
G \times \SC \ar[r]^-{a} & \gg_\reg.
}
\]
By Lemma~\ref{lem:a-smooth} and Proposition~\ref{prop:IS-smooth}, the composition $G \times \IB_\SC \to \gg_\reg$ is smooth. Since $a$ is smooth and surjective, so is the morphism $G \times \IB_\SC \to \IB_\reg$. Using~\cite[Tag 02K5]{stacks-project}, we deduce that the morphism $\IB_\reg \to \gg_\reg$ is smooth.
\end{proof}

\begin{remark}
\label{rk:centralizer-elements-S}
Assume that (C3) holds. Then
it follows from Corollary~\ref{cor:Ireg-smooth} that for any $x \in \gg_\reg$, the stabilizer $\GB_x$ is a smooth group scheme over $\FM$, i.e.~an algebraic group in the ``traditional'' sense. In particular we have $\dim_\FM (\Lie(\GB_x)) = \dim(\GB_x)=r$, see~\cite[Second corollary on p.~94]{waterhouse}, and therefore the embedding $\Lie(\GB_x) \hookrightarrow \gg_x$ is an isomorphism (see Remark~\ref{rk:dim-centralizer}).
\end{remark}

\begin{cor}
\label{cor:Ireg-commutative}
Assume that {\rm (C3)} holds. Then
the group scheme $\IB_\reg$ is commutative.
\end{cor}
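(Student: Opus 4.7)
The plan is a ``commutator vanishes on a dense open'' argument. Consider the commutator morphism
\[
c \colon \IB_\reg \times_{\gg_\reg} \IB_\reg \to \IB_\reg, \qquad (g,h) \mapsto ghg^{-1}h^{-1},
\]
and the morphism $e \circ \pi$, where $\pi$ is the structural projection and $e$ the identity section; commutativity of $\IB_\reg$ amounts to the equality $c = e \circ \pi$. Since $\IB_\reg$ is a closed subscheme of $\GB \times \gg_\reg$, it is affine and hence separated over $\gg_\reg$, so the equalizer $E$ of $c$ and $e \circ \pi$ is a closed subscheme of $\IB_\reg \times_{\gg_\reg} \IB_\reg$.

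By Corollary~\ref{cor:Ireg-smooth}, $\IB_\reg \to \gg_\reg$ is smooth, and hence so is $\IB_\reg \times_{\gg_\reg} \IB_\reg \to \gg_\reg$. Since $\gg_\reg$ is open in $\gg$, it is smooth over $\FM$; therefore the fiber product $\IB_\reg \times_{\gg_\reg} \IB_\reg$ is smooth over $\FM$, in particular reduced.

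Next I would show that the open subset $U := \IB_\rs \times_{\gg_\rs} \IB_\rs$ is contained set-theoretically in $E$. For any $x \in \gg_\rs$, by~\cite[Proposition~11.8]{borel} there exists $g \in \GB$ with $g^{-1} \cdot x \in \tg_\rs$, and Lemma~\ref{lem:centralizer-ss-reg}(2) (whose hypotheses hold under~(C3)) gives $\GB_{g^{-1} \cdot x} = \TB$; hence $\GB_x = g \TB g^{-1}$ is a torus and in particular commutative. So $c$ and $e \circ \pi$ coincide at every closed point of $U$.

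Finally, (C3) implies (C1), so $\gg_\rs$ is nonempty and open in the irreducible variety $\gg$, hence dense in $\gg_\reg$. The flat morphism $\IB_\reg \times_{\gg_\reg} \IB_\reg \to \gg_\reg$ has a reduced source and an irreducible target, so each irreducible component of the source dominates $\gg_\reg$; thus the preimage $U$ of $\gg_\rs$ is dense in $\IB_\reg \times_{\gg_\reg} \IB_\reg$. The closed subscheme $E$ of this reduced scheme contains all closed points of the dense subset $U$, which forces $E = \IB_\reg \times_{\gg_\reg} \IB_\reg$ and proves that $\IB_\reg$ is commutative. The main delicacy is the density of $U$, which relies on the smoothness of $\IB_\reg$ over $\gg_\reg$ together with the irreducibility of $\gg_\reg$.
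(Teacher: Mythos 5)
Your proof is correct and takes essentially the same route as the paper: reduce via flatness to the regular semisimple locus, conjugate into $\tg_\rs$ using~\cite[Proposition~11.8]{borel}, and invoke Lemma~\ref{lem:centralizer-ss-reg}(2) to identify the stabilizer with $\TB$. The only difference is one of exposition: the paper states "$\IB_\reg$ is flat over $\gg_\reg$, hence it is enough to prove that $\IB_\rs$ is commutative" and leaves the reduction implicit, whereas you make it rigorous by introducing the equalizer of the commutator with the identity section, observing that $\IB_\reg \times_{\gg_\reg} \IB_\reg$ is reduced (being smooth over $\FM$), and using flatness plus irreducibility of $\gg_\reg$ to see that every component of this fiber product dominates $\gg_\reg$, so the preimage of $\gg_\rs$ is dense. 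This is exactly the argument the paper's terse phrasing is gesturing at, carried out in full.
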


\begin{proof}
By Corollary~\ref{cor:Ireg-smooth}, $\IB_\reg$ is flat over $\gg_{\reg}$. Hence it is enough to prove that $\IB_\rs$ is commutative. Since any element in $\gg_\rs$ is $\GB$-conjugate to an element in $\tg_\rs$ (see~\cite[Proposition~11.8]{borel}), it is enough to prove that for any $x \in \tg_\rs$, the group $\GB_x$ is commutative. However in this case we have $\GB_x=\TB$ (see Lemma~\ref{lem:centralizer-ss-reg}\eqref{it:stabilizer-trs}), which is clearly commutative.
\end{proof}

Let us also note the following property.

\begin{prop}
\label{prop:J}
Assume that {\rm (C3)} holds. Then
there exists a unique smooth affine commutative group scheme $\JB$ over $\tg/W$ such that the pullback of $\JB$ under $\chi_\reg \colon \gg_\reg \to \tg/W$ is $\IB_\reg$.
\end{prop}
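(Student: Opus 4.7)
The plan is to obtain $\JB$ by faithfully flat descent of $\IB_\reg$ along the smooth surjective (hence fpqc) morphism $\chi_\reg \colon \ggr \to \tg/W$ of Proposition~\ref{prop:chi-reg-smooth}. Uniqueness of $\JB$ is then automatic from descent theory, and the Kostant section $\sigma \colon \tg/W \weq \SC \injto \ggr$ provided by Theorem~\ref{thm:kostant} (a section of $\chi_\reg$) will allow us to identify $\JB$ a posteriori with $\sigma^*\IB_\reg = \IB_\SC$; this description makes the required properties evident: smooth by Proposition~\ref{prop:IS-smooth}, affine because $\IB \subset \GB \times \gg$ is a closed subgroup, and commutative by Corollary~\ref{cor:Ireg-commutative}.

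The heart of the argument is to produce a canonical descent datum, i.e.~an isomorphism $\ph \colon \pr_2^* \IB_\reg \simto \pr_1^* \IB_\reg$ of group schemes on $Y := \ggr \times_{\tg/W} \ggr$ (with $\pr_1, \pr_2$ the two projections), satisfying the cocycle condition on $Y \times_{\ggr} Y$. Consider $\alpha \colon \GB \times \ggr \to Y$, $(g, x) \mapsto (g \cdot x, x)$; by Lemma~\ref{lem:chi-conjugate} this morphism is surjective, and its fibers, being torsors under the smooth $r$-dimensional group $\GB_x$ (Remark~\ref{rk:centralizer-elements-S}), have constant dimension $r$, so that miracle flatness between smooth schemes makes $\alpha$ faithfully flat. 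On $\GB \times \ggr$, the $\GB$-equivariance of $\IB_\reg$ tautologically provides an isomorphism from the pullback of $\IB_\reg$ along $\pr(g,x) = x$ to the pullback along $m(g,x) = g \cdot x$, given fiberwise by the conjugation map $\Ad(g) \colon \GB_x \simto \GB_{g \cdot x}$. To descend this isomorphism along $\alpha$ to the desired $\ph$ on $Y$, one must check that $\Ad(g)$ and $\Ad(g')$ agree whenever $g \cdot x = g' \cdot x$; setting $h := g^{-1} g' \in \GB_x$ reduces this to the triviality of $\Ad(h)$ on $\GB_x$, which is precisely the commutativity of $\IB_\reg$ (Corollary~\ref{cor:Ireg-commutative}). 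A completely parallel argument on the cover $\GB \times \GB \times \ggr \to Y \times_\ggr Y$ yields the cocycle identity.

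With the descent datum in hand, standard faithfully flat descent for affine group schemes (SGA~1, Expos{\'e}~VIII) produces a unique affine $\tg/W$-group scheme $\JB$ with $\chi_\reg^* \JB \cong \IB_\reg$ as group schemes, compatibly with the descent datum. Smoothness and commutativity of $\JB$ are inherited from those of $\IB_\reg$ by fpqc descent of these properties.

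The principal obstacle is the construction and verification of the descent datum: producing $\ph$ as a genuine morphism of schemes (not merely at the level of geometric points) and checking the cocycle identity. Both verifications reduce, via the faithfully flat cover $\alpha$, to the triviality of $\Ad(h)$ on $\GB_x$ for $h \in \GB_x$; it is through this reduction that the commutativity of $\IB_\reg$ — itself a non-trivial consequence of the Kostant section machinery developed in \S\ref{ss:centralizer-F} — enters the proof in an essential way.
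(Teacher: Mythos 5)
Your proof follows essentially the same strategy as the paper: construct $\JB$ by descent of $\IB_\reg$ along $\chi_\reg$ following Ng\^o~\cite[Lemma~2.1.1]{ngo}, with the commutativity of $\IB_\reg$ (Corollary~\ref{cor:Ireg-commutative}) making the descent datum well-defined (the paper leaves this point implicit via the citation; you spell it out, which is a welcome clarification). The one real divergence is the verification that the graph-of-action map $\mu \colon \GB \times \gg_\reg \to \gg_\reg \times_{\tg/W} \gg_\reg$, $(g,x) \mapsto (x, g\cdot x)$, is faithfully flat and surjective: the paper pre-composes with $\mathrm{id}_\GB \times a$ and, after a clever automorphism of $\GB \times \GB \times \SC$, identifies the composite with the smooth surjective map $a \times a$; you instead invoke miracle flatness, using that source and target are smooth (the target by Proposition~\ref{prop:chi-reg-smooth}) and that all fibers are torsors under the smooth $r$-dimensional group schemes $\GB_x$ of Remark~\ref{rk:centralizer-elements-S}. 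Both arguments are valid; yours is a bit more conceptual (it explains why the fibers are uniformly nice), while the paper's gives smoothness rather than just flatness in one stroke. Your closing observation identifying $\JB$ with $\IB_\SC$ via the Kostant section, and reading off its properties from Proposition~\ref{prop:IS-smooth}, corresponds to the paper's Remark~\ref{rk:J-S}.
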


\begin{proof}[Sketch of proof]
The group scheme $\JB$ is constructed by descent (for schemes affine over a base) along the smooth and surjective morphism $\chi_\reg$ (see Proposition~\ref{prop:chi-reg-smooth}), following the arguments in~\cite[Lemma~2.1.1]{ngo}.

These arguments use the fact that the morphism $\mu \colon G \times \gg_\reg \to \gg_\reg \times_{\tg/W} \gg_\reg$ defined by $(g,x) \mapsto (x,g \cdot x)$ is smooth and surjective. To check this property, we consider the composition
\begin{equation}
\label{eqn:morphism-ngo}
\GB \times \GB \times \SC \xrightarrow{\mathrm{id}_\GB \times a} \GB \times \gg_\reg \xrightarrow{\mu} \gg_\reg \times_{\tg/W} \gg_\reg.
\end{equation}
To prove that $\mu$ is smooth and surjective, it suffices to prove that this composition has the same properties: then surjectivity is clear, and smoothness follows from Lemma~\ref{lem:a-smooth} and~\cite[Tag 02K5]{stacks-project}. Now, to prove that~\eqref{eqn:morphism-ngo} is smooth and surjective, it is enough to prove that its composition with the automorphism of $\GB \times \GB \times \SC$ defined by $(g,h,x) \mapsto (hg^{-1},g,x)$ has the same properties. The latter morphism sends $(g,h,x)$ to $(g \cdot x, h \cdot x)$. Hence it identifies with the smooth and surjective morphism
\[
(\GB \times \SC) \times_{\tg/W} (\GB \times \SC) \xrightarrow{a \times a} \gg_\reg \times_{\tg/W} \gg_\reg,
\]
which finishes the proof.
\end{proof}

\begin{remark}
\label{rk:J-S}
The group scheme $\IB_\SC$ is naturally isomorphic to the pullback of $\JB$ along the isomorphism $\SC \simto \tg/W$ given by
the composition $\SC \hookrightarrow \gg_\reg \xrightarrow{\chi_\reg} \tg/W$.
\end{remark}

We finish this subsection with the construction of the ``Kostant--Whittaker reduction'' functor.\footnote{In~\cite{mr} we use this terminology for a slightly different functor.} This functor turns out to be very useful to construct equivalences of categories; see~\cite{bf, dodd, mr} and Section~\ref{sec:derived-Satake}.
We denote by $\Rep(\JB)$, resp.~$\Rep(\IB_{\reg})$, the category of representations of $\JB$, resp.~$\IB_\reg$, which are coherent over $\OC_{\tg/W}$, resp.~$\OC_{\gg_\reg}$.

\begin{prop}
\label{prop:coh-greg}
Assume that {\rm (C3)} holds.
There exists a natural equivalence of abelian categories
\[
\Coh^{\GB}(\gg_\reg) \simto \Rep(\JB).
\]
\end{prop}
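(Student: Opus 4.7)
The plan is to realize the functor as restriction to the Kostant section, and then to prove it is an equivalence by identifying the action morphism $a \colon \GB \times \SC \to \gg_\reg$ of Lemma~\ref{lem:a-smooth} as a torsor for $\IB_\SC$ and applying faithfully flat descent.

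First I would describe the functor $F \colon \Coh^{\GB}(\gg_\reg) \to \Rep(\JB)$. Given $\FC \in \Coh^{\GB}(\gg_\reg)$, the restriction $\FC|_\SC$ is a coherent sheaf on $\SC$, and the $\GB$-equivariant structure on $\FC$ restricts to an action of the stabilizer group scheme $\IB_\SC$ on $\FC|_\SC$. Via the identifications $\SC \simto \tg/W$ of Theorem~\ref{thm:kostant} and the identification of $\IB_\SC$ with the pullback of $\JB$ along this isomorphism (Remark~\ref{rk:J-S}), this is exactly a $\JB$-equivariant coherent sheaf on $\tg/W$, i.e., an object of $\Rep(\JB)$.

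Second, I would verify the torsor statement: the group scheme $\IB_\SC$ acts freely on $\GB \times \SC$ (over $\SC$) by $h \cdot (g,s) = (g h^{-1}, s)$ for $h \in \IB_{\SC,s}$, and $a$ is a quotient for this action. Scheme-theoretically, this amounts to showing that the natural morphism
\[
\IB_\SC \times_\SC (\GB \times \SC) \longrightarrow (\GB \times \SC) \times_{\gg_\reg} (\GB \times \SC), \quad (h,g,s) \longmapsto \bigl( (g,s), (g h^{-1}, s) \bigr)
\]
is an isomorphism. At the level of closed points, the key input is that if $g_1 \cdot s_1 = g_2 \cdot s_2$ with $s_1, s_2 \in \SC$, then applying $\chi$ and using the isomorphism $\SC \simto \tg/W$ forces $s_1 = s_2$, after which $g_2^{-1} g_1$ fixes this common value, hence lies in $\GB_s = \IB_{\SC, s}$. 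The scheme-theoretic statement then follows because both sides are smooth over $\SC$ of the same relative dimension $\dim(\GB) + r$, and the morphism is a bijection on points.

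Third, the morphism $a$ is faithfully flat by Lemma~\ref{lem:a-smooth}, so faithfully flat descent along $a$ shows that giving a $\GB$-equivariant coherent sheaf on $\gg_\reg$ is equivalent to giving a $\GB$-equivariant coherent sheaf on $\GB \times \SC$ (for the $\GB$-action by left translation on the first factor and trivially on $\SC$) together with descent data for $a$. The former datum is equivalent to an arbitrary coherent sheaf on $\SC$ via restriction to $\{1\} \times \SC$, and by the torsor identification of the previous step, the descent datum translates into an $\IB_\SC$-equivariant structure on this sheaf. This establishes an equivalence $\Coh^{\GB}(\gg_\reg) \simeq \Coh^{\IB_\SC}(\SC)$; composing with the identifications of Theorem~\ref{thm:kostant} and Remark~\ref{rk:J-S} yields the desired equivalence, and one checks that the resulting functor coincides with the restriction functor $F$ described in the first paragraph. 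The main obstacle is the torsor statement in the second paragraph: this is where the slice property of the Kostant section enters in an essential way, whereas the descent step itself is standard once the torsor structure is in place.
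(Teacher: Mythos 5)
Your strategy is essentially the same as the paper's: restrict to the Kostant section, identify $a \colon \GB \times \SC \to \gg_\reg$ as an $\IB_\SC$-torsor, and descend along $a$. The only substantive difference is in how the torsor identification is established, and there you have a gap as written.

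You justify the scheme-theoretic isomorphism
\[
\IB_\SC \times_\SC (\GB \times \SC) \longrightarrow (\GB \times \SC) \times_{\gg_\reg} (\GB \times \SC)
\]
by arguing that it is a bijection on closed points between schemes smooth over $\SC$ of the same relative dimension. This inference is not valid in general --- the relative Frobenius of $\mathbb{A}^1_\SC$ in characteristic $\ell>0$ is a counterexample --- and positive characteristic is precisely the case of interest here. The conclusion is still correct, but one must supply an extra ingredient. One fix: your morphism is the restriction of the closed immersion $\GB \times \GB \times \SC \hookrightarrow \GB \times \SC \times \GB \times \SC$, $(h,g,s) \mapsto (g,s,gh^{-1},s)$, to a morphism into the closed subscheme $(\GB \times \SC) \times_{\gg_\reg} (\GB \times \SC)$; by cancellation it is therefore itself a closed immersion, and a surjective closed immersion into a reduced (e.g.\ smooth) scheme is an isomorphism. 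The paper sidesteps the pointwise check entirely: it observes that both structure morphisms defining $\IB_\SC = \SC \times_{\gg \times \SC} (\GB \times \SC)$ factor through $\gg \times_{\gg/\GB} \SC$, which Theorem~\ref{thm:kostant} identifies with $\gg$; this yields $\IB_\SC \simto \SC \times_{\gg}(\GB \times \SC)$ directly (equation~\eqref{eqn:isom-IS}), and the identification $(\GB\times\SC)\times_{\gg_\reg}(\GB\times\SC) \cong \GB\times\IB_\SC$ then follows by $\GB$-equivariance. Apart from this cleaner derivation of the torsor structure, the two arguments coincide, including the final translation of the descent datum into a comodule/equivariance structure and the identification with $\Rep(\JB)$ via Remark~\ref{rk:J-S}.
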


\begin{proof}
We consider the following composition of functors:
\begin{equation*}
\Coh^{\GB}(\gg_\reg) \to \Rep(\IB_\reg) \to \Rep(\IB_\SC) \simto \Rep(\JB).
\end{equation*}
Here the first functor is the natural ``forgetful'' functor, see e.g.~\cite[\S 2.2]{mr}. The second arrow is induced by restriction to $\SC$. And the third functor is induced by the isomorphism $\SC \simto \tg/W$, see Remark~\ref{rk:J-S}. We will prove that the composition of the first two functors is an equivalence, which will imply the proposition.

Recall that $a \colon \GB \times \SC \to \gg_\reg$ is smooth and surjective (see Lemma~\ref{lem:a-smooth}), in particular faithfully flat and quasi-compact. Hence descent theory implies that the category $\Coh^{\GB}(\gg_\reg)$ is equivalent to the category of pairs $(\FC, \vartheta)$ where $\FC$ is an object of $\Coh^{\GB}(\GB \times \SC)$ and $\vartheta$ is an isomorphism of $\GB$-equivariant coherent sheaves between the two natural pullbacks of $\FC$ to $(\GB \times \SC) \times_{\gg_\reg} (\GB \times \SC)$, satisfying the descent conditions. (Here, $\GB$ acts diagonally on the fiber product.) Now we have canonical isomorphisms
\[
(\GB \times \SC) \times_{\gg_\reg} (\GB \times \SC) \cong \GB \times \bigl( \SC \times_{\gg_\reg} (\GB \times \SC) \bigr) \cong \GB \times \IB_\SC.
\]
(Here the first isomorphism is induced by the $\GB$-action and follows from $\GB$-equiva\-riance, and the second isomorphism is given by~\eqref{eqn:isom-IS}.) Under these isomorphisms, the first, resp.~second, projection $(\GB \times \SC) \times_{\gg_\reg} (\GB \times \SC) \to \GB \times \SC$ identifies with the morphism $(g, (h,s)) \mapsto (g,s)$, resp.~$(g,(h,s)) \mapsto (gh, s)$. Hence $\vartheta$ can be interpreted as an isomorphism between the pullbacks of $\FC$ to $\GB \times \IB_\SC$ under these two morphisms.

Now we observe that there exist canonical equivalences of categories
\[
\Coh(\SC) \simto \Coh^{\GB}(\GB \times \SC), \qquad \Coh(\IB_\SC) \simto \Coh^{\GB}(\GB \times \IB_\SC)
\]
induced by pullback under the second projection. Under these equivalences, both pullbacks considered above identify with the natural pullback morphism $\Coh(\SC) \to \Coh(\IB_\SC)$. Hence $\Coh^{\GB}(\gg_\reg)$ is equivalent to the category of pairs $(\FC', \vartheta')$ where $\FC'$ is an object of $\Coh(\SC)$ and $\vartheta'$ is an automorphism of the pullback of $\FC'$ to $\IB_\SC$ satisfying certain conditions. But $\Coh(\SC)$ is equivalent to the category of $\OC(\SC)$-modules via $\Gamma(\SC,-)$, and unravelling the definitions one can check that the datum of the automorphism $\vartheta'$ considered above is equivalent to the datum of an $\OC(\IB_\SC)$-comodule structure on $\Gamma(\SC,\FC')$, which finishes the proof.
%
%
\end{proof}

\begin{remark}
\label{rk:KW-reduction-canonical}
One can describe the equivalence of Proposition~\ref{prop:coh-greg} in slightly more canonical terms, as the composition $\Coh^{\GB}(\gg_\reg) \to \Rep(\IB_\reg) \to \Rep^\UB(\IB_\Upsilon) \simto \Rep(\JB)$. Here $\Rep^\UB(\IB_\Upsilon)$ is the category of $\UB$-equivariant representations of the restriction $\IB_\Upsilon$ of $\IB$ to $\Upsilon$, and the last functor is induced by the isomorphism $\Upsilon / \UB \simto \tg/W$, see~Theorem~\ref{thm:kostant}. In particular, this functor does not depend on the choice of $\sg$, up to canonical isomorphism.
\end{remark}

\subsection{Lie algebras}
\label{ss:Lie-centralizer-F}

We set 
\[
\IG:=\Lie(\IB/\gg), \ \ \IG_\reg:=\Lie(\IB_\reg/\gg_\reg), \ \
\IG_{\SC}:=\Lie(\IB_\SC/\SC), \ \ \JG:=\Lie(\JB/(\tg/W)).
\]
It follows from Lemma~\ref{lem:Lie-alg}\eqref{it:base-change-Lie} that $\IG_\reg$, resp.~$\IG_{\rs}$, is the restriction of $\IG$ to $\gg_\reg$, resp.~$\gg_\rs$. We also set
\[
\IM:=\LLie(\IB/\gg), \ \ \IM_\reg:=\LLie(\IB_\reg/\gg_\reg), \ \ 
\IM_{\SC}:=\LLie(\IB_\SC/\SC), \ \ \JM:=\LLie(\JB/(\tg/W)).
\]

%
%
%

The following properties are easy consequences of the results of \S\ref{ss:centralizer-F}.

\begin{prop}
\label{prop:properties-Lie-alg}
Assume that {\rm (C3)} holds. Then:

\begin{enumerate}
\item
the coherent sheaf $\IG_\reg$, 
resp.~$\IG_\SC$, resp.~$\JG$, on $\gg_\reg$, 
resp.~$\SC$ resp.~$\tg/W$, is locally free of finite rank;
\item
the Lie algebra $\IM_\reg$, 
resp.~$\IM_\SC$, resp.~$\JM$, is a vector bundle over $\gg_\reg$, 
resp.~$\SC$, resp.~$\tg/W$;
\item
the Lie algebras $\IG_\reg$, 
$\IG_\SC$ and $\JG$ are commutative;
\item
the restriction of $\IG_\reg$ to $\SC$ is canonically isomorphic to $\IG_\SC$; in other words there exists a canonical Cartesian diagram
\[
\xymatrix@R=0.5cm{
\IM_\SC \ar[r] \ar[d] & \IM_\reg \ar[d] \\
\SC \ar[r] & \gg_\reg;
}
\]
\item
there exists a canonical isomorphism $\chi_\reg^*(\JG) \cong \IG_\reg$; in other words there exists a canonical Cartesian diagram
\[
\xymatrix@R=0.5cm{
\IM_\reg \ar[r] \ar[d] & \JM \ar[d] \\
\gg_\reg \ar[r]^-{\chi_\reg} & \tg/W;
}
\]
\item
the coherent sheaf $\IG_\SC$ is canonically isomorphic to the pullback of $\JG$ under the isomorphism $\SC \simto \tg/W$; in other words $\IM_\SC$ is canonically isomorphic to the pullback of $\JM$ under the isomorphism $\SC \simto \tg/W$.
\end{enumerate}
\end{prop}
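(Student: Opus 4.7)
The plan is to deduce all six assertions directly from the results of \S\ref{ss:centralizer-F} together with the two parts of Lemma~\ref{lem:Lie-alg}. For (1) and (2), observe that each of the group schemes $\IB_\reg \to \gg_\reg$, $\IB_\SC \to \SC$, and $\JB \to \tg/W$ is smooth (by Corollary~\ref{cor:Ireg-smooth}, Proposition~\ref{prop:IS-smooth}, and Proposition~\ref{prop:J} respectively); hence Lemma~\ref{lem:Lie-alg}\eqref{it:Lie-smooth} immediately yields both the local freeness of the coherent sheaves $\IG_\reg$, $\IG_\SC$, $\JG$ and the vector bundle structure on the corresponding schemes $\IM_\reg$, $\IM_\SC$, $\JM$.

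For (3), commutativity of $\IG_\reg$ is a formal consequence of commutativity of the group scheme $\IB_\reg$ (Corollary~\ref{cor:Ireg-commutative}), the Lie bracket being determined by the group law to first order. The Lie algebra $\JG$ is commutative for the same reason, $\JB$ being commutative by Proposition~\ref{prop:J}. Commutativity of $\IG_\SC$ then follows because $\IG_\SC$ is a restriction of $\IG_\reg$, a point made precise by the Cartesian square of (4).

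The remaining statements (4), (5), and (6) all have the same shape and reduce to Lemma~\ref{lem:Lie-alg}\eqref{it:base-change-Lie}: we exhibit in each case a Cartesian square of group schemes with at least one leg smooth, and invoke the lemma. For (4), $\IB_\SC = \SC \times_{\gg_\reg} \IB_\reg$ by definition, and $\IB_\reg \to \gg_\reg$ is smooth by Corollary~\ref{cor:Ireg-smooth}. For (5), Proposition~\ref{prop:J} provides an identification $\IB_\reg \cong \gg_\reg \times_{\tg/W} \JB$, and $\JB \to \tg/W$ is smooth again by Proposition~\ref{prop:J}. Finally, (6) follows either from the isomorphism $\IB_\SC \cong \SC \times_{\tg/W} \JB$ furnished by Remark~\ref{rk:J-S}, or by combining (4) and (5) with the factorization $\SC \hookrightarrow \gg_\reg \xrightarrow{\chi_\reg} \tg/W$. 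The only mild obstacle is bookkeeping: checking that each fiber-product square we invoke is genuinely Cartesian with one leg smooth, so that Lemma~\ref{lem:Lie-alg}\eqref{it:base-change-Lie} applies, and that the resulting isomorphisms of $\OC$-Lie algebras translate faithfully into the corresponding isomorphisms of the $\LLie$-schemes $\IM_{(-)}$, $\JM$ in the statement. No ingredient beyond Lemma~\ref{lem:Lie-alg} and the results of \S\ref{ss:centralizer-F} is required.
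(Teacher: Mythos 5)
Your proof is correct and follows essentially the same approach as the paper: parts (1) and (2) from Lemma~\ref{lem:Lie-alg}\eqref{it:Lie-smooth} plus the three smoothness results, part (3) from commutativity of $\IB_\reg$ and $\JB$, and parts (4)--(6) from Lemma~\ref{lem:Lie-alg}\eqref{it:base-change-Lie} applied to the same Cartesian squares (with (6) via Remark~\ref{rk:J-S}). The only difference is that you spell out a few routine steps that the paper leaves implicit.
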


\begin{proof}
$(1)$ and $(2)$ follow from Lemma~\ref{lem:Lie-alg}\eqref{it:Lie-smooth} together with Proposition~\ref{prop:IS-smooth}, Corollary~\ref{cor:Ireg-smooth} and Proposition~\ref{prop:J}. $(3)$ follows from Corollary~\ref{cor:Ireg-commutative} and Proposition~\ref{prop:J}. $(4)$ follows from Lemma~\ref{lem:Lie-alg}\eqref{it:base-change-Lie} and Corollary~\ref{cor:Ireg-smooth}. $(5)$ follows from Lemma~\ref{lem:Lie-alg}\eqref{it:base-change-Lie} and Proposition~\ref{prop:J}. Finally, $(6)$ follows from Remark~\ref{rk:J-S}.
\end{proof}

The main result of this subsection gives a more concrete description of the Lie algebras $\JM$, $\IM_\reg$ and $\IM_\SC$. The proof will require that (C4) holds. In fact, from now on we fix a $\GB$-equivariant isomorphism $\kappa \colon \gg \simto \gg^*$.

\begin{thm}
\label{thm:Lie-alg-cotangent}
Assume that {\rm (C4)} holds. Then
there exist canonical isomorphisms
\[
\JM \cong \TM^*(\tg/W), \qquad \IM_\reg \cong \gg_\reg \times_{\tg/W} \TM^*(\tg/W), \qquad \IM_\SC \cong \TM^*(\SC)
\]
of commutative Lie algebras over $\tg/W$, $\gg_\reg$ and $\SC$, respectively.
\end{thm}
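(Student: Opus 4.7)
The plan is to construct the first isomorphism $\JM \cong \TM^*(\tg/W)$, or equivalently a canonical isomorphism of $\OC_{\tg/W}$-modules $\JG \cong \Omega_{(\tg/W)/\FM}$, and to deduce the other two by base change. Indeed, by Proposition~\ref{prop:properties-Lie-alg}(5) we would then have $\IM_\reg \cong \gg_\reg \times_{\tg/W} \JM \cong \gg_\reg \times_{\tg/W} \TM^*(\tg/W)$, and by Proposition~\ref{prop:properties-Lie-alg}(6) together with the compatibility of cotangent bundles with isomorphisms of schemes, $\IM_\SC \cong \TM^*(\SC)$. All Lie brackets involved are zero (by Proposition~\ref{prop:properties-Lie-alg}(3) on the $\JM$-side, and as vector bundles on the other side), so compatibility of the isomorphisms with the Lie brackets is automatic.

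First I would construct the isomorphism over $\gg_\reg$. The canonical map $\chi^* \Omega_{(\gg/\GB)/\FM} \to \Omega_{\gg/\FM}$ of coherent sheaves on $\gg$, combined with the identification $\Omega_{\gg/\FM} \cong \gg^* \otimes_\FM \OC_\gg$ (since $\gg$ is an affine space) and the inverse of the $\GB$-equivariant isomorphism $\kappa$, gives a $\GB$-equivariant morphism
\[
\varphi \colon \chi^* \Omega_{(\gg/\GB)/\FM} \longto \gg \otimes_\FM \OC_\gg.
\]
Viewing $\IG_\reg$ as an $\OC_{\gg_\reg}$-submodule of $\gg \otimes_\FM \OC_{\gg_\reg}$ via the fiberwise inclusions $\gg_x = \Lie(\GB_x) \hookrightarrow \gg$ (Remark~\ref{rk:centralizer-elements-S}), I claim that the restriction of $\varphi$ to $\gg_\reg$ factors through $\IG_\reg$. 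Since $\dim(\gg_x) = r$ is constant on $\gg_\reg$ (Remark~\ref{rk:dim-centralizer}), the quotient $(\gg \otimes_\FM \OC_{\gg_\reg})/\IG_\reg$ is locally free; hence on the reduced scheme $\gg_\reg$ this claim can be checked fiberwise. But the fiber of $\varphi$ at $x \in \gg_\reg$ factors as
\[
\TM^*_{\chi(x)}(\tg/W) \simto (\gg/[x,\gg])^* \xrightarrow{\kappa^{-1}} \gg_x,
\]
using Remark~\ref{rk:tangent-space} for the first isomorphism and Lemma~\ref{lem:kappa-centralizer} for the second. This shows the claim and, at the same time, that the resulting morphism $\chi_\reg^* \Omega_{(\tg/W)/\FM} \to \IG_\reg$ is an isomorphism on every fiber. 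As both sheaves are locally free of rank $r$ (Proposition~\ref{prop:properties-Lie-alg}(1)), it is an isomorphism.

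To descend this isomorphism to $\tg/W$, I use that $\chi_\reg$ is faithfully flat and quasi-compact (Proposition~\ref{prop:chi-reg-smooth}), and that the morphism $\mu \colon \GB \times \gg_\reg \to \gg_\reg \times_{\tg/W} \gg_\reg$, $(g,x) \mapsto (x, g \cdot x)$, is smooth and surjective, as was established in the proof of Proposition~\ref{prop:J}. Both $\chi_\reg^* \Omega_{(\tg/W)/\FM}$ and $\IG_\reg = \chi_\reg^* \JG$ carry their canonical descent data as pullbacks from $\tg/W$; after pulling back along $\mu$, compatibility with these data amounts to $\GB$-equivariance (with respect to the trivial action on $\tg/W$), which holds for our isomorphism since $\chi$ is $\GB$-invariant and $\kappa$ is $\GB$-equivariant. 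Faithfully flat descent then yields an isomorphism $\Omega_{(\tg/W)/\FM} \simto \JG$, and taking relative spectra of symmetric algebras gives $\TM^*(\tg/W) \simto \JM$.

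I expect the most delicate step to be the verification that $\varphi|_{\gg_\reg}$ factors through the subsheaf $\IG_\reg$: this is a global assertion that must be lifted from a purely fiberwise computation, which requires combining Lemma~\ref{lem:kappa-centralizer} and Remark~\ref{rk:tangent-space} with the constancy of $\dim(\gg_x)$ on $\gg_\reg$ (to ensure the relevant quotient is locally free) and the reducedness of $\gg_\reg$.
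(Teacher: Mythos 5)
Your proposal is correct, but it takes a genuinely different route from the paper's proof. The paper constructs a morphism $\IG_\SC \hookrightarrow \gg \otimes_\FM \OC_\SC \xrightarrow{\kappa} \gg^* \otimes_\FM \OC_\SC \twoheadrightarrow \Omega_\SC$ directly on the Kostant section $\SC$, and proves it is an isomorphism by exploiting the contracting $\Gm$-action on $\SC$: the map is $\Gm$-equivariant, so by graded Nakayama it suffices to check the fiber at the single fixed point $e$, where it becomes $\gg_e \hookrightarrow \gg \xrightarrow{\kappa} \gg^* \twoheadrightarrow \sg^*$ and the isomorphism follows from Lemma~\ref{lem:kappa-centralizer} and Lemma~\ref{lem:direct-sum}. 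It then obtains the first two isomorphisms of the theorem from the third via Proposition~\ref{prop:properties-Lie-alg}(5)-(6). You instead construct the map in the opposite direction and over all of $\gg_\reg$, using the cotangent map of $\chi$, prove it is an isomorphism by checking fibers at every point (using Remark~\ref{rk:tangent-space} and Lemma~\ref{lem:kappa-centralizer}, together with reducedness of $\gg_\reg$ and local freeness of the relevant cokernel — which, incidentally, follows directly from smoothness of $\IB_\reg/\gg_\reg$, not really from the constancy of $\dim\gg_x$), and then descend to $\tg/W$ via $\GB$-equivariance and faithfully flat descent along $\chi_\reg$, paralleling the argument of Proposition~\ref{prop:J}. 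What the paper's approach buys is economy: only one point needs to be checked, and no descent machinery is needed because the base $\SC \cong \tg/W$ is already reached directly. What your approach buys is that it sidesteps the graded Nakayama/contraction step and makes the geometric content at each regular point (via Remark~\ref{rk:tangent-space}) explicit; it also emphasizes that the isomorphism really lives over $\gg_\reg$ and only then descends.
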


\begin{proof}
It is enough to construct the third isomorphism; then the other two isomorphisms follow, using Proposition~\ref{prop:properties-Lie-alg}(5)-(6).

First we construct a morphism of coherent sheaves $\IG_\SC \to \Omega_{\SC}$. Since $\IB_\SC$ is a closed subgroup of $\GB \times \SC$, the Lie algebra $\IG_\SC$ embeds naturally in $\Lie(\GB \times \SC/\SC)=\gg \otimes_\FM \OC_{\SC}$. Now since $\SC$ is a smooth closed subvariety in $\gg$, the tangent sheaf $\TC_{\SC}$ embeds in the restriction $\TC_{\gg|\SC}$ of $\TC_{\gg}$ to $\SC$, i.e.~in $\gg \otimes_\FM \OC_{\SC}$, and the cokernel of this embedding is a locally free sheaf. We deduce a canonical surjection $\gg^* \otimes_\FM \OC_{\SC} \twoheadrightarrow \Omega_{\SC}$. Then we define our morphism as the composition
\begin{equation}
\label{eqn:morphism-thm-cotangent}
\IG_\SC \hookrightarrow \gg \otimes_\FM \OC_{\SC} \xrightarrow[\sim]{\kappa \otimes \OC_\SC} \gg^* \otimes_\FM \OC_{\SC} \twoheadrightarrow \Omega_{\SC}.
\end{equation}
To finish the proof it suffices to prove that this morphism is an isomorphism.

Recall the contracting $\Gm$-action on $\SC$ considered in~\S\ref{ss:principal-nilpotent}. One can ``extend'' this action to a compatible action on $\GB \times \SC$ by group automorphisms, setting $t \cdot (g,x) = ({\check \lambda}_\circ(t) g {\check \lambda}_\circ(t)^{-1}, t \cdot x)$. This action stabilizes $\IB_\SC$, and we deduce a $\Gm$-equivariant structure on $\IG_\SC$.
The $\Gm$-action on $\SC$ also induces a $\Gm$-equivariant structure $\Omega_\SC$, and~\eqref{eqn:morphism-thm-cotangent} induces a morphism of $\Gm$-equivariant coherent sheaves $\IG_\SC \to \Omega_\SC \langle -2 \rangle$. Since $\Omega_\SC$ is locally free (in particular, has no torsion), using 
the graded Nakayama lemma we deduce that to prove that~\eqref{eqn:morphism-thm-cotangent} is an isomorphism it suffices to prove that the induced morphism
\begin{equation}
\label{eqn:morphism-thm-cotangent-e}
i_e^*(\IG_\SC) \to i_e^*(\Omega_\SC)
\end{equation}
is an isomorphism, where $i_e \colon \{e\} \hookrightarrow \SC$ is the inclusion.

We claim that there exists a canonical isomorphism $i_e^*(\IG_\SC) \simto \gg_e$. Indeed, using Lemma~\ref{lem:Lie-alg}\eqref{it:base-change-Lie} and Proposition~\ref{prop:IS-smooth} there exists a canonical isomorphism $i_e^*(\IG_\SC) \simto \Lie(\GB_e)$, and by Remark~\ref{rk:centralizer-elements-S} the right-hand side identifies with $\gg_e$ canonically.
Using this claim and the obvious isomorphism $i_e^*(\Omega_\SC) \cong \sg^*$, one can identify~\eqref{eqn:morphism-thm-cotangent-e} with the composition
\[
\gg_e \hookrightarrow \gg \xrightarrow[\sim]{\kappa} \gg^* \twoheadrightarrow \sg^*.
\]
The fact that this morphism is an isomorphism follows from Lemma~\ref{lem:kappa-centralizer} and Lemma~\ref{lem:direct-sum}.
\end{proof}

\subsection{Variant for the Grothendieck resolution}
\label{ss:groth-resolution}

In this subsection we consider analogues of the objects studied above for the Grothendieck resolution $\tgg$. Recall that this variety is the vector bundle on the flag variety $\Flag$ of $\GB$ (considered as the variety of Borel subgroups in $\GB$) defined as
\[
\tgg=\{(x,\BB') \in \gg \times \Flag \mid x \in \Lie(\BB')\}.
\]

There exists a natural projective morphism
$\pi \colon \tgg \to \gg$ defined by $\pi(x,\BB')=x$. We denote by $\tgg_{\reg}$, resp.~$\tgg_\rs$, the inverse image of $\gg_{\reg}$, resp.~$\gg_\rs$, in $\tgg$, and by $\pi_\reg:\tgg_\reg \to \gg_\reg$, resp.~$\pi_\rs \colon \tgg_\rs \to \gg_\rs$, the restriction of $\pi$. 

We will also use the morphism $\nu \colon \tgg \to \tg$ defined as follows: for $g \in \GB$ and $x \in \Lie(g\BB g^{-1})$, $\nu(x, g \BB g^{-1})$ is the inverse image under the isomorphism $\tg \hookrightarrow \bg \twoheadrightarrow \bg/\ng$ of the image of $g^{-1} \cdot x$ in $\bg/\ng$. (One can easily check that this morphism is well defined, and is a smooth morphism which satisfies $\tgg_\rs=\nu^{-1}(\tg_\rs)$, see e.g.~\cite[\S 13.3]{jantzen}.) We denote by $\nu_\reg \colon \tgg_\reg \to \tg$ the restriction of $\nu$ to $\tgg_\reg$. Combining $\pi$ and $\nu$ we obtain a morphism
\[
\varphi \colon \tgg \to \gg \times_{\tg/W} \tg.
\]
We denote by $\varphi_\reg \colon \tgg_{\reg} \to \ggr \times_{\tg /W} \tg$, resp.~$\varphi_{\mathrm{rs}} \colon \tgg_{\mathrm{rs}} \to \gg_{\mathrm{rs}} \times_{\tg_{\mathrm{rs}} / W} \tg_{\mathrm{rs}}$, the restriction of $\varphi$ to $\tgg_\reg$, resp.~$\tgg_{\mathrm{rs}}$.

We will consider
the schemes
\[
\tSC := \SC \times_{\gg} \tgg, \qquad \tUp := \Upsilon \times_{\gg} \tgg.
\]
These schemes are also endowed with an action of $\Gm$, obtained by restricting the action on $\tgg$ defined by
\[
t \cdot (x,\BB') :=(t^{-2} {\check \lambda}_\circ(t) \cdot x, {\check \lambda}_\circ(t) \BB' {\check \lambda}_\circ(t)^{-1})
\]
for $t \in \Gm$ and $(x,\BB') \in \tgg$. The natural morphisms $\tSC \to \SC$ and $\tUp \to \Upsilon$ are $\Gm$-equivariant. Note that the set-theoretic fiber of $\pi$ over $e$ is reduced to $(e,\BB^+)$ by~\cite[Lemma~5.3]{springer}. Hence this $\Gm$-action on $\tUp$ is contracting to $(e,\BB^+)$.

One can also consider the universal stabilizer $\tIB$ over $\tgg$, defined as the fiber product
\[
\tIB:=\tgg \times_{\tgg \times \tgg} (\GB \times \tgg).
\]
If $(x,\BB')$ is a point in $\tgg$, then the fiber of $\tIB$ over $(x,\BB')$ identifies with $(\BB')_x$. We will denote by $\tIB_{\reg}$ the restriction of $\tIB$ to $\tgg_{\reg}$, by $\tIB_{\rs}$ its restriction to $\tgg_{\rs}$, and by $\tIB_{\SC}$ its restriction to $\tSC$. If
{\rm (C3)} holds, then it follows from Corollary~\ref{cor:Ireg-commutative} that these group schemes are commutative.

\begin{lem}
\label{lem:ta-smooth}
Assume that {\rm (C3)} holds. Then
the morphism
\[
\widetilde{a} \colon \GB \times \tSC \to \tgg_\reg
\]
induced by the $\GB$-action on $\tgg$ is smooth and surjective.
\end{lem}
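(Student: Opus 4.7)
The plan is to reduce this directly to Lemma~\ref{lem:a-smooth} by exhibiting $\widetilde{a}$ as a base change of the morphism $a \colon \GB \times \SC \to \ggr$. Concretely, I would consider the natural commutative square
\[
\xymatrix@R=0.5cm{
\GB \times \tSC \ar[r]^-{\widetilde{a}} \ar[d] & \tgg_\reg \ar[d]^-{\pi_\reg} \\
\GB \times \SC \ar[r]^-{a} & \ggr,
}
\]
where the left vertical arrow is $(g,(s,(x,\BB')))\mapsto (g,s)$ (recall that a point of $\tSC$ is a pair $(s,(x,\BB'))$ with $s=x$). Commutativity is immediate from the formulas $\widetilde{a}(g,s,\BB')=(g\cdot s,g\BB'g^{-1})$ and $a(g,s)=g\cdot s$.

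The main step is then to check that this square is Cartesian. The fiber product $(\GB\times\SC)\times_{\ggr}\tgg_\reg$ consists of quadruples $((g,s),(x,\BB''))$ with $g\cdot s=x$ and $x\in\Lie(\BB'')$. The map from $\GB\times\tSC$ sends $(g,s,\BB')$ to $((g,s),(g\cdot s,g\BB'g^{-1}))$; its inverse sends $((g,s),(x,\BB''))$ to $(g,s,g^{-1}\BB''g)$, and the condition $s\in\Lie(g^{-1}\BB''g)$ is automatic from $x=g\cdot s\in\Lie(\BB'')$ via the $\GB$-equivariance of the adjoint action. Hence the bijection is an isomorphism of schemes and the square is Cartesian.

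Once this is established, the conclusion is immediate: Lemma~\ref{lem:a-smooth} guarantees that $a$ is smooth and surjective under assumption (C3), and both properties are stable under arbitrary base change, so $\widetilde{a}$ inherits them. The only point that requires a small verification is the identification of the fiber product with $\GB\times\tSC$, but this is a routine bookkeeping check using the definitions of $\tSC$ and $\tgg$; I do not expect any genuine obstacle in the argument.
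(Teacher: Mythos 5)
Your proof is correct and takes essentially the same approach as the paper: both exhibit $\widetilde{a}$ as the base change of $a$ along $\pi_\reg$ via the Cartesian square, and then invoke Lemma~\ref{lem:a-smooth} together with stability of smoothness and surjectivity under base change. The paper simply asserts the square is Cartesian ``by $\GB$-equivariance,'' whereas you spell out the pointwise verification, but the argument is the same.
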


\begin{proof}
By $\GB$-equivariance the following diagram is Cartesian:
\[
\xymatrix@C=1.5cm@R=0.5cm{
\GB \times \tSC \ar[r]^-{\widetilde{a}} \ar[d] & \tgg_\reg \ar[d]^-{\pi_\reg} \\
\GB \times \SC \ar[r]^-{a} & \gg_\reg,
}
\]
where the left vertical map is the product of $\mathrm{id}_\GB$ with the morphism induced by $\pi$. Hence smoothness and surjectivity of $\widetilde{a}$ follow from the same properties for $a$, which we proved in Lemma~\ref{lem:a-smooth}.
\end{proof}

\begin{remark}
Using similar arguments one can prove that
the morphism $\UB \times \tSC \to \tUp$ induced by the $\UB$-action on $\tgg$ is an isomorphism, assuming that (C2) holds.
\end{remark}

\begin{lem}
\label{lem:tgg-reg}
Assume that {\rm (C3)} holds. Then
the morphism 
\[
\varphi_\reg \colon \tgg_{\reg} \to \ggr \times_{\tg /W} \tg
\]
is an isomorphism.
\end{lem}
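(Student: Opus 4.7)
The plan is to reduce via faithfully flat descent to the restriction to the Kostant section, and then to show the resulting morphism is finite flat of generic degree one.

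First, using the $\GB$-equivariance of $\varphi$, I would check that the morphism $\widetilde{a} \colon \GB \times \tSC \to \tgg_\reg$ and its obvious analogue $\GB \times (\SC \times_{\tg/W} \tg) \to \ggr \times_{\tg/W} \tg$ (the base change of $a$ along $\tg \to \tg/W$) fit into a Cartesian square with right vertical arrow $\varphi_\reg$ and left vertical arrow $(g,(x,\BB')) \mapsto (g,(x,\nu(x,\BB')))$. Both vertical maps are smooth surjective by Lemmas~\ref{lem:a-smooth} and~\ref{lem:ta-smooth}, so by faithfully flat descent $\varphi_\reg$ is an isomorphism iff the induced morphism $\varphi_\SC \colon \tSC \to \SC \times_{\tg/W} \tg$ is. By Kostant's theorem (Theorem~\ref{thm:kostant}), the second projection identifies $\SC \times_{\tg/W} \tg$ with $\tg$, under which $\varphi_\SC$ becomes $\nu|_\tSC \colon \tSC \to \tg$. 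It suffices to show this map is an isomorphism.

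Second, I would prove $\nu|_\tSC$ is finite. Since $\SC$ consists of regular elements (by \eqref{eqn:Up-reg}) and the fibers of $\pi$ over the regular locus are finite, the base-changed map $\tSC \to \SC$ is proper with finite fibers, hence finite; in particular $\tSC$ is affine of dimension $r$. The composition $\tSC \xrightarrow{\nu|_\tSC} \tg \to \tg/W$ coincides with the finite map $\tSC \to \SC \simto \tg/W$, so the cancellation property (applied using that $\tg \to \tg/W$ is finite, hence separated) makes $\nu|_\tSC$ proper. Quasi-finiteness from the dimension count $\dim \tSC = r = \dim \tg$ then gives finiteness.

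Third, I would apply miracle flatness twice to conclude. The proper morphism $\pi_\reg \colon \tgg_\reg \to \ggr$, between smooth varieties of the same dimension with finite fibers, is flat by miracle flatness (\cite[Theorem~23.1]{matsumura}). Hence $\tSC = \SC \times_{\ggr} \tgg_\reg$ is Cohen-Macaulay of dimension $r$, being the inverse image under a flat map of the smooth (in particular regular) closed immersion $\SC \hookrightarrow \ggr$, inside the smooth $\tgg_\reg$. Miracle flatness applied to the finite morphism $\nu|_\tSC$ from this Cohen-Macaulay variety to the regular variety $\tg$ of the same dimension yields flatness. A direct fiber computation over $\tg_\rs$, using Lemmas~\ref{lem:chi-conjugate} and~\ref{lem:centralizer-ss-reg} to see that each $t \in \tg_\rs$ has a unique preimage (namely the Kostant section value $\sigma(\bar{t})$ together with the unique Borel containing it that realizes the prescribed $\nu$-value), shows that the generic degree of $\nu|_\tSC$ is $1$. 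By flatness this degree is constant, so $\nu|_\tSC$ is finite flat of constant degree $1$, hence an isomorphism.

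The main obstacle is obtaining the correct scheme-theoretic structure on $\tSC$; this is precisely what miracle flatness provides. An alternative route bypasses this: once $\nu|_\tSC$ is known to be finite, one may verify that $\tSC$ is integral (each irreducible component must dominate $\tg$ via $\nu|_\tSC$ since otherwise it would have dimension $<r$, and the generic fiber is reduced of size one) and then invoke that a finite birational morphism to a normal variety is an isomorphism.
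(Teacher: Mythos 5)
The proposal takes a genuinely different route from the paper, and it is essentially correct modulo one real gap.

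\emph{Comparison.} The paper works with $\varphi_\reg$ directly: it proves that $\varphi_\reg$ is finite (by establishing quasi-finiteness of $\pi_\reg$ via $\GB$-equivariance, the contracting $\Gm$-action on $\tSC$, and the fact that the set-theoretic fiber of $\pi$ over $e$ is a single point, then combining with projectivity), proves that $\ggr \times_{\tg/W} \tg$ is smooth using Proposition~\ref{prop:chi-reg-smooth}, and proves that $\varphi_\rs$ is an isomorphism; it then concludes via the general principle that a finite birational morphism of integral Noetherian schemes onto a normal scheme is an isomorphism. You instead use fpqc descent along the base change of $a$ to reduce to the restriction $\varphi_\SC\colon \tSC \to \SC\times_{\tg/W}\tg \cong \tg$, and then argue that this is finite, flat (by miracle flatness), and of generic degree one. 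The descent step is valid (the square you describe is indeed Cartesian, and descent of isomorphisms along fpqc morphisms applies), and it has the nice effect of replacing the target by the affine space $\tg$, so one only needs regularity of $\tg$ rather than smoothness of $\ggr\times_{\tg/W}\tg$. The Cohen--Macaulay + miracle flatness + degree-one argument is a legitimate alternative to the paper's normality argument. So the strategy is different and has some appeal.

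\emph{The gap.} You assert that ``the fibers of $\pi$ over the regular locus are finite'' to get finiteness of $\tSC\to\SC$ (and again later, to apply miracle flatness to $\pi_\reg$). This is precisely the nontrivial point that the paper proves inside this lemma; it is not available earlier in the paper, and it is not automatic. The paper's argument reduces quasi-finiteness of $\pi_\reg$ to a single fact from Springer (\cite[Lemma~5.3]{springer}), using that the quasi-finite locus is open, $\GB$-equivariance together with Lemma~\ref{lem:ta-smooth} to reduce to $\tSC$, and the contracting $\Gm$-action on $\tSC$ to reduce to the single point $(e,\BB^+)$. You should supply an argument of this kind (or at minimum cite a precise reference for the finiteness of Springer/Grothendieck fibers over regular elements); as written this step is simply assumed.

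\emph{Minor misstatement.} The sentence ``Quasi-finiteness from the dimension count $\dim\tSC=r=\dim\tg$ then gives finiteness'' is not valid reasoning: a proper dominant morphism between varieties of equal dimension need not be quasi-finite (think of small contractions). What you actually want is the cancellation property for \emph{finite} morphisms applied directly: the composite $\tSC\to\tg\to\tg/W$ is finite (it equals $\tSC\to\SC\simto\tg/W$) and $\tg\to\tg/W$ is separated, hence $\nu|_{\tSC}$ is finite. There is no need to pass through ``proper then quasi-finite.'' With that correction, and with the gap above filled, the rest (Cohen--Macaulayness of $\tSC$, miracle flatness of $\nu|_{\tSC}$, degree one over $\tg_\rs$ via Lemma~\ref{lem:chi-conjugate} and Lemma~\ref{lem:centralizer-ss-reg}) goes through.
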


\begin{proof}
Following the arguments in~\cite[Remark~4.2.4(i)]{ginzburg}, we only have to prove the following properties:
\begin{enumerate}
\item the morphism $\varphi_{\reg}$ is finite;
\item the scheme $\ggr \times_{\tg /W} \tg$ is a smooth variety;
\item $\varphi_{\mathrm{rs}}$ is an isomorphism.
\end{enumerate} 
Indeed, then the claim follows from the general result that if $f \colon X \to Y$ is a finite, birational morphism of integral Noetherian schemes such that $Y$ is normal, then $f$ is an isomorphism.

First, we claim that the projection $\pi_{\reg} \colon \tgg_{\reg} \to \gg_{\reg}$ is quasi-finite. In fact,  since the locus where $\pi_\reg$ is quasi-finite is open (see~\cite[Tag 01TI]{stacks-project}), it is enough to prove that this morphism is quasi-finite at all closed points of $\tgg_\reg$. Since any closed point in $\tgg_\reg$ is $\GB$-conjugate to a closed point in $\tSC$ (see Lemma~\ref{lem:ta-smooth}), it is enough to prove that $\pi_\reg$ is quasi-finite at all closed point of $\tSC$. Using the contracting $\Gm$-action on $\tSC$ (and again the fact that the quasi-finite locus is open), it is enough to prove that $\pi_\reg$ is quasi-finite at $(e,\BB^+)$. This property follows from~\cite[Tag 02NG]{stacks-project} and the fact that the 
set-theoretic fiber of $\pi_\reg$ over $e$ is reduced to $(e,\BB^+)$.

Since $\pi_{\reg}$ is quasi-finite, a fortiori $\varphi_{\mathrm{reg}}$ is quasi-finite, see~\cite[Tag 03WR]{stacks-project}. Since this morphism is also projective, it is finite, see~\cite[Tags 02NH \& 02LS]{stacks-project}, which proves $(1)$.

Property~$(2)$ is a consequence of Proposition~\ref{prop:chi-reg-smooth}.

Finally we turn to $(3)$. We remark that $\pi_{\rs}$ is {\'e}tale by~\cite[Lemma~13.4]{jantzen}.
On the other hand, the quotient morphism $\tg_\rs \to \tg_\rs / W$ is {\'e}tale by Lemma~\ref{lem:centralizer-ss-reg}\eqref{it:W-trs-free} and~\cite[Remark~12.8]{jantzen}, hence the morphism $\gg_\rs \times_{\tg_\rs/W} \tg_\rs \to \gg_\rs$ is also {\'e}tale. We deduce that $\varphi_\rs$ is {\'e}tale. Moreover, comparing the fibers of $\pi_\rs$ (see~\cite[Lemma~13.3]{jantzen}) and of the projection $\gg_\rs \times_{\tg_\rs/W} \tg_\rs \to \gg_\rs$ (see Lemma~\ref{lem:centralizer-ss-reg}\eqref{it:W-trs-free}) over closed points, we obtain that $\varphi_\rs$ induces a bijection at the level of closed points. Using~\cite[Tag 04DH]{stacks-project}, we deduce that $\varphi_\rs$ is an isomorphism.
\end{proof}

\begin{remark}
\label{rk:tggreg-parabolic}
Let $\PB \subset \GB$ be a standard parabolic subgroup of $\GB$, let $W_\PB \subset W$ be the Weyl group of the Levi factor of $\PB$ containing $\TB$, and consider the variety $\tgg^\PB:=\{(x,g\PB) \in \gg \times \GB/\PB \mid x \in \Lie(g \PB g^{-1})\}$. There exist natural morphisms $\tgg \to \tgg^\PB \to \gg$, and we denote by $\tgg^\PB_\reg \subset \tgg^\PB$ the inverse image of $\gg_\reg$. If (C3) holds, there exist canonical isomorphisms
\[
\tgg^\PB_\reg \simto \gg_\reg \times_{\tg/W} (\tg/W_\PB) \qquad \text{and} \qquad \tgg_\reg \simto \tgg^\PB_\reg \times_{\tg / W_\PB} \tg.
\]
(Indeed, the same arguments as for Lemma~\ref{lem:tgg-reg} prove the first isomorphism, and the second one follows.)
\end{remark}

\begin{prop}
\label{prop:tilde-S-Up}
Assume that {\rm (C3)} holds.

\begin{enumerate}
\item
The morphisms
\[
\tSC \to \SC \times_{\tg/W} \tg, \quad \text{and} \quad \tUp \to \Upsilon \times_{\tg/W} \tg
\]
induced by $\varphi$ are isomorphisms. In particular,
$\tSC$ and $\tUp$ are affine schemes.
\item
The morphism $\nu_\reg$ restricts to an isomorphism $\tSC \simto \tg$.
\label{it:tSC-t}
\end{enumerate}
\end{prop}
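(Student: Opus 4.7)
The plan is to deduce both parts of the proposition from \cref{lem:tgg-reg} together with the fact that $\Upsilon \subset \ggr$ (see~\eqref{eqn:Up-reg}), which in particular implies $\SC \subset \ggr$.

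For part~(1), I first observe that since $\SC \subset \ggr$ and $\Upsilon \subset \ggr$, the defining fiber products can be rewritten as
\[
\tSC = \SC \times_{\gg} \tgg = \SC \times_{\ggr} \tgg_\reg, \qquad \tUp = \Upsilon \times_{\gg} \tgg = \Upsilon \times_{\ggr} \tgg_\reg.
\]
Applying \cref{lem:tgg-reg}, which provides an isomorphism $\tgg_\reg \simto \ggr \times_{\tg/W} \tg$ compatible with projection to $\ggr$, a standard manipulation of fiber products then yields the desired isomorphisms
\[
\tSC \simto \SC \times_{\tg/W} \tg, \qquad \tUp \simto \Upsilon \times_{\tg/W} \tg.
\]
Affineness of $\tSC$ and $\tUp$ follows immediately, since $\SC$, $\Upsilon$, $\tg$, and $\tg/W$ are all affine (using \cref{prop:Chevalley-thm} for the last).

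For part~(2), I compose the isomorphism just obtained with the base change of the Kostant isomorphism $\SC \simto \gg/\GB = \tg/W$ of \cref{thm:kostant}, yielding
\[
\tSC \simto \SC \times_{\tg/W} \tg \simto (\tg/W) \times_{\tg/W} \tg \simto \tg.
\]
It then only remains to check that this composed isomorphism agrees with the restriction of $\nu_\reg$ to $\tSC$; this is a direct unwinding of the definitions of $\nu$ and $\varphi$, using that the second projection $\gg \times_{\tg/W} \tg \to \tg$ is the second component of $\varphi$ by construction.

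The argument is essentially formal once \cref{lem:tgg-reg} and \cref{thm:kostant} are available, so no genuine obstacle arises; the only point requiring a little care is the compatibility check in part~(2) that identifies the composite isomorphism with $\nu_\reg|_{\tSC}$, which amounts to tracing through the definition of the morphism $\nu$ on points of the form $(x,\BB^+) \in \tUp$ and comparing with the projection $\Upsilon \times_{\tg/W} \tg \to \tg$.
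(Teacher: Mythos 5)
Your proof is correct and takes essentially the same route as the paper, which simply cites~\eqref{eqn:Up-reg}, Lemma~\ref{lem:tgg-reg}, and Theorem~\ref{thm:kostant} without elaboration; you have filled in the formal fiber-product manipulations and the compatibility check with $\nu_\reg$ in a way that matches the intended argument.
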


\begin{proof}
(1) follows from~\eqref{eqn:Up-reg} and Lemma~\ref{lem:tgg-reg}. Then 
(2) follows from (1) and Theorem~\ref{thm:kostant}.
\end{proof}

Now we consider universal centralizers.

\begin{prop}
\label{prop:tIreg-smooth}
Assume that {\rm (C3)} holds. The natural commutative diagram
\[
\xymatrix@C=1.5cm@R=0.5cm{
\tIB_\reg \ar[r] \ar[d] & \IB_\reg \ar[d] \\
\tgg_\reg \ar[r]^-{\pi_\reg} & \gg_\reg
}
\]
is Cartesian.
In particular,
the group scheme $\tIB_\reg$ is smooth over $\tgg_\reg$.
\end{prop}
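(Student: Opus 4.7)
The plan is to prove Cartesianness by showing that the natural closed immersion $\iota \colon \tIB_\reg \hookrightarrow Y$ of group schemes over $\tgg_\reg$ is an isomorphism, where $Y := \tgg_\reg \times_{\gg_\reg} \IB_\reg$; on fibers this is the inclusion $(\BB')_x \hookrightarrow \GB_x$. Once this is done, the ``in particular'' clause is immediate, since $Y \to \tgg_\reg$ is smooth by base change from Corollary~\ref{cor:Ireg-smooth}.

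The key preliminary observation is that $Y$ is reduced: it is smooth over $\tgg_\reg$ by base change, and $\tgg_\reg$ is itself smooth (being an open subscheme of the vector bundle $\tgg$ over $\Flag$), so $Y$ is smooth over $\FM$.

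Next I would verify that $\iota$ is an isomorphism over the regular semisimple locus $\tgg_\rs$. For $(x, \BB') \in \tgg_\rs$, the unique maximal torus $\TB'$ with $x \in \Lie(\TB')$ is automatically contained in $\BB'$, and Lemma~\ref{lem:centralizer-ss-reg}\eqref{it:stabilizer-trs} (applicable since (C3) implies its hypotheses) gives $\GB_x = \TB'$. Hence the scheme-theoretic intersection $(\BB')_x = \BB' \cap \GB_x$ coincides with $\TB' = \GB_x$, so $\iota$ induces an isomorphism on every geometric fiber over $\tgg_\rs$; combined with flatness of $Y|_{\tgg_\rs}$ over $\tgg_\rs$, the fiberwise criterion for closed immersions into flat schemes yields $\tIB_\rs \simto Y|_{\tgg_\rs}$.

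To conclude, I would extend from $\tgg_\rs$ to $\tgg_\reg$ by density. The locus $\tgg_\rs$ is nonempty (by (C1), which holds under (C3)) and open in $\tgg_\reg$, which is irreducible since $\tgg$ is a vector bundle over the irreducible $\Flag$; therefore $\tgg_\rs$ is open dense in $\tgg_\reg$. Flatness of $Y \to \tgg_\reg$ (smoothness) then ensures that $Y|_{\tgg_\rs}$ is open dense in $Y$. Since $\tIB_\reg$ is a closed subscheme of $Y$ containing this open dense subscheme, and $Y$ is reduced, we conclude $\tIB_\reg = Y$. The only potentially delicate point is the identification $(\BB')_x = \GB_x$ on the rs locus, which is rendered trivial by Lemma~\ref{lem:centralizer-ss-reg}\eqref{it:stabilizer-trs}; the remainder is a general principle about closed subschemes of a reduced scheme that swallow a dense open.
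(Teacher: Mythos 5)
Your proof is correct, but it takes a genuinely different route from the paper's. The paper proves the square is Cartesian by a short, purely scheme-theoretic fiber-product manipulation: starting from the definition $\tIB_\reg = \tgg_\reg \times_{\tgg_\reg \times \tgg_\reg} (\GB \times \tgg_\reg)$, it factors both structure maps through $\tgg_\reg \times_\tg \tgg_\reg$ and then invokes the isomorphism $\varphi_\reg \colon \tgg_\reg \simto \gg_\reg \times_{\tg/W} \tg$ of Lemma~\ref{lem:tgg-reg} to re-identify the result as $\tg \times_{\tg/W} \IB_\reg \cong \tgg_\reg \times_{\gg_\reg} \IB_\reg$; the conclusion is then a formal consequence of that base-change identification. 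You instead argue geometrically: you identify $\tIB_\reg$ and $Y := \tgg_\reg \times_{\gg_\reg} \IB_\reg$ as closed subschemes of $\GB \times \tgg_\reg$, use smoothness of $Y$ over the smooth $\tgg_\reg$ (via Corollary~\ref{cor:Ireg-smooth}) to get $Y$ reduced, compute fibers explicitly over the dense open rs locus via Lemma~\ref{lem:centralizer-ss-reg}(2) to see the two have the same underlying set there, and close up by density plus reducedness of $Y$. What the paper's packaging buys is that the Cartesianness falls out as a corollary of the standalone isomorphism $\varphi_\reg$ (itself proved by a finite-birational-to-normal argument); what yours buys is that Lemma~\ref{lem:tgg-reg} is bypassed entirely in favor of a direct computation at rs points and a soft topological argument, at the cost of invoking irreducibility of $\tgg$, nonemptiness of $\tgg_\rs$, and reducedness of $Y$. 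One small remark on your rs step: rather than appealing to a ``fiberwise criterion for closed immersions into flat schemes'' (which, if stated with flatness of the target rather than the source, requires a slightly careful associated-points argument), it is cleaner and sufficient to note that the fiber computation gives equality of underlying topological spaces over $\tgg_\rs$; your final step — closure of a dense open in the reduced $Y$ — then does all the remaining work.
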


\begin{proof}
The first claim follows from the compatibility of universal centralizers with fiber products. More specifically, by definition we have
\[
\tIB_\reg = \tgg_\reg \times_{\tgg_\reg \times \tgg_\reg} (\GB \times \tgg_\reg).
\]
Since both morphisms in this fiber product factor through $\tgg_\reg \times_\tg \tgg_\reg$, we deduce that
\[
\tIB_\reg \cong \tgg_\reg \times_{\tgg_\reg \times_\tg \tgg_\reg} (\GB \times \tgg_\reg) \cong \tgg_\reg \times_{\tgg_\reg \times_\tg \tgg_\reg} ((\GB \times \tg) \times_\tg \tgg_\reg).
\]
Now by Lemma~\ref{lem:tgg-reg} the right-hand side is naturally isomorphic to
\[
\tg \times_{\tg/W} \bigl( \gg_\reg \times_{\gg_\reg \times_{\tg/W} \gg_\reg} ((\GB \times \tg/W) \times_{\tg/W} \gg_\reg) \bigr) \cong \tg \times_{\tg/W} \IB_\reg \cong \tgg_\reg \times_{\gg_\reg} \IB_\reg.
\]
The second claim follows from the first one and Corollary~\ref{cor:Ireg-smooth}.
\end{proof}

\begin{remark}
\begin{enumerate}
\item
Consider the setting of Remark~\ref{rk:tggreg-parabolic}. If $\tIB_\reg^\PB$ denotes the restriction of the universal centralizer over $\tgg^\PB$ to $\tgg^\PB_\reg$, then the same arguments as for Proposition~\ref{prop:tIreg-smooth} show that the following diagrams are Cartesian:
\[
\xymatrix@R=0.5cm{
\tIB_\reg \ar[r] \ar[d] & \tIB^\PB_\reg \ar[d] \\
\tgg_\reg \ar[r] & \tgg_\reg^\PB,
} \qquad \qquad
\xymatrix@R=0.5cm{
\tIB_\reg^\PB \ar[r] \ar[d] & \IB_\reg \ar[d] \\
\tgg_\reg^\PB \ar[r] & \gg_\reg.
}
\]
\item
Assume that (C3) holds. Then
Proposition~\ref{prop:tIreg-smooth} implies that for $(x,\BB') \in \tgg_\reg$ the inclusion $(\BB')_x \hookrightarrow \GB_x$ is an equality.
\end{enumerate}
\end{remark}

Let us record the following immediate consequence of Proposition~\ref{prop:tIreg-smooth}, for later reference.

\begin{cor}
\label{cor:tI-smooth}
Assume that {\rm (C3)} holds.
The natural commutative diagram
\[
\xymatrix@R=0.5cm{
\tIB_\SC \ar[r] \ar[d] & \IB_\SC \ar[d] \\
\tSC \ar[r] & \SC
}
\]
is Cartesian.
In particular, the group scheme $\tIB_\SC$ is smooth over $\tSC$.
\end{cor}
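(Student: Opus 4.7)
The plan is to derive the Cartesian square directly from Proposition~\ref{prop:tIreg-smooth} by chasing fiber products, using that $\tSC$ lies inside $\tgg_\reg$ (via $\SC \subset \Upsilon \subset \gg_\reg$ together with $\tSC = \SC \times_\gg \tgg$).

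First I would unwind the definitions. By construction, $\tIB_\SC = \tIB \times_{\tgg} \tSC$, and since $\tSC \subset \tgg_\reg$, this factors as $\tIB_\SC = \tIB_\reg \times_{\tgg_\reg} \tSC$. Next, apply Proposition~\ref{prop:tIreg-smooth} to replace $\tIB_\reg$ with $\tgg_\reg \times_{\gg_\reg} \IB_\reg$, giving
\[
\tIB_\SC \cong \bigl( \tgg_\reg \times_{\gg_\reg} \IB_\reg \bigr) \times_{\tgg_\reg} \tSC \cong \IB_\reg \times_{\gg_\reg} \tSC.
\]
Since the structure morphism $\tSC \to \gg_\reg$ factors through the inclusion $\SC \hookrightarrow \gg_\reg$, we can rewrite the right-hand side as
\[
\IB_\reg \times_{\gg_\reg} \tSC \cong \bigl( \IB_\reg \times_{\gg_\reg} \SC \bigr) \times_\SC \tSC \cong \IB_\SC \times_\SC \tSC,
\]
which is exactly the assertion that the displayed diagram is Cartesian.

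For the smoothness claim, I would invoke Proposition~\ref{prop:IS-smooth}, which gives that $\IB_\SC \to \SC$ is smooth; since smoothness is preserved under base change, the Cartesian square we just established yields that $\tIB_\SC \to \tSC$ is smooth as well. There is no real obstacle here: the only thing to be careful about is that the fiber-product manipulations stay honest, which is ensured by the fact that $\tSC$ lies inside $\tgg_\reg$ (so that restricting from $\tgg$ to $\tgg_\reg$ loses no information) and by Proposition~\ref{prop:tIreg-smooth} identifying $\tIB_\reg$ as a base change of $\IB_\reg$.
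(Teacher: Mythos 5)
Your proof is correct and takes essentially the same route the paper intends: the paper records this as an ``immediate consequence'' of Proposition~\ref{prop:tIreg-smooth} without spelling it out, and your fiber-product chase (restricting $\tIB_\reg \cong \tgg_\reg \times_{\gg_\reg} \IB_\reg$ along $\tSC \hookrightarrow \tgg_\reg$ and using that $\tSC \to \gg_\reg$ factors through $\SC$) is exactly the intended unwinding. Your smoothness argument via base change of $\IB_\SC \to \SC$ (Proposition~\ref{prop:IS-smooth}) is a minor cosmetic variant of the equally valid base change of $\tIB_\reg \to \tgg_\reg$; both are fine.
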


From Proposition~\ref{prop:J} and Proposition~\ref{prop:tIreg-smooth} we deduce that, if (C3) holds, there exists a smooth, affine and commutative group scheme $\tJB$ on $\tg$ whose pullback under $\nu_\reg \colon \tgg_\reg \to \tg$ is $\tIB_\reg$. In fact we have Cartesian diagrams
\begin{equation}
\label{eqn:cartesian-tJ-J}
\vcenter{
\xymatrix@R=0.5cm{
\tJB \ar[r] \ar[d] & \JB \ar[d] \\
\tg \ar[r] & \tg/W
}
}
\qquad \text{and} \qquad
\vcenter{
\xymatrix@R=0.5cm{
\tIB_\SC \ar[r]^-{\sim} \ar[d] & \tJB \ar[d] \\
\tSC \ar[r]^-{\sim} & \tg.
}
}
\end{equation}

\begin{remark}
Assume that (C3) holds.
Using similar constructions as in Proposition~\ref{prop:coh-greg}, one can construct a ``Kostant--Whittaker reduction'' equivalence of categories $\Coh^\GB(\tgg_\reg) \simto \Rep(\tJB)$.
\end{remark}

We set
\[
\tIG:=\Lie(\tIB/\tgg), \ \ \tIG_\reg:=\Lie(\tIB_\reg/\tgg_\reg), \ \ 
\tIG_{\SC}:=\Lie(\tIB_\SC/\tSC), \ \ \tJG:=\Lie(\tJB/\tg)
\]
and
\[
\tIM:=\LLie(\tIB/\tgg), \ \ \tIM_\reg:=\LLie(\tIB_\reg/\tgg_\reg), \ \
\tIM_{\SC}:=\LLie(\tIB_\SC/\tSC), \ \ \tJM:=\LLie(\tJB/\tg).
\]

The following proposition is analogous to Proposition~\ref{prop:properties-Lie-alg}, and its proof is similar (hence left to the reader).

\begin{prop}
\label{prop:properties-Lie-alg-tilde}
Assume that {\rm (C3)} holds. Then:
\begin{enumerate}
\item
the coherent sheaf $\tIG_\reg$, 
resp.~$\tIG_\SC$, resp.~$\tJG$ on $\tgg_\reg$, 
resp.~$\tSC$, resp.~$\tg$, is locally free of finite rank;
\item
the Lie algebra $\tIM_\reg$, 
resp.~$\tIM_\SC$, resp.~$\tJM$, is a vector bundle over $\tgg_\reg$, 
resp.~$\tSC$, resp.~$\tg$;
\item
the Lie algebras $\tIG_\reg$, 
$\tIG_\SC$ and $\tJG$ are commutative;
\item
the restriction of $\tIG_\reg$ to $\tSC$ is canonically isomorphic to $\tIG_\SC$; in other words there exists a canonical Cartesian diagram
\[
\xymatrix@R=0.5cm{
\tIM_\SC \ar[r] \ar[d] & \tIM_\reg \ar[d] \\
\tSC \ar[r] & \tgg_\reg;
}
\]
\item
there exists a canonical isomorphism $\nu_\reg^*(\tJG) \cong \tIG_\reg$; in other words there exists a canonical Cartesian diagram
\[
\xymatrix@R=0.5cm{
\tIM_\reg \ar[r] \ar[d] & \tJM \ar[d] \\
\tgg_\reg \ar[r]^-{\nu_\reg} & \tg;
}
\]
\item
the coherent sheaf $\tIG_\SC$ is canonically isomorphic to the pullback of $\tJG$ under the isomorphism $\tSC \simto \tg$; in other words $\tIM_\SC$ is canonically isomorphic to the pullback of $\tJM$ under the isomorphism $\tSC \simto \tg$.\qed
\end{enumerate}
\end{prop}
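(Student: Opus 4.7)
The plan is to mimic the proof of Proposition~\ref{prop:properties-Lie-alg} step by step, using the tilde-analogues of the inputs that were already assembled in~\S\ref{ss:groth-resolution}. All six parts reduce to two formal tools: Lemma~\ref{lem:Lie-alg}\eqref{it:Lie-smooth} (smoothness of a group scheme implies local freeness of its Lie algebra sheaf, and vector bundle structure on $\LLie$) and Lemma~\ref{lem:Lie-alg}\eqref{it:base-change-Lie} (compatibility of $\Lie$ with base change for smooth group schemes). The only difference is that the role of $\IB_\reg$, $\IB_\SC$, $\JB$ is now played by $\tIB_\reg$, $\tIB_\SC$, $\tJB$, and the smoothness and Cartesian diagrams needed are respectively Proposition~\ref{prop:tIreg-smooth}, Corollary~\ref{cor:tI-smooth}, and the diagrams~\eqref{eqn:cartesian-tJ-J}.

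For parts (1) and (2), I would first invoke Proposition~\ref{prop:tIreg-smooth} to get that $\tIB_\reg$ is smooth over $\tgg_\reg$; then Corollary~\ref{cor:tI-smooth} for the smoothness of $\tIB_\SC$ over $\tSC$; and finally the fact (recorded after Proposition~\ref{prop:tIreg-smooth}) that $\tJB$ is smooth over $\tg$, since it descends from $\tIB_\reg$ under the smooth surjection $\nu_\reg$. Applying Lemma~\ref{lem:Lie-alg}\eqref{it:Lie-smooth} to each of these three smooth group schemes yields at once the local freeness of $\tIG_\reg$, $\tIG_\SC$, $\tJG$ and the vector bundle structure on $\tIM_\reg$, $\tIM_\SC$, $\tJM$.

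For (3), note that $\tIB_\reg$ is commutative by the remark immediately after the definition of $\tIB$ (which appeals to Corollary~\ref{cor:Ireg-commutative} via the Cartesian diagram in Proposition~\ref{prop:tIreg-smooth}); the commutativity of $\tIB_\SC$ then follows because $\tIB_\SC$ is a restriction of $\tIB_\reg$, and the commutativity of $\tJB$ follows because $\tJB$ is faithfully flatly pulled back from along $\nu_\reg$. Passing to Lie algebras yields (3). For (4), (5), (6), I would apply Lemma~\ref{lem:Lie-alg}\eqref{it:base-change-Lie} to the Cartesian diagrams
\[
\xymatrix@R=0.4cm{
\tIB_\SC \ar[r] \ar[d] & \tIB_\reg \ar[d] \\
\tSC \ar[r] & \tgg_\reg
}
\qquad
\xymatrix@R=0.4cm{
\tIB_\reg \ar[r] \ar[d] & \tJB \ar[d] \\
\tgg_\reg \ar[r]^-{\nu_\reg} & \tg
}
\qquad
\xymatrix@R=0.4cm{
\tIB_\SC \ar[r] \ar[d] & \tJB \ar[d] \\
\tSC \ar[r]^-{\sim} & \tg
}
\]
(the first coming from Corollary~\ref{cor:tI-smooth}, the other two from~\eqref{eqn:cartesian-tJ-J}). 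In each case the hypothesis of Lemma~\ref{lem:Lie-alg}\eqref{it:base-change-Lie} is met because the upper horizontal group scheme is smooth over its base, so taking $\Lie$ (or $\LLie$) of each square produces the asserted Cartesian diagrams of Lie algebras.

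There is no real obstacle: the entire argument is formal once one has Proposition~\ref{prop:tIreg-smooth}, Corollary~\ref{cor:tI-smooth}, and the diagrams~\eqref{eqn:cartesian-tJ-J} in hand. The one point that requires minimal care is verifying commutativity of $\tJB$ in (3): since $\tJB$ exists only by descent along $\nu_\reg$, one should phrase commutativity either as a condition that pulls back nontrivially along the faithfully flat $\nu_\reg$, or simply descend the commutativity morphism from $\tIB_\reg$. This is precisely the same issue handled for $\JB$ in Proposition~\ref{prop:properties-Lie-alg}(3), and it is resolved in the same way.
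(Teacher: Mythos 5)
Your proposal is correct and follows precisely the route the paper intends (the paper explicitly leaves this proof to the reader as being ``similar'' to that of Proposition~\ref{prop:properties-Lie-alg}, and your argument is exactly the expected tilde-translation of that proof, with Proposition~\ref{prop:tIreg-smooth}, Corollary~\ref{cor:tI-smooth} and the diagrams~\eqref{eqn:cartesian-tJ-J} replacing Corollary~\ref{cor:Ireg-smooth}, Proposition~\ref{prop:IS-smooth}, Proposition~\ref{prop:J} and Remark~\ref{rk:J-S}).

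One small inaccuracy: for part~(4) you cite Corollary~\ref{cor:tI-smooth} as the source of the Cartesian square with $\tIB_\SC$ over $\tSC$ mapping to $\tIB_\reg$ over $\tgg_\reg$; Corollary~\ref{cor:tI-smooth} actually asserts a different Cartesian square (relating $\tIB_\SC$ to $\IB_\SC$). The square you actually need is Cartesian essentially by definition, since both $\tIB_\SC$ and $\tIB_\reg$ are restrictions of the same universal centralizer $\tIB$ to subschemes $\tSC \subset \tgg_\reg$. The role of Proposition~\ref{prop:tIreg-smooth} is then to provide the smoothness hypothesis in Lemma~\ref{lem:Lie-alg}\eqref{it:base-change-Lie}. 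This does not affect the correctness of the argument, only the attribution.
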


\begin{lem}
\label{lem:tJG}
Assume that {\rm (C3)} holds. Then
there exist canonical Cartesian diagrams
\[
\vcenter{
\xymatrix@R=0.5cm{
\tIM_\SC \ar[r] \ar[d] & \IM_\SC \ar[d] \\
\tSC \ar[r] & \SC,
}
}
\qquad
\vcenter{
\xymatrix@R=0.5cm{
\tIM_\reg \ar[r] \ar[d] & \IM_\reg \ar[d] \\
\tgg_\reg \ar[r] & \gg_\reg
}
}
\quad \text{and} \quad
\vcenter{
\xymatrix@R=0.5cm{
\tJM \ar[r] \ar[d] & \JM \ar[d] \\
\tg \ar[r] & \tg/W.
}
}
\]
\end{lem}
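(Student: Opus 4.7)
The plan is to produce each of the three Cartesian diagrams in the statement by transferring, via the $\LLie$ construction, a Cartesian diagram of group schemes that has already been established earlier in the paper. Specifically, I would invoke Corollary~\ref{cor:tI-smooth}, Proposition~\ref{prop:tIreg-smooth}, and the right-hand diagram in~\eqref{eqn:cartesian-tJ-J}, which assert the Cartesian squares
\[
\tIB_\SC \cong \tSC \times_\SC \IB_\SC, \qquad \tIB_\reg \cong \tgg_\reg \times_{\gg_\reg} \IB_\reg, \qquad \tJB \cong \tg \times_{\tg/W} \JB
\]
of group schemes. In each case the group scheme on the right-hand factor is smooth over its base: $\IB_\SC$ over $\SC$ by Proposition~\ref{prop:IS-smooth}, $\IB_\reg$ over $\gg_\reg$ by Corollary~\ref{cor:Ireg-smooth}, and $\JB$ over $\tg/W$ by Proposition~\ref{prop:J}. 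Hence the smoothness hypothesis of Lemma~\ref{lem:Lie-alg}\eqref{it:base-change-Lie} is satisfied in all three cases.

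Next, I would apply Lemma~\ref{lem:Lie-alg}\eqref{it:base-change-Lie}, or more precisely the intermediate isomorphism at the $\omega$-level appearing in its proof (namely $\omega_{G \times_X Y/Y} \cong f^* \omega_{G/X}$ when $G/X$ is smooth). This yields canonical isomorphisms of conormal sheaves $\omega_{\tIB_\SC/\tSC} \cong f_1^* \omega_{\IB_\SC/\SC}$, $\omega_{\tIB_\reg/\tgg_\reg} \cong f_2^* \omega_{\IB_\reg/\gg_\reg}$, and $\omega_{\tJB/\tg} \cong f_3^* \omega_{\JB/(\tg/W)}$, where $f_1$, $f_2$, $f_3$ denote the horizontal morphisms $\tSC \to \SC$, $\tgg_\reg \to \gg_\reg$, and $\tg \to \tg/W$ respectively. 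Applying $\Spec \mathrm{S}_{\OC_{\bullet}}(\cdot)$ converts these pullbacks of locally free sheaves into fiber products of the corresponding vector bundles over the appropriate base, which is precisely the content of the three Cartesian diagrams in the statement.

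I do not foresee any substantive obstacle here. All the genuinely geometric input, namely the three group-scheme-level Cartesian diagrams and the smoothness of $\IB_\SC$, $\IB_\reg$, and $\JB$, has already been handled in the preceding results; the present lemma is a purely formal consequence of the compatibility of the $\LLie$ construction with base change along a morphism whose target carries a smooth group scheme.
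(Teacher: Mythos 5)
Your proof is correct and takes essentially the same route as the paper: apply the base-change compatibility of Lemma~\ref{lem:Lie-alg}\eqref{it:base-change-Lie} (at the level of $\omega_{G/X}$ and hence of $\LLie$) to the three group-scheme Cartesian squares from Corollary~\ref{cor:tI-smooth}, Proposition~\ref{prop:tIreg-smooth}, and~\eqref{eqn:cartesian-tJ-J}, using the smoothness of $\IB_\SC$, $\IB_\reg$, and $\JB$ over their respective bases. One small slip: the Cartesian square $\tJB \cong \tg \times_{\tg/W} \JB$ that you want is the \emph{left-hand} diagram in~\eqref{eqn:cartesian-tJ-J}, not the right-hand one (the right-hand square is the isomorphism $\tIB_\SC \simto \tJB$ over $\tSC \simto \tg$).
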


\begin{proof}
This follows from Lemma~\ref{lem:Lie-alg}\eqref{it:base-change-Lie}, Corollary~\ref{cor:Ireg-smooth}, Proposition~\ref{prop:tIreg-smooth}, Corollary~\ref{cor:tI-smooth}, and the left diagram in~\eqref{eqn:cartesian-tJ-J}.
\end{proof}

Combining Theorem~\ref{thm:Lie-alg-cotangent} and Lemma~\ref{lem:tJG} we obtain the following.

\begin{thm}
\label{thm:Lie-alg-cotangent-tilde}
Assume that {\rm (C4)} holds. Then
there exist canonical isomorphisms
\[
\tJM \cong \tg \times_{\tg/W} \TM^*(\tg/W), \qquad \tIM_\reg \cong \tgg_\reg \times_{\tg/W} \TM^*(\tg/W), \qquad \tIM_{\SC} \cong \tSC \times_{\SC} \TM^*(\SC)
\]
of commutative Lie algebras over $\tg$, $\tgg_\reg$ and $\tSC$, respectively.\qed
\end{thm}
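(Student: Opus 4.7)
The plan is to derive all three isomorphisms by pulling back the corresponding isomorphisms of Theorem~\ref{thm:Lie-alg-cotangent} along the appropriate morphisms, using the Cartesian squares furnished by Lemma~\ref{lem:tJG}. Since the statement itself advertises the proof as a combination of these two inputs, no new geometric input on the Grothendieck resolution side is required beyond what has already been built up in \S\ref{ss:groth-resolution}; the work is essentially bookkeeping with pullbacks.

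First I would treat $\tJM$. The right-hand Cartesian square of Lemma~\ref{lem:tJG} gives a canonical isomorphism
\[
\tJM \;\cong\; \tg \times_{\tg/W} \JM,
\]
and substituting the first isomorphism of Theorem~\ref{thm:Lie-alg-cotangent} yields $\tJM \cong \tg \times_{\tg/W} \TM^*(\tg/W)$. Next, for $\tIM_\reg$, the middle square of Lemma~\ref{lem:tJG} gives
\[
\tIM_\reg \;\cong\; \tgg_\reg \times_{\gg_\reg} \IM_\reg,
\]
and substituting the second isomorphism of Theorem~\ref{thm:Lie-alg-cotangent} and contracting the two successive fiber products,
\[
\tgg_\reg \times_{\gg_\reg} \bigl( \gg_\reg \times_{\tg/W} \TM^*(\tg/W) \bigr) \;\cong\; \tgg_\reg \times_{\tg/W} \TM^*(\tg/W),
\]
gives the desired identification. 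Finally, for $\tIM_\SC$, the left square of Lemma~\ref{lem:tJG} produces $\tIM_\SC \cong \tSC \times_\SC \IM_\SC$, and substituting the third isomorphism of Theorem~\ref{thm:Lie-alg-cotangent} gives $\tIM_\SC \cong \tSC \times_\SC \TM^*(\SC)$.

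Each of the three identifications is canonical because Lemma~\ref{lem:tJG} produces canonical Cartesian diagrams (coming from Lemma~\ref{lem:Lie-alg}\eqref{it:base-change-Lie}, which is functorial) and Theorem~\ref{thm:Lie-alg-cotangent} constructs its isomorphisms canonically from the fixed datum $\kappa$. The Lie-algebra structures match because both sides are pullbacks of the same commutative Lie algebra structures over the respective bases, and taking fiber products is compatible with the relative spectrum / symmetric algebra operations on both sides. There is no real obstacle: the only mild point to verify is that the two kinds of contractions of iterated fiber products commute as claimed, but this is a formal consequence of the universal property of fiber products.
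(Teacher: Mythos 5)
Your proof is correct and is exactly the argument the paper intends: the theorem is stated with a \qed because the paper simply notes that it follows by ``combining Theorem~\ref{thm:Lie-alg-cotangent} and Lemma~\ref{lem:tJG},'' which is precisely the substitution-into-Cartesian-squares bookkeeping you carry out.
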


\section{The case of integral coefficients}
\label{sec:R}


\subsection{Reduction to algebraically closed fields}
\label{ss:reduction-fields}

Let $\RG$ be a finite localization of $\ZM$. In this subsection we prove various results that allow to deduce results over $\RG$ from their analogues over algebraically closed fields of positive characteristic. These results (which are independent of the rest of the paper) will be used crucially in the rest of the section.

\begin{lem}
\label{lem:flat-R-F}
Let $A$ be a finitely generated $\RG$-algebra, and let $A'$ be a finitely generated $A$-algebra.
Assume that $A'$ is flat over $\RG$ and that for any geometric point $\FM$ of $\RG$ of positive characteristic,
$\FM \otimes_{\RG} A'$ is flat over $\FM \otimes_\RG A$. Then $A'$ is flat over $A$.
\end{lem}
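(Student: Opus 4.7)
The plan is to reduce to checking flatness at closed points and then apply the local criterion of flatness, exploiting the fact that $\RG$ is a Dedekind domain.

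First I would use that flatness is a local property: $A'$ is $A$-flat iff for every maximal ideal $\mg'$ of $A'$, setting $\mg := \mg' \cap A$, the localization $A'_{\mg'}$ is flat over $A_\mg$. Since $A$ and $A'$ are finitely generated algebras over the Jacobson ring $\RG$, they are themselves Jacobson; in particular $\mg$ is maximal in $A$, and $\pg := \mg \cap \RG$ is maximal in $\RG$. Hence $\RG/\pg$ is a finite prime field $\FM_p$ of some positive characteristic $p$, and $\RG_\pg$ is a discrete valuation ring with uniformizer $p$.

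I would then apply the local criterion of flatness (in the form of \cite[Tag 0523]{stacks-project}) to the local ring map $A_\mg \to A'_{\mg'}$ with ideal $I = pA_\mg$, viewing $A'_{\mg'}$ as a finite module over the Noetherian ring $A'_{\mg'}$ itself; note that $IA'_{\mg'} \subset \rad(A'_{\mg'})$ because $p \in \mg \subset \mg'$. It thus suffices to verify (1) that $A'_{\mg'}/pA'_{\mg'}$ is flat over $A_\mg/pA_\mg$, and (2) that $\Tor_1^{A_\mg}(A_\mg/pA_\mg, A'_{\mg'}) = 0$. For (1), let $\FM$ be an algebraic closure of $\FM_p$; the hypothesis gives flatness of $\FM \otimes_\RG A'$ over $\FM \otimes_\RG A$, and since $\FM_p \to \FM$ is faithfully flat, descent yields flatness of $A'/pA' = \FM_p \otimes_\RG A'$ over $A/pA = \FM_p \otimes_\RG A$; localizing at $\mg'$ and $\mg$ gives (1).

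For (2), I would exploit the $\RG$-flatness of $A'$. Begin a free $A_\mg$-resolution of $A_\mg/pA_\mg$ with $F_0 = A_\mg$ surjecting onto $A_\mg/pA_\mg$ and $F_1 = A_\mg$ mapping to $F_0$ by multiplication by $p$. After tensoring with $A'_{\mg'}$ the differential $F_1 \otimes A'_{\mg'} \to F_0 \otimes A'_{\mg'}$ becomes multiplication by $p$ on $A'_{\mg'}$, which is injective because $A'_{\mg'}$, as a localization of the $\RG$-flat module $A'$, is $p$-torsion-free; hence $\Tor_1 = 0$. The subtle point worth noting is that $A_\mg$ itself need not be $\RG$-flat, so the two-term complex $[A_\mg \xrightarrow{p} A_\mg]$ need not be a resolution of $A_\mg/pA_\mg$; nevertheless, the contributions of the higher terms of a genuine resolution disappear after tensoring with the already $p$-torsion-free $A'_{\mg'}$, which makes the Tor vanishing immediate.
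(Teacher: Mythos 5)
Your proof is correct and follows essentially the same route as the paper's: reduce to localizations at maximal ideals (noting via the Jacobson property that the residual prime of $\RG$ is $(p)$), use faithfully flat descent along $\FM_p \to \FM$ to obtain flatness of $A'/pA'$ over $A/pA$ from the geometric-point hypothesis, and then apply the local criterion of flatness. The only difference is cosmetic: the paper invokes the fiber-wise flatness criterion~\cite[Tag 00MP]{stacks-project} for the chain $\RG_\qg \to A_\pg \to A'_\mg$, whereas you cite the local criterion~\cite[Tag 0523]{stacks-project} directly over $A_\mg$ and verify $\Tor_1^{A_\mg}(A_\mg/pA_\mg, A'_{\mg'})=0$ by hand from $p$-torsion-freeness of $A'_{\mg'}$ (correctly noting that $A_\mg$ itself need not be $\RG$-flat, which is why the computation needs the small extra care you supply) --- this is just an unpacking of the same tool.
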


\begin{proof}
By~\cite[Tag 00HT, Item (7)]{stacks-project}, it suffices to prove that for all maximal ideals $\mg \subset A'$, with $\pg$ the inverse image of $\mg$ in $A$, the $A_\pg$-module $A'_\mg$ is flat. Let also $\qg$ be the inverse image of $\mg$ in $\RG$. Then, by~\cite[Tag 00GB]{stacks-project}, $\qg$ is a maximal ideal of $\RG$, i.e.~is of the form $\ell \cdot \RG$ for some prime number $\ell$ not invertible in $\RG$. From our assumption on geometric points one can easily deduce that $A' / (\ell \cdot A')$ is flat over $A / (\ell \cdot A)$. Hence, using~\cite[Tag 00HT, Item (7)]{stacks-project} again, we deduce that $(A' / (\ell \cdot A'))_{\mg/ \ell \cdot A'} = A'_\mg / \qg \cdot A'_\mg$ is flat over $(A / (\ell \cdot A))_{\pg/ \ell \cdot A} = A_\pg / \qg \cdot A_\pg$. By the same result and our other assumption we know that $A'_\mg$ is flat over $\RG_\qg$. Hence we can apply~\cite[Tag 00MP]{stacks-project} to the morphisms $\RG_\qg \to A_\pg \to A'_\mg$ to deduce that $A'_\mg$ is flat over $A_\pg$, which finishes the proof.
%
%
\end{proof}

In the next statements, if $X$ is an $\RG$-scheme and $\SG$ is an $\RG$-algebra, we set $X_\SG:=X \times_{\Spec(\RG)} \Spec(\SG)$.

\begin{cor}
\label{cor:flat-reduction-fields}
Let $X$ and $Y$ be schemes which are locally of finite type over $\RG$, and let $f \colon X \to Y$ be a morphism. Assume that $X$ is flat over $\RG$ and that
for any geometric point $\FM$ of $\RG$ of positive characteristic,
the morphism $X_\FM \to Y_\FM$ induced by $f$
is flat. Then $f$ is flat.
\end{cor}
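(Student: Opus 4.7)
The plan is to reduce immediately to the affine ring-theoretic situation handled by Lemma~\ref{lem:flat-R-F}. Since flatness of a morphism can be checked locally on both source and target, I would cover $Y$ by affine opens $\Spec(A)$ with $A$ a finitely generated $\RG$-algebra (possible because $Y$ is locally of finite type over $\RG$), and then cover each preimage $f^{-1}(\Spec(A))$ by affine opens $\Spec(A')$ with $A'$ a finitely generated $\RG$-algebra. In this situation $A'$ is automatically finitely generated as an $A$-algebra: if $a'_1,\ldots,a'_n$ generate $A'$ over $\RG$, then they also generate $A'$ over $A$ via the composition $\RG \to A \to A'$. It thus suffices to show that each such ring map $A \to A'$ is flat.

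To verify the hypotheses of Lemma~\ref{lem:flat-R-F} I would argue as follows. Flatness of $A'$ over $\RG$ is exactly the assumption that $X$ is flat over $\RG$, localized to $\Spec(A')$. For the fiber condition, given any geometric point $\FM$ of $\RG$ of positive characteristic one has $\FM \otimes_{\RG} A' = \OC(\Spec(A')_{\FM})$ and $\FM \otimes_{\RG} A = \OC(\Spec(A)_{\FM})$, so the hypothesis that $X_{\FM} \to Y_{\FM}$ is flat restricts, over these affine opens, precisely to the statement that $\FM \otimes_{\RG} A'$ is flat over $\FM \otimes_{\RG} A$. Applying Lemma~\ref{lem:flat-R-F} then yields flatness of $A \to A'$, and patching over the chosen cover completes the proof.

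There is no serious obstacle here beyond the bookkeeping of the reduction; the real content has already been packaged into the previous lemma. The only point one needs to be a bit careful about is that the finite generation of $A'$ as an $A$-algebra is not assumed as part of the hypothesis of the corollary, but is automatic from the local finite-type assumptions over $\RG$ combined with the triangle $\RG \to A \to A'$, so the lemma really does apply in the local picture.
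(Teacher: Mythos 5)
Your proposal is correct and follows exactly the same route as the paper's proof: reduce to the affine case by covering $Y$ and then $f^{-1}(\Spec A)$ by affine opens of finite type over $\RG$, and apply Lemma~\ref{lem:flat-R-F}. The observation that $A'$ is automatically of finite type over $A$ via the factorization $\RG \to A \to A'$ is a useful detail that the paper leaves implicit.
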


\begin{proof}
Since flatness is a property which is local both on $X$ and $Y$, we can assume that both of them are affine. Then the claim follows from Lemma~\ref{lem:flat-R-F}.
\end{proof}

\begin{prop}
\label{prop:smooth-reduction-fields}
Let $X$ and $Y$ be schemes which are 
of finite type over $\RG$, and let $f \colon X \to Y$ be a morphism. Assume that
$X$ is flat over $\RG$ and that
for any geometric point $\FM$ of $\RG$ of positive characteristic,
the morphism $X_\FM \to Y_\FM$ induced by $f$
is smooth. Then $f$ is smooth.
\end{prop}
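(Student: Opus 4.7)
The plan is to combine Corollary~\ref{cor:flat-reduction-fields} with the openness of the smooth locus and the Jacobson property of finite-type schemes over $\RG$.

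First, since $X$ and $Y$ are of finite type over the Noetherian ring $\RG$, the morphism $f$ is automatically locally of finite presentation, and each $X_\FM \to Y_\FM$ being smooth is in particular flat; so Corollary~\ref{cor:flat-reduction-fields} applies and yields that $f$ itself is flat. Consequently, the smooth locus $U \subset X$ of $f$ is open, and it suffices to prove that $U=X$.

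Next, I would invoke the fact that $\RG$ is a Jacobson ring (being a finite localization of $\ZM$), so that $X$, as a scheme of finite type over $\RG$, is Jacobson as well. Closed points of $X$ then map to closed points of $\Spec(\RG)$ and so have residue fields of characteristic $\ell > 0$ for some prime $\ell$ not invertible in $\RG$. Since any nonempty closed subset of a Jacobson scheme contains a closed point, it is enough to check that $f$ is smooth at every closed point of $X$.

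Finally, given such a closed point $x \in X$ with $\kappa(x)$ of characteristic $\ell$, set $y := f(x)$ and observe that $\kappa(y)$ also has characteristic $\ell$, so $x$ and $y$ lie in $X_{\FM_\ell}$ and $Y_{\FM_\ell}$ respectively. For $\FM$ an algebraic closure of $\FM_\ell$, the morphism $X_\FM \to Y_\FM$ is smooth by hypothesis, and it is the base change of $X_{\FM_\ell} \to Y_{\FM_\ell}$ along the faithfully flat, quasi-compact morphism $\Spec(\FM) \to \Spec(\FM_\ell)$. Smoothness descends under such base change, so $X_{\FM_\ell} \to Y_{\FM_\ell}$ is smooth; its scheme-theoretic fiber over $y$ is then smooth over $\kappa(y)$, and this fiber coincides with the fiber of $f$ at $y$. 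Combined with flatness of $f$ at $x$ and local finite presentation, this yields smoothness of $f$ at $x$. The main subtlety is that the hypothesis only treats positive-characteristic geometric points of $\RG$, a potential obstacle that is sidestepped by the Jacobson reduction, since all closed points of $X$ automatically have positive residue characteristic.
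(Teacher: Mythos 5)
Your proof is correct and follows the same overall strategy as the paper's: first deduce flatness of $f$ from Corollary~\ref{cor:flat-reduction-fields}, then use openness of the smooth locus together with density of closed points (i.e.\ the Jacobson property of finite-type $\RG$-schemes) to reduce to checking smoothness at a closed point $x$, and finally note that $\kappa(x)$ has positive residue characteristic so the hypothesis on geometric fibers can be brought to bear. The only real difference is in the last step: the paper argues pointwise, comparing smoothness of $f$ at $x$ with smoothness of $X_\FM \to Y_\FM$ at a point lying over $x$ via the characterization in \cite[Tag 01V9, Item (3)]{stacks-project} and invariance of fiber dimension under field extension, whereas you descend smoothness of the entire morphism $X_\FM \to Y_\FM$ down to $X_{\FM_\ell} \to Y_{\FM_\ell}$ along the fpqc morphism $Y_\FM \to Y_{\FM_\ell}$ and then invoke the fiberwise criterion (flat, locally of finite presentation, smooth fiber at $x$). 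Both are standard and both are fine; your descent-based version is arguably slightly cleaner because it avoids having to name the auxiliary points $x'$, $x''$ and reason about each separately. One small remark: when you say that ``$x$ and $y$ lie in $X_{\FM_\ell}$ and $Y_{\FM_\ell}$'' it would be worth spelling out that this is because $\kappa(x)$ (and hence $\kappa(y)$) has characteristic $\ell$, so $x$ and $y$ lie over the closed point $(\ell) \in \Spec(\RG)$ and therefore belong to the corresponding closed fibers.
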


\begin{proof}
First, by Corollary~\ref{cor:flat-reduction-fields}, $f$ is flat. 

The smooth locus of $f$ is open (see e.g.~the definition of smoothness in~\cite[Tag 01V5]{stacks-project}) and the closed points are dense in every closed subset of $X$ (see~\cite[Tag 00G3]{stacks-project}), hence it is enough to prove that $f$ is smooth at closed points of $X$. Let $x$ be such a closed point.
By~\cite[Tag 00GB]{stacks-project}, the residue field $\kappa(x)$ has characteristic $\ell$ for some prime number $\ell$ not invertible in $\RG$. Let $\FM$ be an algebraic closure of $\kappa(x)$, let $x'$ be the point in $X_{\kappa(x)}$ whose image in $X$ is $x$, and let $x''$ be the unique closed point of $X_{\FM}$ lying over $x'$.
Let also $f' \colon X_{\kappa(x)} \to Y_{\kappa(x)}$ and $f'' \colon X_\FM \to Y_\FM$ be the morphisms induced by $f$. Then $f''$ is smooth by assumption. Moreover, one can see (using e.g.~the characterization of smoothness in~\cite[Tag 01V9, Item (3)]{stacks-project} and the invariance of dimension under field extension, see~\cite[Proposition~5.38]{gw}) that the smoothness of $f''$ at $x''$ is equivalent to the smoothness of $f'$ at $x'$, which is itself equivalent to the smoothness of $f$ at $x$. This finishes the proof.
\end{proof}

\begin{lem}
\label{lem:affine-R-F}
Let $A$ be a finitely generated $\RG$-algebra.
Let $X$ be a Noetherian $A$-scheme which is projective over $A$ and flat over $\RG$. Assume that for any 
geometric point $\FM$ of $\RG$ of positive characteristic,
the scheme $X_\FM$ is affine. Then $X$ is affine.
\end{lem}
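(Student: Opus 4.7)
The plan is to show that the projective (hence proper) morphism $f : X \to \Spec(A)$ is finite; since $\Spec(A)$ is affine, this immediately yields that $X$ is affine. As $f$ is proper, finiteness is equivalent to quasi-finiteness, so it suffices to check that every fiber of $f$ is zero-dimensional.

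First I would reduce the problem to closed points of $\Spec(A)$. Since $\RG$ is a localization of $\ZM$ it is a Jacobson ring, and any finitely generated algebra over a Jacobson ring is Jacobson; thus $A$ is Jacobson and every nonempty closed subset of $\Spec(A)$ contains a closed point. For any proper morphism between locally Noetherian schemes, the function $y \mapsto \dim f^{-1}(y)$ is upper semi-continuous (this follows, e.g., from upper semi-continuity of $x \mapsto \dim_x X_{f(x)}$ on $X$ combined with the fact that $f$ takes closed sets to closed sets). Hence the set $Z := \{y \in \Spec(A) : \dim f^{-1}(y) \geq 1\}$ is closed in $\Spec(A)$. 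If $Z$ were nonempty it would contain a closed point, so it is enough to verify that all closed fibers of $f$ are zero-dimensional.

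Next I would pass from a closed point to a geometric point of $\RG$. Let $y$ be a closed point of $\Spec(A)$. It lies over a closed point of $\Spec(\RG)$ whose residue field is some $\mathbb{F}_\ell$ with $\ell$ not invertible in $\RG$; by~\cite[Tag 00GB]{stacks-project} applied to the finitely generated extension $\RG \to A$, the residue field $k(y)$ is a finite extension of $\mathbb{F}_\ell$, in particular of positive characteristic $\ell$. Let $\FM := \overline{k(y)}$, which is a geometric point of $\RG$ of positive characteristic, and let $\tilde y$ be the $\FM$-point of $\Spec(A_\FM)$ induced by $\Spec(\FM) \to \Spec(k(y)) \to \Spec(A)$. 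By hypothesis $X_\FM$ is affine; since $X_\FM$ is also projective over $A_\FM$, the morphism $X_\FM \to \Spec(A_\FM)$ is proper and affine, hence finite. Its fiber at $\tilde y$ is canonically $f^{-1}(y) \times_{k(y)} \FM$, which is therefore finite over $\FM$. Because the dimension of a scheme of finite type over a field is invariant under field extensions, $f^{-1}(y)$ itself is zero-dimensional.

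Combining the two steps, $f$ is proper with all fibers zero-dimensional, hence finite, so $X$ is finite over the affine scheme $\Spec(A)$ and is itself affine. The main conceptual ingredients are the Jacobson property of $A$ and the upper semi-continuity of fiber dimension for proper morphisms; the principal obstacle is convincing oneself that these suffice without any input from the flatness of $X$ over $\RG$. Indeed, flatness does not appear essential to the argument above; the author may have in mind an alternative proof via Serre's criterion together with flat base change for the $R^i f_*$ along $A \to A_\FM$ (which is flat because $\RG \to \FM$ is), where flatness of $X/\RG$ enters to control the vanishing of $R^i f_* \OC_X$ over the generic characteristic as well.
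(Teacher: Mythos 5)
Your proof is correct, and it takes a genuinely different route from the paper's. The paper argues via Serre's affineness criterion: it shows $R\Gamma(X,\IC)$ is concentrated in degree $0$ for every ideal sheaf $\IC$, using Grothendieck vanishing, coherence of the higher direct images along the projective morphism, and a reduction to geometric fibers of the bounded complex $R\Gamma(X,\IC)$ via~\cite[Lemma~1.4.1(2)]{br}; flatness of $X$ over $\RG$ enters precisely to identify $\FM \lotimes_\RG \IC$ with the underived pullback $\IC_\FM$, so that the other direction of Serre's criterion applies on the fiber $X_\FM$. Your argument instead goes through Zariski's main theorem: a proper morphism with zero-dimensional fibers is finite, and a finite scheme over the affine $\Spec(A)$ is affine. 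The reduction to closed fibers uses upper semicontinuity of fiber dimension together with the Jacobson property of $A$ (a finitely generated $\RG$-algebra, hence a finitely generated $\ZM$-algebra), and the Nullstellensatz guarantees that every closed point has finite residue field, so it lies over a prime that is not invertible in $\RG$ --- which is exactly why the hypothesis on geometric points of positive characteristic suffices. A noteworthy by-product of your approach, which you correctly observe, is that the flatness of $X$ over $\RG$ is not actually needed for the conclusion; it is a crutch for the cohomological proof, not an intrinsic hypothesis. Your version is more elementary in that it avoids the derived base-change lemma from~\cite{br} entirely and yields the slightly sharper statement that $X$ is finite over $A$ (which, in the presence of projectivity, is in any case equivalent to affineness). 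Both proofs are sound.
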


\begin{proof}
By Serre's criterion (see~\cite[Theorem~III.3.7]{hartshorne}) it suffices to prove that for any coherent sheaf of ideals $\IC \subset \OC_X$, the complex of $A$-modules $R\Gamma(X,\IC)$ is concentrated in degree $0$. However by Grothendieck's vanishing theorem~\cite[Theorem~III.2.7]{hartshorne} this complex is bounded, and since $X$ is projective over $A$ its cohomology sheaves are finitely generated over $A$ (see~\cite[Theorem~III.5.2]{hartshorne}). Hence by~\cite[Lemma~1.4.1(2)]{br} it suffices to prove that for any 
geometric point $\FM$ of $\RG$,
the complex $\FM \lotimes_{\RG} R\Gamma(X,\IC)$ is concentrated in degree $0$. 

Now one can easily check that we have
\[
\FM \lotimes_{\RG} R\Gamma(X,\IC) \cong R\Gamma(X,\FM \lotimes_{\RG} \IC).
\]
Moreover, since $\OC_X$ is flat over $\RG$, the complex $\FM \lotimes_{\RG} \IC$ is concentrated in degree $0$, and isomorphic to the direct image of a coherent sheaf $\IC_\FM$ on $X_\FM$. Hence we have
\[
\FM \lotimes_{\RG} R\Gamma(X,\IC) \cong R\Gamma(X_\FM, \IC_\FM),
\]
and the desired claim follows from the assumption that $X_\FM$ is affine and the other direction in Serre's criterion.
\end{proof}

\subsection{Definitions and preliminary results}
\label{ss:definitions}

From now on, we
let $\GB_\ZM$ be a group scheme over $\ZM$ which is a product of split simply connected quasi-simple groups, and general linear groups $\mathrm{GL}_{n,\ZM}$. In particular, $\GB_\ZM$ is a split connected reductive group over $\ZM$.
We let $\BB_\ZM \subset \GB_\ZM$ be a Borel subgroup, and $\TB_\ZM \subset \BB_\ZM$ be a (split) maximal torus. We denote by $\gg_\ZM$, $\bg_\ZM$, $\tg_\ZM$ the Lie algebras of $\GB_\ZM$, $\BB_\ZM$, $\TB_\ZM$.

We let $N$ be the product of all the prime numbers which are not very good for some quasi-simple factor of $\GB_\ZM$, and set $\RG:=\ZM[1/N]$. We let $\GB_\RG$, $\BB_\RG$, $\TB_\RG$ be the groups obtained from $\GB_\ZM$, $\BB_\ZM$, $\TB_\ZM$ by base change to $\RG$, and $\gg_\RG$, $\bg_\RG$, $\tg_\RG$ be their respective Lie algebras. We have $\gg_\RG = \RG \otimes_\ZM \gg_\ZM$.
We also denote by $\UB_\RG$ the unipotent radical of $\BB_\RG$, by $\ng_\RG$ its Lie algebras, and by $W$ the Weyl group of $(\GB_\RG, \TB_\RG)$.

We denote by $\Phi \subset X^*(\TB_\RG)$ the root system of $\GB_\RG$, by $\Phi^+ \subset \Phi$ the set of roots which are opposite to the $\TB_\RG$-weights in $\ng_\RG$, by $\Delta \subset \Phi^+$ the corresponding basis, by ${\check \Phi} \subset X_*(\TB_\RG)$ the coroots, and by ${\check \Phi}^+ \subset {\check \Phi}$ the coroots corresponding to $\Phi^+$.

For any geometric point $\FM$ of $\RG$ we set
\[
\GB_\FM := \Spec(\FM) \times_{\Spec(\RG)} \GB_\RG.
\]
We also denote by $\BB_\FM$, $\TB_\FM$ the base change of $\BB_\RG$, $\TB_\RG$, and by $\tg_\FM \subset \bg_\FM \subset \gg_\FM$ the Lie algebras of $\TB_\FM \subset \BB_\FM \subset \GB_\FM$.

\begin{prop}
\label{prop:Chevalley-thm-R}
The inclusion $\tg_\RG \hookrightarrow \gg_\RG$ induces an isomorphism of $\RG$-schemes
\[
\tg_\RG / W \simto
\gg_\RG / \GB_\RG.
\]
Moreover, these $\RG$-schemes are smooth (in fact they are affine spaces).
\end{prop}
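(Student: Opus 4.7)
The smoothness part follows from Demazure's theorem, already invoked in the proof of Theorem~\ref{thm:kostant}: the $\RG$-algebra $\OC(\tg_\RG)^W = \mathrm{S}_\RG(X^*(\TB_\RG) \otimes_\ZM \RG)^W$ is a graded polynomial algebra over $\RG$ in $r$ variables, and its formation commutes with base change to any residue field of $\RG$. The required hypotheses are met under our standing assumptions: each quasi-simple factor of $\GB_\ZM$ being simply connected forces $X_*(\TB_\ZM)/\ZM{\check \Phi}$ to be torsion-free, while $\RG = \ZM[1/N]$ inverts every non-very-good prime. Hence $\tg_\RG/W \cong \AM^r_\RG$, and once the isomorphism with $\gg_\RG/\GB_\RG$ is established the latter inherits the same description.

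For the isomorphism itself, I would consider the natural ring homomorphism
\[
\rho \colon \OC(\gg_\RG)^{\GB_\RG} \longrightarrow \OC(\tg_\RG)^W
\]
induced by restriction from $\gg_\RG$ to $\tg_\RG$; the image lies in the $W$-invariants because $N_{\GB_\RG}(\TB_\RG)$ acts on $\tg_\RG$ through $W$. The plan is to show $\rho$ is bijective by reducing to geometric fibers and exploiting the freeness of the target over $\RG$. Injectivity is immediate: $B_\RG := \OC(\gg_\RG)^{\GB_\RG}$ is torsion-free over $\RG$ as a subring of the free $\RG$-module $\OC(\gg_\RG)$, and $\rho \otimes_\RG \QM$ is the classical characteristic-zero Chevalley isomorphism, so $\ker \rho = 0$.

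Surjectivity I would handle degree by degree. Let $A_d$ and $B_d$ denote the degree-$d$ components of $A_\RG := \OC(\tg_\RG)^W$ and $B_\RG$. Since $\RG$ is a principal ideal domain and $A_d$ is finite free, the submodule $B_d \subseteq A_d$ is itself free, of rank equal to $\rank_\RG A_d$ (by Chevalley over $\QM$), so the cokernel $C_d := A_d / \rho(B_d)$ is a finite torsion $\RG$-module. It therefore suffices to show $C_d \otimes_\RG \FM = 0$ for every residue field $\FM$ of $\RG$ of positive characteristic. Using Demazure's compatibility for $A_d$ together with Proposition~\ref{prop:Chevalley-thm} applied to $\GB_\FM$ (whose hypothesis {\rm (C1)} is satisfied thanks to our assumptions on $\RG$), this reduces to verifying that the natural reduction map $B_d \otimes_\RG \FM \to \OC(\gg_\FM)^{\GB_\FM}_d$ is surjective.

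The base-change-of-invariants step just mentioned is the main obstacle I anticipate. Two complementary routes seem available: one is to appeal to the existence of good filtrations of the symmetric powers $\mathrm{S}^d(\gg_\RG^*)$ as $\GB_\RG$-modules under our very-good-prime hypotheses, which guarantees that formation of invariants commutes with base change; the other is to use the product decomposition $\GB_\ZM = \prod_i \GB_{i,\ZM}$ to reduce to the explicit cases of $\mathrm{GL}_{n,\ZM}$ (where the adjoint-invariants are generated over $\ZM$ by the coefficients of the characteristic polynomial of the adjoint action, and these restrict to the elementary symmetric functions generating $\OC(\tg_\RG)^{S_n}$) and of simply connected quasi-simple factors over $\ZM[1/N]$ (where an integral polynomial description of the invariants is classical modulo the bad primes already inverted in $\RG$). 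Either route yields the required surjectivity at every residue field, and hence $C_d = 0$ for all $d$.
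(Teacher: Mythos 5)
Your strategy differs noticeably from the paper's. The paper reduces to the product decomposition of $\GB_\ZM$, cites Chaput--Romagny~\cite{chaputromagny} for the simply-connected quasi-simple factors, and treats $\mathrm{GL}_{n,\ZM}$ by hand via explicit generators (trace of exterior powers). You instead try to give a uniform argument by reducing to geometric fibers, which is in the spirit of the rest of Section~\ref{sec:R} (e.g.~\S\ref{ss:reduction-fields}).

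The structure of your reduction is sound, but you misdiagnose the ``main obstacle.'' Surjectivity of $B_d \otimes_\RG \FM \to \OC(\gg_\FM)^{\GB_\FM}_d$ does not need good filtrations or explicit generators --- it follows from a dimension count you have already set up. On the one hand, the base-change map for invariants is always injective here: $B_d$ is the kernel of a map $\OC(\gg_\RG)_d \to \OC(\gg_\RG)_d \otimes_\RG \OC(\GB_\RG)$ whose target is $\RG$-flat, so $\OC(\gg_\RG)_d / B_d$ is torsion-free, hence $B_d \otimes_\RG \FM \hookrightarrow \OC(\gg_\FM)_d$ and the image visibly lies in $\OC(\gg_\FM)^{\GB_\FM}_d$. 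On the other hand, the chain
\[
\dim_\FM (B_d \otimes_\RG \FM) = \rank_\RG B_d = \rank_\RG A_d = \dim_\FM \OC(\tg_\FM)^W_d = \dim_\FM \OC(\gg_\FM)^{\GB_\FM}_d
\]
holds by freeness of $B_d$, Chevalley over $\QM$, Demazure's base-change, and Proposition~\ref{prop:Chevalley-thm} over $\FM$, respectively. An injection between $\FM$-vector spaces of the same finite dimension is an isomorphism, so surjectivity is automatic and $C_d = 0$.

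Two smaller points. First, Proposition~\ref{prop:Chevalley-thm} is stated for algebraically closed fields, so you should pass from the residue field $\FM_p$ to $\ov{\FM_p}$; this is harmless since formation of invariants commutes with the flat extension $\FM_p \to \ov{\FM_p}$. Second, of your two proposed routes, the good-filtration one would need a nontrivial reference (good filtrations on $\mathrm{S}^d(\gg^*)$ over $\RG$, not just over fields), and the explicit-generator route is essentially the paper's proof by another name; with the dimension count above, neither is needed, and your argument becomes both self-contained and arguably cleaner than the one in the paper.
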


\begin{proof}
Consider the first claim.
Clearly, it is enough to prove the claim in the case $\GB_\RG$ is either simply-connected and quasi-simple, or else equal to $\mathrm{GL}_{n,\ZM}$. The first case is treated in~\cite[Theorem~1]{chaputromagny}. In the second case, we have $\RG=\ZM$, and one can argue as follows. We can assume that $\TB_\ZM$ is the standard maximal torus consisting of diagonal matrices. Considering the natural diagram
\[
\xymatrix{
\ZM[\mathfrak{gl}_{n,\ZM}]^{\mathrm{GL}_{n,\ZM}} \ar[r] \ar[d] & \ZM[\tg_\ZM]^W \ar[d] \\
\CM[\mathfrak{gl}_{n,\CM}]^{\mathrm{GL}_{n,\CM}} \ar[r] & \CM[\tg_\CM]^W,
}
\]
in which the vertical morphisms are injective and the lower horizontal morphism is an isomorphism (by Proposition~\ref{prop:Chevalley-thm}), we obtain the injectivity of the upper horizontal arrow. To prove surjectivity we only have to prove that the elementary symmetric functions $e_i \in \ZM[\tg_\ZM]=\ZM[X_1, \cdots, X_n]$ ($1 \leq i \leq n$) belong to the image; however, $e_i$ is the image of the function on $\mathfrak{gl}_{n,\ZM}$ sending a matrix $M$ to the trace of the $i$-th exterior power of $M$, which clearly belongs to $\ZM[\mathfrak{gl}_{n,\ZM}]^{\mathrm{GL}_{n,\ZM}}$.

The second claim follows from~\cite[Th{\'e}or{\`e}me 3]{demazure}.
\end{proof}

Using Proposition~\ref{prop:Chevalley-thm-R}, we will freely identify $\gg_\RG / \GB_\RG$ with $\tg_\RG / W$. Let us also note the following corollary.

\begin{cor}
\label{cor:base-change-quotient}
For any geometric point $\FM$ of $\RG$, the natural morphism
\[
\gg_\FM / \GB_\FM \to \Spec(\FM) \times_{\Spec(\RG)} (\gg_\RG / \GB_\RG)
\]
is an isomorphism.
\end{cor}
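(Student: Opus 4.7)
The plan is to use the two Chevalley restriction theorems already established to reduce the claim to a base-change statement about $W$-invariants of a symmetric algebra, which is then Demazure's theorem. Specifically, Proposition~\ref{prop:Chevalley-thm-R} identifies $\gg_\RG/\GB_\RG$ with $\tg_\RG/W$, while Proposition~\ref{prop:Chevalley-thm} identifies $\gg_\FM/\GB_\FM$ with $\tg_\FM/W$ once condition (C1) is known to hold for $\GB_\FM$. Since Lemma~\ref{lem:Lie-alg}\eqref{it:base-change-Lie} gives $\tg_\FM \cong \Spec(\FM) \times_{\Spec(\RG)} \tg_\RG$ (the torus $\TB_\RG$ being smooth over $\RG$), the natural morphism in the statement is thereby reduced to the natural morphism
\[
\tg_\FM/W \to \Spec(\FM) \times_{\Spec(\RG)} (\tg_\RG/W),
\]
i.e., on coordinate rings, to the canonical map $\FM \otimes_\RG \OC(\tg_\RG)^W \to \OC(\tg_\FM)^W$.

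To verify (C1) for $\GB_\FM$, I would observe that a geometric point $\FM$ has characteristic either $0$ (where (C1) is automatic) or a prime $\ell$ not invertible in $\RG$; by the construction of $\RG$, such an $\ell$ is very good for every quasi-simple factor of $\GB_\ZM$, and $\mathrm{GL}_n$-factors impose no restriction in any characteristic. Hence $\ell$ is very good for $\GB_\FM$, and the remark following condition (C4) in~\S\ref{ss:notation} yields (C1).

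It then remains to show that the displayed ring map is an isomorphism, and this is precisely Demazure's theorem (\cite[Corollaire on p.~296]{demazure}), already invoked in the proof of Theorem~\ref{thm:kostant}: both $\FM \otimes_\RG \OC(\tg_\RG)^W$ and $\OC(\tg_\FM)^W$ canonically identify with $\FM \otimes_{\ZM[1/N]} \mathrm{S}_{\ZM[1/N]}(X^*(\TB_\RG) \otimes_\ZM \ZM[1/N])^W$, and the map between them is the identity. The only mild obstacle is that $|W|$ need not be invertible in $\RG$, so a naive averaging argument is unavailable; Demazure's analysis of the $W$-invariants for root systems with the bad primes inverted is precisely what handles this.
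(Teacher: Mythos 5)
Your argument is essentially identical to the paper's proof: reduce via the two Chevalley restriction theorems (Propositions~\ref{prop:Chevalley-thm} and~\ref{prop:Chevalley-thm-R}) to the natural map $\tg_\FM/W \to \Spec(\FM) \times_{\Spec(\RG)}(\tg_\RG/W)$, and then invoke Demazure's corollary. The only difference is that you explicitly verify condition (C1) for $\GB_\FM$ (so that Proposition~\ref{prop:Chevalley-thm} applies), a small step the paper leaves implicit here and only records after Lemma~\ref{lem:bilinear-form}; this is a correct and worthwhile clarification but does not change the approach.
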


\begin{proof}
By Proposition~\ref{prop:Chevalley-thm} and Proposition~\ref{prop:Chevalley-thm-R} it is enough to prove that the natural morphism
\[
\tg_\FM / W \to \Spec(\FM) \times_{\Spec(\RG)} (\tg_\RG/W)
\]
is an isomorphism. The latter property follows from~\cite[Corollary on p.~296]{demazure}.
\end{proof}

\begin{lem}
\label{lem:bilinear-form}
There exists a symmetric $\GB_\RG$-invariant bilinear form on $\gg_\RG$ which is a perfect pairing.
\end{lem}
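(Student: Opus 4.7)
My plan is to reduce the statement to a case-by-case analysis on the simple factors of $\GB_\RG$, then construct explicit bilinear forms in each case.

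\textbf{Reduction to irreducible factors.} Write $\GB_\ZM = \prod_i \GB_\ZM^{(i)}$, where each $\GB_\ZM^{(i)}$ is either $\mathrm{GL}_{n_i,\ZM}$ or a split simply-connected quasi-simple group. Then $\gg_\RG = \bigoplus_i \gg_\RG^{(i)}$ as a $\GB_\RG$-module. The orthogonal direct sum of $\GB_\RG^{(i)}$-invariant perfect pairings on the $\gg_\RG^{(i)}$ gives a $\GB_\RG$-invariant perfect pairing on $\gg_\RG$ (since a block-diagonal matrix is invertible iff each block is). So it suffices to treat each $\GB_\ZM^{(i)}$ individually, and by base change from $\ZM$ to $\RG$, it is enough to construct a $\GB_\ZM^{(i)}$-invariant symmetric bilinear form on $\gg_\ZM^{(i)}$ whose Gram determinant is invertible in~$\RG$.

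\textbf{The case of $\mathrm{GL}_{n,\ZM}$.} The form $\langle X, Y \rangle := \Tr(XY)$ on $\gg_\ZM = \mathfrak{gl}_n(\ZM)$ is manifestly symmetric and $\mathrm{GL}_n$-invariant (the action being by conjugation). In the standard basis $\{E_{ij}\}_{1 \leq i,j \leq n}$ we have $\Tr(E_{ij} E_{kl}) = \delta_{jk}\delta_{il}$, so the Gram matrix is a permutation matrix and its determinant is $\pm 1$. Thus this form is a perfect pairing over $\ZM$, and a fortiori over $\RG$.

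\textbf{The case of a simply-connected quasi-simple factor.} Here I would construct a $\GB_\ZM$-invariant symmetric bilinear form on $\gg_\ZM$ by picking a suitable faithful representation $V$ over $\ZM$ and setting $\langle X, Y \rangle := \Tr_V(XY)$ (for classical types, $V$ is the standard representation of the embedding into a classical group; for the exceptional types, a small faithful representation such as the $7$-dimensional $G_2$-module, the $26$-dimensional $F_4$-module, etc.). Since the Gram matrix has entries in $\ZM$, the form is a perfect pairing over $\RG$ iff its determinant is a unit in $\RG$, iff the form becomes non-degenerate after base change to every geometric point $\FM$ of $\RG$ (which, by the definition of $\RG$, is a field of very good characteristic for $\GB_\ZM^{(i)}$, or of characteristic zero). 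I would then verify non-degeneracy field by field: for classical types this is a direct computation of $\Tr(XY)$ restricted to $\sl_n$, $\sp_n$, or $\mathfrak{so}_n$, using that very-good characteristic rules out the denominators that would otherwise ruin the form (in particular $p \nmid n$ in type $A_{n-1}$, $p \neq 2$ in types $B,C,D$); for the exceptional types one checks non-degeneracy case by case from known dimensions and character data.

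\textbf{Main obstacle.} The nontrivial work is the case-by-case verification for the simply-connected quasi-simple factors, especially for the exceptional types, where one needs an ad hoc check that the chosen trace form (or an appropriate rescaling of the Killing form) does remain non-degenerate modulo each very good prime. Once this is done for each simple type, the proof over $\RG$ follows by a clean Gram-determinant argument and the reduction to factors.
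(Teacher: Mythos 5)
Your proposal follows essentially the same route as the paper: reduce to the quasi-simple simply connected and $\mathrm{GL}_n$ factors, take the trace form for $\mathrm{GL}_n$, realize the classical simply connected types inside $\gl(m)$ and restrict the trace form, and handle the exceptional types separately. The one noteworthy difference is in how the case-by-case verification (which you flag as ``the nontrivial work'') is actually discharged. The paper does not attempt it by hand: for the exceptional types it uses the Killing form and cites Springer--Steinberg, Corollary~I.4.9, for the fact that the discriminant of the Killing form of the simple Chevalley $\ZM$-Lie algebra is invertible away from the bad primes; for the classical types it cites the discussion on p.~184 of the same reference. Since any invariant symmetric form on a simple Lie algebra is proportional to the Killing form, your alternative of taking $\mathrm{Tr}_V(XY)$ for a small faithful $V$ is equivalent in principle, but you would then still have to control the proportionality constant (a Dynkin index) to be sure it introduces no new prime factors, which is exactly the computation the citation avoids. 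Also note that for $E_8$ there is no small faithful representation other than the adjoint, so your ``etc.'' there already reduces to the Killing form.
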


\begin{proof}
As in the proof of Proposition~\ref{prop:Chevalley-thm-R}, we can assume that $\GB_\RG$ is either quasi-simple and simply-connected, or isomorphic to $\mathrm{GL}_{n,\ZM}$. The second case is easy. In the first case, if $\GB_\RG$ is exceptional then by~\cite[Corollary~I.4.9]{springer-steinberg} the discriminant of the Killing form of $\gg_\ZM$ is invertible in $\RG$, hence the Killing form of $\gg_\RG$ satisfies the conditions of the lemma. If $\GB_\RG$ is a classical simply-connected quasi-simple group, then $\gg_\RG$ can be realized naturally as a subalgebra of $\mathfrak{gl}(m,\RG)$ for some $m$, and one can check that the restriction of the bilinear form 
\begin{equation}
\label{eqn:trace-form}
\left\{
\begin{array}{ccc}
\mathfrak{gl}(m,\RG) \otimes_\RG \mathfrak{gl}(m,\RG) & \to & \RG \\
X \otimes Y & \mapsto & \mathrm{tr}(XY)
\end{array}
\right.
\end{equation}
is non-degenerate; see the considerations in~\cite[p.~184]{springer-steinberg}. For instance, let us explain this argument in types $B$ or $D$: in this case there exists a natural quotient morphism 
\[
\GB_\RG \to \mathrm{SO}_{m,\RG} = \{M \in \mathrm{GL}_{m,\RG} \mid {}^t \hspace{-1pt} M \cdot M = 1 \}
\]
for some $m \geq 3$. The induced morphism on Lie algebras is an isomorphism since $2$ is invertible in $\RG$, so that we can identify $\gg_\RG$ with 
\[
\mathfrak{so}(m,\RG) = \{x \in \mathfrak{gl}(m,\RG) \mid {}^t \hspace{-1pt} x = -x\}.
\]
Then the submodule
\[
\{x \in \mathfrak{gl}(m,\RG) \mid {}^t \hspace{-1pt} x = x \}
\]
is a complement to $\mathfrak{so}(m,\RG)$ in $\mathfrak{gl}(m,\RG)$ which is orthogonal to $\mathfrak{so}(m,\RG)$ for~\eqref{eqn:trace-form}, which proves that the restriction to  $\mathfrak{so}(m,\RG)$ is a perfect pairing.
\end{proof}

Lemma~\ref{lem:bilinear-form} implies in particular that for any geometric point $\FM$ of $\RG$, the group 
$\GB_\FM$
satisfies condition (C4) of~\S\ref{ss:notation}. It follows that all the results of Section~\ref{sec:fields} are applicable to this group.

From now on, we fix a bilinear form as in Lemma~\ref{lem:bilinear-form}, and denote by $\kappa \colon \gg_\RG \simto \gg^*_\RG$ the induced isomorphism of $\GB_\RG$-modules. (Here $\gg^*_\RG:=\Hom_\RG(\gg_\RG, \RG)$.)

\subsection{Kostant section}
\label{ss:Kostant-section-R}


%

We now explain how to define the Kostant section over $\RG$. For any $\alpha \in \Delta$ we choose a vector $e_\alpha \in \gg_\ZM$ which forms a $\ZM$-basis of the $\alpha$-weight space in $\gg_\ZM$ (with respect to the action of $\TB_\ZM$), and set
\[
e=\sum_{\alpha \in \Delta} e_\alpha.
\]
We denote similarly the image of this vector in $\gg_\RG$.

\begin{lem}
\label{lem:springer-R}
The $\RG$-module $\bg_\RG / [e, \ng_\RG]$ is free of rank $r$.
\end{lem}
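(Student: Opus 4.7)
My plan is to establish freeness of $M := \bg_\RG / [e, \ng_\RG]$ via a Tor-vanishing argument, and then determine its rank by base change to a single geometric point.

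First, I would observe that $\ad(e) \colon \ng_\RG \to \bg_\RG$ is injective, following the grading decomposition used in the proof of Lemma~\ref{lem:springer}. Over $\ZM$ this map is the direct sum $\bigoplus_{i < 0} t_i$, where $t_i \colon \gg^i_\ZM \to \gg^{i+1}_\ZM$ is the degree $i$ component of $\ad(\sum_{\alpha \in \Delta} x_\alpha)$. For $i < -1$ the map $t_i$ is injective by~\cite[Proposition~2.2]{springer} (applied to each simple factor of $\DS \GB_\ZM$, together with the elementary check for $\GL_n$ factors), and $t_{-1}$ is the injection $\ZM \check\Phi \hookrightarrow X_*(\TB_\ZM)$. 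Since $\RG$ is flat over $\ZM$, we obtain a short exact sequence
\[
0 \to \ng_\RG \xrightarrow{\ad(e)} \bg_\RG \to M \to 0
\]
in which $\ng_\RG$ and $\bg_\RG$ are free $\RG$-modules.

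Next I would show that $M$ is $\RG$-flat, which combined with finite generation over the principal ideal domain $\RG$ forces $M$ to be free. Since $\RG$ is Dedekind it suffices to verify that $\Tor_1^\RG(\RG/(\ell), M) = 0$ for every prime $\ell$ not inverted in $\RG$. Tensoring the short exact sequence with $\FM_\ell := \RG/(\ell)$ and using the freeness of $\ng_\RG$ and $\bg_\RG$, this Tor group is identified with the kernel of $\ad(e) \colon \ng_{\FM_\ell} \to \bg_{\FM_\ell}$. Extending scalars further along the faithfully flat map $\FM_\ell \hookrightarrow \overline{\FM_\ell}$ (which commutes with taking kernels), it is enough to check that $\ad(e) \colon \ng_{\overline{\FM_\ell}} \to \bg_{\overline{\FM_\ell}}$ is injective. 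But by Lemma~\ref{lem:bilinear-form}, $\GB_{\overline{\FM_\ell}}$ satisfies condition (C4), and \emph{a fortiori} (C2); so this injectivity is precisely Lemma~\ref{lem:springer}.

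Finally, to compute the rank I would base change to one convenient geometric point $\FM$ of $\RG$, for instance $\FM = \overline{\QM}$. Since $M$ is $\RG$-flat, tensoring the short exact sequence with $\FM$ stays exact, giving $\dim_\FM (\FM \otimes_\RG M) = \dim_\FM \bg_\FM - \dim_\FM \ng_\FM = r$. Hence $M$ is free of rank $r$. The crux of the argument is the Tor-vanishing step: injectivity of $\ad(e)$ over $\ZM$ alone is not sufficient, and one must invoke Lemma~\ref{lem:springer} at \emph{every} residual characteristic, which is exactly what the hypothesis that $\RG$ inverts all primes not very good for $\GB_\ZM$ guarantees.
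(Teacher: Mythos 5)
Your proof is correct and rests on the same underlying input — Springer's results on the graded maps $t_i$ — but you package it differently from the paper. The paper's one-line proof ("This follows from the results of \cite{springer} as in the proof of Lemma~\ref{lem:springer}") indicates a direct argument: working degree by degree as in the proof of Lemma~\ref{lem:springer}, one sees that each $\mathrm{Coker}(t_i) \otimes_\ZM \RG$ is a finitely generated torsion-free $\RG$-module (hence free), since Springer's Theorem~2.6 shows the only torsion primes of $\mathrm{Coker}(t_i)$ are bad primes, which are inverted in $\RG$ (and $\mathrm{Coker}(t_{-1}) = X_*(\TB_\ZM)/\ZM\check\Phi$ is free by the choice of $\GB_\ZM$). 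You instead first establish injectivity over $\RG$ and over each residue field $\overline{\FM_\ell}$, and then deduce flatness of the total cokernel from $\Tor_1^\RG(\RG/(\ell),M)=0$; this is an equivalent argument since the $\Tor_1$ computation is precisely the statement that each $\mathrm{Coker}(t_i)$ has no $\ell$-torsion. Your formulation has the virtue of fitting naturally into the ``reduction to geometric points'' framework of \S\ref{ss:reduction-fields}, whereas the paper's implicit argument stays purely over $\ZM$; both are valid and both ultimately use Springer's Proposition~2.2 and Theorem~2.6 (the latter entering your proof through the cited Lemma~\ref{lem:springer}). Your closing remark — that injectivity of $\ad(e)$ over $\ZM$ alone is not enough and one really needs the hypothesis on $\RG$ — correctly identifies where the very-good-primes assumption is used.
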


\begin{proof}
This follows from the results of~\cite{springer} as in the proof of Lemma~\ref{lem:springer}.
\end{proof}

Now we consider the cocharacter ${\check \lambda}_\circ:=\sum_{{\check \alpha} \in {\check \Phi}^+} {\check \alpha} \in X_*(\TB_\RG)$, and the $(\Gm)_\RG$-action on $\gg_\RG$ defined by
\[
t \cdot x := t^{-2} {\check \lambda}_\circ(t) \cdot x.
\]
Then $e$ is fixed under this action, and the subalgebras $\bg_\RG$ and $\ng_\RG$ are $(\Gm)_\RG$-stable. Using Lemma~\ref{lem:springer-R} we can choose a $(\Gm)_\RG$-stable free $\RG$-submodule $\sg_\RG \subset \bg_\RG$ such that $\bg_\RG = \sg_\RG \oplus [e,\ng_\RG]$. Then we set
\[
\SC_\RG := e + \sg_\RG, \qquad \Upsilon_\RG := e + \bg_\RG.
\]
It is clear that $\SC_\RG$ and $\Upsilon_\RG$ are $(\Gm)_\RG$-stable.

By construction, if $\FM$ is a geometric point of $\RG$, the base change of $\Upsilon_\RG$ to $\FM$ is the scheme $\Upsilon$ studied in Section~\ref{sec:fields} for the group $\GB_\FM$, and the base change of $\SC_\RG$ is the scheme $\SC$ studied in Section~\ref{sec:fields} for $\GB_\FM$, for the choice $\sg=\FM \otimes_\RG \sg_\RG$.

\begin{prop}
\label{prop:ganginzburg-R}
The morphism
\[
\UB_\RG \times_{\Spec(\RG)} \SC_\RG \to \Upsilon_\RG
\]
induced by the adjoint action is an isomorphism of $\RG$-schemes.
\end{prop}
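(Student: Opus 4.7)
The plan is to reduce to the already-proved case of algebraically closed fields (Proposition~\ref{prop:ganginzburg}), exploiting the contracting $(\Gm)_\RG$-actions to obtain graded coordinate rings and then comparing degree-by-degree.

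First I would observe that both the source $\UB_\RG \times_{\Spec(\RG)} \SC_\RG$ and the target $\Upsilon_\RG$ are isomorphic, as $\RG$-schemes, to affine spaces over $\RG$. Indeed, $\SC_\RG = e + \sg_\RG$ and $\Upsilon_\RG = e + \bg_\RG$ are translates of free $\RG$-modules (using Lemma~\ref{lem:springer-R} for $\sg_\RG$), while $\UB_\RG$ is a split unipotent $\RG$-group scheme and hence isomorphic, as an $\RG$-scheme, to an affine space of dimension $|\Phi^+|$. In particular, the coordinate rings on both sides are polynomial $\RG$-algebras, and thus $\RG$-flat. The $(\Gm)_\RG$-actions defined as in \S\ref{ss:principal-nilpotent} descend to $\RG$ and contract $\Upsilon_\RG$ to $e$ and $\UB_\RG \times \SC_\RG$ to $(1,e)$; moreover $\psi$ is $(\Gm)_\RG$-equivariant. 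Consequently both coordinate rings become non-negatively graded $\RG$-algebras with degree-zero part $\RG$, each graded piece is a free $\RG$-module of finite rank, and $\psi^* \colon \OC(\Upsilon_\RG) \to \OC(\UB_\RG \times \SC_\RG)$ is a graded $\RG$-algebra homomorphism.

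To conclude that $\psi^*$ is an isomorphism it suffices to show that each degree-$n$ piece $\psi^*_n$ is an isomorphism of finite free $\RG$-modules. For a homomorphism between finite free modules over the Dedekind ring $\RG = \ZM[1/N]$, this can be checked after reduction modulo every maximal ideal; these maximal ideals correspond to primes $\ell \nmid N$, and for each such $\ell$ one passes to the geometric point $\FM = \overline{\FM_\ell}$ of $\RG$. Using that tensor products commute with the direct-sum decomposition $\bg_\RG = \sg_\RG \oplus [e,\ng_\RG]$ and with the $(\Gm)_\RG$-grading, the base change of $\psi$ to $\FM$ coincides with the morphism $\UB_\FM \times \SC_\FM \to \Upsilon_\FM$ of Section~\ref{sec:fields} for the choice $\sg = \FM \otimes_\RG \sg_\RG$. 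Since by Lemma~\ref{lem:bilinear-form} the group $\GB_\FM$ satisfies (C4), hence in particular (C2), Proposition~\ref{prop:ganginzburg} applies and gives that $\psi_\FM$ is an isomorphism, which is what we needed.

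The principal (mild) obstacle will be the compatibility verifications in the reduction step: one must check that the chosen splitting of $\bg_\RG$, the $(\Gm)_\RG$-equivariance, and the target and source identifications all base-change correctly so that $\psi_\FM$ really is the map handled over fields. These are formal consequences of the flatness of the various $\RG$-modules involved together with their $\Gm$-stable direct-sum decompositions. A conceivable alternative would be to first apply Proposition~\ref{prop:smooth-reduction-fields} to deduce that $\psi$ is smooth, then use bijectivity on geometric fibers and equi-dimensionality of source and target to upgrade smoothness to an isomorphism; but the graded approach above delivers the isomorphism more directly and stays within the machinery already developed in the excerpt.
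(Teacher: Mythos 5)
Your proposal is correct and follows essentially the same strategy as the paper: use the $(\Gm)_\RG$-equivariance of $\psi$ to decompose $\psi^*$ into maps of finite free graded pieces over $\RG$, then check isomorphism after reduction modulo each prime not invertible in $\RG$ by invoking Proposition~\ref{prop:ganginzburg} over an algebraic closure. The additional observations you make (that source and target are affine spaces, hence the coordinate rings are polynomial) are correct and merely expand on the paper's brief remark that the weight spaces are finitely generated free $\RG$-modules.
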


\begin{proof}
We have to prove that the induced morphism $\OC(\Upsilon_\RG) \to \OC(\UB_\RG \times \SC_\RG)$ is an isomorphism of $\RG$-modules. Now, as in the proof of Proposition~\ref{prop:ganginzburg}, each of these modules is endowed with a natural $(\Gm)_\RG$-action (equivalently, a $\ZM$-grading), and one can easily check that the weight spaces are finitely generated free $\RG$-modules. Moreover, our morphism is $(\Gm)_\RG$-equivariant. Hence it suffices to prove that for any prime $\ell$ not invertible in $\RG$, the induced morphism $\FM_\ell \otimes_\RG \OC(\Upsilon_\RG) \to \FM_\ell \otimes_\RG \OC(\UB_\RG \times \SC_\RG)$ is an isomorphism. The latter result follows from the similar claim for an algebraic closure of $\FM_\ell$, which is a consequence of Proposition~\ref{prop:ganginzburg}.
\end{proof}

\begin{thm}[Kostant's theorem over $\RG$]
\label{thm:kostant-R}
The natural morphisms
\[
\SC_\RG \to \Upsilon_\RG / \UB_\RG \to \gg_\RG / \GB_\RG
\]
are isomorphisms of $\RG$-schemes.
\end{thm}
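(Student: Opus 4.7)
The first isomorphism $\SC_\RG \simto \Upsilon_\RG / \UB_\RG$ is immediate from Proposition~\ref{prop:ganginzburg-R}, since the identification $\UB_\RG \times_{\Spec(\RG)} \SC_\RG \simto \Upsilon_\RG$ is tautologically $\UB_\RG$-equivariant for left multiplication on the first factor. So the substance is the second isomorphism, for which I plan to follow the outline of Theorem~\ref{thm:kostant} but replace the characteristic-based character comparison with a graded-piece-by-graded-piece reduction to the case of geometric fibers, using the techniques of~\S\ref{ss:reduction-fields}.

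Both sides of the morphism in question are affine $\RG$-schemes carrying compatible $(\Gm)_\RG$-actions: on $\gg_\RG$ the action is $t \cdot x = t^{-2} {\check \lambda}_\circ(t) \cdot x$, which preserves $\SC_\RG$ and descends (since $\Ad({\check \lambda}_\circ(t))$ acts trivially on the quotient) to the $t^{-2}$-scaling action on $\gg_\RG/\GB_\RG = \tg_\RG/W$. The morphism therefore corresponds to a $(\Gm)_\RG$-equivariant $\RG$-algebra map
\[
\OC(\gg_\RG)^{\GB_\RG} \longrightarrow \OC(\SC_\RG),
\]
and I would argue it is an isomorphism one $(\Gm)_\RG$-weight at a time. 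For this I need both sides to be graded free over $\RG$: the target is $\mathrm{S}_\RG(\sg_\RG^*)$, a polynomial $\RG$-algebra since $\sg_\RG$ is free by Lemma~\ref{lem:springer-R}; the source identifies via Proposition~\ref{prop:Chevalley-thm-R} with $\OC(\tg_\RG)^W$, whose graded freeness over $\RG$ is the content of Demazure's Théorème~2 on p.~295 of~\cite{demazure} (which was already invoked in the proof of Theorem~\ref{thm:kostant}). Each weight piece on either side is thus a finitely generated free $\RG$-module.

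Next, for any geometric point $\FM$ of $\RG$, Corollary~\ref{cor:base-change-quotient} provides a canonical isomorphism $\FM \otimes_\RG (\gg_\RG/\GB_\RG) \simto \gg_\FM/\GB_\FM$, and by construction the choice $\sg = \FM \otimes_\RG \sg_\RG$ gives $\FM \otimes_\RG \SC_\RG = \SC_\FM$. The base change to $\FM$ of our map is therefore the morphism $\SC_\FM \to \gg_\FM/\GB_\FM$ of Theorem~\ref{thm:kostant}, which is an isomorphism: Lemma~\ref{lem:bilinear-form} guarantees that $\GB_\FM$ satisfies (C4), hence in particular (C1) and (C2). Consequently, in each $(\Gm)_\RG$-weight we have an $\RG$-linear map of finitely generated free $\RG$-modules which becomes an isomorphism modulo every maximal ideal of $\RG$; since $\RG = \ZM[1/N]$ is a principal ideal domain, an element of $\RG$ is a unit iff it is nonzero modulo every maximal ideal, so applying this to the determinant (after choosing bases) shows the map is an isomorphism in each weight, and hence overall.

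I do not anticipate a serious obstacle in this scheme: the substantive external input, graded freeness of $\OC(\tg_\RG)^W$ over $\RG$, is Demazure's theorem, and the compatibility of the construction of $\SC_\RG$ with arbitrary base change to a geometric point is essentially built into the definition (since $\sg_\RG$ was chosen $\Gm$-stable and free, hence freeness and stability survive to any $\FM \otimes_\RG \sg_\RG$). The only point requiring mild care is the verification that the two $(\Gm)_\RG$-actions match up under the quotient $\gg_\RG \to \gg_\RG/\GB_\RG$, which reduces to the observation that ${\check \lambda}_\circ$ factors through $\TB_\RG \subset \GB_\RG$ and so its adjoint action is killed on passing to the GIT quotient.
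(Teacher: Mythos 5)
Your proof is correct and takes essentially the same route as the paper: after disposing of the first isomorphism via Proposition~\ref{prop:ganginzburg-R}, the paper reduces the second to its geometric fibers ``by the same arguments as in the proof of Proposition~\ref{prop:ganginzburg-R},'' which is precisely the $(\Gm)_\RG$-weight-by-weight argument you spell out (finite-rank free weight spaces on both sides, base change via Corollary~\ref{cor:base-change-quotient}, Theorem~\ref{thm:kostant} for each geometric point, then conclude). The only cosmetic difference is that you make explicit why the weight spaces are free of finite rank (polynomial algebra on $\sg_\RG^*$ for the target, Demazure/Proposition~\ref{prop:Chevalley-thm-R} for the source) and finish with the determinant trick, where the paper leaves this to the reader.
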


\begin{proof}
The fact that the morphism $\SC_\RG \to \Upsilon_\RG / \UB_\RG$ is an isomorphism is a consequence of Proposition~\ref{prop:ganginzburg-R}. Hence what remains is to prove that the morphism $\SC_\RG \to \gg_\RG / \GB_\RG$ is an isomorphism. By the same arguments as in the proof of Proposition~\ref{prop:ganginzburg-R} it suffices to observe that for any 
geometric point $\FM$ of $\RG$,
the induced morphism
\[
\Spec(\FM) \times_{\Spec(\RG)} \SC_\RG \to \Spec(\FM) \times_{\Spec(\RG)} (\gg_\RG / \GB_\RG)
\]
is an isomorphism (by Corollary~\ref{cor:base-change-quotient} and
Theorem~\ref{thm:kostant} applied to $\GB_\FM$).
\end{proof}

\begin{prop}
\label{prop:action-smooth-R}
The morphism
\[
a \colon \GB_\RG \times_{\Spec(\RG)} \SC_\RG \to \gg_\RG
\]
induced by the $\GB_\RG$-action on $\gg_\RG$
is smooth.
\end{prop}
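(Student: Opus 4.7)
The plan is to apply Proposition~\ref{prop:smooth-reduction-fields} to the morphism $a$. The two hypotheses of that proposition are: first, that $\GB_\RG \times_{\Spec(\RG)} \SC_\RG$ is flat over $\RG$, and second, that for every geometric point $\FM$ of $\RG$ (of positive characteristic) the base change
\[
a_\FM \colon \GB_\FM \times_{\Spec(\FM)} (\Spec(\FM) \times_{\Spec(\RG)} \SC_\RG) \to \gg_\FM
\]
is smooth. Both schemes involved are clearly of finite type over $\RG$, so once these two conditions are checked Proposition~\ref{prop:smooth-reduction-fields} will finish the job.

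For flatness, I note that $\GB_\RG$ is smooth (hence flat) over $\RG$ since it is the base change of a split reductive group scheme over $\ZM$, and $\SC_\RG = e + \sg_\RG$ is an affine space over $\RG$ (since $\sg_\RG$ was chosen to be a free $\RG$-submodule of $\bg_\RG$ by virtue of Lemma~\ref{lem:springer-R}), so also flat over $\RG$. Therefore their fiber product is flat over $\RG$.

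For the second condition, I would first observe that, for any geometric point $\FM$ of $\RG$, the base change $\Spec(\FM) \times_{\Spec(\RG)} \SC_\RG$ is canonically identified with the Kostant section $\SC$ constructed in~\S\ref{ss:principal-nilpotent} for the group $\GB_\FM$, using the $\Gm$-stable complement $\sg = \FM \otimes_\RG \sg_\RG$ to $[e, \ng_\FM]$ in $\bg_\FM$. (The fact that this specialization of $\sg_\RG$ is indeed such a complement follows from Lemma~\ref{lem:springer-R} together with the hypothesis that $\FM$ is a geometric point of $\RG$, which forces $\ell$ to be very good.) Under this identification $a_\FM$ becomes precisely the action morphism from Lemma~\ref{lem:a-smooth}, which asserts that this map is smooth (and surjective) onto $\gg_{\FM,\reg}$. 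Since $\gg_{\FM,\reg}$ is an open subscheme of $\gg_\FM$, the open immersion $\gg_{\FM,\reg} \hookrightarrow \gg_\FM$ is smooth, and composing yields smoothness of $a_\FM$ into $\gg_\FM$.

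There is no genuine obstacle here; the work has already been done in Section~\ref{sec:fields} (via Lemma~\ref{lem:a-smooth}) and in~\S\ref{ss:reduction-fields} (via Proposition~\ref{prop:smooth-reduction-fields}). The only point requiring a moment of care is the identification of the base change of $\SC_\RG$ with the Kostant section built in the fiber, but this is immediate from the very definition of $\SC_\RG$ and the fact that Lemma~\ref{lem:bilinear-form} guarantees condition (C4) (and \emph{a fortiori} (C3)) holds for every $\GB_\FM$, so that Lemma~\ref{lem:a-smooth} applies.
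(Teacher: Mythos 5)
Your proof is correct and follows exactly the paper's route: the paper's own proof is the one-line statement that the result follows from Proposition~\ref{prop:smooth-reduction-fields} together with Lemma~\ref{lem:a-smooth}, and your write-up simply makes explicit the verifications (flatness of $\GB_\RG \times_{\Spec(\RG)} \SC_\RG$ over $\RG$, identification of the fiber with the Kostant section of $\GB_\FM$, and composition with the open immersion $\gg_{\FM,\reg} \hookrightarrow \gg_\FM$) that the paper leaves implicit.
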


\begin{proof}
This follows from Proposition~\ref{prop:smooth-reduction-fields}
and Lemma~\ref{lem:a-smooth}.
\end{proof}

\begin{remark}
\label{rmk:greg-R}
Since $a$ is in particular a flat morphism, its image is an open subscheme in $\gg_\RG$. One can define the regular locus $\gg_\RG^\reg \subset \gg_\RG$ as this image. Then the morphism $a \colon \GB_\RG \times \SC_\RG \to \gg_\RG^\reg$ is smooth and surjective. Moreover, the same argument as in the proof of Proposition~\ref{prop:chi-reg-smooth} shows that the restriction $\gg_\RG^\reg \to \tg_\RG/W$ of the adjoint quotient is smooth and surjective.
\end{remark}

\subsection{The universal centralizer and its Lie algebra}
\label{ss:centralizer-R}
 
We define the (affine) group scheme $\IB_\RG$ over $\gg_\RG$ as the fiber product
\[
\IB_\RG:=\gg_\RG \times_{\gg_\RG \times \gg_\RG} (\GB_\RG \times \gg_\RG),
\]
where the morphisms are similar to those considered in~\S\ref{ss:centralizer-F}. We also denote by $\IB_\SC^\RG$ the restriction of $\IB_\RG$ to $\SC_\RG$.
It is clear that for any 
geometric point $\FM$ of $\RG$,
the base change of $\IB_\RG$, resp.~$\IB_{\SC}^\RG$, to $\FM$ is the corresponding group scheme defined and studied in~\S\ref{ss:centralizer-F} for the group $\GB_\FM$. 

\begin{prop}
\label{prop:IS-smooth-R}
The group scheme $\IB_\SC^\RG$ is smooth over $\SC_\RG$, and commutative.
\end{prop}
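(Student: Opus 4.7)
The plan is to follow the proof of Proposition~\ref{prop:IS-smooth} as closely as possible: smoothness will follow directly from the fiber-product manipulation used there together with the $\RG$-forms of its inputs, while commutativity is the main obstacle, since we cannot argue directly through the regular semisimple locus over $\RG$ and must instead descend the statement from geometric points by a flatness argument.

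For smoothness, the key point is that the fiber-product manipulation in the proof of Proposition~\ref{prop:IS-smooth} only uses Kostant's theorem, and the $\RG$-version of this input is Theorem~\ref{thm:kostant-R}. Precisely, from $\IB_\SC^\RG = \SC_\RG \times_{\gg_\RG \times \SC_\RG} (\GB_\RG \times \SC_\RG)$ and the fact that both structure maps factor through $\gg_\RG \times_{\gg_\RG/\GB_\RG} \SC_\RG$, Theorem~\ref{thm:kostant-R} yields
\[
\IB_\SC^\RG \cong \SC_\RG \times_{\gg_\RG} (\GB_\RG \times_{\Spec(\RG)} \SC_\RG),
\]
where the right-hand morphism is the action morphism $a$. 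The smoothness of $\IB_\SC^\RG \to \SC_\RG$ then follows from the smoothness of $a$ (Proposition~\ref{prop:action-smooth-R}) by base change.

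For commutativity, let $X := \IB_\SC^\RG \times_{\SC_\RG} \IB_\SC^\RG$, and consider the commutator morphism $c \colon X \to \IB_\SC^\RG$ and the morphism $e \colon X \to \IB_\SC^\RG$ constant equal to the identity section. Since $\IB_\SC^\RG$ is affine (hence separated) over $\SC_\RG$, the equalizer $Z \subset X$ of $c$ and $e$ is a closed subscheme, and the task is to show $Z = X$. By the smoothness established above and Proposition~\ref{prop:Chevalley-thm-R} (which identifies $\SC_\RG$ with an affine space over $\RG$), $X$ is smooth, in particular flat, over $\RG$; consequently the generic fiber $X_\QM$ is scheme-theoretically dense in $X$.

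Now, for every geometric point $\FM$ of $\RG$, Corollary~\ref{cor:Ireg-commutative} is applicable (since $\GB_\FM$ satisfies {\rm (C4)}, by the discussion following Lemma~\ref{lem:bilinear-form}), so the base changes of $c$ and $e$ to $\FM$ coincide; thus $Z_\FM = X_\FM$. Specializing to $\FM = \overline{\QM}$ and using faithful flatness of $\QM \to \overline{\QM}$, we obtain $Z_\QM = X_\QM$; by scheme-theoretic density of $X_\QM$ in $X$, this forces $Z = X$, as desired.
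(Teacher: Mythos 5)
Your proof is correct and follows essentially the same strategy as the paper. The smoothness argument is identical (the fiber-product manipulation from Proposition~\ref{prop:IS-smooth} transported to $\RG$ via Theorem~\ref{thm:kostant-R}, then base change along the smooth morphism of Proposition~\ref{prop:action-smooth-R}). For commutativity, the paper also reduces to a characteristic-$0$ field via flatness of $\IB_\SC^\RG$ over $\RG$, but packages the argument Hopf-algebraically: it shows that the comultiplication map injects, after base change to $\CM$, into the cocommutative comultiplication of $\OC(\IB_\SC^\CM)$. Your version works with the commutator morphism and scheme-theoretic density of the generic fiber; this is the same flatness reduction in geometric rather than algebraic language (and passing through $\overline{\QM}$ rather than $\CM$ — just note that $\overline{\QM}$ is not a ``geometric point'' in the paper's conventions, which are restricted to positive characteristic, so the sentence invoking the discussion after Lemma~\ref{lem:bilinear-form} applies only to the finite-characteristic fibers; for $\overline{\QM}$ one checks (C4) directly, which is immediate in characteristic $0$).
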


\begin{proof}
Smoothness follows from the same arguments as in the proof of Proposition~\ref{prop:IS-smooth}, using Theorem~\ref{thm:kostant-R} and Proposition~\ref{prop:action-smooth-R}.

To prove commutativity, we have to prove that the comultiplication morphism $\OC(\IB_\SC^\RG) \to \OC(\IB_\SC^\RG) \otimes_{\OC(\SC_\RG)} \OC(\IB_\SC^\RG)$ is cocommutative. However using flatness of $\IB_\SC^\RG$ over $\SC_\RG$ (hence over $\RG$) we obtain that the vertical arrows in the natural commutative diagram
\[
\xymatrix@C=1.2cm@R=0.5cm{
\OC(\IB_\SC^\RG) \ar[r] \ar[d] & \OC(\IB_\SC^\RG) \otimes_{\OC(\SC_\RG)} \OC(\IB_\SC^\RG) \ar[d] \\
\OC(\IB_\SC^\CM) \ar[r] & \OC(\IB_\SC^\CM) \otimes_{\OC(\SC_\CM)} \OC(\IB_\SC^\CM)
}
\]
are injective, where $\IB_\SC^\CM:=\Spec(\CM) \times_{\Spec(\RG)} \IB_\SC^\RG$ and $\SC_\CM:=\Spec(\CM) \times_{\Spec(\RG)} \SC_\RG$. Hence the desired cocommutativity follows from the case $\FM=\CM$ of Corollary~\ref{cor:Ireg-commutative}.
\end{proof}

\begin{remark}
\label{rmk:greg-R-2}
If one defines $\gg_\RG^\reg$ as in Remark~\ref{rmk:greg-R}, then the same arguments as in the proof of Corollary~\ref{cor:Ireg-smooth} imply that the restriction of $\IB_\RG$ to the open subscheme $\gg_\RG^\reg$ is smooth (and commutative). Moreover, one can construct an affine smooth group scheme $\JB_\RG$ over $\tg_\RG/W$ as in Proposition~\ref{prop:J}, and an equivalence as in Proposition~\ref{prop:coh-greg}.
\end{remark}

We set 
\[
\IG^\RG_\SC := \Lie(\IB_\SC^\RG/\SC_\RG), \qquad \IM^\RG_\SC := \LLie(\IB_\SC^\RG/\SC_\RG).
\]

\begin{lem}
\label{lem:J-locally-free-R}
\begin{enumerate}
\item
The coherent sheaf $\IG_\RG^\SC$ on $\SC_\RG$ is locally free of finite rank, and the Lie algebra $\IM_\SC^\RG$ is a vector bundle over $\SC_\RG$.
\item
The Lie algebra $\IG_\SC^\RG$ is commutative.
\item
For any geometric point $\FM$ of $\RG$, the Lie algebra $\Spec(\FM) \times_{\Spec(\RG)} \IM_\SC^\RG$ identifies with the Lie algebra $\IM_\SC$ of \S{\rm \ref{ss:Lie-centralizer-F}} for the group $\GB_\FM$.
\label{it:base-change-IR}
\end{enumerate}
\end{lem}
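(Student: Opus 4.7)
The plan is to derive all three assertions from the structural results on $\IB^\RG_\SC$ established in Proposition \ref{prop:IS-smooth-R}, combined with the general formalism of Lemma \ref{lem:Lie-alg}. Part (1) is immediate: since Proposition \ref{prop:IS-smooth-R} asserts that $\IB^\RG_\SC$ is smooth over $\SC_\RG$, Lemma \ref{lem:Lie-alg}(1) gives at once that $\IG^\RG_\SC$ is locally free of finite rank and that $\IM^\RG_\SC$ is a vector bundle over $\SC_\RG$.

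For part (3), I would apply Lemma \ref{lem:Lie-alg}(2) to the base-change morphism $f \colon \Spec(\FM) \times_{\Spec(\RG)} \SC_\RG \to \SC_\RG$; the smoothness hypothesis in Lemma \ref{lem:Lie-alg}(2) is exactly part (1). This yields a canonical isomorphism
\[
\Lie\bigl( (\Spec(\FM) \times_{\Spec(\RG)} \IB^\RG_\SC) / (\Spec(\FM) \times_{\Spec(\RG)} \SC_\RG) \bigr) \cong f^* \IG^\RG_\SC
\]
of $\OC$-Lie algebras, and the corresponding statement for $\IM$ is obtained by passing to the relative spectra of the symmetric algebras of the associated $\omega$'s (which are also compatible with base change, once again by smoothness). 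It remains to identify the fibre $\Spec(\FM) \times_{\Spec(\RG)} \IB^\RG_\SC$ with the object $\IB_\SC$ attached to $\GB_\FM$ in \S\ref{ss:centralizer-F}: this was already observed in \S\ref{ss:Kostant-section-R} and \S\ref{ss:centralizer-R}, because by construction the base change of $\SC_\RG$ to $\FM$ is the Kostant section associated with the complement $\sg = \FM \otimes_\RG \sg_\RG$ to $[e, \ng_\FM]$ in $\bg_\FM$.

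For part (2), I would invoke the general fact that the Lie algebra of a commutative group scheme is abelian: the commutator morphism $\IB^\RG_\SC \times_{\SC_\RG} \IB^\RG_\SC \to \IB^\RG_\SC$ factors through the identity section by the commutativity established in Proposition \ref{prop:IS-smooth-R}, so its differential along the identity vanishes, which gives the vanishing of the bracket on $\IG^\RG_\SC$. Alternatively, since $\IG^\RG_\SC$ is locally free over $\SC_\RG$ (and hence torsion free over $\RG$, as $\SC_\RG$ is affine and flat over $\RG$), one can reduce to checking commutativity after base change to each geometric point $\FM$ of $\RG$, and then apply part (3) together with Proposition \ref{prop:properties-Lie-alg}(3).

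There is no serious obstacle here; the only mild subtlety is the verification, needed in (3), that the chosen $(\Gm)_\RG$-stable complement $\sg_\RG$ specialises compatibly with the formation of the Kostant section and of the universal centralizer. This compatibility is built into the definitions given in \S\ref{ss:Kostant-section-R} and \S\ref{ss:centralizer-R}, so the argument reduces cleanly to an application of Lemma \ref{lem:Lie-alg}.
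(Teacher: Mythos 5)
Your proposal is correct and takes essentially the same route as the paper, whose own proof is simply the citation ``These properties follow from Lemma~\ref{lem:Lie-alg} and Proposition~\ref{prop:IS-smooth-R}.'' You have merely unfolded what that citation involves. One small imprecision worth noting in your direct argument for part (2): the \emph{first} differential of the commutator morphism at the identity section is zero for \emph{any} group scheme (it is $(x,y)\mapsto x+y-x-y$), so that alone does not give the vanishing of the bracket; the correct point is that commutativity makes the commutator morphism identically equal to the identity section, and the bracket is extracted from the second-order part of this morphism (equivalently, from the triviality of the adjoint action), which therefore vanishes. Your alternative argument for (2) — reduce to geometric points using local freeness and $\RG$-flatness, then invoke part (3) and Proposition~\ref{prop:properties-Lie-alg}(3) — is watertight and in fact parallels the way the paper handles commutativity of the group scheme itself in Proposition~\ref{prop:IS-smooth-R}.
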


\begin{proof}
These properties follows from Lemma~\ref{lem:Lie-alg} and Proposition~\ref{prop:IS-smooth-R}.
\end{proof}

\begin{thm}
\label{thm:Lie-alg-cotangent-R}
There exists a canonical isomorphism
\[
\IM_\SC^\RG \simto \TM^*(\SC_\RG)
\]
of commutative Lie algebras over $\SC_\RG$. In other words, if one identifies $\SC_\RG$ with $\tg_\RG/W$ via the isomorphism of Theorem~{\rm \ref{thm:kostant-R}}, there exists a canonical isomorphism 
$\IM_\SC^\RG \simto \TM^*(\tg_\RG/W)$
of commutative Lie algebras
over $\tg_\RG/W$.
\end{thm}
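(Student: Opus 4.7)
The plan is to adapt the proof of Theorem \ref{thm:Lie-alg-cotangent} to the integral setting, and then reduce the key isomorphism statement to the case of algebraically closed fields, using the strategy developed in \S\ref{ss:reduction-fields}.

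First I would construct the candidate morphism $\phi \colon \IG_\SC^\RG \to \Omega_{\SC_\RG/\RG}$ in exactly the same way as in the proof of Theorem \ref{thm:Lie-alg-cotangent}. Since $\IB_\SC^\RG$ is a closed subgroup of $\GB_\RG \times_{\Spec(\RG)} \SC_\RG$ as a group scheme over $\SC_\RG$, taking Lie algebras produces an embedding $\IG_\SC^\RG \hookrightarrow \gg_\RG \otimes_\RG \OC_{\SC_\RG}$. The $\GB_\RG$-equivariant perfect pairing $\kappa$ of Lemma \ref{lem:bilinear-form} gives an identification $\gg_\RG \otimes_\RG \OC_{\SC_\RG} \simto \gg_\RG^* \otimes_\RG \OC_{\SC_\RG}$, and since $\SC_\RG = e + \sg_\RG$ is a smooth closed $\RG$-subscheme of $\gg_\RG$ (with normal bundle free, because $\sg_\RG$ is a direct summand of $\gg_\RG$ by construction), the conormal exact sequence yields a surjection $\gg_\RG^* \otimes_\RG \OC_{\SC_\RG} \twoheadrightarrow \Omega_{\SC_\RG/\RG}$. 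Composing these three arrows defines $\phi$.

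Next I would show that $\phi$ is an isomorphism. The sheaf $\IG_\SC^\RG$ is locally free of rank $r$ by Lemma \ref{lem:J-locally-free-R}, while $\Omega_{\SC_\RG/\RG}$ is locally free of the same rank $r$ since $\SC_\RG$ is an affine $\RG$-space of relative dimension $r$. Hence, by the Nakayama lemma applied to the cokernel of $\phi$, it suffices to check that $\phi \otimes_{\OC_{\SC_\RG}} k(s)$ is surjective at every closed point $s \in \SC_\RG$. Every such closed point has residue field of some positive characteristic $\ell$ coprime to $N$ (because $\SC_\RG$ is of finite type over $\RG = \ZM[1/N]$), and any embedding of $k(s)$ into an algebraic closure $\FM$ exhibits $\FM$ as a geometric point of $\RG$. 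Using Lemma \ref{lem:Lie-alg}\eqref{it:base-change-Lie} (which is applicable thanks to the smoothness established in Proposition \ref{prop:IS-smooth-R}), the identification of Lemma \ref{lem:J-locally-free-R}\eqref{it:base-change-IR}, standard base change for Kähler differentials, and the obvious $\GB_\FM$-equivariance of $\FM \otimes_\RG \kappa$, one checks that the pullback of $\phi$ along $\Spec(\FM) \to \Spec(\RG)$ coincides canonically with the morphism constructed in the proof of Theorem \ref{thm:Lie-alg-cotangent} for the group $\GB_\FM$. That group satisfies (C4) by Lemma \ref{lem:bilinear-form}, so the pulled-back morphism is an isomorphism by Theorem \ref{thm:Lie-alg-cotangent}; since $k(s) \hookrightarrow \FM$ is faithfully flat, $\phi \otimes k(s)$ is an isomorphism as well.

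Finally, the Lie-algebra statement is automatic: $\IM_\SC^\RG$ is commutative by Proposition \ref{prop:IS-smooth-R}, $\TM^*(\SC_\RG)$ is trivially commutative, and any $\OC_{\SC_\RG}$-linear isomorphism between two commutative Lie algebras is an isomorphism of Lie algebras. I do not expect any serious obstacle here; the main point requiring care is verifying that every ingredient entering the definition of $\phi$ commutes with base change $\RG \to \FM$, which rests on the smoothness of $\IB_\SC^\RG$ over $\SC_\RG$ (Proposition \ref{prop:IS-smooth-R}) and the explicit description of the conormal bundle of $\SC_\RG$ in $\gg_\RG$.
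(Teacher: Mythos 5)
Your proof is correct and follows essentially the same strategy as the paper: construct the morphism $\IG_\SC^\RG \to \Omega_{\SC_\RG}$ exactly as in Theorem~\ref{thm:Lie-alg-cotangent}, reduce the isomorphism claim to algebraically closed fields of positive characteristic, and invoke Theorem~\ref{thm:Lie-alg-cotangent} for $\GB_\FM$ together with Lemma~\ref{lem:J-locally-free-R}\eqref{it:base-change-IR}. The only difference is in the reduction step: the paper observes that both sheaves are flat over $\RG$ and appeals to the ``derived Nakayama'' lemma \cite[Lemma~1.4.1(1)]{br} to reduce directly to geometric points of $\RG$, whereas you apply ordinary Nakayama at closed points of $\SC_\RG$ and then pass from residue fields to their algebraic closures; the two mechanisms are essentially equivalent (both rest on $\SC_\RG$ being of finite type over a Jacobson ring via \cite[Tag 00GB]{stacks-project}), yours being a bit more hands-on. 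One small point worth making explicit: Lemma~\ref{lem:J-locally-free-R} states local freeness but not that the rank is $r$; this follows by comparing fibers via part~\eqref{it:base-change-IR} of that lemma and Remark~\ref{rk:dim-centralizer}, but you should say so since your same-rank argument for injectivity depends on it (alternatively, as you note, you already get injectivity directly from $\phi \otimes k(s)$ being an isomorphism, without a rank count).
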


\begin{proof}
As in the proof of Theorem~\ref{thm:Lie-alg-cotangent} one can construct a canonical morphism 
$\IG_\SC^\RG \to \Omega_{\SC_\RG}$ of coherent sheaves on $\SC_\RG$.
To prove that this morphism is an isomorphism it suffices to prove that its cone is isomorphic to $0$. Since both $\IG_\SC^\RG$ and $\Omega_{\SC_\RG}$ are flat over $\RG$, to prove this
it suffices to prove that for any 
geometric point $\FM$ of $\RG$,
the induced morphism $\FM \otimes_\RG \IG_\SC^\RG \to \FM \otimes_\RG \Omega_{\SC_\RG}$ is an isomorphism (see~\cite[Lemma~1.4.1(1)]{br}). However, by Lemma~\ref{lem:J-locally-free-R}\eqref{it:base-change-IR}, $\FM \otimes_\RG \IG_\SC^\RG$ is the Lie algebra $\IG_\SC$ of \S\ref{ss:Lie-centralizer-F} for the group $\GB_\FM$, hence the desired claim follows from Theorem~\ref{thm:Lie-alg-cotangent}.
\end{proof}


\subsection{Variant for the Grothendieck resolution}
\label{ss:grothendieck-R}

In this subsection we consider the Grothendieck resolution
\[
\tgg_\RG:=\GB_\RG \times^{\BB_\RG} \bg_\RG.
\]
It is clear that, for any geometric point $\FM$ of $\RG$, the base change of $\tgg_\RG$ to $\FM$ is the variety $\tgg$ studied in~\S\ref{ss:groth-resolution}, for the group $\GB_\FM$.
There is a natural projective morphism $\pi \colon \tgg_\RG \to \gg_\RG$ induced by the adjoint action, and we set
\[
\tSC_\RG := \SC_\RG \times_{\gg_\RG} \tgg_\RG.
\]
We will also consider the natural morphism $\nu \colon \tgg_\RG \to \tg_\RG$.

The following lemma follows from the same arguments as for Lemma~\ref{lem:ta-smooth}, using Proposition~\ref{prop:action-smooth-R}.

\begin{lem}
\label{lem:action-smooth-R-t}
The morphism
\[
\widetilde{a} \colon \GB_\RG \times_{\Spec(\RG)} \tSC_\RG \to \tgg_\RG
\]
induced by the $\GB_\RG$-action on $\tgg_\RG$
is smooth.
\end{lem}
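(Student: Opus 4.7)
The plan is to imitate the proof of Lemma~\ref{lem:ta-smooth}, exhibiting $\widetilde{a}$ as a base change of the smooth morphism $a$ provided by Proposition~\ref{prop:action-smooth-R}. The key geometric input is the $\GB_\RG$-equivariance of the projection $\pi \colon \tgg_\RG \to \gg_\RG$, which I expect to yield a Cartesian square
\[
\xymatrix@C=1.5cm@R=0.5cm{
\GB_\RG \times_{\Spec(\RG)} \tSC_\RG \ar[r]^-{\widetilde{a}} \ar[d] & \tgg_\RG \ar[d]^-{\pi} \\
\GB_\RG \times_{\Spec(\RG)} \SC_\RG \ar[r]^-{a} & \gg_\RG,
}
\]
whose left vertical arrow is $\mathrm{id}_{\GB_\RG}$ times the first projection $\tSC_\RG \to \SC_\RG$ coming from the definition $\tSC_\RG = \SC_\RG \times_{\gg_\RG} \tgg_\RG$.

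The first step is to verify the Cartesian property. Passing to functors of points over an arbitrary $\RG$-scheme $S$, a point of the fiber product of $\pi$ with $a$ is a quadruple $(g, x, y, \BB')$ with $g \in \GB_\RG(S)$, $x \in \SC_\RG(S)$, $(y, \BB') \in \tgg_\RG(S)$ and $y = g \cdot x$. Conjugating $\BB'$ by $g^{-1}$ produces the point $(g, (x, g^{-1} \BB' g))$ of $\GB_\RG \times_{\Spec(\RG)} \tSC_\RG$ (the condition $x \in \Lie(g^{-1} \BB' g)$ follows from $g \cdot x \in \Lie(\BB')$), and this is plainly inverse to the canonical map from $\GB_\RG \times \tSC_\RG$ into the fiber product, which sends $(g, (x, \BB''))$ to $(g, x, g \cdot x, g \BB'' g^{-1})$. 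This is a routine Yoneda-level check, perfectly parallel to the one implicit in the proof of Lemma~\ref{lem:ta-smooth}.

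Once the diagram is established as Cartesian, smoothness of $\widetilde{a}$ follows immediately from smoothness of $a$ (Proposition~\ref{prop:action-smooth-R}) by base change. I do not anticipate any substantive obstacle here: all the analytic content is packaged in Proposition~\ref{prop:action-smooth-R}, and the Grothendieck resolution contributes only a formal manipulation of fiber products. An alternative route would apply Proposition~\ref{prop:smooth-reduction-fields} to reduce to geometric points and invoke Lemma~\ref{lem:ta-smooth} directly, but that would first require checking that $\GB_\RG \times_{\Spec(\RG)} \tSC_\RG$ is flat over $\RG$ (equivalently, that $\tSC_\RG$ is), whereas the Cartesian-square approach sidesteps this entirely.
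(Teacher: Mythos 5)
Your proof is correct and follows exactly the route the paper indicates: it remarks that Lemma~\ref{lem:action-smooth-R-t} "follows from the same arguments as for Lemma~\ref{lem:ta-smooth}, using Proposition~\ref{prop:action-smooth-R}," i.e.\ precisely the Cartesian square you write down together with base change of smoothness. Your closing observation is also apt — the alternative via Proposition~\ref{prop:smooth-reduction-fields} would indeed require knowing $\tSC_\RG$ is flat over $\RG$, which the paper only deduces later (in Proposition~\ref{prop:tSC-affine}) \emph{from} this very lemma, so the Cartesian-square argument is the right one.
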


\begin{prop}
\label{prop:tSC-affine}
The scheme $\tSC_\RG$ is affine, and
the natural morphisms
\begin{equation}
\label{eqn:morphisms-tSC}
\tSC_\RG \to \SC_\RG \times_{\tg_\RG/W} \tg_\RG \to \tg_\RG
\end{equation}
are isomorphisms.
\end{prop}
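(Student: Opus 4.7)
The plan is to reduce to the field-theoretic result Proposition \ref{prop:tilde-S-Up}(2), via Theorem \ref{thm:kostant-R} and a fiber-by-fiber analysis along $\nu$.

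First, the Kostant isomorphism $\SC_\RG \simto \tg_\RG/W$ of Theorem \ref{thm:kostant-R} identifies the second morphism of \eqref{eqn:morphisms-tSC} with the projection $\tg_\RG/W \times_{\tg_\RG/W} \tg_\RG \to \tg_\RG$, which is automatically an isomorphism; under this identification the full composition \eqref{eqn:morphisms-tSC} is the restriction $\nu|_{\tSC_\RG} \colon \tSC_\RG \to \tg_\RG$. Thus both claims of the proposition, including the affineness of $\tSC_\RG$, reduce to showing that this single morphism is an isomorphism.

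Next I would argue that $\nu|_{\tSC_\RG}$ is finite. The composition $q \circ \nu|_{\tSC_\RG} \colon \tSC_\RG \to \tg_\RG \to \tg_\RG/W$ (with $q$ the quotient map) agrees with the Kostant isomorphism applied to $\pi|_{\tSC_\RG}$, which is projective as the base change of $\pi \colon \tgg_\RG \to \gg_\RG$ along the closed immersion $\SC_\RG \hookrightarrow \gg_\RG$; hence $q \circ \nu|_{\tSC_\RG}$ is proper, and since $q$ is separated this forces $\nu|_{\tSC_\RG}$ itself to be proper. For quasi-finiteness, pick any algebraically closed field $\FM$ carrying an $\RG$-algebra structure and any $\FM$-point $\bar y$ of $\tg_\RG$; the fiber of $\nu|_{\tSC_\RG}$ at $\bar y$ coincides with the fiber at the induced $\FM$-point of the base change $\nu|_{\tSC_\FM} \colon \tSC_\FM \to \tg_\FM$, which is an isomorphism by Proposition \ref{prop:tilde-S-Up}(2) applied to $\GB_\FM$ (for which (C3) holds by Lemma \ref{lem:bilinear-form}, irrespective of the characteristic of $\FM$). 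Thus every geometric fiber of $\nu|_{\tSC_\RG}$ is a single reduced point, so $\nu|_{\tSC_\RG}$ is proper and quasi-finite, and therefore finite.

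Finally, set $\mathcal{A} := (\nu|_{\tSC_\RG})_*\OC_{\tSC_\RG}$, a coherent $\OC_{\tg_\RG}$-algebra. The fiber analysis above shows $\dim_{\kappa(y)} \mathcal{A}(y) = 1$ for every point $y \in \tg_\RG$, where $\mathcal{A}(y) := \mathcal{A}_y \otimes_{\OC_{\tg_\RG,y}} \kappa(y)$. Since $\tg_\RG$ is reduced (indeed integral), constancy of this fiber dimension implies that $\mathcal{A}$ is locally free of rank $1$ over $\OC_{\tg_\RG}$. The canonical unit map $\OC_{\tg_\RG} \to \mathcal{A}$ is an isomorphism on every residue field, hence surjective by Nakayama's lemma, and therefore itself an isomorphism (a surjection between locally free sheaves of the same rank is automatically an isomorphism). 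The step I expect to require most care is this last one: the constant-rank condition must be verified at every scheme-theoretic point of $\tg_\RG$, including those lying over the generic point of $\Spec \RG$. This is legitimate because Proposition \ref{prop:tilde-S-Up}(2) was proved under the sole hypothesis (C3), which is automatic for $\GB_\FM$ over any algebraically closed $\FM$ admitting an $\RG$-algebra structure, regardless of characteristic.
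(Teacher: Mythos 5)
Your proof is correct, but it takes a genuinely different route from the paper's. The paper first establishes $\RG$-flatness of $\tSC_\RG$ (by noting $\GB_\RG \times \tSC_\RG$ is $\RG$-flat via Lemma~\ref{lem:action-smooth-R-t} and $\RG$ is a direct factor of $\OC(\GB_\RG)$), then proves affineness via the Serre-criterion reduction of Lemma~\ref{lem:affine-R-F} together with Proposition~\ref{prop:tilde-S-Up}(1), and finally proves the isomorphism by combining $\RG$-flatness of both sides with the derived Nakayama-type Lemma~1.4.1(1) of~\cite{br} and again Proposition~\ref{prop:tilde-S-Up}(1). Your argument instead bypasses $\RG$-flatness of $\tSC_\RG$ entirely: you reduce to showing $\nu|_{\tSC_\RG}$ is an isomorphism, establish finiteness (properness via projectivity of $\pi|_{\tSC_\RG}$ and separation of $q$, plus quasi-finiteness from the geometric fibers supplied by Proposition~\ref{prop:tilde-S-Up}(2)), and then invoke the standard fact that a coherent sheaf of constant fiber rank on an integral Noetherian scheme is locally free, followed by Nakayama. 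Both proofs ultimately hinge on the same field-theoretic input (the fibered version of Kostant's section over algebraically closed fields, which is valid in all characteristics admitting a map from $\RG$ by Lemma~\ref{lem:bilinear-form}), but yours is arguably more self-contained: it replaces the appeal to~\cite{br} and to the flatness bookkeeping of \S\ref{ss:reduction-fields} with the elementary constant-rank criterion, and it obtains affineness automatically from finiteness rather than as a separate step. The one point you correctly flagged as delicate — verifying constant fiber rank at \emph{all} scheme-theoretic points, including those over the generic point of $\Spec\RG$ — is handled properly, since Proposition~\ref{prop:tilde-S-Up}(2) applies under (C3) alone, which holds for $\GB_\FM$ for every algebraically closed $\FM$ with an $\RG$-algebra structure, whatever its characteristic.
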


\begin{proof}
First we observe that $\tSC_\RG$ is flat over $\RG$. In fact, it follows from Lemma~\ref{lem:action-smooth-R-t} that $\GB_\RG \times \tSC_\RG$ is flat over $\RG$. Since $\RG$ is a direct factor in $\OC(\GB_\RG)$, this implies that $\tSC_\RG$ is also flat over $\RG$.
Then,
since $\tSC_\RG$ is $\RG$-flat and projective over $\SC_\RG$, the first assertion
follows from Proposition~\ref{prop:tilde-S-Up}$(1)$ and Lemma~\ref{lem:affine-R-F}.

The fact that the second morphism in~\eqref{eqn:morphisms-tSC} is an isomorphism follows from Theorem~\ref{thm:kostant-R}. To prove that the first one is also an isomorphism, consider the induced morphism
\begin{equation}
\label{eqn:tSC-affine}
\OC(\SC_\RG \times_{\tg_\RG / W} \tg_\RG) \to \OC(\tSC_\RG).
\end{equation}
Then both sides are finite modules over $\OC(\tg_\RG/W)$ (by \cite[Th{\'e}or{\`e}me 2]{demazure} and~\cite[Theorem~III.5.2]{hartshorne}, respectively), and are $\RG$-flat. Hence, using~\cite[Lemma~1.4.1(1)]{br}, to prove that~\eqref{eqn:tSC-affine} is an isomorphism it suffices to prove that it becomes an isomorphism after applying $\FM \otimes_\RG (-)$ for all geometric points $\FM$ of $\RG$. The latter fact was proved in Proposition~\ref{prop:tilde-S-Up}(1).
\end{proof}

We define the universal stabilizer $\tIB_\RG$ over $\tgg_\RG$ as the fiber product
\[
\tIB_\RG := \tgg_\RG \times_{\tgg_\RG \times \tgg_\RG} (\GB_\RG \times \tgg_\RG).
\]
We denote by $\tIB_\SC^\RG$ the restriction of $\tIB_\RG$ to $\tSC_\RG$.

\begin{prop}
\label{prop:tI-smooth-R}
The group scheme $\tIB_\SC^\RG$ is smooth over $\tSC_\RG$, and
the following natural commutative diagram is Cartesian:
\[
\xymatrix@R=0.5cm{
\tIB_\SC^\RG \ar[r] \ar[d] & \IB_\SC^\RG \ar[d] \\
\tSC_\RG \ar[r] & \SC_\RG.
}
\]
\end{prop}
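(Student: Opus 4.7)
The plan is to adapt to the integral setting the formal argument of the proof of Proposition~\ref{prop:IS-smooth-R}, in which Proposition~\ref{prop:IS-smooth} was lifted over $\RG$ using Theorem~\ref{thm:kostant-R}. In our setting the analogue of Theorem~\ref{thm:kostant-R} for the Grothendieck resolution is provided by Proposition~\ref{prop:tSC-affine}, together with the $\GB_\RG$-invariance of $\nu$. Both assertions should then follow from explicit manipulations of fiber products, with no recourse to a Nakayama-type reduction to geometric points.

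For smoothness, I would start from the definition $\tIB_\SC^\RG = \tSC_\RG \times_{\tgg_\RG \times \tSC_\RG} (\GB_\RG \times \tSC_\RG)$ and note that both structural morphisms to $\tgg_\RG \times \tSC_\RG$ factor through the closed subscheme $\tgg_\RG \times_{\tg_\RG} \tSC_\RG$, by $\GB_\RG$-invariance of $\nu$. Using the isomorphism $\tSC_\RG \simto \tg_\RG$ of Proposition~\ref{prop:tSC-affine}, the first projection identifies $\tgg_\RG \times_{\tg_\RG} \tSC_\RG \simto \tgg_\RG$. This yields $\tIB_\SC^\RG \simto \tSC_\RG \times_{\tgg_\RG} (\GB_\RG \times \tSC_\RG)$, in which the morphism $\GB_\RG \times \tSC_\RG \to \tgg_\RG$ is the smooth map $\widetilde{a}$ of Lemma~\ref{lem:action-smooth-R-t}. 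Smoothness of $\tIB_\SC^\RG$ over $\tSC_\RG$ then follows by base change.

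For the Cartesian property, the analogous identification (as in the proof of Proposition~\ref{prop:IS-smooth-R}) gives $\IB_\SC^\RG \simto \SC_\RG \times_{\gg_\RG} (\GB_\RG \times \SC_\RG)$, whence $\tSC_\RG \times_{\SC_\RG} \IB_\SC^\RG \simto \tSC_\RG \times_{\gg_\RG} (\GB_\RG \times \SC_\RG)$ by associativity of fiber products. The next step is to construct a direct isomorphism $\tSC_\RG \times_{\tgg_\RG} (\GB_\RG \times \tSC_\RG) \simto \tSC_\RG \times_{\gg_\RG} (\GB_\RG \times \SC_\RG)$, which on points sends $(y,(g,y'))$ to $(y,(g,\pi(y')))$, with inverse reconstructing $y'$ as the unique lift in $\tSC_\RG = \SC_\RG \times_{\gg_\RG} \tgg_\RG$ whose components are determined by $(g,y)$. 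Tracing through the identifications of source and target, the natural morphism $\tIB_\SC^\RG \to \tSC_\RG \times_{\SC_\RG} \IB_\SC^\RG$ is seen to correspond to this canonical isomorphism, which finishes the proof.

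The main subtlety will be the bookkeeping needed to verify that the natural comparison morphism does correspond to the constructed isomorphism of fiber products; this is purely formal but requires care in tracking the two natural projections from $\tSC_\RG$ (to $\SC_\RG$ and to $\tgg_\RG$) and the way $\GB_\RG$ acts through each of them.
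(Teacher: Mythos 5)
Your treatment of the smoothness assertion coincides with the paper's: both start from $\tIB_\SC^\RG = \tSC_\RG \times_{\tgg_\RG \times \tSC_\RG} (\GB_\RG \times \tSC_\RG)$, use the $\GB_\RG$-invariance of $\nu$ to factor through $\tgg_\RG \times_{\tg_\RG} \tSC_\RG$, apply Proposition~\ref{prop:tSC-affine} to collapse the base to $\tgg_\RG$, and then conclude from Lemma~\ref{lem:action-smooth-R-t}. For the Cartesian assertion, however, you take a genuinely different route. The paper argues that the comparison morphism $\tIB_\SC^\RG \to \tSC_\RG \times_{\SC_\RG} \IB_\SC^\RG$ is a closed embedding (both sides being closed subschemes of $\GB_\RG \times \tSC_\RG$), then invokes $\RG$-flatness of both sides and~\cite[Lemma~1.4.1(1)]{br} to reduce to geometric points, where the result is Corollary~\ref{cor:tI-smooth}. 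You instead stay at the integral level and exhibit a direct isomorphism between the two sides. Your argument is correct: writing $\tIB_\SC^\RG \cong \tSC_\RG \times_{\tgg_\RG}(\GB_\RG \times \tSC_\RG)$ and $\tSC_\RG \times_{\SC_\RG} \IB_\SC^\RG \cong \tSC_\RG \times_{\gg_\RG}(\GB_\RG \times \SC_\RG)$ (the latter by the proof of Proposition~\ref{prop:IS-smooth-R} and associativity), the mutually inverse morphisms $(\tilde s', (g,\tilde s)) \mapsto (\tilde s', (g, \pi(\tilde s)))$ and $(\tilde s', (g,s)) \mapsto (\tilde s', (g, g^{-1} \cdot \tilde s'))$ are well defined scheme morphisms. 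The only points to verify are that $g^{-1} \cdot \tilde s'$ lies in $\tSC_\RG$, which follows from $\GB_\RG$-equivariance of $\pi$ and the fiber-product condition $\pi(\tilde s') = g \cdot s$ (so $\pi(g^{-1}\cdot\tilde s') = s \in \SC_\RG$), and that these maps compose to the identities, which is immediate. Tracing through the identifications then shows this is the canonical comparison morphism. Your approach has the advantage of being self-contained at the level of $\RG$-schemes, avoiding the flatness-and-reduction machinery, and essentially reproves Corollary~\ref{cor:tI-smooth} along the way rather than invoking it; the paper's route is shorter in print because it delegates the work to the already-established field case.

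Just be careful when you write this up: your sketch of the inverse (``reconstructing $y'$ as the unique lift ... whose components are determined by $(g,y)$'') should be made explicit as $y' = g^{-1} \cdot y$, with the verification that $\pi(g^{-1}\cdot y) = s$ using $\GB_\RG$-equivariance of $\pi$; this is the one place where the ``purely formal'' claim needs an actual computation beyond associativity of fiber products.
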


\begin{proof}
The proof of the first claim is similar to the proof of Proposition~\ref{prop:IS-smooth}, namely we use isomorphisms
\[
\tIB_\SC^\RG = \tSC_\RG \times_{\tgg_\RG \times \tSC_\RG} (\GB_\RG \times \tSC_\RG) \cong \tSC_\RG \times_{\tgg_\RG \times_{\tg_\RG} \tSC_\RG} (\GB_\RG \times \tSC_\RG) \cong \tSC_\RG \times_{\tgg_\RG} (\GB_\RG \times \tSC_\RG)
\]
(where the last isomorphism uses Proposition~\ref{prop:tSC-affine}), and then the result follows from Lemma~\ref{lem:action-smooth-R-t}.


To prove the second claim, we observe that the commutative square of the statement induces a canonical morphism of $\tSC_\RG$-group schemes
\begin{equation}
\label{eqn:tAG-fiber-product}
\tIB_\SC^\RG \to \tSC_\RG \times_{\SC_\RG} \IB_\SC^\RG.
\end{equation}
Since both group schemes are closed subschemes of $\GB_\RG \times_{\Spec(\RG)} \tSC_\RG$, and since this morphism is compatible with their inclusion in $\GB_\RG \times_{\Spec(\RG)} \tSC_\RG$, \eqref{eqn:tAG-fiber-product} must be a closed embedding. Since the $\RG$-algebra $\OC(\tSC_\RG \times_{\SC_\RG} \IB_\SC^\RG)$ is of finite type and flat over $\RG$, and since $\tIB_\SC^\RG$ is also $\RG$-flat, by~\cite[Lemma~1.4.1(1)]{br}, to prove that~\eqref{eqn:tAG-fiber-product} is an isomorphism it suffices to prove that,  for any 
geometric point $\FM$ of $\RG$, the morphism 
\[
\Spec(\FM) \times_{\Spec(\RG)} \tIB_\SC^\RG \to \Spec(\FM) \times_{\Spec(\RG)} (\tSC_\RG \times_{\SC_\RG} \IB_\SC^\RG)
\]
is an isomorphism,
which follows from Corollary~\ref{cor:tI-smooth}.
\end{proof}

Now we set
\[
\tIG_\SC^\RG:=\Lie(\tIB_\SC^\RG/\tSC_\RG), \qquad \tIM_\SC^\RG:=\LLie(\tIB_\SC^\RG/\tSC_\RG).
\]

\begin{lem}
\label{lem:tJG-R}
There exists a canonical Cartesian diagram
\[
\xymatrix@R=0.5cm{
\tIM_\SC^\RG \ar[d] \ar[r] & \IM_\SC^\RG \ar[d] \\
\tSC_\RG \ar[r] & \SC_\RG.
}
\]
\end{lem}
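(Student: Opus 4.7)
The plan is to derive Lemma~\ref{lem:tJG-R} directly from the Cartesian square of Proposition~\ref{prop:tI-smooth-R} by applying the base-change property of Lie algebras established in Lemma~\ref{lem:Lie-alg}\eqref{it:base-change-Lie}. Concretely, let $f \colon \tSC_\RG \to \SC_\RG$ denote the natural morphism. Proposition~\ref{prop:tI-smooth-R} provides a canonical isomorphism of group schemes over $\tSC_\RG$
\[
\tIB_\SC^\RG \simto \tSC_\RG \times_{\SC_\RG} \IB_\SC^\RG,
\]
and Proposition~\ref{prop:IS-smooth-R} asserts that $\IB_\SC^\RG$ is smooth over $\SC_\RG$. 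Hence the hypotheses of Lemma~\ref{lem:Lie-alg}\eqref{it:base-change-Lie} are satisfied for the group scheme $\IB_\SC^\RG / \SC_\RG$ and the morphism $f$.

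Applying that lemma yields a canonical isomorphism of $\OC_{\tSC_\RG}$-Lie algebras
\[
\tIG_\SC^\RG \;=\; \Lie\bigl( \tIB_\SC^\RG / \tSC_\RG \bigr) \;\cong\; f^* \Lie\bigl( \IB_\SC^\RG / \SC_\RG \bigr) \;=\; f^* \IG_\SC^\RG.
\]
Dualizing to the level of the modules $\omega_{\IB_\SC^\RG/\SC_\RG}$ and $\omega_{\tIB_\SC^\RG/\tSC_\RG}$, and then passing to the relative spectra of their symmetric algebras (which commutes with base change), this translates immediately into a canonical isomorphism of $\tSC_\RG$-schemes
\[
\tIM_\SC^\RG \;\simto\; \tSC_\RG \times_{\SC_\RG} \IM_\SC^\RG,
\]
which is exactly the content of the Cartesian square in the statement.

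There is essentially no obstacle here: the result is a formal consequence of the preceding propositions. The only bookkeeping to check is that the isomorphism constructed is compatible with the natural projections $\tIM_\SC^\RG \to \tSC_\RG$ and $\IM_\SC^\RG \to \SC_\RG$, but this is automatic from the fact that the isomorphism is produced by $f$-pullback. In particular, no flatness hypothesis on $f$ is required thanks to the smoothness of $\IB_\SC^\RG$ over $\SC_\RG$, so we need not invoke any reduction to geometric points in this step.
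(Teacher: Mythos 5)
Your proof is correct and uses exactly the same three ingredients as the paper's (one-line) proof: Proposition~\ref{prop:tI-smooth-R} supplies the Cartesian square at the level of group schemes, Proposition~\ref{prop:IS-smooth-R} provides the smoothness hypothesis, and Lemma~\ref{lem:Lie-alg}\eqref{it:base-change-Lie} then yields the base-change isomorphism for Lie algebras. Your additional remark that no flatness of $\tSC_\RG \to \SC_\RG$ is needed (smoothness of $\IB_\SC^\RG$ over $\SC_\RG$ suffices) is accurate and is indeed why the authors do not invoke any reduction to geometric points here.
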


\begin{proof}
This follows from Lemma~\ref{lem:Lie-alg}\eqref{it:base-change-Lie}, Proposition~\ref{prop:IS-smooth-R} and Proposition~\ref{prop:tI-smooth-R}.
\end{proof}

Combining Lemma~\ref{lem:tJG-R} and Theorem~\ref{thm:Lie-alg-cotangent-R} we deduce the following result.

\begin{thm}
\label{thm:J-cotangent-R}
There exists a canonical isomorphism
\[
\tIM_\SC^\RG \simto \tSC_\RG \times_{\SC_\RG} \TM^*(\SC_\RG)
\]
of commutative Lie algebras over $\tSC_\RG$. In other words, if one identifies $\tSC_\RG$ with $\tg_\RG$ via the isomorphism of Proposition~{\rm \ref{prop:tSC-affine}}, there exists a canonical isomorphism 
$\tIM_\SC^\RG \simto \tg_\RG \times_{\tg_\RG/W} \TM^*(\tg_\RG/W)$
of commutative Lie algebras
over $\tg_\RG$.\qed
\end{thm}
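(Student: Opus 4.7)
The plan is essentially to chain together the two results mentioned in the hint. First, I would rewrite the Cartesian square of Lemma~\ref{lem:tJG-R} as an isomorphism of $\tSC_\RG$-schemes
\[
\tIM_\SC^\RG \simto \tSC_\RG \times_{\SC_\RG} \IM_\SC^\RG.
\]
This is an isomorphism of commutative Lie algebras over $\tSC_\RG$: indeed, $\IM_\SC^\RG$ is a commutative Lie algebra over $\SC_\RG$ by (the analogue of) Proposition~\ref{prop:properties-Lie-alg}(3) over $\RG$ (which follows from Proposition~\ref{prop:IS-smooth-R}), and the pullback of a commutative Lie algebra along a morphism of base schemes is again a commutative Lie algebra; equivalently, the natural Lie algebra structure coming from the fact that $\tIB_\SC^\RG$ is commutative (by Proposition~\ref{prop:tI-smooth-R} together with Proposition~\ref{prop:IS-smooth-R} applied to the Cartesian square) is transported by the isomorphism above.

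Next, I would apply Theorem~\ref{thm:Lie-alg-cotangent-R}, which provides a canonical isomorphism $\IM_\SC^\RG \simto \TM^*(\SC_\RG)$ of commutative Lie algebras over $\SC_\RG$. Pulling back along the canonical morphism $\tSC_\RG \to \SC_\RG$ and combining with the display above yields a canonical isomorphism
\[
\tIM_\SC^\RG \simto \tSC_\RG \times_{\SC_\RG} \TM^*(\SC_\RG)
\]
of commutative Lie algebras over $\tSC_\RG$, establishing the first assertion.

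For the reformulation, I would invoke the isomorphism $\tSC_\RG \simto \tg_\RG$ of Proposition~\ref{prop:tSC-affine}, which is compatible with the identification $\SC_\RG \simto \tg_\RG/W$ of Theorem~\ref{thm:kostant-R} in the sense that the projection $\tSC_\RG \to \SC_\RG$ corresponds to the quotient morphism $\tg_\RG \to \tg_\RG/W$. Under these identifications the right-hand side of the previous display becomes $\tg_\RG \times_{\tg_\RG/W} \TM^*(\tg_\RG/W)$, which yields the second form of the statement. Since there is no substantive computation here—each step is a matter of transporting isomorphisms already constructed—no genuine obstacle arises; the only point to verify is that the pullback really does intertwine the Lie algebra structures, and this is automatic because both sides are abelian (so the bracket vanishes), making compatibility a triviality once the isomorphism of $\OC$-modules (equivalently, of vector bundles) is in place.
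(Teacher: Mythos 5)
Your proposal is correct and matches the paper's (one-line) argument exactly: the paper derives the result by ``Combining Lemma~\ref{lem:tJG-R} and Theorem~\ref{thm:Lie-alg-cotangent-R},'' which is precisely the chain of base-change along $\tSC_\RG \to \SC_\RG$ followed by the cotangent identification that you spell out. Your additional observation that compatibility of Lie brackets is automatic because both sides are commutative is a fair way to dispose of the only potential subtlety, and the reformulation via Proposition~\ref{prop:tSC-affine} and Theorem~\ref{thm:kostant-R} is also what the paper intends.
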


\section{Application: mixed modular derived Satake equivalence}
\label{sec:derived-Satake}

\subsection{Notation}
\label{ss:notation-Satake}

Let $\GD$ be a connected reductive
complex algebraic group. We will consider the affine Grassmannian
\[
\Gr:=\GD(\mathscr{K})/\GDO,
\]
with its natural structure of complex ind-variety, where $\mathscr{K}:=\CM ( \hspace{-1pt} (z) \hspace{-1pt} )$ and $\mathscr{O}:=\CM [ \hspace{-1pt} [z] \hspace{-1pt} ]$. We also choose a Borel subgroup $\BD \subset \GD$ (considered as the ``negative'' Borel subgroup) and a maximal torus $\TD \subset \GD$, and denote by $\UD$ the unipotent radical of $\BD$ and by $\XM:=X_*(\TD)$ the lattice of cocharacters of $\TD$.
Any $\lambda \in \XM$ defines in a natural way a point $L_\lambda \in \Gr$, and we set $\Gr^\lambda:=\GDO \cdot L_\lambda$. Then, if $\XM^+ \subset \XM$ denotes the subset of dominant weights, we have
\[
\Gr = \bigsqcup_{\lambda \in \XM^+} \Gr^\lambda.
\]

We fix $\ell$ which is either $0$ or a prime number which is very good for $\GD$, and let $\FM$ be an algebraically closed field of characteristic $\ell$.
We consider the $\GDO$-equivariant derived category $\Db_{\GDO}(\Gr, \FM)$ with coefficients in $\FM$ (in the sense of Bernstein--Lunts), and the
the full subcategory $\Perv_{\GDO}(\Gr, \FM)$ of perverse sheaves. There exists a natural convolution product $\star$ on $\Db_{\GDO}(\Gr, \FM)$, which makes it a monoidal category. This bifunctor restricts to a bifunctor
\[
(-) \star (-) \colon \Perv_{\GDO}(\Gr, \FM) \times \Perv_{\GDO}(\Gr, \FM) \to \Perv_{\GDO}(\Gr, \FM),
\]
see~\cite{mv}.
We also have a natural functor
\[
\FF := \HF^\bullet(\Gr, -) \colon \Perv_{\GDO}(\Gr, \FM) \to \Vect(\FM),
\]
which has a natural structure of tensor functor.
The main results of~\cite{mv} show that the $\FM$-group scheme $\GB$ of tensor automorphisms of the functor $\FF$ is a 
connected reductive
algebraic group over $\FM$ which is 
Langlands dual to $\GD$, and that this isomorphism induces an equivalence of tensor categories
\[
\Satake \colon \Perv_{\GDO}(\Gr, \FM) \simto \Rep(\GB),
\]
where the right-hand side is the category of finite dimensional (algebraic) representations of $\GB$. Using the Mirkovi{\'c}--Vilonen ``weight functors,'' this equivalence defines a maximal torus $\TB \subset \GB$ and a canonical identification $X^*(\TB)=\XM$, see e.g.~\cite[\S 6.1]{gr} for a brief reminder of these constructions. 
We denote by $\BB$ the Borel subgroup of $\GB$ containing $\TB$ and whose $\TB$-roots are the $\TD$-coroots of $\BD$, and then 
use the same notation
as in Sections~\ref{sec:preliminaries}--\ref{sec:fields}.

The natural morphism
\[
\HF^\bullet_{\GDO}(\mathrm{pt}; \FM) \cong
\HF^\bullet_{\GD}(\mathrm{pt}; \FM) \to \HF^\bullet_{\TD}(\mathrm{pt}; \FM) \cong \mathrm{S}(\tg) = \OC(\tg^*)
\]
is injective, and identifies the left-hand side with $\OC(\tg^* /W)$, see~\cite[\S 3.2]{mr} for references. In particular this graded ring is concentrated in even degrees, so that
we can consider the category $\Parity_{\GDO}(\Gr, \FM)$ of $\GDO$-equivariant $\FM$-parity complexes on $\Gr$ in the sense of~\cite[Definition~2.4]{jmw}. By~\cite[Theorem~2.4 \& \S 4.1]{jmw}, for any $\lambda \in \XM^+$ there exists a unique indecomposable parity complex $\EC^\lambda$ in $\Parity_{\GDO}(\Gr, \FM)$ supported on $\overline{\Gr^\lambda}$ and whose restriction to $\Gr^\lambda$ is $\underline{\FM}_{\Gr^\lambda}[\dim \Gr^\lambda]$. Moreover, any indecomposable parity complex in $\Parity_{\GDO}(\Gr, \FM)$ is isomorphic to $\EC^\lambda[i]$ for some unique $\lambda \in \XM^+$ and $i \in \ZM$.

By~\cite[Corollary~1.6]{mr}, the parity complexes $\EC^\lambda$ are perverse (see also~\cite{jmw2} for an earlier proof of this fact, under stronger assumptions).
We denote by $\PParity_{\GDO}(\Gr,\FM)$ the full subcategory of $\Parity_{\GDO}(\Gr, \FM)$ consisting of parity complexes which are perverse, in other words of direct sums of objects $\EC^\lambda$ (with no shift). 

By~\cite[Theorem~4.8]{jmw}, the convolution product $\star$ restricts to a bifunctor $\Parity_{\GDO}(\Gr, \FM) \times \Parity_{\GDO}(\Gr, \FM) \to \Parity_{\GDO}(\Gr, \FM)$, hence to a bifunctor $\PParity_{\GDO}(\Gr,\FM) \times \PParity_{\GDO}(\Gr,\FM) \to \PParity_{\GDO}(\Gr,\FM)$. It also follows from the results of~\cite{jmw2} that the equivalence $\Satake$ restricts to an equivalence of categories
\[
\PParity_{\GDO}(\Gr,\FM) \simto \Tilt(\GB),
\]
where the right-hand side is the category of finite dimensional tilting $\GB$-modules; see~\cite[\S 1.5]{mr} for remarks and more precise references.

\subsection{Equivariant cohomology of spherical perverse sheaves}
\label{ss:tensor}

In this subsection we assume that the derived subgroup $\DS \GD$ of $\GD$ is quasi-simple. We will denote by $|\cdot|^2$ the unique $W$-invariant quadratic form on $\ZM \Phi$ such that short roots have length one. (Recall that $|\alpha|^2 \in \{1,2,3\}$ for any $\alpha \in \Phi$.)

As explained in~\cite[\S 5.3]{yz} (following earlier ideas developed in particular by Ginzburg~\cite{ginzburg}), the geometric Satake equivalence determines a canonical regular nilpotent element $e \in \ng^+$ which is a sum of non-zero simple root vectors $e_\alpha \in \gg_\alpha$ (for $\alpha \in \Delta$), see in particular~\cite[Proposition~5.6]{yz}. By definition, for $\FC$ in $\Perv_{\GDO}(\Gr, \FM)$, the action of $e$ on $\FF(\FC) = \HF^\bullet(\Gr, \FC)$ is the cup product with the first Chern class of the determinant line bundle on $\Gr$.

For $\alpha \in \Delta$, we will denote by $e_{-\alpha} \in \gg_{-\alpha}$ the unique element such that $[e_\alpha, e_{-\alpha}] = h_\alpha$, where $h_\alpha \in \tg$ is the differential of the coroot of $\GB$ associated with $\alpha$. We also let $\UB_{-\alpha} \subset \UB$ be the unique closed subgroup with Lie algebra $\gg_{-\alpha}$, and $u_{-\alpha} \colon \FM \simto \UB_{-\alpha}$ be the unique group isomorphism whose differential is $e_{-\alpha}$.

In~\cite{yz} the authors give a description of the equivariant cohomology $\HF^\bullet_{\TD}(\Gr, \FC)$ for any $\FC$ in $\Perv_{\GDO}(\Gr, \FM)$, which we now briefly recall.
%
%
%
%
For $\lambda \in \XM$, following~\cite{mv} we set $\TG_\lambda:= \UD(\mathscr{K}) \cdot L_\lambda$, and denote by
\[
t_\lambda \colon \TG_\lambda \hookrightarrow \Gr \quad \text{and} \quad \overline{t}_\lambda \colon \overline{\TG_\lambda} \hookrightarrow \Gr
\]
the inclusions. By~\cite[Proposition~3.1]{mv} we have $\overline{\TG_\lambda} = \bigsqcup_{\mu \geq \lambda} \TG_\mu$, and by~\cite[Theorem~3.5]{mv} for $\lambda \in \XM$ and $\FC$ in $\Perv_{\GDO}(\Gr, \FM)$ we have
\begin{equation*}
\HF^n(\TG_\lambda, t_\lambda^! \FC)=0 \qquad \text{unless $i=\langle \lambda, 2{\check \rho} \rangle$,}
\end{equation*}
where $2{\check \rho} \in {\check \XM}$ is the sum of positive roots of $\GD$. From these observations one can deduce that the morphism
\[
\HF^{\langle \lambda, 2{\check \rho} \rangle}_{\TD}(\overline{\TG_\lambda}, \overline{t}_\lambda^! \FC) \to \HF^{\langle \lambda, 2{\check \rho} \rangle}_{\TD}(\TG_\lambda, t_\lambda^! \FC)
\]
induced by restriction, and the morphism
\[
\HF^{\langle \lambda, 2{\check \rho} \rangle}_{\TD}(\TG_\lambda, t_\lambda^! \FC) \to \HF^{\langle \lambda, 2{\check \rho} \rangle}(\TG_\lambda, t_\lambda^! \FC)
\]
induced by ``forgetting the $\TD$-equivariance,'' are both isomorphisms. Combining these identification and identifying $\HF^\bullet_{\TD}(\pt; \FM)$ with $\OC(\tg^*)$ as in~\S\ref{ss:notation-Satake} we obtain a natural morphism
\[
\OC(\tg^*) \otimes_\FM \HF^{\langle \lambda, 2{\check \rho} \rangle}(\TG_\lambda, t_\lambda^! \FC) \to \HF^{\bullet}_{\TD}(\overline{\TG_\lambda}, \overline{t}_\lambda^! \FC).
\]
Composing with the natural morphism $\HF^{\bullet}_{\TD}(\overline{\TG_\lambda}, \overline{t}_\lambda^! \FC) \to \HF^\bullet_{\TD}(\Gr, \FC)$ and then taking the direct sum for all $\lambda$, we obtain a canonical morphism
\begin{equation}
\label{eqn:cohom-Tequ-0}
\bigoplus_{\lambda \in \XM} \, \OC(\tg^*) \otimes_\FM \HF^{\langle \lambda, 2{\check \rho} \rangle}(\TG_\lambda, t_\lambda^! \FC) \to \HF^\bullet_{\TD}(\Gr, \FC).
\end{equation}
It is explained in~\cite[Lemma~2.2]{yz} that this morphism is an isomorphism. In fact, the image under $\FM \otimes_{\OC(\tg^*)} (-)$ of this morphism can be identified with the isomorphism
\[
\bigoplus_{\lambda \in \XM} \, \HF^{\langle \lambda, 2{\check \rho} \rangle}(\TG_\lambda, t_\lambda^! \FC) \simto \HF^\bullet(\Gr, \FC)
\]
of~\cite[Theorem~3.6]{mv}. Hence~\eqref{eqn:cohom-Tequ-0} can be interpreted as an isomorphism
\begin{equation}
\label{eqn:cohom-Tequ}
\OC(\tg^*) \otimes \FF(\FC) \simto \HF^\bullet_{\TD}(\Gr, \FC).
\end{equation}
By~\cite[Lemma~2.4]{yz} this isomorphism is compatible with the natural monoidal structures on both sides.

Since $\FC$ is $\GDO$-equivariant, the right-hand side in~\eqref{eqn:cohom-Tequ} is endowed with a natural action of $W$.

\begin{lem}
\label{lem:action-W-Gr}
Assume that $\FC$ is in $\PParity_{\GDO}(\Gr, \FM)$, and identify $\OC(\tg^*) \otimes \FF(\FC)$ with the space of (algebraic) functions on $\tg^*$ with values in the vector space $\FF(\FC)$.
For $\alpha \in \Delta$, under the isomorphism~\eqref{eqn:cohom-Tequ} the action of $s_\alpha$ is given by
\begin{equation}
\label{eqn:action-s-cohom}
(s_\alpha \cdot f)(\xi) = u_{-\alpha}(-|\alpha|^2 \langle \xi, h_\alpha \rangle) \cdot f(s_\alpha \xi).
\end{equation}
\end{lem}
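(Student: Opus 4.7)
My plan is to reduce the statement to a rank-one computation in the spirit of~\cite{yz}, with the additional care needed to pin down the sign and the factor $|\alpha|^2$ (this being the sign correction acknowledged in the introduction).

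The first step is to set up the $W$-action geometrically: a lift $\dot s_\alpha \in N_{\GD}(\TD)$ of the reflection $s_\alpha$ induces an automorphism of $\HF^\bullet_{\TD}(\Gr, \FC)$ via the $\GDO$-equivariant structure on $\FC$, and this automorphism is independent of the lift. The $f(s_\alpha \xi)$ part of the formula then follows at once from the fact that $\dot s_\alpha$ acts on $\HF^\bullet_{\TD}(\pt;\FM) = \OC(\tg^*)$ through $s_\alpha$. The unipotent twist appears because $\dot s_\alpha$ does not preserve the MV stratification by $\UD$-orbits $\TG_\lambda$: it maps $\TG_\lambda$ to an orbit for the conjugate unipotent group $\dot s_\alpha \UD \dot s_\alpha^{-1}$, and comparing the two resulting weight filtrations on the cohomology of an MV space introduces precisely a unipotent correction through $u_{-\alpha}$.

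The second step is a Levi reduction. Since the formula involves only the single root $\alpha$ and the subgroup $\UB_{-\alpha}$, it suffices to prove it after applying the hyperbolic restriction functor $\mathsf{CT}_{\PD_\alpha} \colon \Db_{\GDO}(\Gr,\FM) \to \Db_{\LD_\alpha(\mathscr{O})}(\Gr_{\LD_\alpha}, \FM)$ associated with the parabolic $\PD_\alpha \subset \GD$ of semisimple rank one with coroot $\check{\alpha}$. By standard compatibilities of the geometric Satake equivalence with Levi restriction, $\mathsf{CT}_{\PD_\alpha}$ intertwines~\eqref{eqn:cohom-Tequ} for $\GD$ with the analogous isomorphism for $\LD_\alpha$ (using the same torus $\TD$, and the natural Levi $\LB_\alpha \subset \GB$ dual to $\LD_\alpha$), so the problem reduces to the case in which $\GD$ has semisimple rank one.

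In that rank-one setting, the MV orbits $\TG_\lambda$ intersected with any bounded Schubert variety are explicit affine spaces, and a single standard chart on $\PM^1 \cong \overline{\Gr^{\check{\alpha}}} \subset \Gr_{\mathrm{SL}_2}$ allows one to write down the action of $\dot s_\alpha$ coordinate by coordinate and read off the formula on the fundamental tilting. The main obstacle is then getting the coefficient $-|\alpha|^2$ right: the sign is the one whose correction relative to~\cite{yz} is mentioned in the introduction, and the $|\alpha|^2$ factor reflects the ratio between the natural coordinate on the root subgroup of $\GD$ indexed by $\check{\alpha}$ (used to identify $\HF^\bullet(\PM^1)$ with a representation of $\mathfrak{sl}_2 \subset \gg$) and the Chevalley coordinate on $\UB_{-\alpha}$ coming from the $\mathfrak{sl}_2$-triple $(e_\alpha, h_\alpha, e_{-\alpha})$; these differ for short versus long coroots, which is exactly the origin of $|\alpha|^2 \in \{1,2,3\}$. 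Once the constant is fixed by verification on this one example, the formula extends to all $\FC \in \PParity_{\GDO}(\Gr,\FM)$ by additivity in $\FC$ and the reductions above.
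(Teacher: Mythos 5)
Your strategy — Levi reduction via hyperbolic restriction to the rank-one parabolic, then an explicit computation on $\PM^1$ — is essentially the same as the paper's. There are, however, two places where the proposal is imprecise or incomplete.

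The real gap is the last sentence: ``the formula extends to all $\FC$ by additivity in $\FC$ and the reductions above.'' Additivity is not enough. After reducing to the rank-one case, the objects of $\PParity$ correspond under $\Satake$ to arbitrary tilting $\mathrm{SL}_2$-modules, and in positive characteristic the indecomposable tiltings $T(n)$ with $n\geq p$ are \emph{not} direct summands of direct sums of (shifts of) the fundamental one. What the paper actually uses is that the asserted formula is compatible with the monoidal structure on both sides of~\eqref{eqn:cohom-Tequ} (which is the content of~\cite[Lemma~2.4]{yz} mentioned just above), together with the fact that every tilting $\mathrm{SL}(2,\FM)$-module is a direct summand of a \emph{tensor power} of $\FM^2$. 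Your argument must be upgraded from ``additivity'' to ``compatibility with direct summands and with convolution/tensor product''; without this, verification on a single object does not propagate.

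A smaller point: you locate the factor $|\alpha|^2$ inside the rank-one chart computation. In the paper's argument the rank-one case is done for $\GD=\mathrm{PGL}(2,\CM)$, where $|\alpha|^2=1$ and no such factor appears; the factor enters only in the Levi reduction, via the identity $e_\alpha=|\alpha|^2\cdot e(\LD)$ relating the canonical regular nilpotent produced by the Satake equivalence for $\GD$ to the one produced for the Levi $\LD$ (see the proof of~\cite[Proposition~5.6]{yz}). Your ``ratio of coordinates'' explanation gestures at the right phenomenon, but to be precise you need exactly this comparison of the two canonical principal nilpotents, and it is a statement about the reduction functor $\RG^{\GD}_{\LD}$ rather than about the rank-one geometry. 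Also, the support of the relevant fundamental object is the $1$-dimensional orbit in $\Gr_{\mathrm{PGL}_2}$, not in $\Gr_{\mathrm{SL}_2}$: for $\mathrm{SL}_2$ the orbit $\Gr^{\check\alpha}$ already has dimension $2$, so your identification $\overline{\Gr^{\check\alpha}}\cong\PM^1$ is off.
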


\begin{proof}
First, let us assume that $\GD$ has semisimple rank $1$. Since any connected component of $\Gr$ is isomorphic (in a $\GDO$-equivariant way) to a connected component of the affine Grassmannian of the quotient $\GD / Z(\GD)^\circ$, we can assume that $\GD$ is semisimple, and then that $\GD = \mathrm{PGL}(2,\CM)$ (with $\BD$, resp.~$\TD$, the subgroup of lower-triangular, resp.~diagonal, matrices). Then $\XM=\ZM$ (where $\alpha$ corresponds to $2$), and $\Gr^1$ is the only $1$-dimensional $\GDO$-orbit on $\Gr$; this orbit is isomorphic to $\GD/\BD \cong \mathbb{P}^1$ via $g \BD \mapsto g \cdot L_1$.
Since our formula~\eqref{eqn:action-s-cohom} is preserved by tensor products and direct summands, and since any tilting $\mathrm{SL}(2,\FM)$-module is a direct summand of a tensor power of the natural representation $\FM^2$, it is sufficient to consider the case $\FC=\IC\CC(\Gr^1, \FM) = \underline{\DM}_{\Gr^1}[-1]$. 

This case will be checked by explicit computation.
We have
\[
\HF^\bullet_{\TD}(\Gr, \FC) = \HF_{\TD}^{\bullet-1}(\Gr^1, \underline{\DM}_{\Gr^1}) = \FM[x] \cdot [\{L_{1}\}] \oplus \FM[x] \cdot [\Gr^1],
\]
where we identify $\HF_{\TD}^{\bullet}(\mathrm{pt}; \FM)=\OC(\tg^*)$ with $\FM[x]$ in the standard way (so that $x$ corresponds to $h_\alpha$), and $[Z]$ is the fundamental class of $Z$ in equivariant Borel--Moore homology. In the identification~\eqref{eqn:cohom-Tequ} we have $\FF(\FC) = \FM \cdot [\{L_1\}] \oplus \FM \cdot [\Gr^1]$, where $[\{L_1\}]$ has weight $1$ and $[\Gr^1]$ has weight $-1$.
The action of the simple reflection $s_\alpha$ stabilizes $[\Gr^1]$, and sends $[\{L_1\}]$ to $[\{L_{-1}\}]= -x \cdot [\Gr^1] + [\{L_1\}]$.
It is well known (see e.g.~\cite[Proof of Proposition~5.6]{yz} or~\cite[Th{\'e}or{\`e}me~2]{baumann}) that $e$ sends $[\Gr^1]$ to $[\{L_1\}]$. Formula~\eqref{eqn:action-s-cohom} follows.

Now we consider the general case. Let $\LD$ be the Levi subgroup of $\GD$ containing $\TD$ whose roots are the coroots associated with $\alpha$ and $-\alpha$. Let also $\PD$ be the minimal parabolic subgroup containing $\BD$ with Levi factor $\LD$. Then we have a diagram
\[
\xymatrix{
\Gr & \Gr_{\PD} \ar[r] \ar[l] & \Gr_{\LD},
}
\]
where $\Gr_{\PD}$ and $\Gr_{\LD}$ are the affine Grassmannians of $\PD$ and $\LD$ respectively, and the associated restriction functor
\[
\RG^{\GD}_{\LD} \colon \Perv_{\GDO}(\Gr, \FM) \to \Perv_{\LDO}(\Gr_{\LD}, \FM),
\]
as e.g.~in~\cite[\S 6.3]{gr}. If $\LB$ is the Levi subgroup of $\GB$ whose roots are $\alpha$ and $-\alpha$, then we have a Satake equivalence $\Satake_{\LD}$ for $\LD$, with dual group $\LB$, and $\RG^{\GD}_{\LD}$ corresponds, under $\Satake$ and $\Satake_{\LD}$, to the restriction functor $\Rep(\GB) \to \Rep(\LB)$. It is also known that $\RG^{\GD}_{\LD}$ sends parity complexes to parity complexes, see~\cite[Theorem~1.6]{jmw2}.

Using isomorphism~\eqref{eqn:cohom-Tequ} this provides a canonical isomorphism
\[
\HF^\bullet_{\TD}(\Gr, \FC) \cong \HF^\bullet_{\TD}(\Gr, \RG^{\GD}_{\LD}(\FC)),
\]
which is easily seen to commute with the actions of $s_\alpha$ on both sides. If $e(\LD)$ denotes the canonical regular nilpotent element in the Lie algebra $\lg$ of $\LB$ provided by $\Satake_{\LD}$, then it follows from the proof of~\cite[Proposition~5.6]{yz} that $e_\alpha = |\alpha|^2 \cdot e(\LD)$. Hence $e_{-\alpha}$ is equal to $\frac{1}{|\alpha|^2}$ times the similar element in $\lg$, and we deduce our formula~\eqref{eqn:action-s-cohom} from its analogue for $\LD$ proved above.
\end{proof}

\subsection{Equivariant cohomology of $\Gr$}
\label{ss:equiv-coh}

%

In this subsection we assume further that $\GD$ is quasi-simple and simply connected.

Let $\theta \in \XM$ be the highest short root of $\GB$.
As in~\cite[Proposition~5.7]{yz}\footnote{In~\cite[Proposition~5.7]{yz} the bilinear form appears with a ``$-$'' sign. However this is a mistake; in fact in the definition of $d_V$ in Lemma~4.2 the coefficient $\frac{1}{2}$ should be corrected to $-\frac{1}{2}$ (as can be checked from the proof), and as a consequence the sign in~\cite[Proposition~5.7]{yz} should be changed from ``$-$'' to ``$+$.''} we consider the $W$-invariant $\QM$-valued bilinear form on $\XM:=X_*(\TD)$ defined by
\[
\langle \lambda, \mu \rangle_{\Gr} := 2 \cdot \frac{(\lambda, \mu)_\Kil}{(\theta, \theta)_\Kil} \qquad \text{where} \qquad (\nu, \eta)_\Kil := \sum_{{\check \alpha} \in {\check \Phi}} \langle \nu, {\check \alpha} \rangle \cdot \langle \eta, {\check \alpha} \rangle.
\]
Identifying $X_*(\TB)$ with the dual of $\XM$ in the standard way, this bilinear form defines a $W$-equivariant morphism $\tau \colon \XM \to \QM \otimes_\ZM X_*(\TB)$.

\begin{lem}
\label{lem:tau-roots}
For any $\alpha \in \Phi$ we have
\[
\tau(\alpha) = |\alpha|^2 \cdot \alpha^\vee,
\]
where $\alpha^\vee$ is the coroot associated with $\alpha$.
\end{lem}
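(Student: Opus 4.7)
The plan is a three-step calculation, and no serious obstacle is expected; the only real content is pinning down the normalization.

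First I would use the $W$-equivariance of $\tau$ (which is immediate from the $W$-invariance of $\langle\cdot,\cdot\rangle_\Gr$) together with the reflection identity $s_\alpha \alpha = -\alpha$: applying $s_\alpha$ to $\tau(\alpha)$ gives $s_\alpha\tau(\alpha) = \tau(s_\alpha\alpha) = -\tau(\alpha)$, so $\tau(\alpha)$ lies in the $(-1)$-eigenspace of $s_\alpha$ acting on $\QM\otimes_\ZM X_*(\TB)$. Since $s_\alpha$ is a reflection, that eigenspace is the line spanned by $\alpha^\vee$, hence $\tau(\alpha) = c_\alpha\cdot\alpha^\vee$ for a unique $c_\alpha\in\QM$.

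Next I would compute $c_\alpha$ by pairing with $\alpha$. Using the identification $X^*(\TB)=\XM$ from the Satake equivalence, the defining property of $\tau$ says $\langle\alpha,\tau(\alpha)\rangle = \langle\alpha,\alpha\rangle_\Gr$ (where on the left we use the natural pairing $X^*(\TB)\times X_*(\TB)\to\ZM$ extended $\QM$-linearly). Since $\langle\alpha,\alpha^\vee\rangle = 2$, this forces
\[
c_\alpha \;=\; \tfrac{1}{2}\,\langle\alpha,\alpha\rangle_\Gr \;=\; \frac{(\alpha,\alpha)_\Kil}{(\theta,\theta)_\Kil}.
\]

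Finally I would compute this ratio. The form $(\cdot,\cdot)_\Kil$ is $W$-invariant by construction (since $W$ permutes ${\check\Phi}$), so $(\alpha,\alpha)_\Kil$ depends only on the $W$-orbit of $\alpha$, i.e.\ only on the length of $\alpha$. More precisely, for any $W$-invariant bilinear form on $\ZM\Phi_\QM$ the ratio $(\alpha,\alpha)/(\beta,\beta)$ equals $|\alpha|^2/|\beta|^2$, because both sides give the (unique) $W$-invariant quadratic form on $\ZM\Phi$ up to scalar. Since $\theta$ is the highest \emph{short} root we have $|\theta|^2 = 1$, and therefore $(\alpha,\alpha)_\Kil = |\alpha|^2\cdot(\theta,\theta)_\Kil$. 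Plugging this into the formula for $c_\alpha$ yields $c_\alpha = |\alpha|^2$, which is exactly the claim. The only point one should verify along the way is that $(\theta,\theta)_\Kil\neq 0$, which is immediate from the definition (it is a positive sum of squares of nonzero integers, e.g.\ $\langle\theta,\theta\rangle\neq 0$).
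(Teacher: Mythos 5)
Your proof is correct and follows the same route as the paper: use $W$-equivariance to reduce to a scalar $c_\alpha$, compute $c_\alpha$ by pairing with $\alpha$, and identify the resulting ratio with $|\alpha|^2$. The one place you supply more detail than the paper does is the final equality $(\alpha,\alpha)_{\Kil}/(\theta,\theta)_{\Kil}=|\alpha|^2$, which the paper simply asserts; your appeal to the uniqueness (up to scalar) of a $W$-invariant quadratic form on an irreducible root lattice, together with the normalization $|\theta|^2=1$, is exactly the right justification.
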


\begin{proof}
Since $\tau$ is $W$-equivariant, we have $s_\alpha(\tau(\alpha))=-\tau(\alpha)$; therefore, $\tau(\alpha) = z_\alpha \alpha^\vee$ for some $z_\alpha \in \QM$. To compute $z_\alpha$, we remark that
\[
2 z_\alpha = \langle \tau(\alpha), \alpha \rangle = \langle \alpha, \alpha \rangle_\Gr = 2 \frac{(\alpha, \alpha)_\Kil}{(\theta, \theta)_\Kil} = 2 |\alpha|^2.
\]
Hence $z_\alpha = |\alpha|^2$, which completes the proof.
\end{proof}

Under our assumptions, the image of $|\alpha|^2$ in $\FM$ is invertible for any $\alpha$, and the images of simple roots (resp.~simple coroots) in $\tg^*$ (resp.~in $\tg$) form a basis of $\tg^*$ (resp.~$\tg$). Hence Lemma~\ref{lem:tau-roots} implies that $\tau$ induces an isomorphism from $\tg^* = \XM \otimes_\ZM \FM = \ZM \Phi \otimes_\ZM \FM$ to $\tg = X_*(\TB) \otimes_\ZM \FM = \ZM {\check \Phi} \otimes_\ZM \FM$. For $\alpha \in \Phi$, the same arguments as in Lemma~\ref{lem:tau-roots} show that
\begin{equation}
\label{eqn:tau-roots}
(d\alpha) \circ \tau = |\alpha|^2 \cdot h_\alpha \qquad \text{in $(\tg^*)^*=\tg$.}
\end{equation}

Now, recall the element $e \in \gg$ defined in~\S\ref{ss:tensor}. Using this element we can define $\Upsilon:= e+\bg \subset \gg$ as in~\S\ref{ss:principal-nilpotent}.
We also set $\Sigma:= e + \tg$, and consider the isomorphism $\tau_\Sigma \colon \tg^* \to \Sigma$ defined by $\tau_\Sigma (\xi) = e + \tau(\xi)$. We denote by $\IB_{\tg^*}$ the pullback under $\tau_\Sigma$ of the restriction $\IB_\Sigma$ of $\IB$ to $\Sigma$. Note that $\Sigma \subset \Upsilon \subset \gg_\reg$ (see~\eqref{eqn:Up-reg}), so that $\IB_{\tg^*}$ is a smooth commutative group scheme over $\tg^*$ by Corollary~\ref{cor:Ireg-smooth} and Corollary~\ref{cor:Ireg-commutative}.


\begin{prop}
\label{prop:cohom-Gr}
There exists a canonical $\FM$-algebra isomorphism
\[
\HF^\bullet_{\TD}(\Gr; \FM) \simto \mathrm{Dist}(\IB_{\tg^*}),
\]
where $\mathrm{Dist}$ denotes the distribution algebra.
\end{prop}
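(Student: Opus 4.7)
The plan is Tannakian in nature: we use the cup-product action of $\HF^\bullet_\TD(\Gr;\FM)$ on the ``relative fiber functor'' $\FF(-) \otimes_\FM \OC(\tg^*)$, interpret this data as a morphism from $\HF^\bullet_\TD(\Gr;\FM)$ into a distribution algebra, and identify the target as $\mathrm{Dist}(\IB_{\tg^*})$. First, for $\FC \in \Perv_{\GDO}(\Gr,\FM)$, cup product defines an $\OC(\tg^*)$-linear action of $\HF^\bullet_\TD(\Gr;\FM)$ on $\HF^\bullet_\TD(\Gr,\FC) \cong \OC(\tg^*) \otimes_\FM \FF(\FC)$ (via~\eqref{eqn:cohom-Tequ}), natural in $\FC$. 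Extending the compatibility of~\cite[Lemma~2.4]{yz} to the $\TD$-equivariant convolution diagram yields a K\"unneth isomorphism $\HF^\bullet_\TD(\Gr, \FC_1 \star \FC_2) \cong \HF^\bullet_\TD(\Gr,\FC_1) \otimes_{\OC(\tg^*)} \HF^\bullet_\TD(\Gr,\FC_2)$ intertwining the cup-product actions via a coproduct $\Delta$ on $\HF^\bullet_\TD(\Gr;\FM)$. This endows $\HF^\bullet_\TD(\Gr;\FM)$ with the structure of a commutative and cocommutative $\OC(\tg^*)$-Hopf algebra whose coaction on $\FF \otimes \OC(\tg^*)$ respects unit and tensor structures; by Tannakian reconstruction for cocommutative Hopf algebras acting on $\Rep(\GB)$, we obtain a canonical $\FM$-algebra morphism
\[
\beta \colon \HF^\bullet_\TD(\Gr;\FM) \longrightarrow \mathrm{Dist}(\GB) \otimes_\FM \OC(\tg^*).
\]

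Next I would show that $\beta$ factors through $\mathrm{Dist}(\IB_{\tg^*}) \subset \mathrm{Dist}(\GB) \otimes_\FM \OC(\tg^*)$. The key input is the identification of the action of the equivariant Chern class $c_1^{\TD}(\LC) \in \HF^2_\TD(\Gr;\FM)$ of the determinant line bundle with the tautological section $\tau_\Sigma \colon \tg^* \to \gg$, $\xi \mapsto e + \tau(\xi)$, viewed as an element of $\gg \otimes_\FM \OC(\tg^*) \subset \mathrm{Dist}(\GB) \otimes_\FM \OC(\tg^*)$ acting on $\FF(\FC) \otimes \OC(\tg^*)$ through the $\gg$-action. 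The non-equivariant part (the action of $e$) is built into the definition in~\S\ref{ss:tensor}, and the dependence on $\xi$ follows from~\cite[Proposition~5.7]{yz} (with the sign correction discussed in~\S\ref{ss:equiv-coh}) combined with the construction of $\tau$. Commutativity of $\HF^\bullet_\TD(\Gr;\FM)$ then forces the image of $\beta$ to commute with $\tau_\Sigma$, hence to lie in the centralizer of this pointwise-regular section (note $\Sigma \subset \Upsilon \subset \ggr$ by~\eqref{eqn:Up-reg}). That centralizer in $\mathrm{Dist}(\GB) \otimes_\FM \OC(\tg^*)$ is precisely $\mathrm{Dist}(\IB_{\tg^*})$.

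Finally I would verify that $\beta \colon \HF^\bullet_\TD(\Gr;\FM) \to \mathrm{Dist}(\IB_{\tg^*})$ is an isomorphism. Both sides are graded free $\OC(\tg^*)$-modules: the left by the Mirkovi{\'c}--Vilonen cell decomposition of $\Gr$, and the right because smoothness of $\IB_{\tg^*} \to \tg^*$ makes $\mathrm{Dist}(\IB_{\tg^*})$ the divided-power algebra on the locally free module $\Lie(\IB_{\tg^*}/\tg^*)$. After base change to the regular semisimple locus $\tg^*_\rs \subset \tg^*$, $\IB_{\tg^*}|_{\tg^*_\rs}$ becomes a constant family of tori isomorphic to $\TB$, and $\beta$ may be checked to be an isomorphism by equivariant localization: the fixed-point basis of $\HF^\bullet_\TD(\Gr;\FM) \otimes_{\OC(\tg^*)} \OC(\tg^*_\rs)$ indexed by $\XM$ matches the basis of $\mathrm{Dist}(\TB) \otimes_\FM \OC(\tg^*_\rs)$ dual to $\OC(\TB) = \FM[\XM]$. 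A comparison of graded Poincar{\'e} series then promotes this to an isomorphism over all of $\tg^*$.

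The principal obstacle lies in the first paragraph: establishing the relative K\"unneth compatibility of cup product with the $\TD$-equivariant convolution, and carefully invoking Tannakian duality for cocommutative Hopf algebras acting on $\Rep(\GB)$ (as opposed to the more familiar tensor-unit-preserving reconstruction that recovers $\GB$ itself).
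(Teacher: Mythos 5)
Your approach differs substantially from the paper's. The paper's proof is short and citation-driven: it invokes Yun--Zhu's main result~\cite[Theorem~6.1]{yz}, which already provides a canonical isomorphism $\HF^{\TD}_\bullet(\Gr;\FM)\simto\OC(\IB_{\tg^*})$ of commutative Hopf algebras over $\OC(\tg^*)$; it then passes to graded $\OC(\tg^*)$-linear duals, observes that distributions on $\IB_{\tg^*}$ land in the graded dual because the quotient of $\OC(\IB_{\tg^*})$ by any power of the augmentation ideal is $\OC(\tg^*)$-finite, checks the resulting embedding becomes an isomorphism after $\FM\otimes_{\HF^\bullet_{\TD}(\pt;\FM)}(-)$ by~\cite[Corollary~6.4]{yz}, and concludes by graded Nakayama. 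Your proposal, by contrast, attempts to reconstruct the map from scratch by a Tannakian argument rather than dualizing~\cite[Theorem~6.1]{yz}; in effect you are sketching a reproof of a large part of Yun--Zhu's theorem, which is considerably more work than the paper does.

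Beyond the mismatch of scope, two steps in your argument have genuine gaps. First, the assertion that ``Tannakian reconstruction for cocommutative Hopf algebras'' produces a morphism $\beta$ landing in $\mathrm{Dist}(\GB)\otimes_\FM\OC(\tg^*)$ is not justified. The algebra of natural transformations of the fibre functor of $\Rep(\GB)$ is the full dual $\OC(\GB)^*$, of which $\mathrm{Dist}(\GB)$ is only the subalgebra of distributions supported at the identity; getting into $\mathrm{Dist}(\GB)$ requires an additional finiteness/continuity argument, which you do not supply. Second, and more seriously, the claim that the centralizer of the section $\tau_\Sigma\in\gg\otimes_\FM\OC(\tg^*)$ inside $\mathrm{Dist}(\GB)\otimes_\FM\OC(\tg^*)$ is exactly $\mathrm{Dist}(\IB_{\tg^*})$ is asserted without proof, and it is not a formal consequence of the definitions in characteristic $\ell>0$. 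The inclusion $\mathrm{Dist}(\IB_{\tg^*})\subset Z_{\mathrm{Dist}(\GB)\otimes\OC(\tg^*)}(\tau_\Sigma)$ is clear, but the reverse inclusion amounts to showing that for each $\xi$ the centralizer of $e+\tau(\xi)$ in $\mathrm{Dist}(\GB)$ equals $\mathrm{Dist}(\GB_{e+\tau(\xi)})$ --- a statement that is standard over $\CM$ (where $\mathrm{Dist}(\GB)=U(\gg)$) but requires a real argument over a field of positive characteristic, where $\mathrm{Dist}(\GB)$ has divided-power elements not captured by $U(\gg)$. Until these two points are closed, your proposal does not establish the proposition; the path of least resistance here really is to quote~\cite[Theorem~6.1]{yz} as the paper does.
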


\begin{proof}
By~\cite[Theorem~6.1]{yz}, there exists a canonical isomorphism
\[
\HF^{\TD}_\bullet(\Gr; \FM) \simto \OC(\IB_{\tg^*}).
\]
Now by definition (see~\cite[\S 2.6]{yz}), $\HF^\bullet_{\TD}(\Gr; \FM)$ identifies with the graded dual of $\HM^{\TD}_\bullet(\Gr; \FM)$. Since the quotient of $\OC(\IB_{\tg^*})$ by any power of the augmentation ideal is finitely generated over $\OC(\tg^*)$, any distribution on $\IB_{\tg^*}$ belongs to this graded dual, so that we deduce an embedding
\[
\mathrm{Dist}(\IB_{\tg^*}) \hookrightarrow \HF^\bullet_{\TD}(\Gr; \FM).
\]
The image of this morphism under the functor $\FM \otimes_{\HF^\bullet_{\TD}(\mathrm{pt}; \FM)} (-)$ is an isomorphism by~\cite[Corollary~6.4]{yz}. (Here we use also~\cite[\S I.7.4(1)]{jantzen-gps} and the fact that smooth group schemes are infinitesimally flat, see~\cite[Proof of Lemma~5.15]{mr} for references.) By the graded Nakayama lemma, this implies that our morphism itself is an isomorphism.
\end{proof}

\subsection{Kostant--Whittaker reduction}
\label{ss:KW-reduction}

The results of this subsection apply in the general setting of \S\S\ref{ss:principal-nilpotent}--\ref{ss:centralizer-F}; in fact we only need to assume that the group $\GB$ satisfies condition (C3).

Let us consider the composition
\[
\sigma \colon \Coh^\GB(\gg_\reg) \to \Rep(\IB_\reg) \to \Rep(\IB_\Sigma)
\]
where as above $\Sigma=e+\tg$.
Here the first arrow is the functor considered in the proof of Proposition~\ref{prop:coh-greg}, and the second arrow is induced by restriction to $\Sigma$. 


Consider the action of $W$ on $\Sigma$ induced by the natural action on $\tg$. The restriction $\chi_\Sigma$ of  $\chi_\reg$ to $\Sigma$ is $W$-equivariant, where $W$ acts trivially on the codomain. It follows from Proposition~\ref{prop:J} that $\IB_\Sigma$ is canonically isomorphic to $\Sigma \times_{\tg/W} \JB$; therefore this group scheme is $W$-equivariant. We claim that $\kappa$ factors through a functor with values in the category $\Rep^W(\IB_\Sigma)$ of $W$-equivariant representations of $\IB_\Sigma$ which are coherent over $\OC_{\Sigma}$, i.e.~the category whose objects consist of a representation $\FC$ in $\Rep(\IB_\Sigma)$ together with a collection of isomorphisms $\varphi_w \colon w^*(\FC) \simto \FC$ compatible with composition (and identity) in the obvious sense. Indeed,
recall the equivalences
\[
\Coh^\GB(\gg_\reg) \simto \Rep^\UB(\IB_\Upsilon) \simto \Rep(\JB)
\]
from Proposition~\ref{prop:coh-greg} and Remark~\ref{rk:KW-reduction-canonical}, and denote by $\kappa$ this composition. 
Then we have $\sigma=\chi_\Sigma^* \circ \kappa$. Clearly $\chi_\Sigma^*$ factors through a functor $\Rep(\JB) \to \Rep^W(\IB_\Sigma)$, and our claim follows.

For $V$ in $\Rep(\GB)$, we set
\[
\varsigma(V) := \sigma(V \otimes_\FM \OC_{\gg_\reg}).
\]
As a coherent sheaf on $\Sigma$, $\varsigma(V)$ is isomorphic to $V \otimes \OC_\Sigma$. The $W$-action on the global sections of this coherent sheaf can be described more concretely as follows: since $\Upsilon$ is a $\UB$-torsor over $\tg/W$ (see Proposition~\ref{prop:ganginzburg}), there exists a canonical isomorphism
\[
\OC(\Upsilon) \otimes_{\OC(\tg/W)} \bigl( V \otimes \OC(\Upsilon) \bigr)^{\UB} \simto V \otimes \OC(\Upsilon).
\]
Restricting to $\Sigma$ we deduce an isomorphism
\[
\OC(\Sigma) \otimes_{\OC(\tg/W)} \bigl( V \otimes \OC(\Upsilon) \bigr)^{\UB} \simto V \otimes \OC(\Sigma) = \Gamma(\Sigma, \varsigma(V))
\]
Then the $W$-action is induced by the action on $\OC(\Sigma)$. 

\begin{lem}
\label{lem:morphisms-Coh}
For $V$, $V'$ in $\Rep(\GB)$, the functor $\sigma$ induces an isomorphism
\[
\Hom_{\Coh^\GB(\gg)}(V \otimes_\FM \OC_\gg, V' \otimes_\FM \OC_\gg) \simto \bigl( \Hom_{\Rep(\IB_\Sigma)}(\varsigma(V), \varsigma(V')) \bigr)^W.
\]
\end{lem}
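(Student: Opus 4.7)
The plan is to reduce the claim to descent along the finite flat quotient $\pi \colon \tg \to \tg/W$. First I will argue that restriction to $\gg_\reg$ induces an isomorphism
\[
\Hom_{\Coh^\GB(\gg)}(V \otimes_\FM \OC_\gg, V' \otimes_\FM \OC_\gg) \simto \Hom_{\Coh^\GB(\gg_\reg)}(V \otimes_\FM \OC_{\gg_\reg}, V' \otimes_\FM \OC_{\gg_\reg}).
\]
Both sides identify with $\GB$-invariants in $\Hom_\FM(V, V') \otimes_\FM \OC(-)$, so this reduces to $\OC(\gg) = \OC(\gg_\reg)$; the latter holds because $\gg$ is smooth and $\gg \setminus \gg_\reg$ has codimension $\geq 2$, a standard fact in good characteristic (obtainable in our setting from the equidimensionality of $\chi$ together with the fact that each fiber's subregular orbit has codimension $2$). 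Next I will invoke Proposition~\ref{prop:coh-greg}, and to make the comparison with $\sigma$ transparent its canonical refinement from Remark~\ref{rk:KW-reduction-canonical}, in order to rewrite the right-hand side as $\Hom_{\Rep(\JB)}(M, M')$, where $M := \kappa(V \otimes \OC_\gg)$ and $M' := \kappa(V' \otimes \OC_\gg)$. Since by construction $\sigma$ factors as $\chi_\Sigma^* \circ \kappa$, we have $\varsigma(V) = \chi_\Sigma^* M$, $\varsigma(V') = \chi_\Sigma^* M'$ and $\IB_\Sigma = \chi_\Sigma^* \JB$; the lemma thus reduces to the descent statement
\[
\Hom_{\Rep(\JB)}(M, M') \simto \Hom_{\Rep(\IB_\Sigma)}(\chi_\Sigma^* M, \chi_\Sigma^* M')^W.
\]

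To carry out this descent, I will identify $\chi_\Sigma$ with $\pi$ under the bijection $\Sigma \cong \tg$, $e + \xi \leftrightarrow \xi$. For $\xi \in \tg_\rs$, the fact that $e \in \ng^+$ together with $\UB^+ \cdot \xi = \xi + \ng^+$ (recalled in~\S\ref{ss:adjoint-quotient}) shows that $\xi$ and $e+\xi$ are $\UB^+$-conjugate, so $\chi(e+\xi) = \chi(\xi)$; the general case then follows since $\tg_\rs$ is dense in $\tg$, using~\eqref{eqn:trs}. The map $\pi$ is finite and faithfully flat with $\OC(\tg)^W = \OC(\tg/W)$, by Demazure's theorem as already invoked in the proof of Theorem~\ref{thm:kostant}. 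Flatness of $\chi_\Sigma$ gives a natural isomorphism
\[
\Hom_{\OC(\Sigma)}(\chi_\Sigma^* M, \chi_\Sigma^* M') \cong \Hom_{\OC(\tg/W)}(M, M') \otimes_{\OC(\tg/W)} \OC(\Sigma)
\]
which is $W$-equivariant for the action on the $\OC(\Sigma)$-factor. Taking $W$-invariants and using $\OC(\Sigma)^W = \OC(\tg/W)$ recovers $\Hom_{\OC(\tg/W)}(M, M')$; finally, faithful flatness of $\chi_\Sigma$ ensures that $\IB_\Sigma = \chi_\Sigma^* \JB$-equivariance of a morphism descends to $\JB$-equivariance of its restriction to $\tg/W$, so the isomorphism restricts to the one claimed in the lemma.

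The main obstacle is arguably the very first step: although the codimension bound on $\gg \setminus \gg_\reg$ is classical in our good characteristic setting, it has not been established elsewhere in this paper, so a short independent justification is required. Once that is settled, the rest is either an invocation of the Kostant--Whittaker equivalence of Proposition~\ref{prop:coh-greg} or the formal faithfully-flat descent along the finite $W$-quotient $\pi$ sketched above.
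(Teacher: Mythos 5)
Your proposal follows the paper's proof step for step: restrict to $\gg_\reg$ using the codimension bound on $\gg \setminus \gg_\reg$, pass through the Kostant--Whittaker equivalence $\kappa$ of Proposition~\ref{prop:coh-greg} and Remark~\ref{rk:KW-reduction-canonical} to land in $\Rep(\JB)$, and descend along $\chi_\Sigma$ (identified with $\tg \to \tg/W$ under $\Sigma \cong \tg$) using freeness of $\OC(\tg)$ over $\OC(\tg/W)$. The real value you add is spelling out the final descent step, which the paper compresses to ``one can easily check'': you correctly show that $\chi(e+\xi)=\chi(\xi)$ by $\UB^+$-conjugation on $\tg_\rs$ and density, and then run the standard flat base-change and faithfully-flat descent argument for the $\Hom$-spaces. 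The only real divergence is your justification of the codimension-$\geq 2$ bound: the paper's footnote argues via $\tgg_\reg \subset \tgg$ and (an adaptation of) \cite[Proposition~1.9.4]{br}, whereas your sketch reasons fiberwise about subregular orbits; the latter requires knowing that the nonregular locus in each fiber of $\chi$ has codimension $\geq 2$ in that fiber, which is true but not much more elementary than the route through $\tgg$. Either way the approach and the conclusion are the same.
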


\begin{proof}
First we recall that $\gg \smallsetminus \gg_\reg$ has codimension at least $2$ in $\gg$.\footnote{This fact is well known. For instance, the following proof works for any connected reductive group satisfying (C1). The same arguments as in~\cite[Proposition~1.9.4]{br} show that the complement of $\tgg_\reg$ in $\tgg$ has codimension at least $2$; then the case of $\gg$ follows, since $\gg \smallsetminus \gg_\reg$ is the image of $\tgg \smallsetminus \tgg_\reg$ in $\gg$ and $\dim(\tgg)=\dim(\gg)$.} Hence restriction induces an isomorphism
\[
\Hom_{\Coh^\GB(\gg)}(V \otimes_\FM \OC_\gg, V' \otimes_\FM \OC_\gg) \simto \Hom_{\Coh^\GB(\gg_\reg)}(V \otimes_\FM \OC_{\gg_\reg}, V' \otimes_\FM \OC_{\gg_\reg}).
\]
Now the equivalence $\kappa$ induces an isomorphism
\[
\Hom_{\Coh^\GB(\gg_\reg)}(V \otimes_\FM \OC_{\gg_\reg}, V' \otimes_\FM \OC_{\gg_\reg}) \simto \Hom_{\Rep(\JB)}(\varkappa(V), \varkappa(V')),
\]
where $\varkappa(V)=\kappa(V \otimes \OC_{\gg_{\reg}})$ and similarly for $V'$. And finally, using the fact that $\OC(\tg)$ is free over $\OC(\tg/W)$ one can easily check that the functor $\chi_\Sigma^*$ induces an isomorphism
\[
\Hom_{\Rep(\JB)}(\varkappa(V), \varkappa(V')) \simto \bigl( \Hom_{\Rep(\IB_\Sigma)}(\varsigma(V), \varsigma(V')) \bigr)^W,
\]
which finishes the proof.
\end{proof}


Below we will need an explicit description of the $W$-action on $\varsigma(V)$, as follows. Recall the isomorphism $u_{-\alpha} \colon \FM \to \UB_{-\alpha}$ defined in~\S\ref{ss:tensor}.

\begin{lem}
\label{lem:action-W-Coh}
Identifying the global sections on $\varsigma(V)$ with the space of (algebraic) functions from $\Sigma$ to $V$, the action of $s_\alpha$ satisfies
\[
(s_\alpha \cdot f)(e+h) = u_{-\alpha}(-(d\alpha)(h)) \cdot f(e+s_\alpha h) 
\]
for $h \in \tg$ and $f$ a global section of $\varsigma(V)$.
\end{lem}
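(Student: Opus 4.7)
The plan is to unravel the description of the $W$-action given just before the statement, and then reduce the lemma to an explicit $\mathfrak{sl}_2$-triple calculation inside $\Upsilon$. First I would write any global section $f$ of $\varsigma(V) = V \otimes \OC(\Sigma)$ as a finite sum $f = \sum_i g_i \cdot F_i|_\Sigma$ with $g_i \in \OC(\Sigma) = \OC(\tg)$ and $F_i \in (V \otimes \OC(\Upsilon))^{\UB}$. By the definition of the $W$-action recalled just before the statement, together with the identification $\Sigma \cong \tg$ given by $e+h \mapsto h$, one has $s_\alpha \cdot f = \sum_i (s_\alpha \cdot g_i) \cdot F_i|_\Sigma$ with $(s_\alpha \cdot g_i)(e+h) = g_i(e + s_\alpha h)$; hence
\[
(s_\alpha \cdot f)(e+h) = \sum_i g_i(e + s_\alpha h) \cdot F_i(e+h).
\]

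Next I would check that $e+h$ and $e + s_\alpha h$ always lie in a common $\UB$-orbit inside $\Upsilon$, and then deduce that $(s_\alpha \cdot f)(e+h) = u \cdot f(e + s_\alpha h)$ for the unique $u \in \UB$ with $u \cdot (e + s_\alpha h) = e+h$. For the first point, for $h \in \tg_\rs$ one has $e+h \in h + \ng^+ = \UB^+ \cdot h$, so $e+h$ is $\GB$-conjugate to $h$ and hence $\chi(e+h) = [h]_W$; by Zariski density this persists for all $h \in \tg$. Since Proposition~\ref{prop:ganginzburg} together with Theorem~\ref{thm:kostant} identifies $\Upsilon \to \tg/W$ with a $\UB$-torsor, the points $e+h$ and $e + s_\alpha h$ lie in the same fiber and therefore differ by a unique element $u \in \UB$. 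Using the $\UB$-equivariance $F_i(e+h) = F_i(u \cdot (e + s_\alpha h)) = u \cdot F_i(e + s_\alpha h)$ and summing over $i$ yields the desired formula.

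What remains is to identify this $u$ with $u_{-\alpha}(-(d\alpha)(h))$, which is a direct $\mathfrak{sl}_2$-triple computation. Indeed, using $[e_{-\alpha}, e_\alpha] = -h_\alpha$, $[e_{-\alpha}, h_\alpha] = 2 e_{-\alpha}$, $[e_{-\alpha}, e_\beta] = 0$ for $\beta \in \Delta \smallsetminus \{\alpha\}$ (since $\beta - \alpha$ is not a root), and $[e_{-\alpha}, h] = (d\alpha)(h) \cdot e_{-\alpha}$ for $h \in \tg$, the series $\exp(t \cdot \ad(e_{-\alpha}))(e+h)$ terminates after the quadratic term and evaluates to
\[
u_{-\alpha}(t) \cdot (e+h) = e + h - t \, h_\alpha + t \bigl( (d\alpha)(h) - t \bigr) e_{-\alpha}.
\]
Setting this equal to $e + s_\alpha h = e + h - (d\alpha)(h) \, h_\alpha$ forces $t = (d\alpha)(h)$, so $u_{-\alpha}((d\alpha)(h)) \cdot (e+h) = e + s_\alpha h$, and inverting gives $u = u_{-\alpha}(-(d\alpha)(h))$, as required. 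The main conceptual ingredient is the $\UB$-torsor picture from Proposition~\ref{prop:ganginzburg}; the rest is routine sign-bookkeeping in the $\mathfrak{sl}_2$-triple.
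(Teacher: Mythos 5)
Your proof is correct and follows the same approach as the paper: unravel the $W$-action via the $\UB$-torsor structure of $\Upsilon \to \tg/W$, and verify $u_{-\alpha}(-(d\alpha)(h)) \cdot (e + s_\alpha h) = e + h$ by an explicit bracket computation. The paper compresses all of this into two lines, quoting the formula $(w \cdot f)(x) = u(w,x) \cdot f(w^{-1} \cdot x)$ and calling the verification of the identity above ``a straightforward computation''; you supply the full details, including the torsor picture, the Zariski-density check that $e+h$ and $e + s_\alpha h$ lie in the same fiber over $\tg/W$, and the $\mathfrak{sl}_2$-computation (whose coefficients are integral, so the informal use of $\exp$ is harmless even in small characteristic).
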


\begin{proof}
By definition of the action, for $w \in W$ we have
\[
(w \cdot f)(x) = u(w,x) \cdot f(w^{-1} \cdot x),
\]
where $u(w,x) \in \UB$ is the unique element such that $u(w,x) \cdot (w^{-1} \cdot x) = x$. In our case we have $s_\alpha \cdot (e+h) = e+ s_\alpha h$, and 
a straightforward computation shows that
\[
u_{-\alpha}(-(d\alpha)(h)) \cdot (e+s_\alpha h) = e+ h.
\]
The formula follows.
\end{proof}

\subsection{Mixed modular derived Satake equivalence}
\label{ss:derived-Satake}

\emph{From now on we assume that $\GD$ is quasi-simple and simply connected.}

We define the \emph{mixed $\GDO$-equivariant derived category} of $\Gr$ with coefficients in $\FM$ by
\[
\Dmix_{\GDO}(\Gr, \FM) := \Kb \Parity_{\GDO}(\Gr, \FM);
\]
see~\S\ref{ss:intro-derived-Satake} for a justification of this definition. As in~\cite{modrap2} we denote by $\{1\}$ the auto-equivalence induced by the cohomological shift in the category $\Parity_{\GDO}(\Gr, \FM)$ and define the ``Tate twist'' by the formula $\langle 1 \rangle = \{-1\}[1]$, where $[1]$ is the cohomological shift in the triangulated category $\Kb \Parity_{\GDO}(\Gr, \FM)$. 
The restriction to $\Parity_{\GDO}(\Gr, \FM)$ of the convolution product $\star$ (see~\S\ref{ss:notation-Satake}) induces a convolution product on $\Dmix_{\GDO}(\Gr, \FM)$, which we again denote by $\star$.

On the other hand, we consider
the bounded derived category $\Db \Coh^{\GB \times \Gm}(\gg)$ of $\GB \times \Gm$-equivariant coherent sheaves on $\gg$, where $\GB \times \Gm$ acts on $\gg$ via $(g,t) \cdot \xi = t^{-2} (g \cdot \xi)$.
This category has ``shift functor'' $\langle 1 \rangle$, defined as the tensor product with the tautological $1$-dimensional $\Gm$-module. Since $\gg$ is a smooth variety, the category  $\Db \Coh^{\GB \times \Gm}(\gg)$ has a natural monoidal structure induced by (derived) tensor product of coherent sheaves. 

The main result of this section is the following. It provides a ``mixed modular'' analogue of the ``derived Satake equivalence'' of~\cite{bf}.

\begin{thm}
\label{thm:derived-Satake}
Assume that $\GD$ is quasi-simple and simply connected, and that $\ell$ is very good for $\GD$.
There exists an equivalence of monoidal triangulated categories
\[
\Phi \colon \Dmix_{\GDO}(\Gr, \FM) \simto \Db \Coh^{\GB \times \Gm}(\gg)
\]
which satisfies $\Phi \circ \langle 1 \rangle \cong \langle 1 \rangle [1] \circ \Phi$ and
\[
\Phi(\FC) \cong \Satake(\FC) \otimes_\FM \OC_\gg
\]
for all $\FC$ in $\PParity_{\GDO}(\Gr, \FM)$.
\end{thm}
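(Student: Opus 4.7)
The plan is to follow the strategy of Bezrukavnikov--Finkelberg~\cite{bf}, with the explicit semisimple-rank-one computations used in \emph{loc.}~\emph{cit.}~replaced by the conceptual description of $\TD$-equivariant cohomology of spherical perverse sheaves provided by the isomorphism~\eqref{eqn:cohom-Tequ}, Lemma~\ref{lem:action-W-Gr}, and Proposition~\ref{prop:cohom-Gr}.

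First I would construct the functor $\Phi$. Since $\Satake$ restricts to an equivalence $\PParity_{\GDO}(\Gr,\FM) \simto \Tilt(\GB)$, the assignment $\FC \mapsto \Satake(\FC) \otimes_\FM \OC_\gg$, with $\Satake(\FC)$ placed in $\Gm$-weight zero, defines an additive functor from $\PParity_{\GDO}(\Gr,\FM)$ to $\Coh^{\GB \times \Gm}(\gg)$. Because every indecomposable parity complex is of the form $\EC^\lambda\{n\}$ for unique $(\lambda, n)$, this extends uniquely to an additive functor $\Phi_\Parity \colon \Parity_{\GDO}(\Gr,\FM) \to \Coh^{\GB \times \Gm}(\gg)$ intertwining $\{1\}$ with $\langle -1\rangle$. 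Applying the bounded homotopy category construction and composing with the natural realization yields
\[
\Phi \colon \Dmix_{\GDO}(\Gr,\FM) \longrightarrow \Kb\Coh^{\GB \times \Gm}(\gg) \longrightarrow \Db \Coh^{\GB \times \Gm}(\gg),
\]
which by construction satisfies $\Phi \circ \langle 1\rangle \cong \langle 1\rangle[1] \circ \Phi$. The monoidal structure on $\Phi$ is inherited from the monoidality of $\Satake$ together with the canonical isomorphism $(V \otimes \OC_\gg) \otimes^{L} (V' \otimes \OC_\gg) \cong (V \otimes V') \otimes \OC_\gg$, extended to complexes in the standard way.

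The heart of the argument is the full faithfulness of $\Phi$. Since the objects $\EC^\lambda$ generate $\Dmix_{\GDO}(\Gr, \FM)$ as a triangulated category closed under Tate twists, it suffices to prove that $\Phi$ induces, for all $\FC, \FC' \in \PParity_{\GDO}(\Gr,\FM)$, an isomorphism
\[
\bigoplus_{n,m} \Hom_{\Dmix}(\FC, \FC'[n]\langle m\rangle) \simto \bigoplus_{n,m} \Hom_{\Db \Coh^{\GB \times \Gm}(\gg)}(\Phi(\FC), \Phi(\FC')[n]\langle m\rangle).
\]
Standard manipulations in $\Kb \Parity$ identify the left-hand side with $\bigoplus_k \Hom^k_{\Db_{\GDO}(\Gr,\FM)}(\FC, \FC')$, which can be expressed as $\HF^\bullet_{\GDO}(\Gr, \RHOM(\FC, \FC'))$, and hence (since $|W|$ is invertible in $\FM$) as the $W$-invariant part of $\HF^\bullet_{\TD}(\Gr, \RHOM(\FC, \FC'))$. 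Via the tensor isomorphism~\eqref{eqn:cohom-Tequ} and Proposition~\ref{prop:cohom-Gr}, this is rewritten as the $W$-invariants of global sections over $\Sigma \cong \tg^*$ of a sheaf built from $\Satake(\FC)$ and $\Satake(\FC')$, with $W$-action described explicitly by Lemma~\ref{lem:action-W-Gr}. On the other hand, the right-hand side is computed using Lemma~\ref{lem:morphisms-Coh}, the fact that $\gg \smallsetminus \gg_\reg$ has codimension $\geq 2$, and the Kostant--Whittaker reduction of~\S\ref{ss:KW-reduction} (extended to the derived level using the smoothness of $\IB_\reg$ from Corollary~\ref{cor:Ireg-smooth} together with Theorem~\ref{thm:Lie-alg-cotangent} identifying $\Lie(\IB_\reg)$ with the pullback of the cotangent bundle of $\tg/W$). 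This yields a parallel description as the $W$-invariants of $\Gamma(\Sigma, \varsigma(\Satake(\FC)^{*} \otimes \Satake(\FC')))$, with $W$-action now described by Lemma~\ref{lem:action-W-Coh}. The two descriptions are matched via the isomorphism $\tau_\Sigma \colon \tg^* \simto \Sigma$ of~\S\ref{ss:equiv-coh}, and the crucial compatibility of the two $W$-actions follows from the relation~\eqref{eqn:tau-roots}, which precisely translates the Killing-form normalization underlying Yun--Zhu's description into the natural pairing between $\tg$ and $\tg^*$ used in the Kostant--Whittaker picture.

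Essential surjectivity is then immediate: every $V \in \Rep(\GB)$ is a direct summand of a tilting module, so every free sheaf $V \otimes \OC_\gg \langle m\rangle[n]$ lies in the essential image of $\Phi$, and these objects generate $\Db \Coh^{\GB \times \Gm}(\gg)$ as a triangulated category. The main obstacle will be the $W$-equivariance matching step above: although the two sides admit relatively manageable descriptions, keeping track of the normalization factors $|\alpha|^2$ appearing in Lemma~\ref{lem:action-W-Gr} and Lemma~\ref{lem:action-W-Coh} requires care, and is ultimately the reason for assuming $\ell$ very good—so that these factors (and $|W|$) are invertible in $\FM$.
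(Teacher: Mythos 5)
Your proposal captures the overall architecture — reduce to an additive equivalence at the level of parity complexes, describe both Hom-spaces as $W$-invariants of suitably equivariant objects, and match them via the Yun--Zhu isomorphism $\eqref{eqn:cohom-Tequ}$ together with Lemma~\ref{lem:action-W-Gr}, Lemma~\ref{lem:action-W-Coh}, and $\eqref{eqn:tau-roots}$. That part agrees with the paper's \S\ref{ss:proof-derived-Satake}. However there are a few genuine gaps.

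The most serious omission is the higher-$\Ext$ vanishing on the coherent side, namely Lemma~\ref{lem:Hom-vanishing-gg}:
$\Ext^i_{\Db \Coh^{\GB \times \Gm}(\gg)}(V \otimes \OC_\gg, V' \otimes \OC_\gg) = 0$ for $i>0$ when $V, V'$ are tilting. Without this, the composition $\Kb \mathsf{T}^{\GB\times\Gm}(\gg) \to \Kb\Coh^{\GB\times\Gm}(\gg) \to \Db\Coh^{\GB\times\Gm}(\gg)$ is not fully faithful, and your proposed isomorphism $\bigoplus_{n,m}\Hom_{\Dmix}(\FC,\FC'[n]\langle m\rangle) \simto \bigoplus_{n,m}\Hom_{\Db\Coh}(\Phi(\FC),\Phi(\FC')[n]\langle m\rangle)$ cannot hold, since the left side vanishes for $n+m>0$ while the right side may not. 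The tools you invoke to compute the coherent side — Lemma~\ref{lem:morphisms-Coh}, the codimension argument, and Kostant--Whittaker reduction — only control $\Hom^0$: restriction to $\gg_\reg$ loses information about higher $\Ext$, and the KW reduction is an equivalence of abelian categories whose $\Ext$ groups are not a priori vanishing. The paper proves Lemma~\ref{lem:Hom-vanishing-gg} by a different route, passing to the Grothendieck resolution via $\Rder\pi_*\OC_{\tgg}$ and using the corresponding vanishing on $\tNC$ from~\cite{mr-exotic}; ``extending KW reduction to the derived level'' as you propose is not a workable substitute.

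There is also a false claim: $|W|$ is not invertible in $\FM$ under the hypotheses. For $\GD = \mathrm{SL}_4$, $\ell=3$ is very good, yet $3$ divides $|W|=24$. Very good does not imply $\ell \nmid |W|$. The paper instead obtains the $W$-invariance description of $\Hom_{\Parity_{\GDO}}$ structurally: it shows that $\Hom^\bullet_{\Db_{\GDO}}(\FC,\GC)$ is free over $\HF^\bullet_{\GD}(\mathrm{pt};\FM)$, that base change to $\HF^\bullet_{\TD}(\mathrm{pt};\FM)$ gives $\Hom^\bullet_{\Db_{\TD}}$, and that $\HF^\bullet_{\GD}(\mathrm{pt};\FM) = (\HF^\bullet_{\TD}(\mathrm{pt};\FM))^W$, reducing to a well-known statement about partial flag varieties of $\GD$. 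A $W$-averaging argument is not available here.

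Finally, the essential surjectivity step is misformulated: a general $V \in \Rep(\GB)$ is \emph{not} a direct summand of a tilting module (direct summands of tilting modules are again tilting). What is needed — and what the paper uses — is that tilting modules generate $\Db\Rep(\GB)$ as a triangulated category, combined with Lemma~\ref{lem:DCoh-generated-free} showing that the free objects $V\otimes\OC_\gg\langle i\rangle$ generate $\Db\Coh^{\GB\times\Gm}(\gg)$.
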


The proof of this theorem is given in~\S\ref{ss:proof-derived-Satake}. In the remainder of this subsection we introduce some ``tilting'' objects in $\Db \Coh^{\GB \times \Gm}(\gg)$ from which one can recover the entire category. (We do not claim that these objects are tilting for any quasi-hereditary structure, but they will play the same role as the one played by actual tilting objects in~\cite{modrap2} or~\cite{mr}.)
More precisely, we denote by $\mathsf{T}^{\GB \times \Gm}(\gg)$ the additive, monoidal full subcategory of $\Coh^{\GB \times \Gm}(\gg)$ consisting of objects of the form $V \otimes \OC_\gg \langle n \rangle$ where $V \in \Tilt(\GB)$ and $n \in \ZM$.

\begin{lem}
\label{lem:Hom-vanishing-gg}
For $V$, $V'$ in $\Tilt(\GB)$ and for $n,m \in \ZM$ we have
\[
\Hom^i_{\Db \Coh^{\GB \times \Gm}(\gg)}(V \otimes \OC_\gg \langle n \rangle, V' \otimes \OC_\gg \langle m \rangle)=0 \qquad \text{for $i >0$.}
\]
\end{lem}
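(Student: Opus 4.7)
My plan is to reduce the claim to a vanishing statement for group cohomology of the Langlands dual group $\GB$, which can then be handled using standard tilting/good-filtration technology. First, identifying $\Coh^{\GB \times \Gm}(\gg)$ with the category of $\GB \times \Gm$-equivariant finitely generated $\OC(\gg)$-modules (since $\gg$ is affine), the equivariant Hom factors as $(-)^{\GB \times \Gm} \circ \Hom_{\OC(\gg)}(-,-)$. As $V \otimes \OC_\gg$ is free as an $\OC_\gg$-module, the inner $\Hom_{\OC(\gg)}(V \otimes \OC(\gg), -)$ is exact; the Grothendieck spectral sequence therefore degenerates and gives
\[
\Hom^i_{\Db \Coh^{\GB \times \Gm}(\gg)}\bigl(V \otimes \OC_\gg \langle n \rangle, V' \otimes \OC_\gg \langle m \rangle\bigr) \cong H^i\bigl(\GB \times \Gm, \, V^* \otimes V' \otimes \OC(\gg) \langle m-n \rangle\bigr).
\]

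Since $\Gm$ is linearly reductive in any characteristic, the Hochschild--Serre spectral sequence for the normal subgroup $\Gm \subset \GB \times \Gm$ collapses and yields $H^i(\GB \times \Gm, M) \cong H^i(\GB, M^{\Gm})$ for any rational module $M$. With the action of $\Gm$ on $\gg^*$ having weight $2$, the $\Gm$-invariants of $V^* \otimes V' \otimes \OC(\gg) \langle m-n \rangle$ identify with $V^* \otimes V' \otimes \mathrm{S}^k(\gg^*)$ for the unique $k \geq 0$ (if any) satisfying $2k + m - n = 0$, and vanish otherwise. The claim thus reduces, via tensor-Hom adjunction, to showing $\Ext^{>0}_{\GB}(V \otimes (V')^*, \mathrm{S}^k(\gg^*)) = 0$.

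To finish, I would invoke tilting/good-filtration technology for $\GB$. The module $V \otimes (V')^*$ is tilting, since the class of tilting $\GB$-modules is closed under duals and tensor products (the latter by Donkin--Mathieu). Under the very good characteristic hypothesis, the invariant bilinear form of Lemma~\ref{lem:bilinear-form} gives a $\GB$-equivariant isomorphism $\gg \cong \gg^*$, and the adjoint representation $\gg$ admits a good filtration (a standard fact in very good characteristic). By Donkin's theorem on Schur functors of good-filtration modules, $\mathrm{S}^k(\gg^*)$ also has a good filtration, so the desired vanishing follows from the classical fact that $\Ext^{>0}_{\GB}(T, M) = 0$ whenever $T$ is tilting and $M$ has a good filtration. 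The main obstacle is the verification of the good-filtration properties of $\gg$ and its symmetric powers, but these are well-known consequences of the very good characteristic hypothesis.
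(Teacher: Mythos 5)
Your strategy---reduce the equivariant $\Ext$-groups to $\GB \times \Gm$-cohomology, strip off the $\Gm$-grading by linear reductivity, and arrive at $\Ext^{>0}_\GB(V \otimes (V')^*, \mathrm{S}^k(\gg^*)) = 0$---is a correct reduction, and it is a genuinely different route than the paper's. The paper instead observes that $\Rder\pi_*\OC_{\tgg}$ is a free $\OC_\gg$-module (since $\OC(\tg)$ is free over $\OC(\tg)^W$), uses adjunction to transport the $\Ext$-computation to the Grothendieck resolution $\tgg$, and from there reduces to $\tNC$, where the vanishing is quoted from~\cite[Corollary~4.3.7]{mr-exotic}. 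Your observation that $V \otimes (V')^*$ is tilting (Donkin--Mathieu plus closure under duals) and that the claim would follow from $\mathrm{S}^k(\gg^*)$ admitting a good filtration is also correct.

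The gap is in your final justification. There is no general ``Donkin theorem on Schur functors'' asserting that $\mathrm{S}^k(M)$ has a good filtration whenever $M$ does; the standard argument (that $\mathrm{S}^k(M)$ is a direct summand of $M^{\otimes k}$, which is good by Donkin--Mathieu) requires $k!$ to be invertible and therefore breaks down for $k \geq \ell$. Nor does it help that $\gg$ is tilting in very good characteristic, since the class of tilting (or good-filtration) modules is not closed under $\mathrm{S}^k$. That $\OC(\gg) = \mathrm{S}(\gg^*)$ does have a good filtration for good $\ell$ is a genuine and nontrivial theorem, essentially equivalent to the cohomology vanishing for $T^*(\GB/\BB)$ established by Kumar--Lauritzen--Thomsen via Frobenius splitting; the paper's geometric detour through $\tgg$ and $\tNC$ exists precisely to access this fact in the packaged form~\cite[Corollary~4.3.7]{mr-exotic}. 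As it stands your argument is a valid reformulation of the lemma, but it leaves the actual content unproved.
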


\begin{proof}
Since $\OC(\tg)$ is free over $\OC(\tg)^W$, see~\cite{demazure}, the object
\[
\Rder\pi_* \OC_{\tgg} \cong \OC_{\gg} \otimes_{\OC(\tg)^W} \OC(\tg)
\]
is a direct sum of $\Gm$-shifts of the structure sheaf $\OC_\gg$. (The isomorphism stated here is well known, see e.g.~\cite[Proof of Proposition~3.4.1]{bmr} for a proof under our assumptions.) Therefore, to prove the lemma it is enough to prove that
\[
\Hom^i_{\Db \Coh^{\GB \times \Gm}(\gg)}(V \otimes \OC_\gg \langle n \rangle, \Rder\pi_*(V' \otimes \OC_{\tgg}) \langle m \rangle)=0
\]
for all $i>0$. However, by adjunction we have
\begin{multline*}
\Hom^i_{\Db \Coh^{\GB \times \Gm}(\gg)}(V \otimes \OC_\gg \langle n \rangle, \Rder\pi_*(V' \otimes \OC_{\tgg}) \langle m \rangle) \cong \\
\Hom^i_{\Db \Coh^{\GB \times \Gm}(\tgg)}(V \otimes \OC_{\tgg} \langle n \rangle, V' \otimes \OC_{\tgg}\langle m \rangle).
\end{multline*}
Now the fact that the right-hand side vanishes follows from the similar claim on $\tNC$ (which itself follows from~\cite[Corollary~4.3.7]{mr-exotic}) by the same arguments as in~\cite[Proposition~5.5]{mr}.
\end{proof}

\begin{lem}
\label{lem:DCoh-generated-free}
The category $\Db \Coh^{\GB \times \Gm}(\gg)$ is generated, as a triangulated category, by the ``free'' objects of the form $V \otimes \OC_{\gg} \langle i \rangle$ for $V$ in $\Rep(\GB)$ and $i \in \ZM$.
\end{lem}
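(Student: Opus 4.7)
The plan is to prove the stronger statement that every $\GB \times \Gm$-equivariant coherent sheaf on $\gg$ lies in the triangulated subcategory $\TC \subset \Db \Coh^{\GB \times \Gm}(\gg)$ generated by the free objects $V \otimes \OC_\gg \langle i \rangle$. Since every object of $\Db \Coh^{\GB \times \Gm}(\gg)$ is a finite iterated cone of shifts of its cohomology sheaves via the truncation triangles for the standard $t$-structure, this will suffice.

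Given $\FC \in \Coh^{\GB \times \Gm}(\gg)$, corresponding to a finitely generated $\GB \times \Gm$-equivariant $\OC(\gg)$-module $M$, I will construct a finite equivariant resolution by free sheaves. By local finiteness of algebraic group actions, $M$ is the union of its finite-dimensional $\GB \times \Gm$-stable subspaces, and hence admits such a subspace $W_0$ generating $M$ over $\OC(\gg)$. This produces an equivariant surjection $W_0 \otimes_\FM \OC_\gg \twoheadrightarrow \FC$, and iterating yields a (possibly infinite) equivariant free resolution $\cdots \to W_1 \otimes \OC_\gg \to W_0 \otimes \OC_\gg \to \FC \to 0$. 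Since $\gg$ is a smooth affine $\FM$-variety of dimension $N := \dim \gg$, the ring $\OC(\gg)$ has global dimension $N$, so the $N$-th syzygy $\KC := \Ker(W_{N-1} \otimes \OC_\gg \to W_{N-2} \otimes \OC_\gg)$ is a $\GB \times \Gm$-equivariant locally free coherent sheaf. The corresponding distinguished triangle $\KC[N] \to P^\bullet \to \FC \triright$, where $P^\bullet$ is the bounded complex of free equivariant sheaves $W_{N-1} \otimes \OC_\gg \to \cdots \to W_0 \otimes \OC_\gg$, then reduces the problem to showing that $\KC \in \TC$.

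The main remaining task, and the principal obstacle, is therefore to show that every $\GB \times \Gm$-equivariant locally free coherent sheaf $\KC$ on $\gg$ lies in $\TC$. For this I plan to exploit the contracting $\Gm$-action on $\gg$, whose unique $\GB \times \Gm$-fixed point is $0 \in \gg$: the scaling morphism $m \colon \AM^1 \times \gg \to \gg$, $(s, x) \mapsto s \cdot x$, is $\GB \times \Gm$-equivariant (with $\Gm$ acting trivially on the $\AM^1$ factor), restricts to $\mathrm{id}_\gg$ at $s = 1$, and to the constant map to $0$ at $s = 0$. Equivalently, viewing $\KC$ as a finitely generated graded projective $\GB$-equivariant module over $R = \OC(\gg) = \mathrm{S}_\FM(\gg^*)$ (non-negatively graded with $\gg^*$ in degree $2$), the plan is to show by a graded equivariant Nakayama-type argument that the natural $\GB \times \Gm$-equivariant evaluation map $\KC|_0 \otimes_\FM \OC_\gg \to \KC$ (where $\KC|_0$ denotes the fiber at $0$) is an isomorphism. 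Once this is established, decomposing the finite-dimensional $\GB \times \Gm$-module $\KC|_0$ into $\Gm$-weight spaces gives $\KC|_0 = \bigoplus_i V_i$ with each $V_i \in \Rep(\GB)$, and hence $\KC \cong \bigoplus_i V_i \otimes \OC_\gg \langle i \rangle \in \TC$. The non-triviality of this last step stems from the fact that $\GB$ need not be linearly reductive in positive characteristic, so that one cannot simply split the short exact sequences $0 \to \mg_0 \KC \to \KC \to \KC|_0 \to 0$ degree by degree; the rigidity inherent in the contracting $\Gm$-action is what allows the iso to be produced without such splittings.
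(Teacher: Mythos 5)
Your reduction to an equivariant locally free sheaf $\KC$ is sound and runs parallel to the paper's invocation of Hilbert's syzygy theorem, but the final step --- the claim that $\KC$ must be isomorphic to $\KC|_0 \otimes_\FM \OC_\gg$ as a $\GB \times \Gm$-equivariant sheaf --- is false in general, and the ``rigidity from the contracting $\Gm$-action'' you invoke does not exist. To begin with, there is no natural equivariant map $\KC|_0 \otimes_\FM \OC_\gg \to \KC$: constructing one already amounts to choosing an equivariant section of the quotient $\KC \twoheadrightarrow \KC|_0$, which is exactly what is at issue. More to the point, the obstruction is a genuine one. Write $R = \OC(\gg)$ and take $\KC$ with underlying graded $R$-module $(V_1 \otimes_\FM R) \oplus (V_2 \otimes_\FM R\langle -2\rangle)$. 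A compatible $\GB$-equivariant structure is given by a block-upper-triangular action whose off-diagonal entry is an algebraic $1$-cocycle $\phi \colon \GB \to \Hom_\FM(V_2, V_1 \otimes_\FM \gg^*)$, and the equivariant splitting you want exists if and only if the class $[\phi]$ vanishes in $\Ext^1_\GB(V_2, V_1 \otimes_\FM \gg^*)$. In positive characteristic this $\Ext$ group is typically nonzero (already for $\GB=\mathrm{SL}(2,\FM)$ for suitable $V_1, V_2$), and any nonzero class produces a $\KC$ that is graded free as an $R$-module but not equivariantly free. The $\Gm$-action only forces $\phi$ to be homogeneous of degree $2$; it says nothing about whether $[\phi]$ vanishes.

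The correct statement, and the one the paper actually proves (following Achar's Lemma~5.7), is weaker but sufficient: a graded-free equivariant module admits a \emph{filtration} by free objects, not necessarily a direct-sum decomposition. Concretely, if $N$ is the lowest degree in which $\KC$ is nonzero, then the degree-$N$ subspace $\KC_N$ (as opposed to the full fiber $\KC|_0$, which collects generators from all degrees) is $\GB \times \Gm$-stable, the multiplication map $\OC(\gg) \otimes_\FM \KC_N \to \KC$ is an injective morphism of equivariant modules, and its cokernel is again graded free over $\OC(\gg)$ of strictly smaller rank. One concludes by induction on the rank: the resulting short exact sequence need not split, but a triangulated subcategory is closed under extensions, so this is enough. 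Replacing your ``show the fiber-splitting is an isomorphism'' step with this filtration-by-lowest-degree argument would repair the proof.
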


\begin{proof}
The proof of~\cite[Lemma~5.7]{achar} applies in our situation; let us recall its main steps. First, it is clear that any $\GB \times \Gm$-equivariant coherent sheaf on $\gg$ is a quotient of a direct sum of free objects. Then, using an appropriate form of Hilbert's syzygy theorem, to conclude it suffices to prove that any $\GB \times \Gm$-equivariant $\OC(\gg)$-module $M$ which is graded free as an $\OC(\gg)$-module admits a filtration, as a $\GB \times \Gm$-equivariant module, by free objects. However, if $N$ is the lowest degree in which $M$ is non-zero, the subspace $M_N \subset M$ of elements of degree $N$ is $\GB \times \Gm$-stable, the natural morphism $\OC(\gg) \otimes M_N \to M$ is an injective morphism of $\GB \times \Gm$-equivariant $\OC(\gg)$-modules, and its cokernel is graded-free over $\OC(\gg)$, of rank smaller than $M$. We conclude by induction.
\end{proof}

\begin{cor}
\label{cor:DbCoh-tilting}
There exists an equivalence of monoidal triangulated categories
\[
\Kb \mathsf{T}^{\GB \times \Gm}(\gg) \simto \Db \Coh^{\GB \times \Gm}(\gg).
\]
\end{cor}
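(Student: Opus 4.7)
The natural inclusion $\mathsf{T}^{\GB \times \Gm}(\gg) \hookrightarrow \Coh^{\GB \times \Gm}(\gg)$ induces a triangulated functor
\[
F \colon \Kb \mathsf{T}^{\GB \times \Gm}(\gg) \to \Db \Coh^{\GB \times \Gm}(\gg),
\]
and the plan is to show that $F$ is fully faithful, essentially surjective, and monoidal.

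For full faithfulness, I would invoke the standard principle (often attributed to Beilinson) that if $\mathsf{T}$ is a full additive subcategory of an abelian category $\mathsf{A}$ with $\Ext_{\mathsf{A}}^{>0}(T,T')=0$ for all $T,T' \in \mathsf{T}$, then the natural functor $\Kb \mathsf{T} \to \Db \mathsf{A}$ is fully faithful; the required $\Ext$-vanishing is exactly Lemma~\ref{lem:Hom-vanishing-gg} (extended by additivity to arbitrary objects of $\mathsf{T}^{\GB \times \Gm}(\gg)$).

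For essential surjectivity, since the essential image of $F$ is a triangulated subcategory, Lemma~\ref{lem:DCoh-generated-free} reduces the problem to showing that every free object $V \otimes_\FM \OC_\gg \langle i \rangle$ with $V \in \Rep^{\mathrm{fd}}(\GB)$ and $i \in \ZM$ is in the essential image. For this, the key input is that any finite-dimensional $\GB$-module admits a bounded resolution by tilting modules: since $V$ has finitely many weights, it belongs to a truncated subcategory $\Rep_{\leq \lambda}(\GB)$ for some $\lambda \in \XM^+$, which is a highest-weight category with finitely many simple objects. In such a highest-weight category, the natural functor from the bounded homotopy category of tilting objects to the bounded derived category is an equivalence (a standard Ringel-duality-type statement, also used in the analogous arguments of \cite{mr}). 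Given such a tilting resolution $T^\bullet$ of $V$, the fact that $\OC_\gg$ is $\FM$-flat implies that $(-) \otimes_\FM \OC_\gg$ is exact, so $T^\bullet \otimes_\FM \OC_\gg \langle i \rangle$ is a bounded complex in $\mathsf{T}^{\GB \times \Gm}(\gg)$ whose image under $F$ is isomorphic to $V \otimes_\FM \OC_\gg \langle i \rangle$. I expect this essential surjectivity step to be the main obstacle, since it requires invoking the highest-weight structure on truncations of $\Rep^{\mathrm{fd}}(\GB)$.

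Finally, for monoidality, I would observe that because the tensor factors are $\OC_\gg$-free there are no higher Tors, giving
\[
(V \otimes \OC_\gg \langle n \rangle) \otimes^L_{\OC_\gg} (V' \otimes \OC_\gg \langle m \rangle) \cong (V \otimes_\FM V') \otimes \OC_\gg \langle n+m \rangle;
\]
the right-hand side lies in $\mathsf{T}^{\GB \times \Gm}(\gg)$ because $\Tilt(\GB)$ is closed under tensor products (Donkin--Mathieu, valid under our assumption that $\ell$ is very good for $\GD$). This shows that $\mathsf{T}^{\GB \times \Gm}(\gg)$ is a monoidal subcategory of $\Coh^{\GB \times \Gm}(\gg)$ and that $F$ intertwines the monoidal structures.
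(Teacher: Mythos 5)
Your proof is correct and follows essentially the same route as the paper: full faithfulness from Lemma~\ref{lem:Hom-vanishing-gg} via the standard $\Ext$-vanishing criterion, essential surjectivity from Lemma~\ref{lem:DCoh-generated-free} together with the fact that tilting modules generate $\Db\Rep(\GB)$, and monoidality from the flatness/Tor observation. The paper's proof is terser (in particular, it just asserts ``our functor is clearly monoidal'' and that tilting modules generate $\Db\Rep(\GB)$ without spelling out the truncation argument), but the content is the same; your elaboration of the tilting-resolution step and the $\Tilt(\GB)$ tensor-closure is a correct filling-in of the details the paper leaves implicit.
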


\begin{proof}
Consider the composition
\[
\Kb \mathsf{T}^{\GB \times \Gm}(\gg) \hookrightarrow \Kb \Coh^{\GB \times \Gm}(\gg) \to \Db \Coh^{\GB \times \Gm}(\gg).
\]
Lemma~\ref{lem:Hom-vanishing-gg} (together with standard arguments) implies that this functor is fully-faithful. Since the (finite-dimensional) tilting $\GB$-modules generate the derived category $\Db \Rep(\GB)$, using Lemma~\ref{lem:DCoh-generated-free} we obtain that it is also essentially surjective. Finally, our functor is clearly monoidal, hence the corollary is proved.
\end{proof}

\subsection{Proof of Theorem~\ref{thm:derived-Satake}}
\label{ss:proof-derived-Satake}

By Corollary~\ref{cor:DbCoh-tilting} and the definition of the category $\Dmix_{\GDO}(\Gr, \FM)$, to prove the theorem it suffices to construct an equivalence of monoidal additive categories
\begin{equation}
\label{eqn:equiv-parity-tilt}
\Parity_{\GDO}(\Gr, \FM) \simto \mathsf{T}^{\GB \times \Gm}(\gg)
\end{equation}
which intertwines the shift functors $\{1\}$ and $\langle -1 \rangle$ and sends any object $\FC$ in $\PParity_{\GDO}(\Gr, \FM)$ to $\Satake(\FC) \otimes_\FM \OC_\gg$.
Now $\Parity_{\GDO}(\Gr, \FM)$ is equivalent, as an additive monoidal category, to the category whose objects are sequences $(\FC_i)_{i \in \ZM}$ of objects of $\PParity_{\GDO}(\Gr, \FM)$ satisfying $\FC_i=0$ for all but finitely many $i$'s (where morphisms are defined in the obvious way, and with the obvious ``graded'' convolution product), via the functor
\[
(\FC_i)_{i \in \ZM} \mapsto \bigoplus_{i \in \ZM} \FC_i [-i].
\]
Therefore, to construct an equivalence as in~\eqref{eqn:equiv-parity-tilt} it is enough to prove that the assignment $\FC \mapsto \Psi(\FC):=\Satake(\FC) \otimes_\FM \OC_{\gg}$ (which is obviously compatible with the monoidal structures) can be extended to a monoidal functor; in other words
we have to define, for any $i \in \ZM$ and $\FC$, $\GC$ in $\PParity_{\GDO}(\Gr, \FM)$, an isomorphism
\begin{equation}
\label{eqn:def-functor-morphisms}
\Hom_{\Parity_{\GDO}(\Gr, \FM)}(\FC, \GC \{i\}) \simto \Hom_{\mathsf{T}^{\GB \times \Gm}}(\Psi(\FC), \Psi(\GC) \langle -i \rangle),
\end{equation}
compatible with all our structures (and in particular with composition).
To define this isomorphism we will describe both sides in similar terms. (This strategy is of course reminiscent of the ``Soergel construction'' used in particular in~\cite{bf, modrap2, mr}).

First we consider the left-hand side of~\eqref{eqn:def-functor-morphisms}.
We claim that there exists a canonical isomorphism
\[
\Hom_{\Parity_{\GDO}(\Gr, \FM)}(\FC, \GC \{i\}) \simto \bigl( \Hom_{\Db_{\TD}(\Gr, \FM)}(\FC, \GC \{i\}) \bigr)^W,
\]
where $\Db_{\TD}(\Gr, \FM)$ is the $\TD$-equivariant constructible derived category of $\Gr$, and $W$ acts naturally on $\Hom_{\Db_{\TD}(\Gr, \FM)}(\FC, \GC \{i\})$. In fact, since we have $\HF^\bullet_{\GD}(\mathrm{pt}; \FM) \cong (\HF^\bullet_{\TD}(\mathrm{pt}; \FM))^W$ (see~\S\ref{ss:notation-Satake}), to prove this it suffices to prove that the $\HF^\bullet_{\GD}(\mathrm{pt}; \FM)$-module $\Hom^\bullet_{\Db_{\GDO}(\Gr, \FM)}(\FC, \GC)$ is free and that the natural morphism
\[
\HF^\bullet_{\TD}(\mathrm{pt}; \FM) \otimes_{\HF^\bullet_{\GD}(\mathrm{pt}; \FM)} \Hom^\bullet_{\Db_{\GDO}(\Gr, \FM)}(\FC, \GC) \to \Hom^\bullet_{\Db_{\TD}(\Gr, \FM)}(\FC, \GC)
\]
is an isomorphism. Now, using~\cite[Proposition~2.6]{jmw}, to prove this fact it suffices to prove that for any $\lambda \in \XM$ the $\HF^\bullet_{\GD}(\mathrm{pt}; \FM)$-module $\HF^\bullet_{\GD}(\Gr^\lambda; \FM)$ is free, and that the natural morphism
\[
\HF^\bullet_{\TD}(\mathrm{pt}; \FM) \otimes_{\HF^\bullet_{\GD}(\mathrm{pt}; \FM)} \HF^\bullet_{\GD}(\Gr^\lambda; \FM) \to \HF^\bullet_{\TD}(\Gr^\lambda; \FM)
\]
is an isomorphism. Finally, since $\Gr^\lambda$ is an affine bundle over a partial flag variety of $\GD$, it is enough to prove the similar claim for each partial flag variety $\GD/\PD$. This result is well known, and can e.g.~be proved by a spectral sequence argument. 

Now, recall that the functor $\HF^\bullet_{\TD}(\Gr, -)$ induces an isomorphism
\[
\Hom^\bullet_{\Db_{\TD}(\Gr, \FM)}(\FC, \GC) \simto \Hom_{\HF^\bullet_{\TD}(\Gr; \FM)}(\HF_{\TD}^\bullet(\Gr, \FC), \HF_{\TD}^\bullet(\Gr, \GC)),
\]
see~\cite[Proposition~3.12]{mr}.\footnote{In~\cite{mr} we work with integral coefficients and with a particular class of parity complexes. However the same arguments work in the case of fields, and in this setting they apply to all parity complexes; see~\cite[Remark~3.18]{mr}.} 
We deduce a canonical isomorphism
\begin{equation}
\label{eqn:proof-derived-Satake-Hom-Gr}
\Hom^\bullet_{\Parity_{\GDO}(\Gr, \FM)}(\FC, \GC) \simto \bigl( \Hom_{\HF^\bullet_{\TD}(\Gr; \FM)}(\HF_{\TD}^\bullet(\Gr, \FC), \HF_{\TD}^\bullet(\Gr, \GC)) \bigr)^W.
\end{equation}


Now we consider the right-hand side of~\eqref{eqn:def-functor-morphisms}. First we observe that the forgetful functor induces an isomorphism
\begin{multline*}
\bigoplus_{i \in \ZM} \Hom_{\Coh^{\GB \times \Gm}(\gg)}(\Satake(\FC) \otimes \OC_{\gg}, \Satake(\GC) \otimes \OC_{\gg} \langle -i \rangle) \\
\simto \Hom_{\Coh^{\GB}(\gg)}(\Satake(\FC) \otimes \OC_{\gg}, \Satake(\GC) \otimes \OC_{\gg}).
\end{multline*}
Next, by Lemma~\ref{lem:morphisms-Coh} we have a canonical isomorphism
\[
\Hom_{\Coh^{\GB}(\gg)}(\Satake(\FC) \otimes \OC_{\gg}, \Satake(\GC) \otimes \OC_{\gg}) \simto \bigl( \Hom_{\Rep(\IB_\Sigma)}(\varsigma(\Satake(\FC)), \varsigma(\Satake(\GC))) \bigr)^W.
\]
Using~\cite[Lemma~I.7.16]{jantzen-gps} we deduce a canonical isomorphism
\[
\Hom_{\Coh^{\GB}(\gg)}(\Psi(\FC), \Psi(\GC)) \simto \bigl( \Hom_{\mathrm{Dist}(\IB_\Sigma)}(\varsigma(\Satake(\FC)), \varsigma(\Satake(\GC))) \bigr)^W.
\]
(See the proof of Proposition~\ref{prop:cohom-Gr} for remarks on the infinitesimal flatness assumption.)
Finally, since $\tau_\Sigma$ is an isomorphism we deduce a canonical isomorphism
\begin{equation}
\label{eqn:proof-derived-Satake-Hom-Coh}
\Hom_{\Coh^{\GB}(\gg)}(\Psi(\FC), \Psi(\GC)) \simto \bigl( \Hom_{\mathrm{Dist}(\IB_{\tg^*})}(\tau_\Sigma^* \varsigma(\Satake(\FC)), \tau_\Sigma^* \varsigma(\Satake(\GC))) \bigr)^W.
\end{equation}

By construction (see~\cite{yz}), isomorphism~\eqref{eqn:cohom-Tequ} is $\HF^\bullet_{\TD}(\Gr; \FM)$-equivariant, where $\HF^\bullet_{\TD}(\Gr; \FM)$ acts on the left-hand side via the isomorphism of Proposition~\ref{prop:cohom-Gr}. 
With the present notation, this isomorphism can therefore be written as an isomorphism of $\HF^\bullet_{\TD}(\Gr; \FM)$-modules
\[
\HF_{\TD}^\bullet(\Gr, \FC) \simto \tau_\Sigma^* \bigl( \varsigma(\Satake(\FC)) \bigr).
\]
Comparing Lemma~\ref{lem:action-W-Gr} with Lemma~\ref{lem:action-W-Coh}, and using~\eqref{eqn:tau-roots}, we observe that this isomorphism is $W$-equivariant. It is also compatible with the natural monoidal structure on both sides. Hence, using this identification and comparing~\eqref{eqn:proof-derived-Satake-Hom-Gr} with~\eqref{eqn:proof-derived-Satake-Hom-Coh} we deduce an isomorphism as in~\eqref{eqn:def-functor-morphisms}, with all the necessary compatibility properties.

\end{document}